\newtheorem{lem}{Lemma}[section]
\newtheorem{prop}[lem]{Proposition}
\newtheorem{cor}[lem]{Corollary}
\newtheorem{thm}[lem]{Theorem}
\newtheorem{definition}[lem]{Definition}
\theoremstyle{remark}
\newtheorem{remark}[lem]{Remark}
\newtheorem{remarks}[lem]{Remarks}
\newtheorem{ex}[lem]{Example}
\numberwithin{equation}{section}
\newcommand{\af}{\mathrm{af}}
\newcommand{\al}{\alpha}
\newcommand{\alv}[1]{{\alpha^\vee_{#1}}}
\newcommand{\Aut}{\mathrm{Aut}^s}
\newcommand{\av}{{\alpha^\vee}}
\newcommand{\bv}{{\beta^\vee}}
\newcommand{\casetwo}[4]{\left\{ \begin{array}{ll} #1 &\mbox{if $#2$} \\[2mm] #3 &\mbox{if $#4$}\,. \end{array} \right.}
\newcommand{\casetwoc}[4]{\left\{ \begin{array}{ll} #1 &\mbox{if $#2$} \\[2mm] #3 &\mbox{if $#4$}\,, \end{array} \right.}
\newcommand{\cl}{\mathrm{cl}}
\newcommand{\clproj}{\cl}
\newcommand{\dist}{\mathrm{dist}}
\newcommand{\gaf}{\geh_\af}
\newcommand{\geh}{\mathfrak{g}}
\newcommand{\Hom}{\mathrm{Hom}}
\newcommand{\id}{\mathrm{id}}
\newcommand{\I}{I}
\newcommand{\Iaf}{I_\af}
\newcommand{\IP}{J}
\newcommand{\Is}{I^s}
\newcommand{\Inv}{\mathrm{Inv}}
\newcommand{\la}{\lambda}
\newcommand{\La}{\Lambda}
\newcommand{\lev}{\mathrm{level}}
\newcommand{\mcr}[1]{\lfloor #1 \rfloor}
\newcommand{\om}{\omega}
\newcommand{\OmegaP}{\Omega_J}
\newcommand{\omv}[1]{\omega^\vee_{#1}}
\newcommand{\pair}[2]{\langle #1\,,\,#2\rangle}
\newcommand{\Phiaf}{\Phi^\af}
\newcommand{\Phiafm}{\Phi^{\af-}}
\newcommand{\Phiafp}{\Phi^{\af+}}
\newcommand{\PhiafpP}{\Phi^{\af+}_J}
\newcommand{\PhiafmP}{\Phi^{\af-}_J}
\newcommand{\PhiP}{\Phi_J}
\newcommand{\phiP}{\phi_J}
\newcommand{\piP}[1]{\lfloor #1 \rfloor}
\newcommand{\ppiP}{\pi_J}
\newcommand{\XvP}{X^\vee_J}
\newcommand{\qleft}[1]{\xleftarrow{#1}}
\newcommand{\Q}{Q}
\newcommand{\Qaf}{Q_\af}
\newcommand{\Qafv}{Q^\vee_\af}
\newcommand{\QB}{\mathrm{QB}}
\newcommand{\Qv}{Q^\vee}
\newcommand{\QvP}{Q_J^\vee}
\newcommand{\rhoP}{\rho_J}
\newcommand{\SigmaP}{\Sigma_J}
\newcommand{\tal}{{\widetilde{\alpha}}}
\newcommand{\ti}[1]{\widetilde{#1}}
\newcommand{\tv}{{\theta^\vee}}
\newcommand{\W}{W}
\newcommand{\Waf}{W_\af}
\newcommand{\Wafm}{W_\af^-}
\newcommand{\We}{W_e}
\newcommand{\WP}{W_J}
\newcommand{\WPaf}{(W_J)_\af}
\newcommand{\WUP}{W^J}
\newcommand{\WUPaf}{(W^J)_\af}
\newcommand{\X}{X}
\newcommand{\Xaf}{X_\af}
\newcommand{\Xafv}{X_\af^\vee}
\newcommand{\Xafz}{X_\af^0}
\newcommand{\Xv}{X^\vee}
\newcommand{\Z}{\mathbb{Z}}
\newcommand{\wt}{\mathop{\rm wt}\nolimits}
\newcommand{\ql}{q\ell} 
\newcommand{\qlj}{q\ell_{J}}
\newcommand{\sdp}[2]{\ell(#1 \Rightarrow #2)} 
\newcommand{\bp}{{\bf p}}
\newcommand{\bq}{{\bf q}}
\begin{document}

\title[Lifting the parabolic quantum Bruhat graph]{A uniform model for Kirillov-Reshetikhin crystals I:\\
{\small Lifting the parabolic quantum Bruhat graph}}

\author[C.~Lenart]{Cristian Lenart}
\address{Department of Mathematics and Statistics, State University of New York at Albany, 
Albany, NY 12222, U.S.A.}
\email{clenart@albany.edu}
\urladdr{http://www.albany.edu/\~{}lenart/}

\author[S.~Naito]{Satoshi Naito}
\address{Department of Mathematics, Tokyo Institute of Technology,
2-12-1 Oh-Okayama, Meguro-ku, Tokyo 152-8551, Japan}
\email{naito@math.titech.ac.jp}

\author[D.~Sagaki]{Daisuke Sagaki}
\address{Institute of Mathematics, University of Tsukuba, 
Tsukuba, Ibaraki 305-8571, Japan}
\email{sagaki@math.tsukuba.ac.jp}

\author[A.~Schilling]{Anne Schilling}
\address{Department of Mathematics, University of California, One Shields
Avenue, Davis, CA 95616-8633, U.S.A.}
\email{anne@math.ucdavis.edu}
\urladdr{http://www.math.ucdavis.edu/\~{}anne}

\author[M.~Shimozono]{Mark Shimozono}
\address{Department of Mathematics, Virginia Polytechnic Institute
and State University, Blacksburg, VA 24061-0123, U.S.A.}
\email{mshimo@vt.edu}
 
\keywords{Parabolic quantum Bruhat graph, Lakshmibai--Seshadri paths, Littelmann path model,
crystal bases, Deodhar's lift}

\subjclass[2000]{Primary 05E05. Secondary 33D52, 20G42.}

\begin{abstract}
We lift the parabolic quantum Bruhat graph into the Bruhat order on the affine 
Weyl group and into Littelmann's poset on level-zero weights. We establish a quantum analogue of Deodhar's Bruhat-minimum lift from a 
parabolic quotient of the Weyl group. This result asserts a remarkable 
compatibility of the quantum Bruhat graph on the Weyl group, with the cosets 
for every parabolic subgroup. Also, we generalize Postnikov's lemma from the quantum Bruhat graph to the parabolic one; this lemma
compares paths between two vertices in the former graph.

The results in this paper will be applied in a second paper to establish a uniform
construction of tensor products of one-column Kirillov-Reshetikhin (KR) crystals, and the 
equality, for untwisted affine root systems, between the Macdonald polynomial with
$t$ set to zero and the graded character of tensor products of one-column KR modules.
\end{abstract}

\maketitle

\tableofcontents

\section{Introduction}

Our goal in this series of papers is to obtain a uniform construction of tensor products of one-column
Kirillov-Reshetikhin (KR) crystals. As a consequence we shall prove
the equality $P_\lambda(q)=X_\lambda(q)$,
where $P_\lambda(q)$ is the Macdonald polynomial $P_\lambda(q,t)$ specialized at $t=0$
and $X_\lambda(q)$ is the graded character of a simple Lie algebra
coming from tensor products of one-column KR modules.
Both the Macdonald polynomials and KR modules are of arbitrary untwisted affine type.
The parameter $\lambda$ is a dominant weight for the simple Lie subalgebra obtained
by removing the affine node.
Macdonald polynomials and characters of KR modules have been
studied extensively in connection with various fields such as 
statistical mechanics and integrable systems, 
representation theory of Coxeter groups and Lie algebras (and their quantized analogues
given by Hecke algebras and quantized universal enveloping algebras),
geometry of singularities of Schubert varieties, and combinatorics.

Our point of departure is a theorem of Ion~\cite{Ion}, which asserts 
that the nonsymmetric Macdonald polynomials 
at $t=0$ are characters of Demazure submodules of highest weight
modules over affine algebras. This applies for the Langlands duals of 
untwisted affine root systems (and type $A_{2n}^{(2)}$ in the case of 
nonsymmetric Koornwinder polynomials).
Our results apply to the symmetric Macdonald polynomials for the untwisted affine root systems.
The overlapping cases are the simply-laced affine root systems $A_n^{(1)}$,
$D_n^{(1)}$ and $E_{6,7,8}^{(1)}$.

It is known~\cite{FL1,FL2,FSS,KMOU,KMOTU,Na1,Na2,ST} 
that certain affine Demazure characters (including those for the simply-laced affine root systems)
can be expressed in terms of KR crystals, which motivates the relation
between $P$ and $X$. For types $A_n^{(1)}$ and $C_n^{(1)}$, the above mentioned relation between $P$ and $X$
was achieved in~\cite{Le,LeS} by establishing a combinatorial formula for the Macdonald polynomials
at $t=0$ from the Ram--Yip formula~\cite{RY}, and by using explicit models for the one-column KR crystals~\cite{FOS}.
It should be noted that, in types $A_n^{(1)}$ and $C_n^{(1)}$, the one-column KR modules are irreducible
when restricted to the canonical simple Lie subalgebra, while in general this is not the case.
For the cases considered by Ion~\cite{Ion}, the corresponding KR crystals are perfect. This is not
necessarily true for the untwisted affine root systems considered in this work, 
especially for the untwisted non-simply-laced affine root systems.

In this work we provide a type-free approach to the connection between $P$ and $X$ for untwisted
affine root systems. 
Lenart's specialization~\cite{Le} of the Ram--Yip formula for Macdonald polynomials uses paths in the 
quantum Bruhat graph (QBG), which was defined and studied in \cite{BFP} in relation to the
quantum cohomology of the flag variety. On the other hand, Naito and Sagaki~\cite{NS1,NS2,NS3,NS4}
gave models for tensor products of KR crystals of one-column type in terms of projections of level-zero Lakshmibai--Seshadri (LS) paths
to the classical weight lattice.
Hence we need to bridge the gap between these two approaches by establishing
a bijection between paths in the quantum Bruhat graph and projected level-zero LS paths.
For crystal graphs of integrable highest weight modules over quantized universal
enveloping algebras of Kac-Moody algebras,
Lenart and Postnikov had already established a bijection between the LS path model and the alcove model~\cite{LP1}. 
This bijection was refined and reformulated in \cite{LeSh}
using Littelmann's direct characterization of LS paths~\cite{Littelmann1995} and Deodhar's lifting construction
for Coxeter groups~\cite{D}.

In this first paper we set the stage for the connection between the projected level-zero LS path model~\cite{NS1,NS2,NS3,NS4}
and the quantum alcove model~\cite{LeL}.
We begin by establishing a {\em first lift} from the parabolic quantum Bruhat graph (PQBG) to the Bruhat 
order of the affine Weyl group. This is a parabolic analogue of the fact 
that the quantum Bruhat graph (QBG) can be lifted to the affine Bruhat order~\cite{LS}, which is the
combinatorial structure underlying Peterson's
theorem~\cite{P}; the latter equates the Gromov-Witten invariants of finite-dimensional homogeneous spaces $G/P$ 
with the Pontryagin homology structure constants of Schubert varieties in the affine Grassmannian.
We obtain Diamond Lemmas for the PQBG via projection of the
standard Diamond Lemmas for the affine Weyl group.
We find a {\em second lift} of the PQBG into a poset of Littelmann~\cite{Littelmann1995} for level-zero weights
and characterize its local structure (such as cover relations) in terms of the PQBG.
Littelmann's poset was defined in connection with LS paths for arbitrary (not necessarily dominant) 
weights, but the local structure was not previously known.
Then, we prove the tilted Bruhat theorem, which is a quantum Bruhat graph analogue of the
Deodhar lift~\cite{D} for Coxeter groups. This will turn out to be important in our second
paper~\cite{LNSSS}, where we establish the connection between the LS path model and the 
quantum alcove model. Our proof uses the novel notion of quantum length, which relies on the
fact that the PQBG is strongly connected when using only simple transpositions; 
see~\cite{HST}. The theorem ultimately follows from the application of the Diamond Lemmas for the 
QBG. Finally, we prove the natural generalization from the QBG to the PQBG of Postnikov's 
lemma~\cite[Lemma 1\, (2), (3)]{Po}, which compares the weights of two paths in the former graph (the weight measures the down steps);
note that part (1) of Postnikov's lemma, stating the strong connectivity of the QBG, is
generalized earlier in this paper. Besides our second paper on KR crystals, the generalization of Postnikov's lemma
might find applications to parabolic versions of the results in~\cite{Po} on the quantum cohomology of flag varieties.

The paper is organized as follows. In Section~\ref{section.notation} we set up the notation
for untwisted affine root systems and affine Weyl groups.
In Section~\ref{section.orbits} we give the definitions of stabilizers of orbits of the affine Weyl group
and derive properties of $J$-adjusted elements, where $J$ is the index set of a parabolic
subgroup. The PQBG is introduced in Section~\ref{section.QBG}
and the lift to the Bruhat order of the affine Weyl group is given in Section~\ref{section.QBG and affine BO}
(see Proposition~\ref{subsection.embeddings}). This gives rise to the Diamond Lemmas
in Section~\ref{subsection.diamond}. In Section~\ref{section.level 0 WP} we state and prove our
characterization of Littelmann's level-zero weight poset (see Theorem~\ref{theorem.level 0 weight poset})
and show that the PQBG is strongly connected when using only simple
reflections (see Lemma~\ref{lem:ql2}).
In Section~\ref{section.tilted} we prove the tilted Bruhat Theorem (see Theorem~\ref{thm:tilted}).
Finally, in Section~\ref{section.poslem} we prove the parabolic generalization of Postnikov's lemma (see Proposition~\ref{prop:weight}).

Note: After this paper was submitted, we learned of two previous appearances of the regular weight poset
or nonparabolic QBG; see Remark \ref{R:weightposet}. Lusztig's generic Bruhat order on the
affine Weyl group \cite[\S 1.5]{Lusztig} (or equivalently, on alcoves), is isomorphic to the weight poset
when $\la$ is regular. It is shown in \cite{FFKM}
that the containment relation for closures of strata in the space of semi-infinite flags
(or the space of quasimaps from $\mathbb{P}^1$ to $G/B$) is equivalent to the generic Bruhat order;
moreover, the description in \cite[Prop. 5.5]{FFKM} is ultimately equivalent to 
using the QBG, although far less explicit. We presume that analogous results will hold for the PQBG and strata for the space of
quasimaps from $\mathbb{P}^1$ to $G/P$.

\subsection*{Acknowledgments}

The first two and last two authors would like to thank the Mathematisches Forschungsinstitut Oberwolfach for 
their support during the Research in Pairs program, where some of the main ideas of this paper were conceived. 
We would also like to thank Thomas Lam, for helpful discussions during FPSAC 2012 in Nagoya, Japan;
Daniel Orr, for his discussions about Ion's work~\cite{Ion}; and Martina Lanini, for
informing us about Lusztig's order on alcoves.
We used {\sc Sage}~\cite{sage} and {\sc Sage-combinat}~\cite{sagecombinat} to discover properties about the
level-zero weight poset and to obtain some of the pictures in this paper.

C.L. was partially supported by the NSF grant DMS--1101264.
S.N. was supported by Grant-in-Aid for Scientific Research (C), No. 24540010, Japan.
D.S. was supported by Grant-in-Aid for Young Scientists (B) No.23740003, Japan. 
A.S. was partially supported by the NSF grants DMS--1001256 and OCI--1147247.
M.S. was partially supported by the NSF grant DMS-1200804.

\section{Notation}
\label{section.notation}

\subsection{Untwisted affine root datum}
Let $\Iaf=I\sqcup\{0\}$ (resp. $I$) be the Dynkin node set of an untwisted affine algebra $\gaf$
(resp. its canonical subalgebra $\geh$), $(a_{ij}\mid i,j\in \Iaf)$ the affine Cartan matrix, 
$\Xaf = \Z\delta \oplus\bigoplus_{i\in \Iaf} \Z\La_i$ 
(resp. $\X = \bigoplus_{i\in I} \Z \om_i$) the affine (resp. finite) weight lattice,
$\Xafv=\Hom_{\Z}(\Xaf,\Z)$ the dual lattice, 
and $\pair{\cdot}{\cdot}: \Xafv \times \Xaf\to \Z$ the evaluation pairing. 
Let $\Xafv$ have dual basis $\{d\}\cup \{\alv{i} \mid i\in \Iaf \}$.
The natural projection $\cl:\Xaf\to \X$
has kernel $\Z\La_0\oplus\Z\delta$ and sends $\La_i\mapsto\omega_i$ for $i\in I$.

Let $\{\alpha_i\mid i\in \Iaf \}\subset \Xaf$ be the unique elements such that
\begin{align}
  \pair{\alv{i}}{\alpha_j} &= a_{ij}\qquad\text{for $i,j\in \Iaf$} \\
  \pair{d}{\alpha_j} &= \delta_{j,0} .
\end{align}
The affine (resp. finite) root lattice is defined by $\Qaf=\bigoplus_{i\in \Iaf} \Z \alpha_i$
(resp. $\Q = \bigoplus_{i\in I} \Z\alpha_i$). The set of affine real roots (resp. roots)
of $\geh_\af$ (resp. $\geh$) are defined by $\Phiaf = \Waf \,\{\alpha_i\mid i\in \Iaf\}$
(resp. $\Phi=\W\, \{\alpha_i\mid i\in I\}$). The set of positive affine real (resp. positive) roots are the
set $\Phiafp = \Phiaf \cap \bigoplus_{i\in \Iaf} \Z_{\ge0} \alpha_i$
(resp. $\Phi^+=\Phi\cap \bigoplus_{i\in I} \Z_{\ge0} \alpha_i$).
We have $\Phiaf=\Phiafp \sqcup \Phiafm$ where $\Phiafm=-\Phiafp$
and $\Phi=\Phi^+\sqcup\Phi^-$ where $\Phi^-=-\Phi^+$.

The null root $\delta$ is the unique element such that
$\delta\in \bigoplus_{i\in\Iaf}\Z_{>0} \alpha_i$ which generates the rank 1 sublattice
$\{\la\in \Xaf\mid \pair{\alv{i}}{\la}=0 \text{ for all $i\in \Iaf$} \}$. Define $a_i\in \Z_{>0}$ by 
\begin{align}\label{E:a}
\delta=\sum_{i\in \Iaf} a_i \alpha_i.
\end{align}
We have $\delta = \alpha_0 + \theta$, where $\theta$ is the highest root
for $\geh$, and
\begin{align}
\Phiafp = \Phi^+ \sqcup (\Phi+\Z_{>0}\,\delta).
\end{align}

The canonical central element is the unique
element $c\in \bigoplus_{i\in\Iaf} \Z_{>0} \alv{i}$ which generates the rank 1 sublattice
$\{\mu\in\Xafv\mid \pair{\mu}{\alpha_i}=0 \text{ for all $i\in \Iaf$} \}$.
Define $a^\vee_i\in \Z_{>0}$ by $c=\sum_{i\in \Iaf} a^\vee_i \alv{i}$.
Then $a^\vee_0=1$ \cite{Kac}. The level of a weight $\la\in \Xaf$ is defined by
$\lev(\la) = \pair{c}{\la}$. 

\subsection{Affine Weyl group}
Let $\Waf$ (resp. $\W$) be the affine (resp. finite) Weyl group with simple reflections $r_i$ for $i\in \Iaf$
(resp. $i\in I$). $\Waf$ acts on $\Xaf$ and $\Xafv$ by
\begin{align*}
  r_i \la &= \la - \pair{\alpha_i^\vee}{\la} \alpha_i \\
  r_i \mu &= \mu - \pair{\mu}{\alpha_i} \alv{i}
\end{align*}
for $i\in \Iaf$, $\la\in \Xaf$, and $\mu\in \Xafv$. The pairing is $\Waf$-invariant:
\begin{align*}
  \pair{w \mu}{w\la} &= \pair{\mu}{\la}\qquad\text{for $\la\in \Xaf$ and $\mu\in \Xafv$.}
\end{align*}
Since the action of $\Waf$ on $\Xaf$ is level-preserving,
the sublattice $\Xafz\subset\Xaf$ of level-zero elements is $\Waf$-stable.
There is a section $\X\to \Xafz$ given by $\omega_i\mapsto \La_i - \lev(\La_i)\La_0$
for $i\in I$.

For $\beta\in \Phiaf$ let $w\in \Waf$ and $i\in\Iaf$ be such that $\beta = w\cdot \alpha_i$.
Define the associated reflection $r_\beta\in \Waf$ and associated coroot $\beta^\vee\in \Xafv$ by
\begin{align}
\label{E:reflection}
  r_\beta &= w r_i w^{-1} \\
\label{E:coroot}
  \beta^\vee &= w \alv{i}.
\end{align}
Both are independent of $w$ and $i$. Of course $r_{-\beta}=r_\beta$. We have
\begin{align*}
  r_\beta \la &= \la - \pair{\beta^\vee}{\la} \beta &\qquad&\text{for $\la\in \Xaf$} \\
  r_\beta \mu &= \mu - \pair{\mu}{\beta} \beta^\vee &\qquad&\text{for $\mu\in \Xafv$.}
\end{align*}
There is an isomorphism
\begin{align}\label{E:Wafsemi}
  \Waf \cong \W \ltimes \Qv.
\end{align}
Consider the injective group homomorphism $\Qv:=\bigoplus_{i\in \I} \Z\alv{i}\to \Waf$
from the finite coroot lattice into $\Waf$, denoted by $\mu\mapsto t_\mu$.
Then $w t_\mu w^{-1} = t_{w \mu}$ for $w\in \W$. Under the map \eqref{E:Wafsemi},
for $\alpha\in \Phi$ and $n\in\Z$, we have
\begin{align*}
  r_{\alpha+n\delta} &\mapsto r_\alpha t_{n\av} \\
  r_0 &\mapsto r_\theta t_{-\tv}
\end{align*}
the latter holding since $\alpha_0 = \delta-\theta$.

Let $\We = \W \ltimes \Xv$ be the extended affine Weyl group where $\Xv = \bigoplus_{i\in I} \Z\omv{i}$
is the coweight lattice of $\geh$. 
Let $\Is \subset \Iaf$ be the subset of special or cominuscule nodes,
the set of nodes $i\in I^\af$ which are the image of $0$ under some automorphism of the affine
Dynkin diagram. There is a bijection from $\Is$ to $\Xv/\Qv$ given by
$i\mapsto \om_i^\vee+\Qv$ where $\omv{0}:=0$ and $\Qv=\bigoplus_{i\in I}\Z\alv{i}$ is the
finite coroot lattice. 
For each $i\in \Is$ there is a permutation $\tau_i$ of $\Xv/\Qv$ (and therefore a permutation
of $\Is$) defined by adding $-\om_i+\Qv$. The induced permutation of $\Is$
extends uniquely to an automorphism $\tau_i$ of the affine Dynkin diagram.
The group $\Aut(I^\af)$ of special automorphisms is defined to be the group of $\tau_i$ for $i\in \Is$.
It acts on $\Xaf$, $\Xafv$, $\Qaf$, $\Qafv=\bigoplus_{i\in \Iaf}\Z\alv{i}$, and $\Waf$
by permuting $\Iaf$ on basis elements and for $\Waf$, indices of simple reflections.

Define $v_i\in\W$ by the length-additive product
\begin{align}\label{E:v}
  w_0 = v_i w_0^J \qquad\text{for $i\in \Is$}
\end{align}
where $w_0\in \W$ and $w_0^J \in \W_J$ are the longest elements in $\W$ and the subgroup 
$\W_J$ of $\W$ generated by $r_j$ for $j\in J=I\setminus\{i\}$ respectively.
In particular $v_0=\id$. Then there is an injective group homomorphism 
\begin{align*}
  \Aut(I^\af)&\to \We \\
  \tau_i &= v_i t_{-\om_i^\vee}\qquad\text{for $i\in \Is$.}
\end{align*}
$\Aut(I^\af)$ acts on $\Waf$ by conjugation. This action may be defined by relabeling
indices of simple reflections: $\tau r_i \tau^{-1} = r_{\tau(i)}$ for all $\tau\in \Aut(I^\af)$
and $i\in I^\af$. As such we have $\We \cong \Aut(I^\af) \ltimes \Waf$.
There is an injective group homomorphism
\begin{equation}\label{E:specialautomap}
\begin{split}
  \Aut(I^\af) &\to \W \\
  \tau_i &\mapsto v_i .
\end{split}
\end{equation}

\begin{lem}\label{L:cominone} For every $i\in \Is$, $a_i=1$
and $\alpha_i$ occurs in $\theta$
with coefficient $1$ for $i\in \Is\setminus\{0\}$.
\end{lem}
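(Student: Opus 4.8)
The plan is to reduce both statements to the single fact that the marks $a_i$ appearing in \eqref{E:a} are invariant under automorphisms of the affine Dynkin diagram, together with the already-noted identities $a^\vee_0 = 1$, $a_0 = 1$, and $\delta = \alpha_0 + \theta$.

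First I would record the reformulation of the second claim: expanding \eqref{E:a} and using $\delta = \alpha_0 + \theta$ with $a_0 = 1$ gives $\theta = \sum_{i\in I} a_i\alpha_i$, so that for $i\in I$ the coefficient of $\alpha_i$ in $\theta$ is precisely $a_i$. Since every $i\in\Is\setminus\{0\}$ lies in $I$, the second assertion becomes a special case of the first once we know $a_i = 1$ for all $i\in\Is$.

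The key step is then the invariance of the marks. Let $\sigma$ be an automorphism of the affine Dynkin diagram, acting on $\Qaf$ by $\alpha_i\mapsto\alpha_{\sigma(i)}$. Using the defining property $a_{\sigma(i)\sigma(j)} = a_{ij}$ of a diagram automorphism, a short computation gives $\pair{\alv{i}}{\sigma\delta} = \sum_j a_j\, a_{i,\sigma(j)} = \sum_j a_j\, a_{\sigma^{-1}(i),j} = \pair{\alv{\sigma^{-1}(i)}}{\delta} = 0$ for every $i\in\Iaf$. Hence $\sigma\delta$ lies in the rank-one lattice $\{\la\in\Xaf \mid \pair{\alv{i}}{\la} = 0 \text{ for all } i\in\Iaf\}$ generated by $\delta$, so $\sigma\delta = m\delta$ for some $m\in\Z$; comparing coefficients (all of which are positive, being a permutation of the positive integers $a_i$) forces $m = 1$. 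Thus $\sigma\delta = \delta$, i.e. $a_{\sigma(i)} = a_i$ for all $i\in\Iaf$. Since by definition each $i\in\Is$ equals $\sigma(0)$ for some such $\sigma$, we conclude $a_i = a_0 = 1$, and then the coefficient of $\alpha_i$ in $\theta$ is $1$ whenever $i\in\Is\setminus\{0\}$.

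The only point that needs any care is the invariance $\sigma\delta = \delta$, and this rests solely on the uniqueness clause in the definition of $\delta$ (its primitivity and positivity); no case-by-case inspection of the affine types is required, which is exactly what one wants for a type-uniform treatment.
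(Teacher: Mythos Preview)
Your proof is correct and follows essentially the same route as the paper's: reduce to $a_0=1$ (cited from Kac) plus the invariance of $\delta$ under affine Dynkin automorphisms, then use that every $i\in\Is$ is an automorphism image of $0$. The paper simply asserts in one line that $\Aut(I^\af)$ acts transitively on $\Is$ and fixes $\delta$, whereas you supply the short verification of $\sigma\delta=\delta$ from the uniqueness characterization of $\delta$; the additional detail is fine and changes nothing substantive.
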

\begin{proof} For untwisted affine algebras $a_0=1$ \cite{Kac}.
The lemma follows since $\Aut(I^\af)$ acts transitively on $\Is$ and fixes $\delta$.
\end{proof}

\begin{lem}\label{L:comin} For every $i\in \Is$
\begin{align}\label{E:autolength}
  \ell(v_i) = \pair{\omega_i^\vee}{2\rho}.
\end{align}
\end{lem}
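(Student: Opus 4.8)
The plan is to reduce \eqref{E:autolength} to a count of positive roots and then apply the cominuscule property recorded in Lemma~\ref{L:cominone}. The case $i=0$ is immediate: by \eqref{E:v} we have $v_0=\id$, and $\omega_0^\vee=0$, so both sides vanish. Assume henceforth that $i\in\Is\setminus\{0\}$ and put $J=I\setminus\{i\}$.

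First I would read off $\ell(v_i)$ from its definition. Since the factorization $w_0=v_i w_0^J$ is length-additive, $\ell(v_i)=\ell(w_0)-\ell(w_0^J)=\#\Phi^+-\#\Phi_J^+$, where $\Phi_J\subset\Phi$ is the root subsystem spanned by $\{\alpha_j\mid j\in J\}$. Next I would rewrite this difference in terms of the fundamental coweight $\omega_i^\vee$. For a positive root $\alpha=\sum_{j\in I}c_j\alpha_j$ one has $\pair{\omega_i^\vee}{\alpha}=c_i\ge 0$, and $\alpha\in\Phi_J^+$ exactly when $c_i=0$; hence
\[
  \ell(v_i)=\#\{\alpha\in\Phi^+\mid \pair{\omega_i^\vee}{\alpha}>0\}.
\]

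The key step is to invoke that the node $i$ is cominuscule. By Lemma~\ref{L:cominone}, $\alpha_i$ occurs in the highest root $\theta$ with coefficient $1$. Since $\geh$ is simple (the affine Dynkin diagram being connected) the highest root dominates every positive root in the root order, i.e. $\theta-\alpha\in\bigoplus_{j\in I}\Z_{\ge 0}\alpha_j$ for all $\alpha\in\Phi^+$, so the $\alpha_i$-coefficient of $\alpha$ lies in $\{0,1\}$, that is $\pair{\omega_i^\vee}{\alpha}\in\{0,1\}$. Therefore the count above equals $\sum_{\alpha\in\Phi^+}\pair{\omega_i^\vee}{\alpha}=\pair{\omega_i^\vee}{\sum_{\alpha\in\Phi^+}\alpha}=\pair{\omega_i^\vee}{2\rho}$, using $2\rho=\sum_{\alpha\in\Phi^+}\alpha$; this is \eqref{E:autolength}.

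I do not expect a genuine obstacle. The only point that genuinely uses the hypothesis $i\in\Is$ rather than merely $i\in I$ is the bound $\pair{\omega_i^\vee}{\alpha}\le 1$ on all of $\Phi^+$ — i.e. that the indicator of $\{\pair{\omega_i^\vee}{\alpha}>0\}$ coincides with $\pair{\omega_i^\vee}{\alpha}$ — and this is precisely the cominuscule property supplied by Lemma~\ref{L:cominone}, combined with the standard fact that every positive root is dominated coefficient-wise by the highest root.
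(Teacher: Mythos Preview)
Your proof is correct and follows essentially the same route as the paper's: both compute $\ell(v_i)=\ell(w_0)-\ell(w_0^J)=\#(\Phi^+\setminus\Phi_J^+)$, then use Lemma~\ref{L:cominone} to see that $\pair{\omega_i^\vee}{\alpha}\in\{0,1\}$ for every $\alpha\in\Phi^+$, so that this count equals $\pair{\omega_i^\vee}{2\rho}$. Your version is slightly more explicit about the $i=0$ case and about why the highest-root bound propagates to all positive roots, but the argument is the same.
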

\begin{proof} Fix $i\in \Is$. Since $\theta$ is the highest root,
it follows from Lemma \ref{L:cominone} that if $\alpha_i$ occurs in a positive root then its 
coefficient is $1$. Consequently the right hand side of \eqref{E:autolength} equals 
the number of positive roots that contain $\alpha_i$. This is the complement of the
number of positive roots in the parabolic subsystem for $J=I\setminus\{i\}$.
But this is equal to $\ell(w_0)-\ell(w_0^J)=\ell(v_i)$.
\end{proof}

\section{Orbits of level-zero weights}
\label{section.orbits}

\subsection{$\Waf$-orbit and $\W$-orbit}
The action of $\Waf$ on $\Xafz$ is given by
\begin{align}\label{E:onlevelzero}
  w t_\mu \la = w \la - \pair{\mu}{\la} \delta
\end{align}
for $w\in \W$, $\mu\in \Qv$, and $\la\in \Xafz$.

\begin{lem} \label{lemma.W mod delta}
For a dominant weight $\la\in \X\cong \Xafz/\Z\delta$ we have
$\Waf \la = \W \la$ in $\Xafz/\Z\delta$.
\end{lem}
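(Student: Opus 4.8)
The plan is to reduce everything to the explicit description of the $\Waf$-action on level-zero weights given in \eqref{E:onlevelzero}, which shows that the translation part of $\Waf$ acts trivially modulo $\Z\delta$.

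First I would fix a representative $\ti{\la}\in\Xafz$ of the class corresponding to $\la$ under the identification $\X\cong\Xafz/\Z\delta$ (for instance the image of $\la$ under the section $\X\to\Xafz$ described above). Since $\delta$ is fixed by $\Waf$, and hence by $\W$, the sublattice $\Z\delta$ is $\Waf$-stable, so the orbits $\Waf\ti{\la}$ and $\W\ti{\la}$ descend to well-defined subsets of $\Xafz/\Z\delta$; it is these that the statement compares.

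The inclusion $\W\la\subseteq\Waf\la$ in $\Xafz/\Z\delta$ is immediate, since $\W$ embeds in $\Waf$ as $\{w t_0\mid w\in\W\}$. For the reverse inclusion I would use the semidirect product decomposition \eqref{E:Wafsemi} to write an arbitrary element of $\Waf$ as $w t_\mu$ with $w\in\W$ and $\mu\in\Qv$, and then apply \eqref{E:onlevelzero}:
\[
  w t_\mu\,\ti{\la} \;=\; w\ti{\la} - \pair{\mu}{\ti{\la}}\,\delta \;\equiv\; w\ti{\la} \pmod{\Z\delta}.
\]
Thus every point of $\Waf\ti{\la}$ is congruent modulo $\Z\delta$ to a point of $\W\ti{\la}$, giving $\Waf\la\subseteq\W\la$ in $\Xafz/\Z\delta$, and hence equality.

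I do not expect a genuine obstacle here: the whole content is the observation, already encoded in \eqref{E:onlevelzero}, that a translation $t_\mu$ with $\mu\in\Qv$ moves a level-zero weight only in the $\Z\delta$-direction, hence acts trivially on $\Xafz/\Z\delta$. In particular the dominance hypothesis on $\la$ is not used in the argument; it is presumably imposed only because the dominant case is the one relevant to the level-zero weight orbits studied later.
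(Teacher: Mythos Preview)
Your proof is correct and follows exactly the approach of the paper, which simply says the result follows immediately from \eqref{E:onlevelzero}; you have merely spelled out the details. Your observation that dominance of $\la$ is not actually used is also accurate.
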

\begin{proof}
This follows immediately from \eqref{E:onlevelzero}.
\end{proof}

\subsection{Stabilizers}\label{s:stab}
Let $\la\in \X$ be a dominant weight, which will be used several times in this paper, so the notation below applies throughout.
Let $\WP$ be the stabilizer of $\la$ in $\W$.
It is a parabolic subgroup, being generated by $r_i$ for $i\in \IP$ where
\begin{align}
\label{E:pDynkin}
  \IP &= \{i\in \I\mid \pair{\alv{i}}{\la}=0\}.
\end{align}
Let $\QvP = \bigoplus_{i\in \IP} \Z\alv{i}$ be the associated
coroot lattice, 
$\WUP$ the set of minimum-length coset representatives in $\W/\WP$,
$\PhiP= \PhiP^+\sqcup\PhiP^-$ the set of roots and positive/negative roots respectively,
and $\rhoP = (1/2)\sum_{\alpha\in\PhiP^+} \alpha$.

\begin{lem}\label{L:affstab} 
The stabilizer of $\la$ in $\Waf$ under its level-zero 
action on $\X\cong \Xafz/\Z\delta$, is given by the subgroup of
elements of the form $w t_\mu$ where $w\in \WP$ and $\mu\in \Qv$
satisfies $\pair{\mu}{\lambda}=0$.
\end{lem}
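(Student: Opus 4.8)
The plan is to compute the stabilizer of $\la$ in $\Waf$ directly using the explicit formula \eqref{E:onlevelzero} for the level-zero action. Write an arbitrary element of $\Waf$ as $w t_\mu$ with $w\in \W$ and $\mu\in \Qv$, using the semidirect product decomposition \eqref{E:Wafsemi}. By \eqref{E:onlevelzero}, in $\Xafz$ we have $w t_\mu \la = w\la - \pair{\mu}{\la}\delta$, so in $\Xafz/\Z\delta \cong \X$ this equals $w\la$. Hence $w t_\mu$ stabilizes $\la$ (mod $\Z\delta$) precisely when $w\la = \la$ in $\X$, that is, $w\in \WP$ by the definition of $\WP$ as the stabilizer of $\la$ in $\W$.

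First I would observe that this already gives the description at the level of $\Xafz/\Z\delta$: the stabilizer is exactly $\{w t_\mu \mid w\in\WP,\ \mu\in\Qv\}$, with no condition on $\mu$ at all, since passing to the quotient by $\Z\delta$ kills the translation-induced shift $\pair{\mu}{\la}\delta$. But the statement of the lemma imposes the extra condition $\pair{\mu}{\la}=0$. I would reconcile this by noting that the claim must be interpreted as the stabilizer of $\la$ under the action on $\X$ viewed via the section $\X\to\Xafz$, $\omega_i \mapsto \La_i - \lev(\La_i)\La_0$; equivalently, one is asking which elements of $\Waf$ fix the distinguished level-zero lift of $\la$ in $\Xafz$ \emph{on the nose}, not merely modulo $\delta$. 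Under that reading, $w t_\mu$ fixes the lift iff $w\la = \la$ in $\X$ (so $w\in\WP$, which also forces $w$ to fix the level-zero lift of $\la$ since the section is $\W$-equivariant for the linear part) \emph{and} $\pair{\mu}{\la}\delta = 0$, i.e.\ $\pair{\mu}{\la}=0$. This is exactly the asserted description.

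The only genuine point requiring care — and what I expect to be the main (if modest) obstacle — is pinning down precisely which action is meant in the phrase ``its level-zero action on $\X\cong \Xafz/\Z\delta$,'' since the two natural readings (fixing $\la$ in the quotient $\Xafz/\Z\delta$, versus fixing the canonical lift in $\Xafz$) give answers differing by the condition $\pair{\mu}{\la}=0$. I would resolve this by appealing to the section $\X\to\Xafz$ fixed in Section~\ref{section.notation} and to \eqref{E:onlevelzero}: the element $\La_i - \lev(\La_i)\La_0$ is genuinely level-zero, the linear action of $w\in\W$ commutes with this section (both $\W$-actions being the restriction of the $\Waf$-action), and the translations $t_\mu$ act by subtracting $\pair{\mu}{\la}\delta$. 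Assembling these, $wt_\mu$ fixes the chosen lift of the dominant $\la$ iff $w\in\WP$ and $\pair{\mu}{\la}=0$, completing the proof.

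Thus the argument is essentially a one-line unwinding of \eqref{E:onlevelzero} together with the definition \eqref{E:pDynkin} of $\IP$; no induction or case analysis is needed, and Lemma~\ref{lemma.W mod delta} is not even required (it only records the coarser statement in $\Xafz/\Z\delta$). I would present it crisply: fix the decomposition $wt_\mu$, apply \eqref{E:onlevelzero}, read off the two conditions, and invoke that $\WP = \mathrm{Stab}_\W(\la)$ is the parabolic generated by the $r_i$ with $i\in\IP$.
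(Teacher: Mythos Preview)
Your approach is essentially the same as the paper's: both unwind \eqref{E:onlevelzero} directly. The paper's proof is a one-liner (``follows immediately from the definitions and Lemma~\ref{lemma.W mod delta}''), whereas you have been considerably more careful, and in fact you have correctly flagged a genuine imprecision in the statement: read literally as the stabilizer in $\Xafz/\Z\delta$, the condition $\pair{\mu}{\lambda}=0$ would not appear, and your resolution via the section $\X\to\Xafz$ (i.e., fixing the distinguished level-zero lift in $\Xafz$) is the right way to recover that condition. Your remark that Lemma~\ref{lemma.W mod delta} is not actually needed is also fair; it describes the orbit rather than the stabilizer.
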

\begin{proof}
This follows immediately from the definitions and Lemma~\ref{lemma.W mod delta}.
\end{proof}

\subsection{Affinization of stabilizer}
Let $\IP= \bigsqcup_{m=1}^k I_m$ have connected components with vertex sets $I_1,I_2,\dotsc,I_k$.
The coweight lattice $\XvP$ is the direct sum $\bigoplus_{m=1}^k \Xv_{I_m}$ where $\Xv_{I_m}$
is the coweight lattice for the root system defined by the component $I_m$.
Define $\IP^\af = \bigsqcup_m I_m^\af$, where $I_m^\af = I_m \sqcup \{0_m\}$
and $0_m$ is a separate additional affine node attached to $I_m$. Define
$\WPaf = \prod_{m=1}^k (W_{I_m})_\af$, where $W_{I_m}$ and $(W_{I_m})_\af$
are the finite and affine Weyl groups for the root subsystem with Dynkin node set $I_m$. Under this
isomorphism $r_{0_m} = r_{\theta_m} t_{-\theta_m^\vee}$ where $\theta_m$ is the highest root for $I_m$.

Define
\begin{align}
\label{E:Paffinepos}
  \PhiafpP &= \{ \beta\in \Phiafp \mid \cl(\beta) \in \PhiP \}= \PhiP^+ \cup (\Z_{>0}\delta + \PhiP)\,,\;\;\;\;  \PhiafmP=-\PhiafpP\,,\\
\label{E:PaffineWpos}
   \WUPaf &= \{x\in \Waf\mid  x \beta > 0 \text{ for all $\beta\in \PhiafpP$}\, \}.
\end{align}

\begin{lem} \cite[Lemma 10.1]{LS} 
$wt_\mu\in \WUPaf$ if and only if, for all $\alpha\in\PhiP^+$, 
$w\alpha > 0$ implies that $\pair{\mu}{\alpha}=0$ and
$w\alpha< 0$ implies that $\pair{\mu}{\alpha} = -1$.
\end{lem}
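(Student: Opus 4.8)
The plan is to unwind the definition of $\WUPaf$ in \eqref{E:PaffineWpos} and compute directly; since this is \cite[Lemma 10.1]{LS}, one could also simply cite it, but the argument is short enough to reproduce. By definition $wt_\mu\in\WUPaf$ means $(wt_\mu)\beta>0$ for every $\beta\in\PhiafpP$, and by \eqref{E:Paffinepos} the relevant roots are exactly $\PhiP^+$ (the ``finite'' part, with no $\delta$-shift) together with $\Z_{>0}\delta+\PhiP$ (where $\alpha$ ranges over all of $\PhiP$, both signs, and the shift is $n\ge 1$). The first ingredient I need is a formula for the action of $wt_\mu$ on an affine real root: every affine real root is level-zero and $\pair{\mu}{\delta}=0$ for $\mu\in\Qv$, so \eqref{E:onlevelzero} (together with $w\delta=\delta$) gives $wt_\mu(\alpha+n\delta)=w\alpha+(n-\pair{\mu}{\alpha})\delta$ for $\alpha\in\Phi$, $n\in\Z$; a sign check against $r_{\alpha+n\delta}=r_\alpha t_{n\av}$ confirms the convention. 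The second ingredient is the positivity criterion from $\Phiafp=\Phi^+\sqcup(\Phi+\Z_{>0}\delta)$: a root $\gamma+m\delta$ (with $\gamma\in\Phi$) is positive precisely when $m>0$, or $m=0$ and $\gamma\in\Phi^+$.

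With these two facts I would run the positivity criterion on $wt_\mu\beta$ for each type of $\beta$. For $\beta=\alpha\in\PhiP^+$, positivity of $w\alpha-\pair{\mu}{\alpha}\delta$ says: $\pair{\mu}{\alpha}<0$, or $\pair{\mu}{\alpha}=0$ and $w\alpha>0$. For $\beta=\alpha+n\delta$ with $n\ge 1$, positivity of $w\alpha+(n-\pair{\mu}{\alpha})\delta$ says: $\pair{\mu}{\alpha}<n$, or $\pair{\mu}{\alpha}=n$ and $w\alpha>0$; this is tightest at $n=1$, so it reduces to its $n=1$ instance. Applying the $n=1$ condition to the pair $\alpha,-\alpha$ for $\alpha\in\PhiP^+$ forces $\pair{\mu}{\alpha}\in\{-1,0,1\}$, with $w\alpha>0$ when $\pair{\mu}{\alpha}=1$ and $w\alpha<0$ when $\pair{\mu}{\alpha}=-1$. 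Combining this with the $\PhiP^+$ condition eliminates the case $\pair{\mu}{\alpha}=1$ entirely and leaves exactly the asserted dichotomy: $w\alpha>0\Rightarrow\pair{\mu}{\alpha}=0$ and $w\alpha<0\Rightarrow\pair{\mu}{\alpha}=-1$. The converse is the same computation read backwards: assuming the dichotomy, one verifies $w\alpha+(n-\pair{\mu}{\alpha})\delta>0$ for every $\alpha\in\PhiP^+$ with $n\ge 0$ and every $\alpha\in\PhiP^-$ with $n\ge 1$.

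I do not anticipate a real obstacle—the content is elementary—so the only genuine work is bookkeeping: keeping the sign in $t_\mu(\alpha+n\delta)=\alpha+(n-\pair{\mu}{\alpha})\delta$ straight, remembering that the $\delta$-free slice of $\PhiafpP$ contributes only $\PhiP^+$ and not all of $\PhiP$, and checking that among $n\ge 1$ the binding case is $n=1$. If one prefers a more structural phrasing, the same result falls out of describing the inversion set: $wt_\mu\in\WUPaf$ iff $\Inv(wt_\mu)\cap\PhiafpP=\varnothing$, combined with the standard description of $\Inv(wt_\mu)$; but the direct case analysis above is self-contained given what is already set up in Sections~\ref{section.notation} and~\ref{section.orbits}.
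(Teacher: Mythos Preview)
Your argument is correct. The paper does not supply its own proof of this lemma---it merely records the statement with the citation \cite[Lemma 10.1]{LS}---so there is nothing to compare against beyond that reference; your direct unwinding of the definition \eqref{E:PaffineWpos} via the explicit description \eqref{E:Paffinepos} of $\PhiafpP$ and the action formula $wt_\mu(\alpha+n\delta)=w\alpha+(n-\pair{\mu}{\alpha})\delta$ is exactly the natural verification, and the bookkeeping (tightest constraint at $n=1$, combining the conditions for $\alpha$ and $-\alpha$, then intersecting with the $n=0$ condition for $\alpha\in\PhiP^+$) is carried out correctly.
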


\begin{prop} \label{P:factor} \cite[Lemma 10.5]{LS} \cite{P} 
Given $w\in \Waf$ there exist unique 
$w_1\in \WUPaf$ and $w_2\in \WPaf$ such that $w = w_1 w_2$. If $w\in W$, then $w_1\in \WUP$ 
is the minimum-length representative of the coset $w \WP$.
\end{prop}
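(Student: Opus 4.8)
The plan is to realise $\WPaf$ as a reflection subgroup of $\Waf$ and then run the standard ``minimal coset representative'' argument relative to its root system. First I would record, from the affinisation of the stabiliser, that the given embedding identifies $\WPaf$ with $\WP\ltimes\QvP\le\W\ltimes\Qv=\Waf$: each $(W_{I_m})_\af=W_{I_m}\ltimes Q^\vee_{I_m}$, and $r_{0_m}=r_{\theta_m}t_{-\theta_m^\vee}=r_{\delta-\theta_m}$, so the simple reflections of the Coxeter group $\WPaf$ are the $r_\gamma$ with $\gamma$ in $\Pi_J:=\{\alpha_i\mid i\in\IP\}\cup\{\delta-\theta_m\mid 1\le m\le k\}\subseteq\Phiafp$, and the real root system of $\WPaf$, viewed inside $\Phiaf$, is $\Phi^\af_J:=\PhiafpP\sqcup\PhiafmP=\{\alpha+n\delta\mid\alpha\in\PhiP,\ n\in\Z\}$ with positive system $\PhiafpP$; a direct computation with $wt_\mu$ ($w\in\WP$, $\mu\in\QvP$) shows $\WPaf$ preserves $\Phi^\af_J$.

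Two elementary facts then drive the proof. (i) For every $x\in\Waf$ the set $N(x):=\{\beta\in\PhiafpP\mid x\beta\in\Phiafm\}$ is finite: writing $x=wt_\mu$ and $\beta=\alpha+n\delta$ with $\alpha\in\PhiP$ and $n\ge0$, the condition $x\beta=w\alpha+(n-\pair{\mu}{\alpha})\delta\in\Phiafm$ forces $n\le\pair{\mu}{\alpha}$, so each of the finitely many $\alpha\in\PhiP$ contributes finitely many $n$. (ii) If $x\notin\WUPaf$, then $x\gamma\in\Phiafm$ for some $\gamma\in\Pi_J$: choose $\beta\in\PhiafpP$ with $x\beta\in\Phiafm$ and expand $\beta=\sum_{\gamma\in\Pi_J}c_\gamma\gamma$ with $c_\gamma\in\Z_{\ge0}$; were every $x\gamma$ positive, then $x\beta$ would be a nonnegative integer combination of elements of $\bigoplus_{i\in\Iaf}\Z_{\ge0}\alpha_i$, hence would lie in $\Phiaf\cap\bigoplus_{i\in\Iaf}\Z_{\ge0}\alpha_i=\Phiafp$, contradicting $x\beta\in\Phiafm$.

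For existence, given $w\in\Waf$: while $w\notin\WUPaf$, pick $\gamma\in\Pi_J$ with $w\gamma\in\Phiafm$ by (ii) and replace $w$ by $wr_\gamma$. Since $r_\gamma\in\WPaf$ we stay in the coset $w\WPaf$, and since $r_\gamma$ permutes $\PhiafpP\setminus\{\gamma\}$ and negates $\gamma$ one gets $|N(wr_\gamma)|=|N(w)|-1$; by (i) the procedure halts at some $w_1\in\WUPaf\cap w\WPaf$, and we set $w_2:=w_1^{-1}w\in\WPaf$. For uniqueness it suffices that each coset meets $\WUPaf$ at most once: if $x$ and $xu$ lie in $\WUPaf$ with $e\ne u\in\WPaf$, pick $\gamma\in\PhiafpP$ with $u\gamma\in\PhiafmP$ (a nontrivial element of the Coxeter group $\WPaf$ inverts a positive root of its root system, and $\WPaf$ preserves $\Phi^\af_J$); then $-u\gamma\in\PhiafpP$, so $x(-u\gamma)\in\Phiafp$, that is $(xu)\gamma\in\Phiafm$ with $\gamma\in\PhiafpP$, contradicting $xu\in\WUPaf$. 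Uniqueness of $w_1$ forces uniqueness of $w_2$.

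Finally, suppose $w\in W$, and let $v\in\WUP$ be the minimum-length representative of $w\WP$, so $v\in w\WP\subseteq w\WPaf$. Then $v\in\WUPaf$: for $\beta\in\PhiP^+$ we have $v\beta\in\Phi^+$ because $v\in\WUP$, while for $\beta=\alpha+n\delta$ with $\alpha\in\PhiP$ and $n\ge1$ we have $v\beta=v\alpha+n\delta\in\Phi+\Z_{>0}\delta\subseteq\Phiafp$. Thus $v\in\WUPaf\cap w\WPaf$, and the uniqueness just shown gives $w_1=v$, as claimed. The one genuinely delicate step is (ii) --- producing a \emph{simple} descent from $\Pi_J$ for an element of $\Waf$ that need not preserve $\Phi^\af_J$ --- which rests on the characterisation $\Phiafp=\Phiaf\cap\bigoplus_{i\in\Iaf}\Z_{\ge0}\alpha_i$ and on every element of $\Pi_J$ being a positive affine root (for $\gamma=\delta-\theta_m$ use $\theta_m\le\theta$, so $\delta-\theta_m=\alpha_0+(\theta-\theta_m)$ has nonnegative $\Iaf$-coordinates). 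The remaining steps are routine bookkeeping with the structures already set up in Sections~\ref{section.notation} and~\ref{section.orbits}.
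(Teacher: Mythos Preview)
The paper does not supply a proof of Proposition~\ref{P:factor}; it is quoted with citations to \cite[Lemma~10.5]{LS} and \cite{P}. Your argument is correct and fills in what the paper omits. The approach is the natural one: you identify $\WPaf$ with the reflection subgroup $\WP\ltimes\QvP\le\Waf$ generated by $\{r_\gamma\mid\gamma\in\Pi_J\}$, observe that its intrinsic positive system embeds as $\PhiafpP\subset\Phiafp$, and then run the standard descent argument for minimal coset representatives. The finiteness claim~(i), the simple-descent claim~(ii), the termination of the replacement $w\mapsto wr_\gamma$ via $|N(wr_\gamma)|=|N(w)|-1$, and the uniqueness argument via an inversion of a nontrivial $u\in\WPaf$ are all valid; the last paragraph correctly checks $\WUP\subset\WUPaf$ and derives the second assertion from uniqueness. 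One point worth making fully explicit is that the expansion $\beta=\sum_{\gamma\in\Pi_J}c_\gamma\gamma$ with $c_\gamma\in\Z_{\ge0}$ used in~(ii) holds because $\PhiafpP$ is exactly the set of positive real roots of the product of affine Coxeter systems $\prod_m(W_{I_m})_\af$ relative to the simple system $\Pi_J$ (under the identification $\delta_m\mapsto\delta$); this is implicit in your first paragraph but is the crux of~(ii).
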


Define $\ppiP:\Waf\to \WUPaf$ by
\begin{align}\label{E:piP}
  w\mapsto w_1\,,
\end{align}
with $w_1$ as in Proposition \ref{P:factor}. Note that for $x\in \Waf$,
$x\in \WUPaf$ if and only if $\ppiP(x)=x$.

Let $\Wafm$ be the set of minimum-length coset representatives in $\Waf/\W$.

\begin{prop} \label{P:piP} \cite[Proposition 10.8]{LS} \cite{P} 
Let $x\in \Waf$ and $\mu\in \Qv$. Then
\begin{enumerate}
\item $\ppiP(x v)=\ppiP(x)$ if $v\in \WPaf$.
\item $\ppiP(\W)\subset \WUP \subset \WUPaf$.
\item $\ppiP(\Wafm)\subset \Wafm$.
\item $\ppiP(x t_\mu) = \ppiP(x) \ppiP(t_\mu)$.
\end{enumerate}
\end{prop}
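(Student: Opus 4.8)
The plan is to prove the four assertions of Proposition~\ref{P:piP} by reducing each to the defining property of $\ppiP$ from Proposition~\ref{P:factor}, namely that $\ppiP(x)$ is the unique factor in $\WUPaf$ obtained by writing $x = \ppiP(x)\,w_2$ with $w_2\in\WPaf$ and using uniqueness. For part~(1): if $v\in\WPaf$ and $x=x_1x_2$ is the factorization with $x_1\in\WUPaf$, $x_2\in\WPaf$, then $xv = x_1(x_2v)$ with $x_2v\in\WPaf$ (since $\WPaf$ is a subgroup), so by uniqueness $\ppiP(xv)=x_1=\ppiP(x)$. This is immediate.

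For part~(2): if $x\in\W$, then Proposition~\ref{P:factor} already tells us $\ppiP(x)=x_1\in\WUP$, the minimum-length representative of $x\WP$; so I only need the containment $\WUP\subset\WUPaf$. This should follow by checking the defining condition in~\eqref{E:PaffineWpos}: for $w\in\WUP\subset\W$, I need $w\beta>0$ for all $\beta\in\PhiafpP=\PhiP^+\cup(\Z_{>0}\delta+\PhiP)$. For $\beta\in\PhiP^+$ this is exactly the characterization of $\WUP$ (minimal coset representatives send $\PhiP^+$ to $\Phi^+$). For $\beta=n\delta+\gamma$ with $n>0$, $\gamma\in\PhiP$, since $w\in\W$ fixes $\delta$, we get $w\beta=n\delta+w\gamma$, which lies in $\Phi^++\Z_{>0}\delta\subset\Phiafp$ regardless of the sign of $w\gamma$. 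Hence $w\in\WUPaf$.

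For part~(3): I want $\ppiP(\Wafm)\subset\Wafm$, where $\Wafm$ is the set of minimal-length representatives of $\Waf/\W$. The cleanest route is via roots: $x\in\Wafm$ iff $x\alpha>0$ for all $\alpha\in\Phi^+$ (i.e.\ $x$ sends the finite positive roots to positive affine roots). Given $x\in\Wafm$, write $x=\ppiP(x)x_2$ with $x_2\in\WPaf$. I need $\ppiP(x)\alpha>0$ for all $\alpha\in\Phi^+$. Since $x_2\in\WPaf$ permutes $\PhiafpP$ and more importantly acts within the root subsystem attached to the components of $J$, one shows $x_2$ sends $\Phi^+$ into $\Phiafp$; combined with $x\Phi^+\subset\Phiafp$ and a sign-tracking argument this forces $\ppiP(x)\Phi^+\subset\Phiafp$. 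Alternatively, and perhaps more robustly, I would use length additivity: from $x=\ppiP(x)x_2$ being length-additive (Proposition~\ref{P:factor}) and $x$ being $\W$-minimal, deduce $\ppiP(x)$ is $\W$-minimal by a standard exchange-condition argument. I expect part~(3) to be the most delicate point, since it mixes the two different parabolic quotients ($\Waf/\W$ and the $\WPaf$-factorization) and requires care with the extra affine nodes $0_m$ in $\IP^\af$.

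For part~(4): this is the multiplicativity $\ppiP(xt_\mu)=\ppiP(x)\ppiP(t_\mu)$ for $\mu\in\Qv$. The idea is to factor $t_\mu = \ppiP(t_\mu)\,u$ with $u\in\WPaf$, so $xt_\mu = x\,\ppiP(t_\mu)\,u$; by part~(1) it suffices to show $\ppiP(x\,\ppiP(t_\mu)) = \ppiP(x)\,\ppiP(t_\mu)$, i.e.\ that right-multiplication by $\ppiP(t_\mu)\in\WUPaf$ is compatible with the projection. For this I would check that $\ppiP(t_\mu)$, being in $\WUPaf$, satisfies the property that $\ppiP(x)\ppiP(t_\mu)$ is again in $\WUPaf$ — concretely, writing $x=x_1x_2$ with $x_1=\ppiP(x)$, one has $x\,\ppiP(t_\mu)=x_1\,(x_2\,\ppiP(t_\mu))$, and one must push $x_2\in\WPaf$ past $\ppiP(t_\mu)$; since $\ppiP(t_\mu)$ is $\PhiafpP$-dominant and $x_2$ lies in the reflection subgroup generated by $\PhiafpP$, a further factorization $x_2\,\ppiP(t_\mu) = (\text{element of }\WUPaf)\cdot(\text{element of }\WPaf)$ must be analyzed, and one shows the $\WUPaf$-part is exactly $\ppiP(t_\mu)$ by minimality/uniqueness. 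Throughout, I would lean on the translation formula~\eqref{E:onlevelzero} and Lemma~\ref{L:affstab} to control how $t_\mu$ interacts with the stabilizer, and invoke \cite[Proposition 10.8]{LS} as the template; the main obstacle, as noted, is keeping the bookkeeping straight between the genuine affine structure and the "affinization of the stabilizer" $\WPaf$ with its auxiliary nodes $0_m$.
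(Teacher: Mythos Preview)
The paper does not prove Proposition~\ref{P:piP} at all: it is stated with a citation to \cite[Proposition~10.8]{LS} and \cite{P}, and the proof is entirely deferred to those references. So there is no in-paper argument to compare against; what follows is an assessment of your sketch on its own terms.

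Parts (1) and (2) are fine and essentially complete as written.

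Part (3) has a genuine gap in both alternatives you offer. The claim that $x_2\in\WPaf$ sends $\Phi^+$ into $\Phiafp$ is false already for $x_2=r_i$ with $i\in J$, which sends $\alpha_i\in\Phi^+$ to $-\alpha_i$. Your fallback via length additivity is also unjustified: $\WPaf$ is not a parabolic subgroup of $\Waf$ but only a reflection subgroup (the generators $r_{0_m}=r_{\theta_m}t_{-\theta_m^\vee}$ are not simple reflections of $\Waf$), so the standard Coxeter-theoretic statement that $w=w_1w_2$ with $w_1\in W^J$, $w_2\in W_J$ implies $\ell(w)=\ell(w_1)+\ell(w_2)$ does not apply, and Proposition~\ref{P:factor} asserts only uniqueness, not length additivity. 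A correct argument must separate $\alpha\in\PhiP^+$ (handled by $\ppiP(x)\in\WUPaf$) from $\alpha\in\Phi^+\setminus\PhiP^+$, and for the latter actually use the explicit description \eqref{E:piformula} of $\ppiP$ together with the antidominance of the translation part of $x\in\Wafm$; this is presumably how \cite{LS} proceeds.

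Part (4) has the right shape but the two load-bearing claims are not verified. You need (a) that conjugation by $\ppiP(t_\mu)$ preserves $\WPaf$, and (b) that $x_1\ppiP(t_\mu)\in\WUPaf$ whenever $x_1\in\WUPaf$. Both are true---for (a) one checks that $\ppiP(t_\mu)=z_\mu t_\nu$ with $z_\mu\in\WP$ normalizes $\WP\ltimes\QvP$ since $\nu-w\nu\in\QvP$ for $w\in\WP$; for (b) one checks that $\ppiP(t_\mu)$ permutes $\PhiafpP$ because it preserves $\Z\delta+\PhiP$ and lies in $\WUPaf$---but you should supply these computations rather than gesture at them with ``one shows.''
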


We shall employ the explicit description of $\ppiP$ in \cite[Lemma 10.7]{LS}.
The element $\mu\in \Qv$ can be written uniquely in the form
\begin{align}\label{E:corootlatticeform}
  \mu = \sum_{i\in \I\setminus\IP} c_i \omv{i} -\phiP(\mu) - \sum_{m=1}^k \omv{j_m}\,,
\end{align}
where $\phiP(\mu) \in \QvP$ and each $j_m\in I_m$ is a cominuscule node.
The element $\mu$ is first separated into the part in $\XvP$ and the part not in it,
and then one considers the projection of the part in $\XvP$
to $\XvP/\QvP$, takes a canonical lift (the last sum).
Then $\phiP(\mu)\in \QvP$ is the correction term.
We write $z_\mu = \prod_{m=1}^k v_{j_m}^{I_m}$
where $v_{j_m}\in \W_{I_m}\subset\WP$ is defined in \eqref{E:v}.
Then for $w\in \W$ and $\mu\in\Qv$ we have
\begin{align} \label{E:piformula}
  \ppiP(w t_\mu) = \ppiP(w) \ppiP(t_\mu) = \ppiP(w) z_\mu t_{\mu+\phiP(\mu)}.
\end{align}

\begin{remark}\label{R:zhom}
By Proposition \ref{P:piP} the map
\begin{equation}\label{E:zmap}
\begin{split}
  \Qv &\to \Aut(\IP^\af) \subset \WP \\
  \mu&\mapsto z_\mu 
\end{split}
\end{equation}
is a group homomorphism. 
\end{remark}

Denote by $\SigmaP\subset \Aut(\IP^\af)\subset \WP$ 
the image of the homomorphism \eqref{E:zmap}:
\begin{align}\label{E:Sigma}
\SigmaP = \{ z\in \WP \mid z=z_\mu\text{ for some $\mu\in\Qv$} \}.
\end{align}

\subsection{$J$-adjusted elements}
We say that $\mu\in \Qv$ is {\em $J$-adjusted} if $\phiP(\mu)=0$ or equivalently
\begin{align}\label{E:adjusted}
  \ppiP(t_\mu) = z_\mu t_\mu.
\end{align}

This notion gives a nice parametrization of the set $\WUPaf$.

\begin{lem} \label{L:WPaf} Let $w\in \WUP$, $z\in \WP$, and $\mu\in \Qv$.
Then $w z t_\mu\in \WUPaf$ if and only if $\mu$ is $J$-adjusted and $z=z_\mu$.
In particular every element of $\WUPaf$ can be uniquely written
as $w \ppiP(t_\mu)=w z_\mu t_\mu$ where $w\in \WUP$ and $\mu\in \Qv$ is $J$-adjusted.
\end{lem}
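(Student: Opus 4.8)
The plan is to reduce everything to Proposition~\ref{P:factor} and the explicit formula \eqref{E:piformula}. First I would prove the ``if'' direction: suppose $\mu$ is $J$-adjusted and $z=z_\mu$. Then by \eqref{E:adjusted} and \eqref{E:piformula},
\[
  \ppiP(w z_\mu t_\mu) = \ppiP(w)\,\ppiP(t_\mu) = w z_\mu t_\mu,
\]
using that $\ppiP(w)=w$ since $w\in \WUP\subset \WUPaf$ (Proposition~\ref{P:piP}(2)) together with Proposition~\ref{P:piP}(1) applied to absorb $z_\mu\in \SigmaP\subset \WPaf$ into the argument; more directly, $\ppiP(w)=w$ and $\ppiP(t_\mu)=z_\mu t_\mu$, and the multiplicativity $\ppiP(xy)=\ppiP(x)\ppiP(y)$ for $y$ in the relevant shape follows from \eqref{E:piformula}. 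Hence $w z_\mu t_\mu$ is fixed by $\ppiP$, so it lies in $\WUPaf$.

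Next the ``only if'' direction: suppose $w z t_\mu\in \WUPaf$, so $\ppiP(w z t_\mu)=w z t_\mu$. Write $wzt_\mu = w' t_\nu$ with $w' = wz\in\W$ and $\nu=\mu$ (noting $z\in\WP$ so $wz$ need not be in $\WUP$, but $(wz)t_\mu$ is still of the form $W\ltimes\Qv$). Apply \eqref{E:piformula}: $\ppiP(wz\, t_\mu) = \ppiP(wz)\, z_\mu\, t_{\mu+\phiP(\mu)}$. Comparing the translation parts of this with $wzt_\mu$ forces $\phiP(\mu)=0$, i.e.\ $\mu$ is $J$-adjusted; then comparing the finite (Weyl group) parts gives $\ppiP(wz)\, z_\mu = wz$. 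Since $w\in\WUP$ and $z\in\WP$, the coset decomposition of $wz$ in $\W/\WP$ has minimal representative $w$, so $\ppiP(wz)=w$ by Proposition~\ref{P:factor}; hence $w z_\mu = wz$, i.e.\ $z=z_\mu$. The final sentence of the lemma is then immediate: existence of the representation is Proposition~\ref{P:factor} combined with what we just proved (take $w=\ppiP(x)\in\WUP$ for $x\in\WUPaf$, and read off $\mu$ from the translation part), and uniqueness of $(w,\mu)$ follows because $w$ is determined as a coset representative and $\mu$ as the translation part of $wzt_\mu$, with $z=z_\mu$ then forced.

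The main obstacle I anticipate is bookkeeping the identification \eqref{E:piformula} carefully when the left factor is not already minimal-length: one must make sure that the decomposition $\ppiP(x t_\mu)=\ppiP(x)\ppiP(t_\mu)$ from Proposition~\ref{P:piP}(4) is being used with $x=wz$ rather than $x=w$, and that $\ppiP(wz)$ genuinely equals $w$. This last point uses the non-obvious fact (Proposition~\ref{P:factor}) that on the finite Weyl group $\ppiP$ restricts to the standard projection $\W\to\WUP$; it is exactly here that the hypothesis $w\in\WUP$ is consumed. Once that is in place, the translation-part and Weyl-part comparisons are routine, and no further input about $J$-adjustedness beyond its definition \eqref{E:adjusted} is needed.
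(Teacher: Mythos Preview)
Your argument is correct and coincides with the paper's own proof, which is the one-line computation $wzt_\mu=\ppiP(wzt_\mu)=\ppiP(wz)\ppiP(t_\mu)=w\ppiP(t_\mu)$, followed by the same comparison of translation and finite parts that you spell out. The only cosmetic point is that in the displayed equation of your ``if'' direction the middle term should really read $\ppiP(wz_\mu)\,\ppiP(t_\mu)$ before invoking Proposition~\ref{P:piP}(1) to reduce $\ppiP(wz_\mu)$ to $w$; you clearly have this in mind, so the argument is fine.
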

\begin{proof} 
$w z t_\mu\in \WUPaf$ if and only if $wz t_\mu = \ppiP(wzt_\mu)=\ppiP(wz)\ppiP(t_\mu)=w\ppiP(t_\mu)$
from which the result follows.
\end{proof}

\begin{lem} \label{L:adjusted} 
Let $\mu\in \Qv$ and consider \eqref{E:corootlatticeform}.
The following are equivalent:
\begin{enumerate}
\item[(1)] $\mu$ is $J$-adjusted.
\item[(2)] For every component $I_m$ of $\IP$, either 
\begin{enumerate}
\item[(a)] $\pair{\mu}{\alpha_i}=0$ for all $i\in I_m$ (that is, $j_m=0_m\in I_m^\af$), or 
\item[(b)] there is a unique $j\in I_m$ such that
$\pair{\mu}{\alpha_{j}}\ne0$, and in this case $j=j_m$ and
$\pair{\mu}{\alpha_{j_m}}=-1$. 
\end{enumerate}
\item[(3)] $\pair{\mu}{\alpha} \in \{0,-1\}$ for all $\alpha\in \PhiP^+$.
\end{enumerate}
\end{lem}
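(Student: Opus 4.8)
The plan is to prove the cyclic chain of implications (1)$\Rightarrow$(2)$\Rightarrow$(3)$\Rightarrow$(1), working component by component on $\IP = \bigsqcup_m I_m$, since both the decomposition \eqref{E:corootlatticeform} and the conditions in (2), (3) are local to the components. Throughout I would fix $\mu\in\Qv$ and write it in the canonical form $\mu = \sum_{i\in \I\setminus\IP} c_i \omv{i} -\phiP(\mu) - \sum_{m=1}^k \omv{j_m}$, recalling that $\phiP(\mu)\in\QvP$ is characterized as the correction term making the $\XvP$-part of $\mu$ land in $\QvP$; concretely, the projection of $\mu$ to $\XvP/\QvP$ (component $I_m$) is $-\omv{j_m}+\QvP$, and $\mu$ is $J$-adjusted iff $\phiP(\mu)=0$, i.e. iff the $\XvP$-part of $\mu$ equals $-\sum_m \omv{j_m}$ exactly.

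For (1)$\Rightarrow$(2): assume $\phiP(\mu)=0$, so within component $I_m$ the relevant part of $\mu$ is exactly $-\omv{j_m}$ (with the convention $\omv{0_m}=0$). If $j_m = 0_m$ then this part is $0$, and since $\pair{\mu}{\alpha_i}$ for $i\in I_m$ only sees the $\Xv_{I_m}$-component, we get $\pair{\mu}{\alpha_i}=0$ for all $i\in I_m$, which is case (a). If $j_m\in I_m$, then by Lemma~\ref{L:cominone} applied inside the root subsystem $I_m$ (where $j_m$ is cominuscule, hence special, for $(I_m)_\af$), the fundamental coweight $\omv{j_m}$ pairs to $1$ with $\alpha_{j_m}$ and to $0$ with all other simple roots of $I_m$; thus $\pair{\mu}{\alpha_{j_m}}=-1$ and $\pair{\mu}{\alpha_j}=0$ for $j\in I_m\setminus\{j_m\}$, which is case (b) with uniqueness of the nonzero pairing. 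For (2)$\Rightarrow$(3): in case (a) every positive root $\alpha\in\PhiP^+$ supported on $I_m$ has $\pair{\mu}{\alpha}=0$; in case (b), write $\alpha = \sum_{j\in I_m} d_j \alpha_j$ with $d_j\ge 0$, so $\pair{\mu}{\alpha} = \sum_j d_j \pair{\mu}{\alpha_j} = -d_{j_m}$, and since $j_m$ is cominuscule in $I_m$, Lemma~\ref{L:cominone} (applied to $I_m$) gives $d_{j_m}\in\{0,1\}$ for every positive root $\alpha$ of $I_m$; hence $\pair{\mu}{\alpha}\in\{0,-1\}$. Summing over components handles all of $\PhiP^+$.

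For (3)$\Rightarrow$(1): this is the contrapositive direction that requires the most care, so I would argue it directly. Suppose $\pair{\mu}{\alpha}\in\{0,-1\}$ for all $\alpha\in\PhiP^+$. In particular $\pair{\mu}{\alpha_i}\in\{0,-1\}$ for all $i\in\IP$. Fix a component $I_m$. If all these pairings vanish on $I_m$, then the $\Xv_{I_m}$-part of $\mu$ is $0$, forcing $j_m=0_m$ and $\phiP(\mu)|_{I_m}=0$. Otherwise, let $S = \{j\in I_m : \pair{\mu}{\alpha_j} = -1\}$, assumed nonempty. I claim $|S|=1$: if $j,j'\in S$ are adjacent in the Dynkin diagram of $I_m$, then $\alpha_j+\alpha_{j'}$ (or the relevant short sum along a path) is a positive root of $I_m$ on which $\mu$ pairs to $-2$ — this uses that any two nodes of a connected Dynkin diagram are joined by a chain, and that $\sum$ of consecutive simple roots along such a chain is a root — contradicting (3). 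So $S$ is an antichain in the tree $I_m$; but even a disconnected pair $j,j'\in S$ is ruled out because, the diagram being connected, there is a path from $j$ to $j'$ through $I_m$, and the sum of all simple roots on that path is a positive root pairing to $\le -2$ with $\mu$ (the interior nodes pair to $\le 0$, the two endpoints contribute $-1$ each). Hence $|S|=1$, say $S=\{j_m'\}$ with $\pair{\mu}{\alpha_{j_m'}}=-1$ and $\pair{\mu}{\alpha_j}=0$ for $j\in I_m\setminus\{j_m'\}$. This shows the $\Xv_{I_m}$-part of $\mu$ is $-\omv{j_m'}$; in particular $j_m'$ is forced to be cominuscule (by the (2)$\Leftrightarrow$(3) analysis, or directly: $-\omv{j_m'} + \QvP$ is the image in $\XvP/\QvP$, and for the uniqueness statement of Lemma~\ref{L:adjusted}'s setup we need $j_m'=j_m$), so $j_m'=j_m$ and the $\XvP$-part of $\mu$ in component $I_m$ equals $-\omv{j_m}$, i.e. $\phiP(\mu)|_{I_m}=0$. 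Doing this for all $m$ gives $\phiP(\mu)=0$, which is (1).

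The main obstacle is the step (3)$\Rightarrow$(1), specifically showing that a set of simple roots of a connected Dynkin diagram on each of which $\mu$ pairs to $-1$ must be a singleton: one needs the combinatorial fact that for any pair of nodes in a connected diagram, some positive root has coefficient $\ge 1$ at both (take the sum of simple roots along a connecting path), so that $\mu$ would pair to $\le -2$ with it. One also needs that a nonzero value of the pairing can only occur at a cominuscule node of the component, which is exactly where Lemma~\ref{L:cominone} enters (the coefficient-$1$ condition on $\theta_m$), tying the argument back to the structure theory already established. All other implications are routine expansions of roots in the simple-root basis combined with Lemma~\ref{L:cominone}.
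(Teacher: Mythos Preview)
Your argument is largely sound and takes a somewhat different route from the paper. The paper obtains (1)$\Leftrightarrow$(2) by citing \cite[Lemma~10.7]{LS} and then proves (2)$\Leftrightarrow$(3); for (3)$\Rightarrow$(2) it simply evaluates (3) on $\theta_m$ and on each $\alpha_i$ ($i\in I_m$): writing $\theta_m=\sum_{i\in I_m}a_i^{(m)}\alpha_i$ with all $a_i^{(m)}\ge 1$, the constraint $\sum_i a_i^{(m)}\pair{\mu}{\alpha_i}\in\{0,-1\}$ together with $\pair{\mu}{\alpha_i}\in\{0,-1\}$ forces at most one nonzero term, and if $\pair{\mu}{\alpha_j}=-1$ then $a_j^{(m)}=1$, so $j$ is cominuscule and hence $j=j_m$. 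You instead unpack \eqref{E:corootlatticeform} to get (1)$\Rightarrow$(2) directly (avoiding the external citation), and for (3)$\Rightarrow$(1) you use the pleasant fact that the sum of simple roots along any path in the tree $I_m$ is a positive root to force $|S|\le1$.

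There is, however, one genuine gap in your (3)$\Rightarrow$(1). Having established $S=\{j_m'\}$, you still need that $j_m'$ is \emph{cominuscule} in $I_m$: otherwise the projection of $\mu$ to $\Xv_{I_m}$, which you have correctly identified as $-\omv{j_m'}$, is only congruent to $-\omv{j_m}$ modulo $\Qv_{I_m}$, and $\phiP(\mu)|_{I_m}=\omv{j_m}-\omv{j_m'}$ need not vanish. Your parenthetical ``$j_m'$ is forced to be cominuscule'' is an assertion, not an argument (the appeal to ``the (2)$\Leftrightarrow$(3) analysis'' is circular, and the coset equality $-\omv{j_m'}+\QvP=-\omv{j_m}+\QvP$ does not by itself give $j_m'=j_m$). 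The fix is immediate---apply (3) to $\theta_m$ to get $\pair{\mu}{\theta_m}=-a_{j_m'}^{(m)}\in\{0,-1\}$, whence $a_{j_m'}^{(m)}=1$---and you essentially acknowledge this in your closing remarks about Lemma~\ref{L:cominone} and $\theta_m$. But notice that this single evaluation on $\theta_m$ already yields $|S|\le1$ as well, making your path argument superfluous: the paper's approach is shorter precisely because one application of (3) to $\theta_m$ replaces both your connectivity argument and closes your gap in one stroke.
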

\begin{proof} Given \cite[Lemma 10.7]{LS}, (1) and (2) are equivalent. Suppose (2) holds.
Let $\alpha\in \PhiP^+$. Then $\alpha$ is a positive root in the
subrootsystem $\Phi_m^+$ of $\Phi$ for some component $I_m$ of $\IP$. 
Let $\alpha = \sum_{i\in I_m} b_i \alpha_i$. Since $j_m\in I_m$
is cominuscule, $\pair{\omv{j_m}}{\theta_m}=1$
where $\theta_m\in \Phi_m^+$ is the highest root.
Therefore $b_{j_m} \in\{0,1\}$. Since $b_i=0$ for $i\in I_m\setminus \{j_m\}$, (3) follows.

Conversely, suppose (3) holds. Let $I_m$ be a component of $\IP$.
Applying (3) to $\theta_m$ and to each of the $\alpha_i$ for $i\in I_m$, we see that (2) must hold.
\end{proof}

\begin{lem} \label{L:Pinvariant} For $\mu\in \Qv$,
$\mu$ is $\WP$-invariant if and only if $\mu$ is $J$-adjusted and 
$z_\mu=\id$.
\end{lem}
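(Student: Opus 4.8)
The plan is to prove both implications by comparing two descriptions of the relevant conditions in terms of pairings $\pair{\mu}{\alpha}$ for $\alpha\in\PhiP$. First note that $\mu$ is $\WP$-invariant if and only if $r_i\mu=\mu$ for all $i\in\IP$, i.e.\ $\pair{\mu}{\alpha_i}=0$ for all $i\in\IP$, which by taking $\Z$-linear combinations is equivalent to $\pair{\mu}{\alpha}=0$ for all $\alpha\in\PhiP^+$. So the left-hand condition of the lemma says precisely that $\pair{\mu}{\alpha}=0$ for every $\alpha\in\PhiP^+$.

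For the forward direction, assume $\mu$ is $\WP$-invariant, so $\pair{\mu}{\alpha}=0$ for all $\alpha\in\PhiP^+$. In particular $\pair{\mu}{\alpha}\in\{0,-1\}$ for all $\alpha\in\PhiP^+$, so by Lemma~\ref{L:adjusted}(3)$\Rightarrow$(1) the element $\mu$ is $J$-adjusted. Moreover, applying the equivalence (1)$\Leftrightarrow$(2) of Lemma~\ref{L:adjusted}: for each component $I_m$ of $\IP$ we have $\pair{\mu}{\alpha_i}=0$ for all $i\in I_m$, so we are in case (2a), i.e.\ $j_m=0_m$. Then $z_\mu=\prod_{m=1}^k v_{j_m}^{I_m}=\prod_{m=1}^k v_{0_m}^{I_m}=\id$, since $v_0=\id$ by \eqref{E:v}. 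This gives both assertions on the right.

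Conversely, assume $\mu$ is $J$-adjusted and $z_\mu=\id$. Since $z_\mu=\prod_{m=1}^k v_{j_m}^{I_m}=\id$ and this is a length-additive product in $\WP=\prod_m W_{I_m}$, each factor satisfies $v_{j_m}^{I_m}=\id$ in $W_{I_m}$. By Lemma~\ref{L:comin} applied inside the component $I_m$, $\ell(v_{j_m}^{I_m})=\pair{(\omega_{j_m}^\vee)^{I_m}}{2\rho_m}$, which is $0$ only when $j_m=0_m$ (for $j\in I_m$ a genuine node the coweight $\omega_j^\vee$ pairs positively with $2\rho_m$, as it is a nonzero dominant coweight and $\rho_m$ is regular dominant). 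Hence $j_m=0_m$ for every $m$, so by Lemma~\ref{L:adjusted}(2a) we have $\pair{\mu}{\alpha_i}=0$ for all $i\in I_m$ and all $m$, i.e.\ $\pair{\mu}{\alpha_i}=0$ for all $i\in\IP$. By the opening remark this says exactly that $\mu$ is $\WP$-invariant.

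The only mildly delicate point is the identification ``$v_{j_m}^{I_m}=\id \iff j_m=0_m$'': one must know that the cominuscule-node elements $v_j$ attached to genuine nodes $j\ne 0$ are nontrivial, which is immediate from Lemma~\ref{L:comin} since $\omega_j^\vee$ is then a nonzero dominant coweight. Everything else is a direct unwinding of Lemma~\ref{L:adjusted} and the definition of $z_\mu$; no further computation is needed.
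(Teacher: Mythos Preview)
Your proof is correct. It differs slightly from the paper's argument in the choice of intermediate characterization: the paper observes directly that $\mu$ is $\WP$-invariant iff no fundamental coweight $\omv{i}$ with $i\in\IP$ appears in $\mu$, and then reads off from the decomposition \eqref{E:corootlatticeform} that this is equivalent to $\phiP(\mu)=0$ and $j_m=0_m$ for all $m$, which it restates as $\ppiP(t_\mu)=t_\mu$ via \eqref{E:piformula}. You instead route through the pairing criterion of Lemma~\ref{L:adjusted} and invoke Lemma~\ref{L:comin} to show that $v_{j_m}^{I_m}=\id$ forces $j_m=0_m$. Both arguments unwind the same underlying data; yours is more explicit about why $z_\mu=\id$ pins down each $j_m$ individually (via the direct-product structure of $\WP$ and the positivity of $\pair{\omv{j}}{2\rho_m}$), while the paper's is a one-line appeal to the coweight decomposition.
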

\begin{proof} The first condition holds if and only if no fundamental coweight
$\omv{i}$ occurs in $\mu$ for $i\in \IP$, which for the expression
\eqref{E:corootlatticeform} means that $\phiP(\mu)=0$ and $j_m=0_m$ for all $m$.
But this holds if and only if $\ppiP(t_\mu)=t_\mu$
by \eqref{E:piformula}.
\end{proof}

\begin{lem}\label{L:adjustedroot} Let $\mu\in \Qv$ be $J$-adjusted. Then
\begin{align}\label{E:lengthadjusted}
  \ell(z_\mu) = - \pair{\mu}{2\rhoP}.
\end{align}
\end{lem}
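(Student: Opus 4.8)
The plan is to reduce the claim to the parabolic subsystem $\PhiP$ and then use the factorization $z_\mu = \prod_{m=1}^k v_{j_m}^{I_m}$ together with the length formula \eqref{E:autolength} from Lemma~\ref{L:comin}, applied inside each component $I_m$. Since $\rhoP = \sum_m \rho_{I_m}$ (where $\rho_{I_m}$ is the half-sum of positive roots of $\Phi_m^+$), the pairing $\pair{\mu}{2\rhoP} = \sum_{m=1}^k \pair{\mu}{2\rho_{I_m}}$ splits over components; likewise $\ell(z_\mu) = \sum_{m=1}^k \ell(v_{j_m}^{I_m})$ because the $W_{I_m}$ live in commuting parabolic subgroups of $\WP$. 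So it suffices to prove, for each fixed component $I_m$, that $\ell(v_{j_m}^{I_m}) = -\pair{\mu}{2\rho_{I_m}}$.

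First I would fix a component $I_m$ and split into the two cases of Lemma~\ref{L:adjusted}(2). In case (a), $\pair{\mu}{\alpha_i}=0$ for all $i\in I_m$, so $j_m = 0_m$, hence $v_{j_m}^{I_m} = v_{0_m} = \id$ and $\ell = 0$; and the right-hand side $-\pair{\mu}{2\rho_{I_m}}$ vanishes because $2\rho_{I_m}$ is a $\Z$-combination of the $\alpha_i$, $i\in I_m$. In case (b), there is a unique $j\in I_m$ with $\pair{\mu}{\alpha_j}\neq 0$, namely $j = j_m$, and $\pair{\mu}{\alpha_{j_m}} = -1$. Then $\pair{\mu}{2\rho_{I_m}} = \sum_{\beta\in\Phi_m^+}\pair{\mu}{\beta}$, and writing each positive root $\beta = \sum_{i\in I_m} b_i\alpha_i$ we get $\pair{\mu}{\beta} = b_{j_m}\pair{\mu}{\alpha_{j_m}} = -b_{j_m}$. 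Since $j_m$ is cominuscule in $I_m$, every positive root $\beta\in\Phi_m^+$ has $b_{j_m}\in\{0,1\}$, so $-\pair{\mu}{2\rho_{I_m}} = \#\{\beta\in\Phi_m^+ : b_{j_m} = 1\}$, the number of positive roots of $\Phi_m^+$ containing $\alpha_{j_m}$. By Lemma~\ref{L:comin} applied within $I_m$ (with $i = j_m\in\Is(I_m^\af)$), this count equals $\pair{\omega^\vee_{j_m}}{2\rho_{I_m}} = \ell(v_{j_m}^{I_m})$. This matches and completes the component computation.

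Assembling the components gives $\ell(z_\mu) = \sum_m \ell(v_{j_m}^{I_m}) = \sum_m(-\pair{\mu}{2\rho_{I_m}}) = -\pair{\mu}{2\rhoP}$, which is \eqref{E:lengthadjusted}. The only genuinely delicate point is making sure the per-component length formula is legitimately an instance of Lemma~\ref{L:comin}: one must check that $j_m$ really is a cominuscule (special) node of the affine diagram $I_m^\af$ attached to the finite type $I_m$ — this is exactly the content of the decomposition \eqref{E:corootlatticeform}, where each $j_m$ is chosen cominuscule — and that the length of $v_{j_m}$ as an element of $W_{I_m}$ coincides with its length as an element of $\W$ (true because $W_{I_m}$ is a standard parabolic subgroup, so Bruhat length restricts). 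Both are straightforward given the setup, so the proof is essentially a bookkeeping argument reducing to Lemma~\ref{L:comin}.
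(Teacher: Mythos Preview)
Your proof is correct and follows essentially the same approach as the paper: reduce to each connected component $I_m$ of $J$, use Lemma~\ref{L:adjusted} to see that $-\mu$ pairs with the roots of $I_m$ exactly like the cominuscule fundamental coweight $\omv{j_m}$, and then invoke Lemma~\ref{L:comin} inside each component. The paper's proof is a one-sentence compression of precisely this argument; your version simply spells out the case split and the counting of positive roots containing $\alpha_{j_m}$ explicitly.
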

\begin{proof} The proof reduces to considering each component $I_m$ of $\IP$.
Note that $-\mu$ pairs with roots of $I_m$ like a fundamental cominuscule coweight
by Lemma~\ref{L:adjusted} and the result follows by Lemma~\ref{L:comin}.
\end{proof}

\begin{lem} \label{L:zWP} For every $\mu\in \Qv$ and $v\in \WP$,
$z_{\mu}=z_{v\mu}$.
\end{lem}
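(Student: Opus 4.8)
The plan is to evaluate $\ppiP(t_{v\mu})$ in two different ways and compare. On the one hand, since $v\in\WP\subseteq\W$ we have $t_{v\mu}=v\,t_\mu\,v^{-1}$. The group $\WPaf=\prod_m (W_{I_m})_\af$ contains $\WP=\prod_m W_{I_m}$ as a subgroup, because under the realization of $\WPaf$ inside $\Waf$ (given by $r_i\mapsto r_i$ for $i\in I_m$ and $r_{0_m}=r_{\theta_m}t_{-\theta_m^\vee}$) each finite factor $W_{I_m}$ sits inside its affinization $(W_{I_m})_\af$ as the standard parabolic subgroup omitting the node $0_m$. Hence $v^{-1}\in\WPaf$, so Proposition~\ref{P:piP}(1) gives $\ppiP(v\,t_\mu\,v^{-1})=\ppiP(v\,t_\mu)$; then Proposition~\ref{P:piP}(4) gives $\ppiP(v\,t_\mu)=\ppiP(v)\,\ppiP(t_\mu)$; and since $v\in\WP$, Proposition~\ref{P:factor} (the minimum-length representative of $v\WP=\WP$ is $\id$) gives $\ppiP(v)=\id$. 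Altogether $\ppiP(t_{v\mu})=\ppiP(t_\mu)$.

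On the other hand, the explicit factorization formula \eqref{E:piformula} expands both sides: $z_{v\mu}\,t_{v\mu+\phiP(v\mu)}=z_\mu\,t_{\mu+\phiP(\mu)}$. Here $z_{v\mu},z_\mu\in\WP\subseteq\W$ while $v\mu+\phiP(v\mu)$ and $\mu+\phiP(\mu)$ lie in $\Qv$, so by the uniqueness of the semidirect-product decomposition $\Waf\cong\W\ltimes\Qv$ of \eqref{E:Wafsemi} I can read off $z_{v\mu}=z_\mu$, which is the claim. (As a byproduct one also gets $\phiP(v\mu)=\phiP(\mu)+\mu-v\mu$, consistent with $\mu-v\mu\in\QvP$.)

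The argument is essentially mechanical once one feeds $t_{v\mu}=v\,t_\mu\,v^{-1}$ through $\ppiP$; the only point demanding a moment of care — and the closest thing to an obstacle — is verifying that $\WP$ is genuinely a subgroup of $\WPaf$ as realized inside $\Waf$, which is what licenses the use of Proposition~\ref{P:piP}(1) with right multiplication by $v^{-1}$. One could alternatively invoke Remark~\ref{R:zhom}: since $\mu\mapsto z_\mu$ is a homomorphism and $v\mu-\mu\in\QvP$ for $v\in\WP$, it would suffice to show $z_\eta=\id$ for $\eta\in\QvP$, which again follows from \eqref{E:piformula} together with Lemma~\ref{L:adjusted}; but the route through $\ppiP(t_{v\mu})$ above avoids even that reduction.
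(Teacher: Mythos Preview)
Your proof is correct and follows essentially the same route as the paper: both compute $\ppiP(t_{v\mu})=\ppiP(v\,t_\mu\,v^{-1})=\ppiP(t_\mu)$ and then compare the $\W$-components of the factorization \eqref{E:piformula}. Your version is simply more explicit about the three ingredients (Proposition~\ref{P:piP}(1), Proposition~\ref{P:piP}(4), and $\ppiP(v)=\id$) that the paper compresses into a single displayed equality.
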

\begin{proof} 
Using~\eqref{E:piformula} with $w = 1$, we have
$z_{\mu} t_{\mu + \phi_{J}(\mu)} = \ppiP(t_{\mu}) = \ppiP(v t_\mu v^{-1}) = \ppiP(t_{v\mu})$,
which implies the result.
\end{proof}

\begin{lem}\label{L:lalpha} Given $\alpha\in \Phi^+$ and $x=wt_\mu\in \Waf$
with $w\in \W$ and $\mu\in \Qv$, let $\ell_\alpha(x)$ be the number of
roots $\pm \alpha + n\delta\in \Phiafp$ with $n\in\Z$,
which $x$ sends to $\Phiafm$. Then
\begin{align} \label{E:lalpha}
  \ell_\alpha(x) = | \chi(w\alpha\in \Phi^-) + \pair{\mu}{\al}|.
\end{align}
Here $\chi(S)=1$ if $S$ is true and $\chi(S)=0$ if $S$ is false.
\end{lem}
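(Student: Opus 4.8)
The plan is to compute $\ell_\alpha(x)$ directly by tracking the effect of $x = w t_\mu$ on the family of affine roots $\{\pm\alpha + n\delta : n \in \Z\} \cap \Phiafp$. First I would enumerate this family: since $\alpha \in \Phi^+$, the positive affine roots of this shape are $\alpha + n\delta$ for $n \ge 0$ together with $-\alpha + n\delta$ for $n \ge 1$. Next, using the semidirect product description \eqref{E:Wafsemi} and \eqref{E:onlevelzero}, I would record how $x$ acts on such a root: $w t_\mu (\beta + n\delta) = w\beta + (n - \pair{\mu}{\beta})\delta$ for $\beta \in \Phi$ (here $\delta$ is $\Waf$-fixed). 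So $x(\alpha + n\delta) = w\alpha + (n - \pair{\mu}{\alpha})\delta$ and $x(-\alpha + n\delta) = -w\alpha + (n + \pair{\mu}{\alpha})\delta$. Such an image lies in $\Phiafm$ precisely according to the sign of its $\delta$-coefficient, with the finite part $\pm w\alpha$ breaking the tie when the coefficient is $0$; this is exactly the decomposition $\Phiafp = \Phi^+ \sqcup (\Phi + \Z_{>0}\delta)$ recorded in the excerpt.

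The core of the argument is then a short counting exercise, which I would organize by the sign of $w\alpha$. Suppose first $w\alpha \in \Phi^+$. Then $\chi(w\alpha \in \Phi^-) = 0$, and I claim the roots sent to $\Phiafm$ are exactly: among $\alpha + n\delta$ ($n\ge 0$), those with $n < \pair{\mu}{\alpha}$ (there are $\max(\pair{\mu}{\alpha},0)$ of these, since $n - \pair{\mu}{\alpha} < 0$); and among $-\alpha + n\delta$ ($n \ge 1$), those with $n + \pair{\mu}{\alpha} < 0$, i.e.\ $n < -\pair{\mu}{\alpha}$ (there are $\max(-\pair{\mu}{\alpha},0)$ of these). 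Exactly one of these two quantities is nonzero, so the total is $|\pair{\mu}{\alpha}|$, matching $|\,0 + \pair{\mu}{\alpha}\,|$. Now suppose $w\alpha \in \Phi^-$, so $\chi = 1$. Here $\alpha + n\delta \mapsto w\alpha + (n - \pair{\mu}{\alpha})\delta$ is negative iff $n - \pair{\mu}{\alpha} \le 0$ (the $n = \pair{\mu}{\alpha}$ case is negative because $w\alpha \in \Phi^-$), giving $\max(\pair{\mu}{\alpha} + 1, 0)$ such $n \ge 0$; and $-\alpha + n\delta \mapsto -w\alpha + (n + \pair{\mu}{\alpha})\delta$ with $-w\alpha \in \Phi^+$ is negative iff $n + \pair{\mu}{\alpha} < 0$, giving $\max(-\pair{\mu}{\alpha} - 1, 0)$ such $n \ge 1$. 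Again exactly one term survives, and the sum is $|\pair{\mu}{\alpha} + 1| = |\chi(w\alpha \in \Phi^-) + \pair{\mu}{\alpha}|$, as desired.

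I expect the main obstacle to be purely bookkeeping: getting the boundary cases right, namely which root with vanishing $\delta$-coefficient counts as positive or negative (controlled by the sign of $w\alpha$), and correctly handling the index ranges $n \ge 0$ versus $n \ge 1$ for the two sign families — these off-by-one effects are precisely what produces the shift by $\chi(w\alpha \in \Phi^-)$ rather than a naive $|\pair{\mu}{\alpha}|$. A clean way to avoid case-chasing is to note that in all situations the count equals $\#\{n \in \Z : \text{the image of the } n\text{-indexed root is negative}\}$ over a half-line of integers, which is $\max(c, 0)$ or $\max(-c,0)$ for the relevant integer threshold $c$; summing the two contributions always collapses to $|c|$ with $c = \chi(w\alpha \in \Phi^-) + \pair{\mu}{\alpha}$, since the two half-lines are complementary. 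This also makes transparent why the absolute value (rather than a signed quantity) is the right answer.
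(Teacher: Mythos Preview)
Your proposal is correct and follows exactly the same approach as the paper, which simply records the formula $x(\pm\alpha+n\delta)=\pm w\alpha + (n-\pair{\mu}{\pm\alpha})\delta$ and leaves the counting implicit; you have supplied the routine case analysis in full. One small slip: in Case 1 for the family $-\alpha+n\delta$, the image $-w\alpha + (n+\pair{\mu}{\alpha})\delta$ is negative when $n+\pair{\mu}{\alpha}\le 0$ (not $<0$), since $-w\alpha\in\Phi^-$; fortunately your stated count $\max(-\pair{\mu}{\alpha},0)$ is the one matching the correct $\le$ condition, so the conclusion is unaffected.
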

\begin{proof} 
This follows from $x(\pm\alpha+n\delta)=\pm w\alpha + (n-\pair{\mu}{\pm\alpha})\delta$.
\end{proof}

\begin{lem} \label{L:ellP}
Let $w\in \WUP$, $z\in \WP$, and $\mu\in \Qv$ be such that
$\pair{\mu}{\al}<0$ for all $\al\in\Phi^+\setminus\PhiP^+$ 
and $x=wz t_\mu\in \WUPaf$. Then $\mu$ is $J$-adjusted,
$z=z_\mu$, and
\begin{align}
  \ell(x) = - \pair{\mu}{2\rho-2\rhoP} - \ell(w).
\end{align}
\end{lem}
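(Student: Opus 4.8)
The plan is to combine the parametrization of $\WUPaf$ from Lemma~\ref{L:WPaf} with the length formula via $\ell_\alpha$ from Lemma~\ref{L:lalpha}, summing over a complete set of root directions. First, since $x=wzt_\mu\in\WUPaf$ with $w\in\WUP$ and $z\in\WP$, Lemma~\ref{L:WPaf} immediately gives that $\mu$ is $J$-adjusted and $z=z_\mu$; this disposes of the first two assertions with no further work. It remains to compute $\ell(x)$, and the natural approach is to write $\ell(x)=\sum_\alpha \ell_\alpha(x)$ where $\alpha$ ranges over a set of representatives for the $\{\pm\}\times\Z\delta$-orbits on $\Phiafp$, i.e.\ over $\Phi^+$, and where $\ell_\alpha$ counts the affine roots of the form $\pm\alpha+n\delta\in\Phiafp$ sent to $\Phiafm$ by $x$. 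Every positive affine real root is $\alpha+n\delta$ for a unique $\alpha\in\Phi$ and suitable $n$, so this sum does recover $\ell(x)$.

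Now I would split $\Phi^+=(\Phi^+\setminus\PhiP^+)\sqcup\PhiP^+$ and apply Lemma~\ref{L:lalpha} with the element $x=wz_\mu t_\mu$, whose $\W$-part is $wz_\mu$ and whose translation part is $\mu$, so $\ell_\alpha(x)=|\chi(wz_\mu\alpha\in\Phi^-)+\pair{\mu}{\alpha}|$. For $\alpha\in\Phi^+\setminus\PhiP^+$ the hypothesis $\pair{\mu}{\alpha}<0$ forces $\pair{\mu}{\alpha}\le -1$, so $\chi(\cdots)+\pair{\mu}{\alpha}<0$ and $\ell_\alpha(x)=-\chi(wz_\mu\alpha\in\Phi^-)-\pair{\mu}{\alpha}$; summing, the $\chi$-terms contribute $-\ell(w)$ after noting that $z_\mu\in\WP$ permutes $\PhiP$ and hence $z_\mu$ maps $\Phi^+\setminus\PhiP^+$ into $\Phi^+\setminus\PhiP^+$, so the count of sign changes of $wz_\mu$ on $\Phi^+\setminus\PhiP^+$ equals the count of sign changes of $w$ there, which is all of $\ell(w)$ since $w\in\WUP$. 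The $\pair{\mu}{\cdot}$-terms contribute $-\pair{\mu}{\sum_{\alpha\in\Phi^+\setminus\PhiP^+}\alpha}=-\pair{\mu}{2\rho-2\rhoP}$. For $\alpha\in\PhiP^+$, Lemma~\ref{L:adjusted}(3) gives $\pair{\mu}{\alpha}\in\{0,-1\}$, and since $x\in\WUPaf$ sends $\PhiafpP$ into $\Phiafp$, we get $\ell_\alpha(x)=0$ for each such $\alpha$; alternatively, Lemma~2.6 (the quoted \cite[Lemma~10.1]{LS}) says precisely that $wz_\mu\alpha>0\Rightarrow\pair{\mu}{\alpha}=0$ and $wz_\mu\alpha<0\Rightarrow\pair{\mu}{\alpha}=-1$, which makes $\chi(wz_\mu\alpha\in\Phi^-)+\pair{\mu}{\alpha}=0$ in both cases. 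Adding the two contributions yields $\ell(x)=-\pair{\mu}{2\rho-2\rhoP}-\ell(w)$, as claimed.

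The main obstacle is the bookkeeping in the middle step: one must be careful that the decomposition $\ell(x)=\sum_{\alpha\in\Phi^+}\ell_\alpha(x)$ is legitimate (it is, because the inversion set of $x$ is partitioned by the underlying finite root direction), and one must correctly argue that the sign-change count of $wz_\mu$ restricted to $\Phi^+\setminus\PhiP^+$ equals $\ell(w)$ — this uses both that $z_\mu\in\WP$ stabilizes $\PhiP$ (so it does not move roots in or out of $\Phi^+\setminus\PhiP^+$) and that $w\in\WUP$ has all of its inversions outside $\PhiP^+$. A cleaner alternative, which I would adopt if the direct sign-chasing gets unwieldy, is to note $x=(wz_\mu t_\mu)$ and use $\ell(wz_\mu t_\mu)=\ell(w)+\ell(z_\mu t_\mu)$-type length-additivity coming from $x\in\WUPaf$ together with $z_\mu t_\mu=\ppiP(t_\mu)\in\WUPaf$, then compute $\ell(z_\mu t_\mu)$ directly from Lemma~\ref{L:lalpha} (getting $-\pair{\mu}{2\rho}+2\ell(z_\mu)$ before simplification) and combine with Lemma~\ref{L:adjustedroot}, $\ell(z_\mu)=-\pair{\mu}{2\rhoP}$; the arithmetic $-\pair{\mu}{2\rho}+2\ell(z_\mu)+\ell(w)$ must then be reconciled with the sign issue that $\ell(wz_\mu)\ne\ell(w)+\ell(z_\mu)$ in general, so in fact the first, root-by-root, route is the more robust one and is what I would write up.
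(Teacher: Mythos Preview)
Your proposal is correct and follows essentially the same route as the paper: invoke Lemma~\ref{L:WPaf} for the first two claims, write $\ell(x)=\sum_{\alpha\in\Phi^+}\ell_\alpha(x)$, observe $\ell_\alpha(x)=0$ for $\alpha\in\PhiP^+$ from $x\in\WUPaf$, and for $\alpha\in\Phi^+\setminus\PhiP^+$ use the strict negativity of $\pair{\mu}{\alpha}$ to drop the absolute value in Lemma~\ref{L:lalpha}, then reduce the $\chi$-sum to $\ell(w)$ via the fact that $z\in\WP$ permutes $\Phi^+\setminus\PhiP^+$ and $w\in\WUP$ has no inversions in $\PhiP^+$. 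Your write-up is in fact slightly more explicit than the paper's on the justification that the $\chi$-terms sum to $\ell(w)$; the alternative length-additivity route you sketch at the end is indeed messier and you are right to prefer the root-by-root argument.
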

\begin{proof} By Lemma \ref{L:WPaf} we need only prove the length condition.
We have $\ell(x)=\sum_{\alpha\in\Phi^+} \ell_\alpha(x)$.
Fix $\alpha\in \Phi^+$. Since $x\in \WUPaf$, if $\alpha\in \PhiP^+$
then $\ell_\alpha(x)=0$. Let $\alpha\in\Phi^+\setminus \PhiP^+$.
By Lemma \ref{L:lalpha}
we have $\ell_\al(wzt_\mu) = -\chi(wz\al\in\Phi^-) - \pair{\mu}{\al}$.
Summing this over $\al\in \Phi^+\setminus \PhiP^+$ we have
\begin{align*}
  \ell(x) = -\pair{\mu}{2\rho-2\rhoP} + \sum_{\alpha\in \Phi^+\setminus \PhiP^+} -\chi(wz\al\in\Phi^-).
\end{align*}
But $z\in \WP$ so it permutes the set $\Phi^+\setminus \PhiP^+$. Moreover $w\in \WUP$
so $w\PhiP^+\subset \Phi^+$. The lemma follows.
\end{proof}

Let $\mu\in \Qv$. We say that $\mu$ is antidominant if 
\begin{align}
\label{E:antidom}
  \pair{\mu}{\alpha} &\le 0 &\qquad&\text{for all $\alpha\in \Phi^+$.}
\end{align}
Say that $\mu$ is {\em strictly $J$-antidominant} if it is antidominant and
\begin{align}\label{E:strictanti}
  \pair{\mu}{\alpha} &< 0 &\qquad &\text{for $\alpha\in\Phi^+\setminus\PhiP^+$.}
\end{align}
Say that $\mu$ is {\em $J$-superantidominant} if $\mu$ is antidominant and
\begin{align}\label{E:superanti}
  \pair{\mu}{\alpha} &\ll 0 &\qquad &\text{for $\alpha\in\Phi^+\setminus\PhiP^+$.}
\end{align}
In the notation of \eqref{E:corootlatticeform}, the condition
\eqref{E:superanti} means that $c_i\ll0$ for all $i\in \I\setminus \IP$. 

\begin{remark}\label{R:superanti}
If $J=\emptyset$, then 
the $J$-superantidominant property becomes the superantidominant one in \cite{LS}.
If $\mu$ is superantidominant, then \eqref{E:corootlatticeform} and \eqref{E:piformula} show that, in the projection
$\pi_J(t_\mu)=z_\mu t_{\nu}$, the element $\nu$ is $J$-superantidominant.
\end{remark}

\begin{lem} \label{L:zfromadj} Let $z\in \SigmaP$ (see \eqref{E:Sigma}).
Then there is a $J$-superantidominant,
$J$-adjusted element $\mu\in \Qv$ such that $z=z_\mu$.
\end{lem}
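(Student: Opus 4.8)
The plan is to start from an arbitrary $\mu_0\in\Qv$ with $z_{\mu_0}=z$, which exists by the definition \eqref{E:Sigma} of $\SigmaP$, and to correct it by an element of $\Qv$ that does not change $z_\bullet$. Recall from Remark~\ref{R:zhom} that $\mu\mapsto z_\mu$ is a group homomorphism $\Qv\to\SigmaP$, so that $z_{\mu_0+\nu}=z_{\mu_0}z_\nu=z z_\nu$ for every $\nu\in\Qv$. Thus it suffices to produce $\nu\in\Qv$ with $z_\nu=\id$ such that $\mu:=\mu_0+\nu$ is simultaneously $J$-superantidominant and $J$-adjusted.

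The first step is to make $\mu$ $J$-superantidominant. By Lemma~\ref{L:Pinvariant}, any $\WP$-invariant $\nu\in\Qv$ satisfies $z_\nu=\id$; and a $\WP$-invariant $\nu$ can be taken of the form $\nu=-\sum_{i\in\I\setminus\IP}N_i\,\gamma_i$, where the $\gamma_i$ are suitable $\WP$-invariant elements of $\Qv$ (for instance $\WP$-symmetrizations, or appropriate multiples, of the $\omv{i}$ for $i\in\I\setminus\IP$) chosen so that by taking the integers $N_i\gg0$ we force $\pair{\mu}{\alpha}=\pair{\mu_0}{\alpha}+\pair{\nu}{\alpha}\ll0$ for all $\alpha\in\Phi^+\setminus\PhiP^+$, while leaving $\pair{\mu}{\alpha}=\pair{\mu_0}{\alpha}$ unchanged for $\alpha\in\PhiP^+$ since $\nu$ is $\WP$-invariant. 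After this step $\mu$ is $J$-superantidominant provided we also arrange $\pair{\mu}{\alpha}\le0$ on $\PhiP^+$; but this is automatic at the end once we know $\mu$ is $J$-adjusted, since $J$-adjusted means $\pair{\mu}{\alpha}\in\{0,-1\}$ for all $\alpha\in\PhiP^+$ by Lemma~\ref{L:adjusted}(3).

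The second step is to make $\mu$ $J$-adjusted without destroying $J$-superantidominance. Write the current $\mu$ in the form \eqref{E:corootlatticeform}, $\mu=\sum_{i\in\I\setminus\IP}c_i\omv{i}-\phiP(\mu)-\sum_{m=1}^k\omv{j_m}$, and observe that by Remark~\ref{R:superanti} applied to $\ppiP(t_\mu)=z_\mu t_\nu$, the element $\nu=\mu+\phiP(\mu)$ is again $J$-superantidominant, and it is $J$-adjusted because $\phiP(\nu)=0$ (equivalently $\ppiP(t_\nu)=z_\nu t_\nu$, which is how one sees $J$-adjustedness; and $z_\nu=z_\mu=z$ since adding $\phiP(\mu)\in\QvP\subset\Qv$ changes $z_\bullet$ by $z_{\phiP(\mu)}$, which one checks is trivial — indeed $\ppiP(t_{\phiP(\mu)})=t_{\phiP(\mu)}$ because $\phiP(\mu)$ lies in $\QvP$ and hence is "$J$-adjusted with trivial $z$" in the sense of Lemma~\ref{L:Pinvariant} applied componentwise). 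Replacing $\mu$ by this $\nu$ gives the desired element.

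The main obstacle is bookkeeping the interaction of the two corrections: the first correction must be $\WP$-invariant so as not to disturb the values on $\PhiP^+$, while the second correction lives in $\QvP$ and does disturb them, so one must check that after the second correction the pairings on $\PhiP^+$ land in $\{0,-1\}$ (Lemma~\ref{L:adjusted}) and that superantidominance on $\Phi^+\setminus\PhiP^+$ is preserved. The cleanest route, which I would adopt, is to do the first step to get $J$-superantidominance with a large margin, then simply apply $\ppiP$ to $t_\mu$ and read off $\nu$ from $\ppiP(t_\mu)=z_\mu t_\nu$: by \eqref{E:piformula} this $\nu=\mu+\phiP(\mu)$ is automatically $J$-adjusted, has $z_\nu=z_\mu=z$, and is $J$-superantidominant by Remark~\ref{R:superanti}. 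This reduces the whole argument to one invocation of the first (easy) step plus Remark~\ref{R:superanti}.
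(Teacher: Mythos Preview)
Your approach is essentially the paper's: correct an arbitrary $\mu_0$ with $z_{\mu_0}=z$ by (a) an element of $\QvP$, namely $\phiP(\mu_0)$, to make it $J$-adjusted, and (b) a $\WP$-invariant element of $\Qv$ to make it $J$-superantidominant. The paper applies (a) then (b); you apply (b) then (a). The paper's order is cleaner because the $\WP$-invariant correction in (b) pairs to zero with every $\alpha\in\PhiP^+$ and hence cannot disturb $J$-adjustedness, so no interaction check is needed.

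Your reversed order works too, but two justifications need repair. First, your appeal to Remark~\ref{R:superanti} is off: that remark assumes $\mu$ is \emph{super}antidominant (negative on all of $\Phi^+$), whereas after your first step $\mu$ is only $J$-superantidominant and $\pair{\mu}{\alpha}$ for $\alpha\in\PhiP^+$ is still arbitrary. The fix is what your ``large margin'' remark really buys: since the $\WP$-invariant correction does not change the $\XvP$-component of $\mu_0$, one has $\phiP(\mu)=\phiP(\mu_0)$, a fixed element, so taking the first correction sufficiently negative guarantees $\mu+\phiP(\mu)$ is $J$-superantidominant. Second, your claim that $\ppiP(t_{\phiP(\mu)})=t_{\phiP(\mu)}$ via Lemma~\ref{L:Pinvariant} is wrong on both counts: an element of $\QvP$ is generally neither $\WP$-invariant nor $J$-adjusted, and in fact $\ppiP(t_\gamma)=\id$ (not $t_\gamma$) for $\gamma\in\QvP$, because $t_\gamma\in\WPaf$ and Proposition~\ref{P:piP}(1) applies. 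This still yields the conclusion you need, $z_{\phiP(\mu)}=\id$, but for the right reason.
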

\begin{proof} 
By assumption there is a $\nu\in \Qv$ such that $\ppiP(t_\nu) = z t_{\nu+\phiP(\nu)}$.
Since $\gamma=\phiP(\nu)\in\QvP$, by \eqref{E:piformula} we have $\ppiP(t_\gamma)=\id$.
We have $\ppiP(t_{\nu+\gamma})=\ppiP(t_\nu)\ppiP(t_\gamma)=z t_{\nu+\gamma}$
so that $\nu+\gamma$ is a $J$-adjusted element of $\Qv$ with $z_{\nu+\gamma}=z$.
Let $\eta\in \Qv$ be $J$-superantidominant and $\WP$-invariant, so that
$z_\eta = \id$. Then $\nu+\gamma+\eta$ is the required element.
\end{proof}

\begin{lem} \label{L:W-af} Let $w\in \WUP$ and let $\mu\in \Qv$
be $J$-adjusted and strictly $J$-antidominant. Then $wz_\mu t_\mu\in \Wafm$.
\end{lem}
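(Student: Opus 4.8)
The plan is to verify membership in $\Wafm$ directly from the root-theoretic criterion for parabolic quotients. Since $\W$ sits inside $\Waf$ as the parabolic subgroup generated by the $r_i$, $i\in I$, an element $x\in\Waf$ is the minimal-length representative of the coset $x\W$ exactly when $x\alpha_i\in\Phiafp$ for all $i\in I$, and hence — by the usual argument that the set of finite positive roots which $x$ sends to negative roots is closed under taking sums inside $\Phi^+$, so if it is nonempty it contains a simple root — exactly when $x\alpha\in\Phiafp$ for every $\alpha\in\Phi^+$. So I would set $x:=wz_\mu t_\mu$, fix $\alpha\in\Phi^+$, and show $x\alpha\in\Phiafp$, splitting according to whether $\alpha\in\PhiP^+$ or $\alpha\in\Phi^+\setminus\PhiP^+$.

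The case $\alpha\in\PhiP^+$ comes for free: since $\mu$ is $J$-adjusted, $w\in\WUP$, and $z_\mu\in\WP$, Lemma~\ref{L:WPaf} gives $x=wz_\mu t_\mu\in\WUPaf$; then by the defining property~\eqref{E:PaffineWpos} of $\WUPaf$ and the inclusion $\PhiP^+\subseteq\PhiafpP$ from~\eqref{E:Paffinepos}, we get $x\alpha>0$. For the case $\alpha\in\Phi^+\setminus\PhiP^+$, I would use the computation behind Lemma~\ref{L:lalpha}: writing $x=(wz_\mu)t_\mu$ with $wz_\mu\in\W$, one has $x\alpha=wz_\mu\alpha-\pair{\mu}{\alpha}\delta$; strict $J$-antidominance of $\mu$ forces $\pair{\mu}{\alpha}<0$, so $x\alpha\in\Phi+\Z_{>0}\delta\subseteq\Phiafp$ by the decomposition $\Phiafp=\Phi^+\sqcup(\Phi+\Z_{>0}\delta)$. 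Having handled both cases, $x\alpha\in\Phiafp$ for all $\alpha\in\Phi^+$, so $x\in\Wafm$.

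I do not expect a serious obstacle; the work is essentially bookkeeping. The one conceptual point worth isolating is that the hypothesis ``$\mu$ is $J$-adjusted'' is precisely what is needed to place $x$ in $\WUPaf$, and this already disposes of every root of $\PhiP^+$; strict $J$-antidominance is then used exactly — and only — to raise the remaining roots $\Phi^+\setminus\PhiP^+$ by a positive multiple of $\delta$. The only background fact to record carefully is the standard description $\Wafm=\{x\in\Waf:x\Phi^+\subseteq\Phiafp\}$ together with its elementary reduction from simple roots to all of $\Phi^+$.
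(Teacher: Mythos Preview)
Your argument is correct. The only superfluous step is the reduction from simple roots to all of $\Phi^+$: since you end up verifying $x\alpha\in\Phiafp$ for every $\alpha\in\Phi^+$, the implication $x\alpha_i>0$ for all $i\in I$ $\Rightarrow$ $x\in\Wafm$ already suffices, and the converse (the ``closure under sums'' argument) is never actually used. But it is harmless and true.

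Your route is genuinely different from the paper's. The paper argues in two lines: first, $wt_\mu\in\Wafm$ by \cite[Lemma 3.3]{LS} (this uses antidominance of $\mu$ together with $w\in\WUP$ to handle the simple roots $\alpha_i$ with $\pair{\mu}{\alpha_i}=0$); second, $\ppiP$ preserves $\Wafm$ by Proposition~\ref{P:piP}(3), and $\ppiP(wt_\mu)=\ppiP(w)\ppiP(t_\mu)=wz_\mu t_\mu$ by Proposition~\ref{P:piP}(4), (2) and the $J$-adjusted hypothesis. So the paper packages the result as ``$wz_\mu t_\mu$ is the $\ppiP$-image of something already in $\Wafm$''. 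Your approach instead checks the root criterion directly, using Lemma~\ref{L:WPaf} to dispose of $\PhiP^+$ and strict $J$-antidominance to dispose of $\Phi^+\setminus\PhiP^+$. Your version is more self-contained (no external citation, no appeal to the nontrivial fact that $\ppiP(\Wafm)\subset\Wafm$), while the paper's version is terser and highlights the role of $\ppiP$ that recurs throughout the section.
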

\begin{proof} By \cite[Lemma 3.3]{LS} $wt_\mu\in \Wafm$.
We have $\ppiP(wt_\mu)=\ppiP(w)\ppiP(t_\mu)=w z_\mu t_\mu \in \Wafm$
by Proposition \ref{P:piP}.
\end{proof}

\section{Quantum Bruhat graph}
\label{section.QBG}
The quantum Bruhat graph was first introduced in a paper by Brenti, Fomin and Postnikov~\cite{BFP}
and later appeared in connection with the quantum cohomology of flag varieties in a paper by Fulton 
and Woodward~\cite{FW}. In this section we define the QBG and its parabolic analogue,
and prove some properties we need.

\subsection{Quantum roots}
Say that $\alpha \in \Phi^+$ is a \emph{quantum root}
if $\ell(r_\alpha)=\pair{\av}{2\rho}-1$. 

\begin{lem} \cite[Lemma 4.3]{BFP} \cite[Lemma 3.2]{Mar}
\label{L:reflectionlength}
For any positive root $\alpha \in \Phi^+$, we have
$\ell(r_\alpha) \le -1+ \pair{\av}{2\rho}$. 
In simply-laced type all roots are quantum roots.
\end{lem}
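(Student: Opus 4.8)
The plan is to prove the inequality $\ell(r_\alpha)\le \pair{\av}{2\rho}-1$ by realizing $\ell(r_\alpha)$ as a count of positive roots sent to negative roots by $r_\alpha$, and bounding that set. Recall that for any $w\in\W$, $\ell(w)=|\Inv(w)|$ where $\Inv(w)=\{\beta\in\Phi^+\mid w\beta\in\Phi^-\}$. So I would first describe $\Inv(r_\alpha)$ explicitly: since $r_\alpha\beta=\beta-\pair{\bv}{\alpha}\alpha$... more usefully, $r_\alpha$ maps $\alpha\mapsto-\alpha$ and pairs off the remaining roots. A clean route: $\pair{\av}{2\rho}=\sum_{\beta\in\Phi^+}\pair{\av}{\beta}$ is not quite what I want; instead note $\pair{\av}{2\rho}-1 = \#\{\beta\in\Phi^+ : \pair{\av}{\beta}>0\}+\#\{\beta\in\Phi^+:\pair{\av}{\beta}<0\}$ is also not immediate. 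Let me reorganize.

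The cleanest approach uses the standard formula $\ell(r_\alpha)=2\,\mathrm{ht}(\av)-1$ where $\mathrm{ht}$ denotes the coheight, together with $\pair{\av}{2\rho}=2\,\mathrm{ht}(\av)$ only in the simply-laced case — so this is exactly where the two cases diverge, and I should be careful. Instead, I would argue as follows. For $\beta\in\Phi^+\setminus\{\alpha\}$, group the roots into orbits of $r_\alpha$: either $r_\alpha\beta=\beta$ (when $\pair{\bv}{\alpha}=0$, equivalently $\pair{\av}{\beta}=0$... no, these are not equivalent in the non-simply-laced case either — but $r_\alpha\beta=\beta \iff \pair{\bv}{\alpha}=0$), or $\{\beta,r_\alpha\beta\}$ is a pair with exactly one of the two in $\Phi^+$ when $\beta$ and $r_\alpha\beta$ have opposite signs, or both could be positive. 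The key classical fact (due to Kostant, see also \cite[Lemma 4.3]{BFP}) is: $\Inv(r_\alpha)=\{\beta\in\Phi^+ : \pair{\bv}{\alpha}>0\text{ and }r_\alpha\beta\in\Phi^-\}$, and one shows $|\Inv(r_\alpha)|\le\pair{\av}{2\rho}-1$ by the estimate $\ell(r_\alpha)\le \#\{\beta\in\Phi^+ : \pair{\av}{\beta}>0\}\cdot(\text{something})$; more precisely, I would invoke the identity $\ell(r_\alpha)=\pair{\av}{2\rho}-1$ when $\alpha$ is a \emph{long} root (or in simply-laced type, any root), and $\ell(r_\alpha)<\pair{\av}{2\rho}-1$ strictly for short roots in non-simply-laced type. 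Concretely: write $\ell(r_\alpha)=\sum_{\beta\in\Phi^+}\chi(r_\alpha\beta\in\Phi^-)$ and compare termwise against $\pair{\av}{2\rho}=\sum_{\beta\in\Phi^+}\pair{\av}{\beta}$. Using that $r_\alpha\beta<0$ forces $\pair{\av}{\beta}>0$ hence $\pair{\av}{\beta}\ge1$, we get $\ell(r_\alpha)\le\sum_{\beta\in\Phi^+,\,r_\alpha\beta<0}\pair{\av}{\beta}\le\pair{\av}{2\rho}$, and the $-1$ comes from the root $\alpha$ itself: $r_\alpha\alpha=-\alpha<0$ but we can do better on the sum. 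To squeeze out the $-1$: among roots with $r_\alpha\beta<0$, at least one (namely $\alpha$) has $\pair{\av}{\alpha}=2$ while contributing only $1$ to $\ell$, OR there exist two distinct positive roots with $\pair{\av}{\cdot}=1$ both sent negative that are "paired" — a short case analysis handles this, giving the strict gain of at least $1$.

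For the simply-laced addendum: when $\Phi$ is simply-laced, every $\beta\in\Phi^+$ with $r_\alpha\beta<0$ satisfies $\pair{\av}{\beta}\in\{1,2\}$ with $\pair{\av}{\beta}=2$ only for $\beta=\alpha$, so $\ell(r_\alpha)=\#\{\beta : r_\alpha\beta<0\}$ and $\pair{\av}{2\rho}=\sum_{r_\alpha\beta<0}\pair{\av}{\beta}+\sum_{r_\alpha\beta>0}\pair{\av}{\beta}$; one checks the roots with $r_\alpha\beta>0$ come in $r_\alpha$-pairs $\{\beta,r_\alpha\beta\}$ with $\pair{\av}{\beta}+\pair{\av}{r_\alpha\beta}=0$, hence contribute $0$, giving $\pair{\av}{2\rho}=\ell(r_\alpha)+2$ when... — actually the exact bookkeeping yields precisely $\ell(r_\alpha)=\pair{\av}{2\rho}-1$, so all roots are quantum. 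I expect the main obstacle to be the careful sign/length bookkeeping that extracts the exact "$-1$" in the general (possibly non-simply-laced) bound rather than just "$\le\pair{\av}{2\rho}$"; this is genuinely a short but fiddly case analysis on the $r_\alpha$-orbit structure of $\Phi^+$, and it is exactly what is done in \cite[Lemma 4.3]{BFP} and \cite[Lemma 3.2]{Mar}, so I would cite those for the routine part and only sketch the orbit-pairing argument.
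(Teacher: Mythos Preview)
The paper does not give its own proof of this lemma; it simply cites \cite[Lemma 4.3]{BFP} and \cite[Lemma 3.2]{Mar} and moves on. So there is nothing to compare your argument against in the paper itself.

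Your sketch, once the false starts are stripped away, is the standard argument and is correct. The clean version is this: for $\beta\in\Phi^+$ with $r_\alpha\beta\in\Phi^+$, either $\pair{\av}{\beta}=0$ or $\{\beta,r_\alpha\beta\}$ is a two-element subset of $\Phi^+$ with $\pair{\av}{\beta}+\pair{\av}{r_\alpha\beta}=0$; hence $\pair{\av}{2\rho}=\sum_{\beta\in\Inv(r_\alpha)}\pair{\av}{\beta}$. Each summand is $\ge1$ (since $r_\alpha\beta<0$ forces $\pair{\av}{\beta}>0$), and the summand for $\beta=\alpha$ equals $2$, so $\pair{\av}{2\rho}\ge |\Inv(r_\alpha)|+1=\ell(r_\alpha)+1$. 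In the simply-laced case $\pair{\av}{\beta}\in\{1,2\}$ for $\beta\in\Inv(r_\alpha)$ with $2$ only at $\beta=\alpha$, so the inequality is an equality. This is exactly what \cite{BFP} does, so your plan to cite that reference for the routine bookkeeping is appropriate; but your write-up should be tightened to the three lines above rather than the exploratory narrative you currently have.
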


\begin{lem} \label{L:quantumroot} \cite{BMO} 
$\alpha\in \Phi^+$ is a quantum root if and only if
\begin{enumerate}
\item $\alpha$ is a long root, or
\item $\alpha$ is a short root, and writing $\alpha = \sum_i c_i \alpha_i^\vee$, we have
$c_i=0$ for all $i$ such that $\alpha_i$ is long.
\end{enumerate}
Here for simply-laced root systems we consider all roots to be long.
\end{lem}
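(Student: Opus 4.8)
The plan is to verify the characterization by a case analysis on the root-system type, reducing to a computation of $\ell(r_\alpha)$ for each root $\alpha$. Recall that $\ell(r_\alpha)$ equals the number of positive roots sent to negative roots by $r_\alpha$, i.e. $\ell(r_\alpha) = |\{\beta\in\Phi^+ : r_\alpha\beta\in\Phi^-\}|$, and that $\pair{\av}{2\rho} = \sum_{\beta\in\Phi^+}\pair{\av}{\beta}$. Since $\alpha$ is a quantum root precisely when $\ell(r_\alpha) = \pair{\av}{2\rho}-1$ and, by Lemma \ref{L:reflectionlength}, always $\ell(r_\alpha)\le \pair{\av}{2\rho}-1$, the task is to decide when equality holds. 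The quantity $D(\alpha) := \pair{\av}{2\rho}-1-\ell(r_\alpha)$ is what we must show vanishes exactly in cases (1) and (2).

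First I would dispose of the simply-laced case: by Lemma \ref{L:reflectionlength} every root is a quantum root there, and condition (1) (all roots long by convention) holds vacuously, so there is nothing to prove. For the non-simply-laced types, I would use the standard fact that a root $\alpha$ and its reflection behavior depend only on the rank-two (or small-rank) parabolic subsystems it meets; more precisely, one computes $D(\alpha)$ by summing local contributions $\pair{\av}{\beta} - \chi(r_\alpha\beta\in\Phi^-)$ over $\beta\in\Phi^+$, and this difference is nonzero only for $\beta$ in the span of $\alpha$ and roots not orthogonal to $\alpha$. The key computation: if $\alpha$ is long, then $\pair{\av}{\beta}\in\{-1,0,1\}$ for all $\beta$, while $r_\alpha\beta<0$ contributes exactly when $\pair{\av}{\beta}>0$, forcing $D(\alpha)=0$ — this gives (1). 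If $\alpha$ is short, then there can exist $\beta$ with $\pair{\av}{\beta} = 2$ (or $3$ in type $G_2$), and each such $\beta$ contributes $\pair{\av}{\beta}-1\ge 1$ to $D(\alpha)$; such a $\beta$ exists if and only if $\alpha$ is "adjacent" in the right sense to a long simple direction, which I would translate into the coefficient condition in (2) by expanding $\alpha = \sum_i c_i\alv{i}$ and noting that $\pair{\av}{\beta}\ge 2$ for some $\beta$ forces some $c_i\ne 0$ with $\alpha_i$ long, and conversely.

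Alternatively — and this is probably cleaner — I would invoke the cited result of \cite{BMO} directly, since the lemma is attributed there; in that case the "proof" is simply a pointer, and the substantive content is the translation between their formulation and the coefficient condition as stated, which amounts to checking that "$\alpha$ short and $c_i=0$ whenever $\alpha_i$ long" is equivalent to "$\alpha$ lies in the long part only in a way that avoids the $\pair{\av}{\beta}=2$ phenomenon." The main obstacle is the short-root case in types $B_n$, $C_n$, $F_4$, and especially $G_2$, where one must carefully track which short roots fail the quantum condition; I would handle $G_2$ by explicit enumeration of its six positive roots, and handle $B_n/C_n/F_4$ by reducing to a rank-two $B_2$ (or $G_2$) parabolic containing $\alpha$ and a long simple root, where the computation $\ell(r_\alpha)$ versus $\pair{\av}{2\rho}$ is immediate. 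If the citation to \cite{BMO} is taken as a black box, the remaining work is light and essentially bookkeeping; if a self-contained argument is wanted, the rank-two reduction is the crux.
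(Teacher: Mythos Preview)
The paper does not prove this lemma at all: it is stated with the attribution \cite{BMO} and no argument is given. Your second alternative---treating the statement as a citation and supplying only the translation to the coefficient condition---is exactly what the paper does (in fact the paper does even less: it simply quotes the result). So in that sense your proposal already subsumes the paper's ``proof.''

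Your self-contained sketch, however, contains a genuine slip. For $\alpha$ long you assert that ``$r_\alpha\beta<0$ contributes exactly when $\pair{\av}{\beta}>0$,'' i.e.\ that the inversion set of $r_\alpha$ coincides with $\{\beta\in\Phi^+:\pair{\av}{\beta}>0\}$. This is false already in type $A_3$: with $\alpha=\alpha_2$ and $\beta=\alpha_1+\alpha_2$ one has $\pair{\av}{\beta}=1>0$ yet $r_\alpha\beta=\alpha_1\in\Phi^+$. The correct bookkeeping is to pair off the inversions of $r_\alpha$ other than $\alpha$ itself as $\{\beta,-r_\alpha\beta\}$; each such pair shares a common positive value $\pair{\av}{\beta}$, the non-inversions contribute zero to $\pair{\av}{2\rho}$ since $\pair{\av}{\beta}+\pair{\av}{r_\alpha\beta}=0$, and one finds
\[
\pair{\av}{2\rho}-1-\ell(r_\alpha)\;=\;2\sum_{\text{inversion pairs}}\bigl(\pair{\av}{\beta}-1\bigr).
\]
From this the long-root case follows immediately (each summand vanishes since $\pair{\av}{\beta}\in\{0,\pm1\}$ for $\beta\neq\pm\alpha$), and the short-root analysis reduces to detecting a single inversion $\beta$ with $\pair{\av}{\beta}\ge2$, which is where your coefficient condition enters. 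With this correction your case-by-case plan goes through.
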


\subsection{Regular case}
The \emph{quantum Bruhat graph} $\QB(\W)$ is a directed graph structure on
$W$ that contains two kinds of directed edges.
For $w\in W$ there is a directed edge $w\overset{\al}{\longrightarrow} wr_\alpha$ if 
$\alpha\in \Phi^+$ and one of the following holds.
\begin{enumerate}
\item (Bruhat edge) $w\lessdot wr_\al$ is a covering relation in Bruhat order, that is, $\ell(wr_\al)=\ell(w)+1$.
\item (Quantum edge) $\ell(wr_\al)=\ell(w)-\ell(r_\alpha)$ and $\alpha$ is a quantum root.
\end{enumerate}
Condition (2) is equivalent to
\begin{enumerate}
\item[($2^{\prime}$)] $\ell(wr_\al)=\ell(w)+1-\pair{\av}{2\rho}$.
\end{enumerate}
An example is given in Figure~\ref{fig:qB}, where the quantum edges are drawn in red
and $\alpha_{ij} = \alpha_i+\alpha_{i+1} + \cdots + \alpha_{j-1}$.

\begin{figure}
\begin{center}
\includegraphics[scale=0.45]{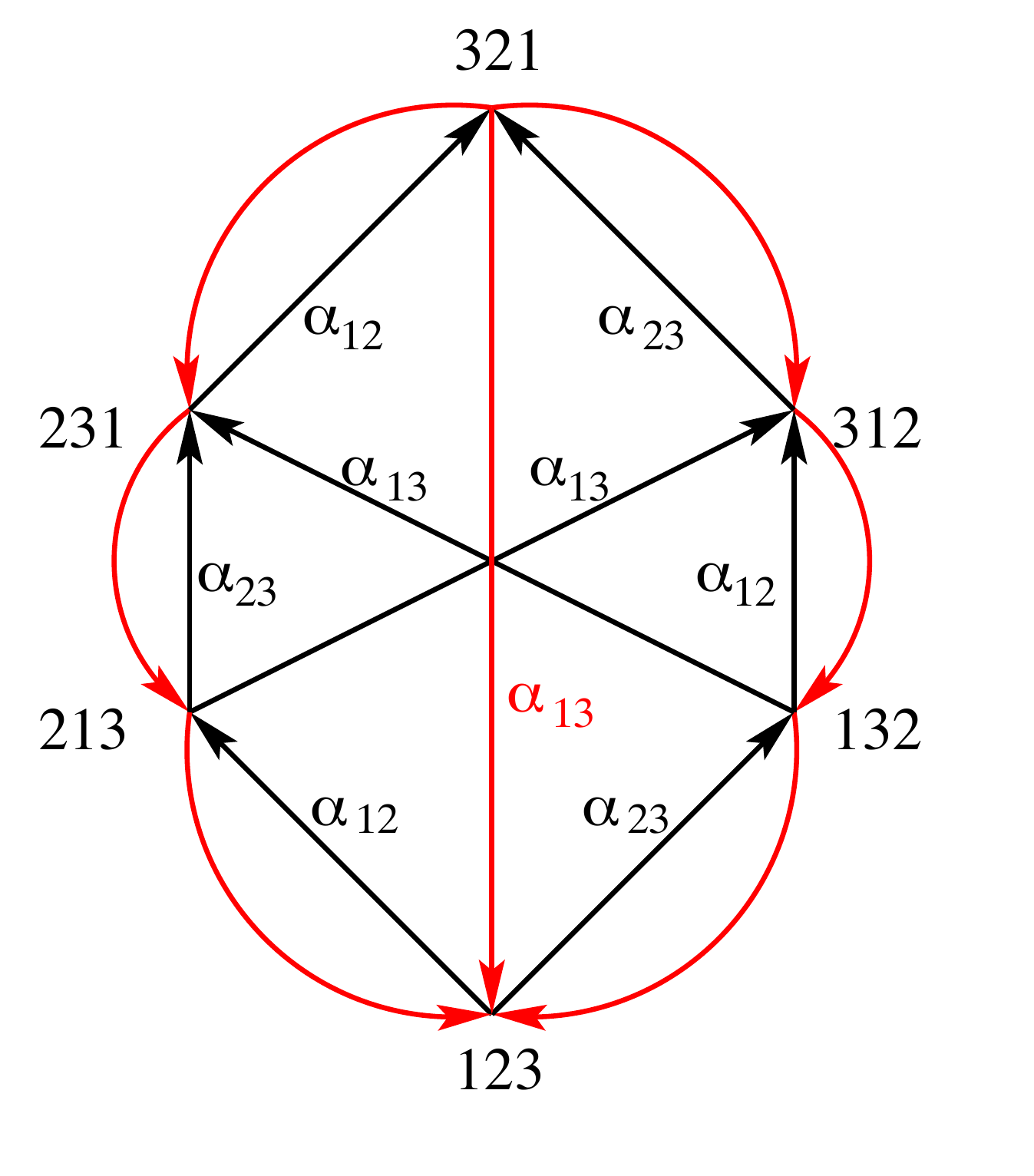}
\end{center}
\caption{Quantum Bruhat graph for $S_3$ \label{fig:qB}}
\end{figure}

\subsection{Parabolic case}
\label{ss:parabolic}
Let $\QB(\WUP)$ be the \emph{parabolic quantum Bruhat graph}. Its vertex set is $\WUP$.
There are two kinds of directed edges.
Both are labeled by some $\alpha\in \Phi^+\setminus \PhiP^+$.
We use the notation $\piP{w}$ to indicate the minimum-length coset representative
in the coset $w \WP$.
\begin{enumerate}
\item (Bruhat edge) $w\overset{\al}\longrightarrow \piP{wr_\alpha}$ where $w\lessdot wr_\al$.
(One may deduce that $wr_\al\in \WUP$.)
\item (Quantum edge) 
\begin{align}\label{E:projqarrow}
\ell(\piP{wr_\al})=\ell(w)+1-\pair{\av}{2\rho-2\rhoP}.
\end{align}
\end{enumerate}
Condition (2) is equivalent to 
\begin{enumerate}
\item[($2^{\prime}$)] $wr_\al \qleft{\al} w$ is a quantum edge in $\QB(\W)$
and $wr_\al t_\av \in \WUPaf$.
\end{enumerate}
This equivalence may be deduced from \cite[Lemma 10.14]{LS} and the proof of
\cite[Theorem 10.18]{LS}. The arguments there rely on geometry, namely, the
quantum Chevalley rule and the Peterson-Woodward comparison theorem.
An example of a PQBG is given in Figure~\ref{fig:pqB}.

We define the \emph{weight} of an edge $w\overset{\al}\longrightarrow \piP{wr_\alpha}$ in the PQBG to be
either $\al^\vee$ or $0$, depending on whether it is a quantum edge or not, respectively. Then the weight of a 
directed path $\bp$, denoted by $\wt(\bp)$, is defined as the sum of the weights of its edges. 

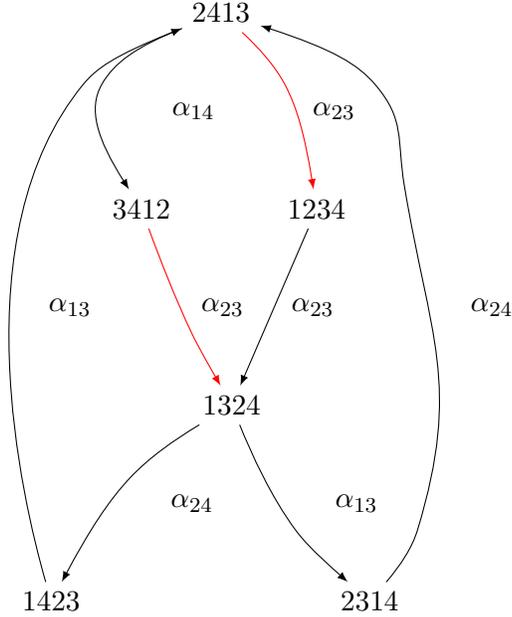
\begin{figure}
\begin{tikzpicture}[>=latex,line join=bevel]
\node (1+2+3+4) at (124bp,157bp) [draw,draw=none] {$1234$};
  \node (1+3+2+4) at (92bp,83bp) [draw,draw=none] {$1324$};
  \node (1+3+4+2) at (24bp,9bp) [draw,draw=none] {$1423$};
  \node (3+1+2+4) at (144bp,9bp) [draw,draw=none] {$2314$};
  \node (3+1+4+2) at (88bp,231bp) [draw,draw=none] {$2413$};
  \node (3+4+1+2) at (58bp,157bp) [draw,draw=none] {$3412$};
  \draw [black,->] (3+1+4+2) ..controls (55.971bp,218.25bp) and (47.668bp,212.25bp)  .. (43bp,204bp) .. controls (37.801bp,194.82bp) and (41.08bp,183.59bp)  .. (3+4+1+2);
  \definecolor{strokecol}{rgb}{0.0,0.0,0.0};
  \pgfsetstrokecolor{strokecol}
  \draw (77.5bp,194bp) node {$\alpha_{14}$};
  \draw [black,->] (1+2+3+4) ..controls (115.2bp,136.65bp) and (106.27bp,116.01bp)  .. (1+3+2+4);
  \draw (122.5bp,120bp) node {$\alpha_{23}$};
  \draw [black,->] (1+3+2+4) ..controls (69.755bp,69.62bp) and (60.806bp,63.153bp)  .. (54bp,56bp) .. controls (45.68bp,47.256bp) and (38.347bp,35.88bp)  .. (1+3+4+2);
  \draw (77bp,46bp) node {$\alpha_{24}$};
  \draw [red,->] (3+4+1+2) ..controls (64.731bp,138.65bp) and (70.778bp,123.02bp)  .. (77bp,110bp) .. controls (78.433bp,107bp) and (80.052bp,103.87bp)  .. (1+3+2+4);
  \draw (88.5bp,120bp) node {$\alpha_{23}$};
  \draw [black,->] (1+3+2+4) ..controls (99.541bp,64.353bp) and (106.94bp,48.219bp)  .. (116bp,36bp) .. controls (118.93bp,32.048bp) and (122.46bp,28.159bp)  .. (3+1+2+4);
  \draw (139bp,46bp) node {$\alpha_{13}$};
  \draw [black,->] (1+3+4+2) ..controls (12.94bp,47.473bp) and (-10.455bp,147.35bp)  .. (36bp,204bp) .. controls (40.922bp,210bp) and (47.531bp,214.83bp)  .. (3+1+4+2);
  \draw (31bp,120bp) node {$\alpha_{13}$};
  \draw [black,->] (3+1+2+4) ..controls (155.69bp,22.77bp) and (159.98bp,29.314bp)  .. (162bp,36bp) .. controls (178.76bp,91.34bp) and (166.42bp,108.95bp)  .. (157bp,166bp) .. controls (154.14bp,183.35bp) and (157.44bp,190.65bp)  .. (146bp,204bp) .. controls (139.51bp,211.57bp) and (130.44bp,217.16bp)  .. (3+1+4+2);
  \draw (190bp,120bp) node {$\alpha_{24}$};
  \draw [red,->] (3+1+4+2) ..controls (102.61bp,217.42bp) and (108.35bp,210.87bp)  .. (112bp,204bp) .. controls (116.61bp,195.32bp) and (119.51bp,184.78bp)  .. (1+2+3+4);
  \draw (130.5bp,194bp) node {$\alpha_{23}$};
\end{tikzpicture}
\caption{Parabolic quantum Bruhat graph for $S_4$ with $J=\{1,3\}$ \label{fig:pqB}}
\end{figure}

\subsection{Duality antiautomorphism of $\QB(\WUP)$}
Let $w_0\in W$ be the longest element. There is an involution on $W$
defined by $w\mapsto w_0 w$. It reverses length in that
$\ell(w_0w)=\ell(w_0)-\ell(w)$. It also reverses Bruhat order in $W$:
$v\lessdot w$ if and only if $w_0 v \gtrdot w_0 w$.
The map $w\mapsto ww_0$ also has the same properties.
In particular $w\mapsto w^*=w_0 w w_0$ is a group automorphism of $W$
which preserves length. Define the involution $*$ on the Dynkin diagram $I$
by $w_0 r_i w_0 = r_{i^*}$ or equivalently $w_0 \alpha_i = -\alpha_{i^*}$.
Then $*$ is an automorphism of $I$. The map $w\mapsto w^*$ can be computed
on reduced words by replacing each $r_i$ by $r_{i^*}$.

Define the map $w\mapsto w^\circ$ on $\WUP$ by 
$w^\circ = \piP{w_0 w}$. Let $w_0^J\in\WP$ be the longest element.

\begin{prop} \label{P:circ} The map $w\mapsto w^\circ$ is an involution on $\WUP$
such that
\begin{enumerate}
\item $w^\circ = w_0 w w_0^J$.
\item $\ell(w^\circ)=\ell(w_0)-\ell(w_0^J)-\ell(w)=|\Phi^+\setminus\PhiP^+|-\ell(w)$.
\item $v \qleft{\beta} w$ is an edge in $\QB(\WUP)$ 
if and only if $w^\circ \qleft{w_0^J\beta} v^\circ$ is an edge in $\QB(\WUP)$.
Moreover both edges are Bruhat or both are quantum.
\end{enumerate}
In particular this involution reverses arrows in $\QB(\WUP)$ and preserves whether an arrow is
quantum or not.
\end{prop}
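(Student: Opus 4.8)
The plan is to prove the three parts in order, using the known properties of the ordinary duality $w \mapsto w_0 w w_0$ on $W$ together with the characterization of the PQBG edges via lifts to the affine Weyl group. For part (1), I would start from the definition $w^\circ = \piP{w_0 w}$ and observe that $w_0 w$ has coset $w_0 w \WP$; its minimum-length representative is computed by the classical parabolic theory. Since right multiplication by $w_0^J$ within the coset $w_0 w \WP$ is length-subtractive precisely when $w_0 w$ is the longest element of its coset, and $w$ being in $\WUP$ means $w_0 w$ is the \emph{longest} element of $w_0 w \WP$ (because $w$ is the shortest in $w\WP$ and $w_0$ reverses length), we get $\piP{w_0 w} = w_0 w w_0^J$. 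Then part (2) is an immediate length computation: $\ell(w^\circ) = \ell(w_0 w w_0^J) = \ell(w_0) - \ell(w) - \ell(w_0^J)$ using length-additivity of the factorization $w_0 w \cdot w_0^J$ (which follows from the coset-longest-element observation), and $\ell(w_0) - \ell(w_0^J) = |\Phi^+ \setminus \PhiP^+|$ is standard. That $w \mapsto w^\circ$ is an involution follows from $(w^\circ)^\circ = \piP{w_0 w_0 w w_0^J} = \piP{w w_0^J} = w$, since $w \in \WUP$.

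For part (3), I would use the equivalent form $(2')$ of the quantum edge condition in the PQBG: an edge $v \qleft{\beta} w$ exists iff either ($w = v r_\beta$, $v \lessdot w$, a Bruhat edge) or ($w r_\beta \qleft{\beta} w$ is a quantum edge in $\QB(W)$ and $w r_\beta t_{\beta^\vee} \in \WUPaf$). The cleanest route is to first establish the statement for the \emph{nonparabolic} QBG — i.e., that $v \to w$ is an edge in $\QB(W)$ iff $w_0 v \to w_0 w$ is, with the edge label transforming by $w_0$ and Bruhat/quantum type preserved — which is essentially the classical fact that $\QB(W)$ has a duality antiautomorphism via $w \mapsto w_0 w$; this can be checked directly from the edge definitions, using $\ell(w_0 u) = \ell(w_0) - \ell(u)$ and $\ell(r_{w_0\alpha}) = \ell(r_\alpha)$ (since $w_0 \alpha$ is a root of the same length as $\alpha$, so quantum-ness of the root is preserved). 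Then I would descend to the parabolic setting: composing the left multiplication by $w_0$ with the projection $\piP{-}$, and tracking how $\piP{-}$ interacts with this, using Proposition~\ref{P:piP} and the factorization results of Section~\ref{section.orbits}. The label $\beta$ maps to $w_0^J \beta$ rather than $w_0 \beta$ precisely because we are working relative to $\WUP$ and must reexpress $w_0 \beta$ modulo $\PhiP$; here the automorphism $\beta \mapsto w_0^J \beta$ of $\PhiP$ (negating $\PhiP^+$) enters, and one checks $w_0 \beta$ and $w_0^J \beta$ differ by an element of $\WP$ acting suitably.

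The main obstacle I anticipate is part (3), and specifically the bookkeeping showing that the affine condition $w r_\beta t_{\beta^\vee} \in \WUPaf$ for the edge $v \qleft{\beta} w$ is \emph{equivalent} to the corresponding affine condition $(w^\circ) r_{w_0^J \beta} t_{(w_0^J\beta)^\vee} \in \WUPaf$ for the edge $w^\circ \qleft{w_0^J\beta} v^\circ$. This requires understanding how the map $x \mapsto \text{(lift of } \piP{w_0 x})$ transforms elements of $\WUPaf$, and I expect to need Lemma~\ref{L:WPaf}, Lemma~\ref{L:adjusted}, and the explicit formula \eqref{E:piformula} for $\ppiP$, together with the fact that $w_0$ conjugates the relevant coroot-lattice data compatibly. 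The Bruhat-edge case of part (3) should be comparatively easy, since it reduces to the classical statement that $w \mapsto \piP{w_0 w}$ reverses the Bruhat order on $\WUP$ and sends covers to covers; the quantum-edge case is where the affine lift must be invoked carefully.
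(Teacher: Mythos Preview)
Your approach to parts (1) and (2) is correct and essentially matches the paper's (the paper verifies $w_0 w w_0^J \in \WUP$ by checking directly that it sends $\PhiP^+$ into $\Phi^+$, and proves (2) by exhibiting a bijection between $\Inv(w)$ and $(\Phi^+\setminus\PhiP^+)\setminus\Inv(w^\circ)$, but your length-additivity argument is equivalent).

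For part (3), however, you are making the problem much harder than it is. The paper does \emph{not} use the affine characterization $(2')$ of quantum edges at all; it uses the pure length condition~\eqref{E:projqarrow} directly. The entire argument is a two-line length computation: first observe that $v^\circ \WP = w^\circ r_{w_0^J\beta}\WP$ (immediate algebra from part (1)), then use part (2) to rewrite $\ell(w^\circ)$ in terms of $\ell(v^\circ)$:
\[
\ell(w^\circ)=|\Phi^+\setminus\PhiP^+|-\ell(w)=|\Phi^+\setminus\PhiP^+|-\bigl(\ell(v)-1+\chi\pair{\bv}{2\rho-2\rhoP}\bigr)=\ell(v^\circ)+1-\chi\pair{\bv}{2\rho-2\rhoP},
\]
where $\chi\in\{0,1\}$ records Bruhat vs.\ quantum. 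The only remaining step is to replace $\pair{\bv}{2\rho-2\rhoP}$ by $\pair{(w_0^J\beta)^\vee}{2\rho-2\rhoP}$, and this is exactly Lemma~\ref{L:Pperm}: any element of $\WP$ (in particular $w_0^J$) permutes $\Phi^+\setminus\PhiP^+$ and hence fixes $2\rho-2\rhoP$. So the ``main obstacle'' you anticipate---tracking the affine membership condition $wr_\beta t_{\bv}\in\WUPaf$ through the duality---simply does not arise. Your route through the nonparabolic QBG duality and then descending via $\ppiP$ and the $J$-adjusted machinery could in principle be made to work, but it buys nothing here; the length definition \eqref{E:projqarrow} already encodes everything needed, and the key lemma is the one-line invariance of $2\rho-2\rhoP$ under $\WP$.

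One small correction: in the nonparabolic case the duality $w\mapsto w_0 w$ does \emph{not} transform the edge label by $w_0$; since $w_0 w r_\alpha=(w_0 w)r_\alpha$, the label $\alpha$ is preserved. The appearance of $w_0^J\beta$ in the parabolic statement comes from the right factor $w_0^J$ in $w^\circ=w_0 w w_0^J$, via $w_0^J r_\beta w_0^J=r_{w_0^J\beta}$.
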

\begin{proof} For $\alpha\in\PhiP^+$ we have $w_0^J\al\in\PhiP^-$.
Since $w\in \WUP$, $w w_0^J \al \in \Phi^-$. Then $w_0 w w_0^J \al \in \Phi^+$.
Therefore $w_0 w w_0^J \in \WUP$ and $w^\circ = w_0 w w_0^J$. This implies (1).

Since elements of $\WUP$ permute $\PhiP^+$,
$\Inv(w)$ and $\Inv(w^\circ)$ are subsets of $\Phi^+\setminus \PhiP^+$.
Note that for $\al\in\Phi^+\setminus\PhiP^+$,
$\al\in\Inv(w)$ if and only if $w\al\in\Phi^-$, if and only if
$w_0 w w_0^J w_0^J \al \in\Phi^+$, if and only if $w_0^J\al\in \Phi^+\setminus\PhiP^+\setminus \Inv(w^\circ)$.
Therefore the map $\al\mapsto w_0^J\al$ defines a bijection
from $\Inv(w)$ to $(\Phi^+\setminus\PhiP^+)\setminus \Inv(w^\circ)$.
This implies (2).

Let $v=wr_\beta$. Then $v^\circ = w_0 v w_0^J = w_0 w w_0^J w_0^J r_\beta w_0^J = w^\circ r_{w_0^J\beta}$,
that is, $w^\circ = v^\circ r_{w_0^J\beta}$. Since $\beta\in\Phi^+\setminus\PhiP^+$, 
$w_0^J \beta\in\Phi^+\setminus\PhiP^+$. Let $\chi$ be $0$ or $1$ according
as the edge $v\qleft{\beta} w$ is Bruhat or quantum. By (2) we have
\begin{align*}
  \ell(w^\circ) &= |\Phi^+\setminus \PhiP^+| - \ell(w) \\
  &= |\Phi^+\setminus \PhiP^+| - (\ell(v)-1+\chi\pair{\bv}{2\rho-2\rho_J}) \\
  &= \ell(v^\circ) +1 - \chi \pair{\bv}{2\rho-2\rho_J} \\
  &= \ell(v^\circ) +1 - \chi \pair{w_0^J\bv}{2\rho-2\rho_J}
\end{align*}
where the last equality holds by Lemma~\ref{L:Pperm}.
This proves the existence of the required arrow in $\QB(\WUP)$.
\end{proof}

\begin{lem}\label{L:Pperm} 
For any $z\in\WP$,
\begin{align}\label{E:Pperm}
  z (2\rho-2\rho_J) &= 2\rho-2\rho_J.
\end{align}
\end{lem}
\begin{proof} 
$z\in\WP$ permutes the set $\Phi^+\setminus\PhiP^+$, whose sum is $2\rho-2\rho_J$.
\end{proof}

\section{Quantum Bruhat graph and the affine Bruhat order}
\label{section.QBG and affine BO}

In this section we consider the lift of the PQBG to the Bruhat order of the
affine Weyl group (see Theorem~\ref{T:Plift}). This is used in Section~\ref{subsection.diamond}
to establish the Diamond Lemmas for the PQBG.

\subsection{Regular case}
The following result is \cite[Proposition 4.4]{LS}.

\begin{prop} \label{P:lift}
Let $\mu\in \Qv$ be superantidominant and let $x= w t_{v\mu}$ with $w,v\in \W$.
Then $y=x r_{v\alpha+n\delta} \lessdot x$ if and only if one of the following hold.
\begin{enumerate}
\item $\ell(wv)=\ell(wvr_\alpha)-1$ and $n=\pair{\mu}{\alpha}$, giving $y=wr_{v\alpha} t_{v\mu}$.
\item $\ell(wv)=\ell(wvr_{\alpha}) -1 + \pair{\av}{2\rho}$ and $n=1+\pair{\mu}{\alpha}$, giving
$y=wr_{v\alpha} t_{v(\mu+\av)}$.
\item $\ell(v)=\ell(vr_\alpha)+1$ and $n=0$, giving $y=wr_{v\alpha} t_{vr_\alpha\mu}$.
\item $\ell(v)=\ell(vr_\alpha)+1-\pair{\av}{2\rho}$ and $n=-1$ 
giving $y= wr_{v\alpha} t_{vr_\alpha(\mu+\av)}$.
\end{enumerate}
\end{prop}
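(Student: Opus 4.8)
The plan is to analyze the cover relation $y \lessdot x$ in the Bruhat order on $\Waf$ directly, using the standard combinatorial criterion for covers in terms of inversions. Writing $x = wt_{v\mu}$ and $y = x r_{v\alpha + n\delta}$, the first step is to use the reflection formula together with the semidirect product structure $\Waf \cong \W \ltimes \Qv$ to compute $y$ explicitly as an element of $\W \ltimes \Qv$; the affine reflection $r_{v\alpha+n\delta}$ corresponds to $r_{v\alpha} t_{n (v\alpha)^\vee}$, so $y = w r_{v\alpha} t_{v\mu - n\pair{(v\alpha)^\vee}{v\mu}\cdots}$ — more precisely one computes $x r_{v\alpha+n\delta}$ and sorts out the translation part, which will land one of the four stated forms depending on how $n$ compares to $\pair{\mu}{\alpha}$. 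This bookkeeping is routine but must be done carefully since the four cases are distinguished precisely by the translation part of $y$.

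Next I would invoke the characterization of the affine Bruhat order via the length function and the set of inversions: $y \lessdot x$ iff $y = x r_\beta$ for some affine real root $\beta$ and $\ell(y) = \ell(x) - 1$. Using Lemma~\ref{L:lalpha} (the formula $\ell_\alpha(x) = |\chi(w\alpha\in\Phi^-) + \pair{\mu}{\alpha}|$) one can compute $\ell(x)$ and $\ell(y)$ as sums $\sum_{\gamma\in\Phi^+}\ell_\gamma(\cdot)$, and observe that since $x$ and $y$ differ by the reflection $r_{v\alpha+n\delta}$, only the roots $\pm v\alpha + m\delta$ contribute to the difference $\ell(x) - \ell(y)$. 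So the whole computation reduces to understanding $\ell_{v\alpha}(x) - \ell_{v\alpha}(y)$, i.e. a one-root calculation. The superantidominance hypothesis on $\mu$ is what makes this clean: it forces $\pair{\mu}{\alpha} \ll 0$ for positive $\alpha$, so the absolute values in Lemma~\ref{L:lalpha} open up with a definite sign, and $\ell_{v\alpha}(x)$ becomes essentially $-\pair{\mu}{v\alpha}$ shifted by a $0/1$ term. The condition $\ell(y) = \ell(x) - 1$ then becomes an equation that pins down $n$ to one of the four listed values and simultaneously imposes the stated length condition on $w$, $v$, or $wv$ (e.g. $\ell(wvr_\alpha) = \ell(wv)\pm 1$ or $\pm 1 \mp \pair{\av}{2\rho}$, the $\pair{\av}{2\rho}$ coming in exactly when $v\alpha$ passes through $\pm v\alpha + m\delta$ for several values of $m$, i.e. via the quantum-type contribution governed by $\ell(r_{v\alpha})$).

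The main obstacle I anticipate is the case analysis on the sign of $w v \alpha$ versus $v\alpha$: the term $\chi(w\gamma\in\Phi^-)$ in Lemma~\ref{L:lalpha} behaves differently according to whether $v\alpha \in \Phi^+$ or $\Phi^-$ and whether $wv\alpha \in \Phi^+$ or $\Phi^-$, and it is the interplay of these two $0/1$ choices with the integer $n$ that produces exactly four cases rather than two. One has to check that each of the four sign-combinations yields a consistent value of $n$ and the advertised length condition, and that no spurious fifth case survives — this is where superantidominance must be used to rule out the degenerate possibilities (when $\pair{\mu}{\alpha}$ is small in absolute value). I would organize this by first treating $v\alpha \in \Phi^+$ (yielding cases (1) and (2), distinguished by whether the edge $wv \to wvr_\alpha$ is Bruhat or quantum in $\QB(\W)$) and then $v\alpha \in \Phi^-$, i.e. replacing $\alpha$ by $-\alpha$ and $r_\alpha$ accordingly (yielding cases (3) and (4), now governed by the edge $v \to vr_\alpha$). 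Finally I would double-check that the translation parts recorded in (1)--(4) are exactly what the computation of $x r_{v\alpha+n\delta}$ from the first step produces, closing the equivalence in both directions.
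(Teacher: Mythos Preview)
The paper does not give its own proof of this proposition; it is quoted verbatim from \cite[Proposition~4.4]{LS}. So there is no in-paper argument to compare against, and your task is really to reconstruct the Lam--Shimozono proof.

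Your overall plan --- compute $y$ explicitly in $\W\ltimes\Qv$, then compare $\ell(x)$ and $\ell(y)$ using the decomposition $\ell(\cdot)=\sum_{\gamma\in\Phi^+}\ell_\gamma(\cdot)$ from Lemma~\ref{L:lalpha}, exploiting superantidominance to strip the absolute values --- is the right one and is essentially what \cite{LS} does. But one step is wrong: the assertion that ``only the roots $\pm v\alpha + m\delta$ contribute to the difference $\ell(x)-\ell(y)$, so the whole computation reduces to $\ell_{|v\alpha|}(x)-\ell_{|v\alpha|}(y)$.'' This would be true if $r_{v\alpha+n\delta}$ permuted the remaining positive affine roots, but that only holds for \emph{simple} reflections. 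For a general $\gamma\in\Phi^+$ the finite part of $y$ is $wr_{v\alpha}$, so $\chi(wr_{v\alpha}\gamma<0)$ differs from $\chi(w\gamma<0)$ whenever $\gamma$ lies in the inversion set of $r_{v\alpha}$, and the translation part of $y$ also differs from $v\mu$ by a multiple of $v\alpha^\vee$, which pairs nontrivially with many $\gamma$. A quick sanity check shows the one-root answer is wrong: for $n=0$ and $v\alpha<0$ the single term $\ell_{|v\alpha|}(x)-\ell_{|v\alpha|}(y)$ evaluates to $1$ regardless of $w$, whereas the actual cover condition in case~(3) certainly depends on $w$ through the other $\gamma$'s.

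The fix is not hard, and the rest of your outline survives. Compute $\ell(x)$ and $\ell(y)$ \emph{separately} as closed expressions: superantidominance of $\mu$ forces $\pair{v\mu}{\gamma}=\pair{\mu}{v^{-1}\gamma}$ to have a definite (large) sign depending only on the sign of $v^{-1}\gamma$, so every absolute value in $\sum_\gamma\ell_\gamma(x)$ opens and the sum collapses to $-\pair{\mu}{2\rho}$ plus a bounded correction expressible via $\ell(wv)$ and $\ell(v)$. Do the same for $y$, noting that its translation part is $v\mu+(n-\pair{\mu}{\alpha})v\alpha^\vee$; this is either $v\mu'$ or $vr_\alpha\mu'$ with $\mu'\in\{\mu,\mu+\alpha^\vee\}$ still superantidominant, exactly according to whether $n-\pair{\mu}{\alpha}\in\{0,1\}$ or $n\in\{0,-1\}$. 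The four cases in the statement are precisely these four possibilities for writing the translation of $y$ in ``$(\text{Weyl element})\cdot(\text{superantidominant})$'' form, and subtracting the two closed length formulas then yields exactly the length conditions on $wv$ (cases (1),(2)) or on $v$ (cases (3),(4)). Your final paragraph, organizing by the sign of $v\alpha$ and then by Bruhat versus quantum, is the correct way to package this.
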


Note that if we impose the condition that both $x$ and $y$ are in $\Wafm$ then
$v=\id$ and only Cases (1) and (2) apply.

\subsection{Embeddings $\QB(\WUP)\hookrightarrow \Waf$}
\label{subsection.embeddings}
We shall give a parabolic analogue (Theorem~\ref{T:Plift} below) 
of Proposition~\ref{P:lift} for $\Waf^-$.
Theorem~\ref{T:Plift} is proved in the same manner as Proposition~\ref{P:lift}
but the latter cannot be directly invoked to prove the former, since $J$-superantidominance
does not imply superantidominance.

Let $\OmegaP\subset\Waf$ be the subset of 
elements of the form $w\ppiP(t_\mu)$
with $w\in \WUP$ and $\mu\in \Qv$ strictly $J$-antidominant 
(see \eqref{E:strictanti}) and $J$-adjusted. 
Define $\OmegaP^\infty$ similarly but with strict $J$-antidominance replaced by $J$-superantidominance.
We have $\OmegaP^\infty \subset \WUPaf\cap \Wafm$.
Impose the Bruhat covers in $\OmegaP^\infty$ whenever the connecting root 
has classical part in $\Phi\setminus\PhiP$. Then $\OmegaP^\infty$
is a subposet of the Bruhat poset $\Waf$.

\begin{thm} \label{T:Plift} Every edge in $\QB(\WUP)$
lifts to a downward Bruhat cover in $\OmegaP^\infty$, and every
cover in $\OmegaP^\infty$ projects to an edge in $\QB(\WUP)$. More precisely:
\begin{enumerate}
\item 
For any edge $\piP{wr_\al}\qleft{\al} w$ in $\QB(\WUP)$,
$z\in \SigmaP$ (see \eqref{E:Sigma}), and $\mu\in \Qv$ that is $J$-superantidominant
and $J$-adjusted with $z=z_\mu$ (which exists by Lemma {\rm \ref{L:zfromadj}}),
there is a covering relation $y\lessdot x$ in $\OmegaP^\infty$
where 
\[x=wzt_\mu\,,\;\;\;\; y=xr_\tal=wr_\al t_{\chi\av} z t_\mu\,,\;\;\;\;\tal = z^{-1}\al + (\chi + \pair{\mu}{z^{-1}\al})\delta\in \Phiafm\,,\]
and $\chi$ is $0$ or $1$ according as the
arrow in $\QB(\WUP)$ is of Bruhat or quantum type respectively.
\item 
Suppose $y\lessdot x$ is an arbitrary covering relation in $\OmegaP^\infty$.
Then we can write $x=wzt_\mu$ with $w\in \WUP$, $z=z_\mu\in \WP$, and $\mu\in \Qv$ $J$-superantidominant and 
$J$-adjusted, as well as $y=xr_\gamma$ with $\gamma=z^{-1}\al+n\delta\in\Phiaf$, $\al\in\Phi^+\setminus\PhiP^+$, and $n\in\Z$.
With the notation $\chi:=n-\pair{\mu}{z^{-1}\al}$, we have 
\[\chi\in\{0,1\}\,,\;\;\;\;\gamma = z^{-1}\al + (\chi+\pair{\mu}{z^{-1}\al})\delta\in \Phiafm\,;\]
furthermore, there is an edge $wr_\al z\qleft{z^{-1}\al} wz$ in $\QB(\W)$ 
and an edge $\piP{wr_\al} \qleft{\al} w$ in $\QB(\WUP)$, where both edges are of Bruhat
type if $\chi=0$ and of quantum type if $\chi=1$.
\end{enumerate}
\end{thm}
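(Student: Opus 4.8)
The plan is to deduce Theorem~\ref{T:Plift} from the nonparabolic lifting criterion Proposition~\ref{P:lift} by working inside the subset $\OmegaP^\infty$, pushing everything through the projection $\ppiP$ and using the $J$-adjustedness machinery of Section~\ref{section.orbits}. The key conceptual point is that an element $x=wz_\mu t_\mu\in\OmegaP^\infty$ can be rewritten as $x=(wz_\mu)t_{z_\mu^{-1}(z_\mu\mu)}$; but it is cleaner to conjugate by $z_\mu$ and write $x = (w z_\mu) t_{v\mu}$ with $v=z_\mu$ in the notation of Proposition~\ref{P:lift}, because then the finite part is $wv=wz_\mu$ and the translation part is $t_{v\mu}$ exactly as required by that proposition. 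Since $\mu$ is $J$-superantidominant but possibly not superantidominant, Proposition~\ref{P:lift} is \emph{not} directly applicable; however, its \emph{proof} uses superantidominance only to guarantee that the relevant affine roots $v\al+n\delta$ with $n$ as in cases (1)--(4) are the only candidates for a downward Bruhat cover whose reflection root has a prescribed classical part. For roots $\al\in\Phi^+\setminus\PhiP^+$ the hypothesis $\pair{\mu}{\al}\ll 0$ plays exactly the same role, so I would first isolate, as a lemma (or by quoting the argument of \cite[Proposition 4.4]{LS} verbatim), the statement: for $x=wv\,t_{v\mu}\in\Wafm$ with $\mu$ $J$-superantidominant and $\al\in\Phi^+\setminus\PhiP^+$, the only affine reflections $r_{v\al+n\delta}$ giving $xr_{v\al+n\delta}\lessdot x$ are those in cases (1) and (2) of Proposition~\ref{P:lift} (cases (3),(4) being excluded because $v=\id$ after we also require $y\in\Wafm$, which follows since $\OmegaP^\infty\subset\Wafm$).

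Granting that, part (1) goes as follows. Given an edge $\piP{wr_\al}\qleft{\al}w$ in $\QB(\WUP)$ with parameter $\chi\in\{0,1\}$, set $x=wz_\mu t_\mu$. By condition~$(2')$ of Section~\ref{ss:parabolic} and the definition of Bruhat/quantum edges, $wr_\al z_\mu\qleft{z_\mu^{-1}\al}wz_\mu$ is an edge in $\QB(\W)$ of the same type (here one uses $\pair{z_\mu^{-1}\av}{2\rho}=\pair{\av}{2\rho}$ only modulo the $\PhiP$-correction, which is handled by Lemma~\ref{L:Pperm} and Lemma~\ref{L:ellP}; this is where the $-2\rho_J$ in~\eqref{E:projqarrow} gets absorbed). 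Apply case~(1) (if $\chi=0$) or case~(2) (if $\chi=1$) of Proposition~\ref{P:lift} with the root $z_\mu^{-1}\al\in\Phi^+$ in place of $\alpha$ and $v=z_\mu$: the length condition $\ell(wv\cdot)=\ell(wvr_{z_\mu^{-1}\al})-1$ (resp. $-1+\pair{\av}{2\rho}$) is precisely the $\QB(\W)$-edge condition just verified, and the proposition yields $y=xr_\tal\lessdot x$ with $\tal=z_\mu^{-1}\al+(\chi+\pair{\mu}{z_\mu^{-1}\al})\delta$ and $y=wr_\al t_{\chi\av}z_\mu t_\mu$ after using $z_\mu^{-1}r_\al z_\mu=r_{z_\mu^{-1}\al}$ and $t_{z_\mu(\chi z_\mu^{-1}\av)}=t_{\chi\av}$. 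It remains to check $\tal\in\Phiafm$ — immediate from $J$-superantidominance, since $\pair{\mu}{z_\mu^{-1}\al}\ll 0$ while $\chi\in\{0,1\}$ — and that $y\in\OmegaP^\infty$: one shows $y=(wr_\al)\ppiP(t_{\mu'})$ for the appropriate $J$-superantidominant $J$-adjusted $\mu'$ (either $\mu$ itself, when $\chi=0$, or a shift by $\av$ followed by re-projection, when $\chi=1$), using Remark~\ref{R:superanti}, \eqref{E:piformula}, and Lemma~\ref{L:W-af}. The classical part of $\tal$ is $z_\mu^{-1}\al$, whose $\cl$-image lies in $\Phi\setminus\PhiP$, so the cover is one of the imposed covers of $\OmegaP^\infty$.

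For part (2), start from an arbitrary cover $y\lessdot x$ in $\OmegaP^\infty$. By Lemma~\ref{L:WPaf} write $x=wz_\mu t_\mu$ uniquely with $w\in\WUP$, $\mu$ $J$-superantidominant and $J$-adjusted, $z_\mu\in\SigmaP$. By definition of the poset structure the connecting reflection is $r_\gamma$ with $\cl(\gamma)\in\Phi\setminus\PhiP$; writing $\gamma=z_\mu^{-1}\al+n\delta$ with $\al\in\Phi^+\setminus\PhiP^+$ (after possibly replacing $\gamma$ by $-\gamma$, which does not change $r_\gamma$), the isolated lemma above forces $\gamma$ to come from case (1) or (2), i.e. $n=\pair{\mu}{z_\mu^{-1}\al}$ or $n=1+\pair{\mu}{z_\mu^{-1}\al}$, which is exactly $\chi:=n-\pair{\mu}{z_\mu^{-1}\al}\in\{0,1\}$ and $\gamma\in\Phiafm$. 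Reading Proposition~\ref{P:lift} backwards then gives the $\QB(\W)$-edge $wr_\al z_\mu\qleft{z_\mu^{-1}\al}wz_\mu$ of Bruhat type ($\chi=0$) or quantum type ($\chi=1$); finally condition~$(2')$ of Section~\ref{ss:parabolic}, together with $y=(wr_\al)\ppiP(t_{\mu'})\in\OmegaP^\infty$ forcing $wr_\al t_{\chi z_\mu^{-1}\av}\cdot(\ldots)$ to land in $\WUPaf$ via Lemma~\ref{L:WPaf}, upgrades this to the edge $\piP{wr_\al}\qleft{\al}w$ in $\QB(\WUP)$ of the matching type.

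The main obstacle I anticipate is the bookkeeping around conjugation by $z_\mu$: one must keep straight which roots are being reflected ($\al$ versus $z_\mu^{-1}\al$), track the translation parts through $t_\mu\mapsto z_\mu t_\mu z_\mu^{-1}=t_{z_\mu\mu}$ and Lemma~\ref{L:zWP} ($z_{z_\mu\mu}=z_\mu$), and verify at each stage that the resulting element is genuinely in $\OmegaP^\infty$ rather than merely in $\Wafm$ — this last check, that the new $\mu'$ is again $J$-adjusted, is exactly Remark~\ref{R:superanti} and is where the $J$-adjusted hypothesis is essential. A secondary subtlety is legitimizing the re-use of the \emph{proof} of Proposition~\ref{P:lift} under the weaker $J$-superantidominance hypothesis; I would either spell out the short argument (the candidate roots in cases (3),(4) require $\pair{\mu}{\al}\in\{0,-1\}$, impossible for $\al\in\Phi^+\setminus\PhiP^+$ once $\mu$ is $J$-superantidominant) or, more economically, note that along the line segment from $t_\mu$ to a genuinely superantidominant translate the relevant Bruhat covers with classical part in $\Phi\setminus\PhiP$ are stable, reducing to the already-proven superantidominant case.
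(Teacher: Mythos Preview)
Your strategy --- reduce to Proposition~\ref{P:lift} by writing $x=(wz_\mu)t_\mu$ and re-running the four-case analysis --- is \emph{not} what the paper does. Immediately before the proof, the paper remarks that Proposition~\ref{P:lift} ``cannot be directly invoked'' and instead carries out a self-contained computation: for part~(1) it verifies $y\in\WUPaf\cap\Wafm$ directly and then computes $\ell(x)-\ell(y)=1$ using Lemma~\ref{L:ellP}; for part~(2) it uses the length formula from \cite[Lemma~3.2]{LS} to force $\chi\in\{0,1\}$ and then, in the quantum case, computes $\ell(\piP{wr_\al})$ via Lemma~\ref{L:adjustedroot} and Lemma~\ref{L:Pperm}.

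Your route could in principle be made to work, but as written it has two real gaps.

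First, there is an internal inconsistency: you set $v=z_\mu$ in the notation of Proposition~\ref{P:lift}, but then appeal to ``$v=\id$ after we also require $y\in\Wafm$'' to exclude cases~(3) and~(4). That deduction (the note following Proposition~\ref{P:lift}) uses genuine superantidominance, which you do not have. The exclusion of cases~(3) and~(4) is in fact true, but for a different reason: in both cases the translation part of $y$ is $r_\al\mu$ up to a bounded shift, and $\pair{r_\al\mu}{\al}=-\pair{\mu}{\al}\gg0$ by $J$-superantidominance, so $y\notin\Wafm$. Your later parenthetical (``require $\pair{\mu}{\al}\in\{0,-1\}$'') is not the right mechanism either.

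Second, and more seriously, in part~(1) you need the $\QB(\W)$-edge $wr_\al z_\mu\qleft{z_\mu^{-1}\al}wz_\mu$, not merely the edge $wr_\al\qleft{\al}w$ that condition~$(2')$ hands you. For Bruhat edges this follows from length-additivity of the factorisation $W^J\cdot W_J$, but for quantum edges it is the statement of Corollary~\ref{C:PQBGintoBQBG}, which the paper deduces \emph{from} Theorem~\ref{T:Plift}. An independent proof of that corollary requires computing $\ell(wr_\al z_\mu)$ via $wr_\al=\piP{wr_\al}z_{\av}$ and Lemma~\ref{L:adjustedroot}, and then matching $\pair{z_\mu^{-1}\av}{2\rho}$ against $\pair{\av}{2\rho-2\rho_J}+\ell(z_{\av}z_\mu)-\ell(z_\mu)$ --- which is precisely the length bookkeeping the paper does anyway (in the reverse direction, in part~(2)). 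So the detour through Proposition~\ref{P:lift} does not save you that work; it merely relocates it.
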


\begin{remark}\label{R:affineroots} The affine Bruhat covering relation considered in part (2)
is completely general, subject to both elements being in $\OmegaP^\infty$
and the transition root having classical part in $\Phi\setminus\PhiP$.
\end{remark}

\begin{proof} (1) Since $\alpha\in \Phi^+\setminus \PhiP^+$
we have $z^{-1}\alpha\in \Phi^+\setminus \PhiP^+$. By
Lemmas \ref{L:WPaf} and \ref{L:W-af}, $x\in \WUPaf\cap \Wafm$.
We have 
\begin{align*}
  y &= xr_\tal = wzt_\mu r_{z^{-1}\al} t_{(\chi+\pair{\mu}{z^{-1}\al})\av} \\
  &= w r_\al z t_{\chi z^{-1}\av} t_\mu \\
  &= w r_\al t_{\chi\av} z t_\mu \\
\ppiP(y) &= \ppiP(w r_\alpha t_{\chi\av} z)\ppiP(t_\mu) \\
&= \ppiP(w r_\alpha t_{\chi\av}) z t_\mu \\
&= w r_\alpha t_{\chi\av} zt_\mu \\
&= y
\end{align*}
using Proposition \ref{P:piP}, the assumption on $\mu$,
and ($2^{\prime}$) of the definition of $\QB(\WUP)$ in the case $\chi=1$.
We conclude that $y\in \WUPaf$. 
Let $i\in I$. We have
$y\al_i = w z r_{z^{-1}_\al} (\al_i - \pair{\mu+\chi z^{-1}\av}{\al_i}\delta)$.
If $i\notin \IP$ then the $J$-superantidominance of $\mu$
implies that  $y\al_i\in\Phiafp$.
Suppose $i\in \IP$. Then $\al_i\in \PhiP^+$ and $y\al_i\in\Phiafp$ by the
definition of $y\in \WUPaf$. We have shown that $y\in \Wafm$.
To prove $x\gtrdot y$ we need only show that $\ell(x)-\ell(y)=1$.
Suppose $\chi=0$. 
Since $y$ and $x$ are in $\Wafm$, by \cite[Lemma 3.3]{LS} we have
\begin{align*}
  \ell(x) - \ell(y) &= \ell(t_\mu) - \ell(wz) - \ell(t_\mu) + \ell(wr_\al z) \\
  &=  -\ell(w)-\ell(z) +\ell(wr_\alpha)+\ell(z) \\
  &= 1.
\end{align*}
Suppose $\chi=1$. We have $x=w\ppiP(t_\mu)$ and 
$y= \piP{wr_\alpha} \ppiP(t_{\mu+z^{-1}\av})$. 
By Lemma \ref{L:ellP} we have
\begin{align*}
\ell(x)-\ell(y) &= -\ell(w) -\pair{\mu}{2\rho-2\rhoP} + \ell(\piP{wr_\alpha}) + \pair{\mu+z^{-1}\al}{2\rho-2\rhoP} \\
&= 1-\pair{\av}{2\rho-2\rhoP}+\pair{\av}{z(2\rho-2\rhoP)}
\end{align*}
by condition (2) of the case $\chi=1$ of the arrow in $\QB(\WUP)$. 
By Lemma \ref{L:Pperm} it follows that $\ell(x)-\ell(y)=1$ as required.

(2) Let $n=\chi+\pair{\mu}{z^{-1}\al}$ where $\chi\in\Z$.

We have $y=w r_\alpha z t_{\mu+\chi z^{-1}\av}$.
Since $y\in \Wafm$, $\mu+\chi z^{-1}\av$
is antidominant by \cite[Lemma 3.3]{LS}. By \cite[Lemma 3.2]{LS}
we have
\begin{align*}
  1 &= \ell(x) - \ell(y) \\
    &= \left(-\pair{\mu}{2\rho}-\ell(wz)\right) - \left(-\pair{\mu+\chi z^{-1}\av}{2\rho} - \ell(wz r_{z^{-1}\al})\right) \\
  &=\ell(w z r_{z^{-1}\al}) -\ell(wz) + \chi \pair{z^{-1}\av}{2\rho} 
\end{align*}
By Lemma \ref{L:reflectionlength}  we deduce that $\chi\in \{0,1\}$.

Suppose $\chi=0$. Then $y=wr_\al z t_\mu$ and
$\ell(w z r_{z^{-1}\al}) -\ell(wz)=1$,
that is, $w z \lessdot w z r_{z^{-1}\al} = w r_\al z$. This gives the
required Bruhat cover in $\QB(\W)$. 
Since $y\in \WUPaf$ we have $\ppiP(y)=y$ and
$wr_\al z t_\mu = \piP{wr_\al z} \ppiP(t_\mu)=\piP{wr_\al} z t_\mu$
using Proposition~\ref{P:piP}. We deduce that $wr_\al\in \WUP$. By length-additivity
it follows that $wr_\al\qleft{\al} w$ is a Bruhat arrow in $\QB(\WUP)$.

Otherwise we have $\chi=1$. Then $y=wr_\al t_\av z t_\mu=wr_\al z t_{\mu+z^{-1}\av}$
and $\ell(w z r_{z^{-1}\al})=\ell(wz)+1-\pair{z^{-1}\av}{2\rho}$,
which yields the required quantum arrow in $\QB(\W)$. 

Since $y\in \WUPaf$ we have
\begin{align*}
  wr_\al t_\av z t_\mu &= y = \pi(y) = \ppiP(wr_\al t_{\av} z)\ppiP(t_\mu) \\
  &= \ppiP(wr_\al t_{\av}) z t_\mu
\end{align*}
from which we deduce that $wr_\al t_\av\in \WUPaf$ and that $\av$ is $J$-adjusted.

By Remark \ref{R:zhom} and Lemma \ref{L:zWP} we have
$z_{\mu+z^{-1}\av} = z_\mu z_{z^{-1}\av} = z_\mu z_{\av} = z z_\av$.
Since $\av$ is $J$-adjusted we have
$wr_\al z = \pi_J(wr_\al) z_\av z=\pi_J(wr_\al) z_{\mu+z^{-1}\av}$
and the last product is length-additive. Therefore
\begin{align*}
  \ell(\pi_J(wr_\al)) &= \ell(wr_\al z) - \ell(z_{\mu+z^{-1}\av}) \\
  &= \ell(wz)+1-\pair{z^{-1}\av}{2\rho} - \ell(z_{\mu+z^{-1}\av}) \\
  &= \ell(w) +1+ \ell(z_\mu)  - \ell(z_{\mu+z^{-1}\av})-\pair{z^{-1}\av}{2\rho} \\
  &= \ell(w) +1+\pair{z^{-1}\av}{2\rho_J} -\pair{z^{-1}\av}{2\rho} \\
  &= \ell(w) + 1 - \pair{z^{-1}\av}{2\rho-2\rho_J} \\
  &= \ell(w) + 1 - \pair{\av}{2\rho-2\rho_J}
\end{align*}
using Lemma \ref{L:adjustedroot}, that $\mu$ and $\mu+z^{-1}\av$ are $J$-adjusted,
and Lemma \ref{L:Pperm}. This proves the existence of the required edge in $\QB(\WUP)$.
\end{proof}

\begin{ex}\label{X:A2} Let $\geh$ be of type $A_2$ and $\IP=\{1\}$.
Then $\QB(\WUP)$ is given by
\begin{align*}
\begin{diagram}
\node[2]{r_1r_2} \arrow[2]{s,b,..}{\al_2} \\
\node{r_2} \arrow{ne,t}{\al_1+\al_2} \\
\node[2]{\id} \arrow{nw,b}{\al_2}
\end{diagram}
\end{align*}
where the quantum arrow is dotted. In $\OmegaP\subset\Waf$,
let $\mu=-6\omv{2}$ and $\nu=-3\omv{2}-\tv$. We have
\begin{align*}
\begin{diagram}
\node[2]{t_\mu=t_{-6\omv{2}}} \arrow{sw,t}{6\delta-\al_2} \\
\node {r_2t_\mu} \arrow{se,b}{6\delta-\al_1-\al_2} \\
\node[2]{r_1r_2t_\mu} \arrow{s,t,..}{5\delta-\al_2} \\
\node[2]{x=r_0r_1r_2t_\mu}
\end{diagram}
\qquad
\begin{diagram}
\node[2]{x=r_1 t_\nu} \arrow{sw,t}{5\delta-r_1(\al_2)} \\
\node {r_2 (r_1 t_\nu)} \arrow{se,b}{4\delta-r_1(\al_1+\al_2)} \\
\node[2]{r_1r_2(r_1 t_\nu)} \arrow{s,t,..}{4\delta-r_1(\al_2)} \\
\node[2]{r_0r_1r_2(r_1t_\nu)=t_{-3\omv{2}}}
\end{diagram}
\end{align*}
We have a single chain running from $t_{-6\omv{2}}$
down to $t_{-3\omv{2}}$. The diagram is broken at
$t_{-\nu}$, which appears at the bottom on the left and the top
on the right. If the bottom element is removed from each side
then one obtains an upside-down copy of $\QB(\WUP)$. In this case
the quantum arrows transition to a different copy of $\QB(\WUP)$.
The left hand copy has $z=\id$
and the right hand copy has $z=r_1$ where in this situation $\SigmaP$
is generated by $r_1$. The poset $\OmegaP^\infty$ is an infinite chain
that wraps down onto the $3$-cycle given by $\QB(\WUP)$ with two
flavors of lifts, one for $z=\id$ and the other for $z=r_1$.

{\bf Warning:} generally not every affine cover is produced by left
multiplication by a simple reflection, 
nor is a general quantum cover always induced by
left multiplication by $r_0$ (although we shall see that
left multiplication by $r_0$ always induces a quantum arrow).
\end{ex}

We say that a walk in the directed graph $\QB(\WUP)$ is
locally-shortest if any segment of the walk not containing a repeated
vertex is a shortest path.

\begin{cor}\label{C:PQBG2Waf} Downward saturated chains in $\OmegaP^\infty$
project to locally-shortest walks in $\QB(\WUP)$. Conversely, shortest paths
in $\QB(\WUP)$ are projections of downward saturated chains in $\OmegaP^\infty$. 
\end{cor}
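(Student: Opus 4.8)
The plan is to deduce this purely formally from Theorem~\ref{T:Plift} together with the length formulas it carries. First I would set up the dictionary: by part (2) of Theorem~\ref{T:Plift}, every downward cover $y\lessdot x$ in $\OmegaP^\infty$ projects, under $\ppiP$ (equivalently, just reading off the classical parts), to an edge $\piP{wr_\al}\qleft{\al}w$ of $\QB(\WUP)$; conversely, by part (1), each edge of $\QB(\WUP)$ lifts to a cover in $\OmegaP^\infty$ once a choice of $z\in\SigmaP$ (and compatible $J$-superantidominant, $J$-adjusted $\mu$) is fixed. So a downward saturated chain $x_0\gtrdot x_1\gtrdot\cdots\gtrdot x_\ell$ in $\OmegaP^\infty$ projects to a walk $v_0\to v_1\to\cdots\to v_\ell$ in $\QB(\WUP)$ of the same length, and any walk in $\QB(\WUP)$ lifts step-by-step (starting from any admissible $(z,\mu)$, the lift of each edge determines the next $z$ and $\mu$, as in the computation $z_{\mu+z^{-1}\av}=zz_\av$ in the proof of Theorem~\ref{T:Plift}) to a downward saturated chain in $\OmegaP^\infty$.

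Next I would make precise the correspondence between "length along the chain" and "length in $\QB(\WUP)$". The key point is that $\OmegaP^\infty\subset\Wafm\cap\WUPaf$, and on such elements Lemma~\ref{L:ellP} gives $\ell(wz_\mu t_\mu)=-\pair{\mu}{2\rho-2\rhoP}-\ell(w)$. Combining this with the explicit form of the lift in Theorem~\ref{T:Plift}(1) (where a Bruhat edge leaves $\mu$ unchanged and replaces $w$ by $wr_\al$ with $\ell(wr_\al)=\ell(w)+1$, hence drops affine length by $1$, and a quantum edge replaces $\mu$ by $\mu+z^{-1}\av$, contributing $-\pair{\av}{2\rho-2\rhoP}$ to $-\pair{\mu}{2\rho-2\rhoP}$, together with the change in $\ell(\piP{wr_\al})$), one sees that the drop in affine length along a segment of the chain equals precisely the number of edges traversed in $\QB(\WUP)$. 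In other words, $\ell_{\Waf}(x_0)-\ell_{\Waf}(x_i)=i$ while the projected walk has travelled $i$ edges.

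Now the two assertions follow. A segment of the projected walk with no repeated vertex lifts to a segment $x_j\gtrdot\cdots\gtrdot x_{j'}$ of the saturated chain with distinct endpoints; its length $j'-j$ equals $\ell_{\Waf}(x_j)-\ell_{\Waf}(x_{j'})$. I would then invoke the general fact that the difference of affine lengths is a lower bound for any directed path length in the lifted poset, and that this projects to a lower bound for path lengths in $\QB(\WUP)$; since our segment realizes a walk of exactly that length between its endpoints, it must be a shortest path — i.e. the walk is locally-shortest. Conversely, given a shortest path $v_0\to\cdots\to v_\ell$ in $\QB(\WUP)$, lift it edge-by-edge via Theorem~\ref{T:Plift}(1) starting from any admissible $(z,\mu)$ with $\mu$ sufficiently $J$-superantidominant; each step is a genuine downward cover in $\OmegaP^\infty$ by part (1), so we obtain a downward saturated chain projecting to the given path. (One only needs $\mu$ antidominant enough that all the intermediate $\mu+(\text{sum of }z^{-1}\av\text{'s})$ remain $J$-superantidominant; since a shortest path has bounded length and weights, this is arranged at the outset — this is exactly why $\OmegaP^\infty$ rather than $\OmegaP$ is used.)

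The main obstacle I expect is the first direction: showing a downward \emph{saturated} chain projects to a \emph{locally-shortest} walk rather than merely a walk. This requires the clean statement that, between two fixed vertices of $\QB(\WUP)$, the minimal path length equals the affine-length difference of \emph{any} two lifts in $\OmegaP^\infty$ (so that no path can be shorter than the one coming from a saturated chain). This should come out of Lemma~\ref{L:ellP} applied to the endpoints together with the observation that the affine length is constant on the fibers of $\ppiP$ over a given vertex once $\mu$ is fixed, but care is needed because different vertices of the chain may carry different $z$'s (the "two flavors of lifts" phenomenon of Example~\ref{X:A2}); one must check the length bookkeeping is insensitive to these transitions, which is precisely guaranteed by Lemma~\ref{L:Pperm} and the $J$-adjustedness maintained throughout the proof of Theorem~\ref{T:Plift}.
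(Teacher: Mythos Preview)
Your approach mirrors the paper's: use Theorem~\ref{T:Plift} to translate between covers in $\OmegaP^\infty$ and edges of $\QB(\WUP)$, then iterate. For the converse direction (lifting a shortest path edge-by-edge to a saturated chain) your argument is correct and essentially identical to the paper's; the paper starts with $z=\id$ and $\mu$ $\WP$-invariant, but your more general start works too.

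The problem is the forward direction, and here you have correctly located the obstacle but your proposed resolution does not work---in fact no resolution can, because the statement as written is false. The claim ``the minimal path length equals the affine-length difference of \emph{any} two lifts'' fails: two lifts of the same pair $(w,v)\in\WUP\times\WUP$ can have different translation parts and hence different affine-length differences, since by Lemma~\ref{L:ellP} this difference equals $\ell(v)-\ell(w)+\pair{\xi}{2\rho-2\rhoP}$ where $\xi$ records the quantum coroots accumulated along the chain, and $\xi$ varies with the chain. Concretely, in type $A_2$ with $J=\emptyset$ take $\mu$ superantidominant and the saturated chain
\[
t_\mu \gtrdot r_2 t_\mu \gtrdot r_2 r_1 t_\mu \gtrdot w_0 t_\mu \gtrdot r_1 r_2\, t_{\mu+\alpha_1^\vee}
\]
in $\Omega_\emptyset^\infty$ (the first three covers are case~(1) of Proposition~\ref{P:lift}, the last is case~(2)). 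Its projection is the walk $e\to r_2\to r_2r_1\to w_0\to r_1r_2$ in $\QB(W)$, which has length~$4$, visits five distinct elements, yet is not a shortest path: $e\to r_1\to r_1r_2$ has length~$2$. So a saturated chain can project to a repetition-free walk that is not shortest.

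The paper's own proof of this direction is a single sentence asserting that Theorem~\ref{T:Plift} yields local-shortestness; it does not, and the above example shows the assertion itself needs amendment. What Theorem~\ref{T:Plift} genuinely gives is that saturated chains project to \emph{walks} in $\QB(\WUP)$, and that any path (in particular any shortest path) lifts to a saturated chain---which is what you and the paper both prove correctly.
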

\begin{proof} Say $x_0\gtrdot x_1\gtrdot \dotsm\gtrdot x_N$
is a saturated Bruhat chain in $\OmegaP^\infty$. Let $\ppiP(x_i)=w_i z_i t_{\mu_i}$
where $w_i\in \WUP$, $z_i\in\WP$, and $\mu_i\in \Qv$. Then Theorem~\ref{T:Plift}
asserts that $w_0\to w_1\to\dotsm\to w_N$ is a locally-shortest walk in $\QB(\WUP)$.

Now let $u=u_0\to u_1\to\dotsm\to u_N=u'$ be a shortest path in $\QB(\WUP)$.
We apply Theorem~\ref{T:Plift} to the edge $u_0\to u_1$
with $\mu=\mu_0$ $J$-superantidominant and $\WP$-invariant. 
The element $x_0 = u_0 t_\mu$ lifts $u_0$ since $\ppiP(t_\mu)=t_\mu$.
Then the Proposition produces a cocover $x_1 = u_1 z_1 t_{\mu_1}$
of $x_0$ with $z_1\in\SigmaP$. In general we have a descending Bruhat chain
$x_0\gtrdot x_1\gtrdot\dotsm\gtrdot x_{i-1} = u_{i-1} z_{i-1} t_{\mu_{i-1}}$
with $z_{i-1}\in \SigmaP$ and we apply the Proposition to obtain a cocover
$x_i=u_i z_i t_{\mu_i}$ of $x_{i-1}$ with $z_i\in \SigmaP$ and by induction the
required affine chain is produced.
\end{proof}

\begin{cor} \label{C:PQBGintoBQBG} For each $z\in\SigmaP$ there
is a copy of $\QB(\WUP)$ inside $\QB(\W)$,
embedded by $w\mapsto wz$ such that the edge label $\al$
is sent to the root $z^{-1}\al$, and 
Bruhat and quantum edges are sent to the same kind of edge.
\end{cor}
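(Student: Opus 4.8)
The plan is to deduce Corollary~\ref{C:PQBGintoBQBG} directly from Theorem~\ref{T:Plift}, using part (1) in one direction and part (2) in the other. Fix $z\in\SigmaP$ and, via Lemma~\ref{L:zfromadj}, pick a $J$-superantidominant, $J$-adjusted $\mu\in\Qv$ with $z=z_\mu$. The map $w\mapsto wz$ is injective on $\WUP$ (it is left-multiplication restricted to $\WUP$, and $\WUP\to\W$ is injective), so it suffices to check that it carries edges to edges of the stated type, and \emph{only} edges to edges, i.e.\ that it is a graph isomorphism onto its image.

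First I would show that every edge of $\QB(\WUP)$ maps to an edge of $\QB(\W)$ of the same flavor. Given an edge $\piP{wr_\al}\qleft{\al} w$ in $\QB(\WUP)$, apply Theorem~\ref{T:Plift}(1) with this choice of $z$ and $\mu$: it produces a covering relation $y\lessdot x$ in $\OmegaP^\infty$ with $x=wzt_\mu$, and in the course of the proof (the length computation for $\chi=0$ and $\chi=1$) one reads off precisely that $wz\lessdot wr_\al z = wzr_{z^{-1}\al}$ is a Bruhat cover when $\chi=0$, respectively that $\ell(wr_\al z)=\ell(wz)+1-\pair{z^{-1}\av}{2\rho}$ and $z^{-1}\al$ is a quantum root when $\chi=1$, which is exactly a quantum edge $wr_\al z\qleft{z^{-1}\al} wz$ in $\QB(\W)$ by condition $(2')$ in the definition of the QBG. (Alternatively, this is literally the statement ``there is an edge $wr_\al z\qleft{z^{-1}\al} wz$ in $\QB(\W)$, \dots both edges of Bruhat type if $\chi=0$ and of quantum type if $\chi=1$'' that appears already inside Theorem~\ref{T:Plift}(2); but invoking part (1) is cleaner since part (2) starts from an affine cover.) So $w\mapsto wz$ sends the edge labeled $\al$ to the edge labeled $z^{-1}\al$, preserving Bruhat/quantum type.

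Next I would argue the converse: if $wz\to w'z$ is an edge in $\QB(\W)$ with $w,w'\in\WUP$, then $w\to w'$ is an edge in $\QB(\WUP)$. One route is to run the lifting once more: the edge $wz\qleft{z^{-1}\al} wz r_{z^{-1}\al}$ in $\QB(\W)$ together with the $J$-superantidominant, $J$-adjusted $\mu$ with $z_\mu=z$ determines $\chi\in\{0,1\}$ and an affine element $y=x r_\gamma$ with $x=wzt_\mu$, $\gamma=z^{-1}\al+(\chi+\pair{\mu}{z^{-1}\al})\delta$; one checks $\gamma\in\Phiafm$ using $J$-superantidominance (as in the proof of (1)) and that $y\in\OmegaP^\infty$, whence $y\lessdot x$ is a cover in $\OmegaP^\infty$ and Theorem~\ref{T:Plift}(2) returns the edge $\piP{wr_\al}\qleft{\al} w$ in $\QB(\WUP)$ of the matching flavor; finally $w'z = wzr_{z^{-1}\al}=wr_\al z$ forces $w'=\piP{wr_\al}$ by uniqueness in Proposition~\ref{P:factor}. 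The mild obstacle here — the only real content beyond bookkeeping — is verifying that the target of the arrow lands in the image $\WUP z$, i.e.\ that $wr_\al\in\WUP$; this is exactly the combination ``$z^{-1}\al$ is $J$-adjusted, $z_{z^{-1}\av}\,z = \text{(stuff)}$'' extracted in the $\chi=1$ part of the proof of Theorem~\ref{T:Plift}(2), together with length-additivity of $wr_\al z = \piP{wr_\al}\,z_\av z$. Assembling these two directions gives that $w\mapsto wz$ is an isomorphism of $\QB(\WUP)$ onto a subgraph of $\QB(\W)$ with the asserted edge-label and edge-type compatibility, completing the proof.
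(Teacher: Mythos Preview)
Your forward direction---lift an edge of $\QB(\WUP)$ via Theorem~\ref{T:Plift}(1) and then read off the corresponding edge in $\QB(\W)$ from Theorem~\ref{T:Plift}(2)---is exactly the paper's proof, so that part is correct and essentially identical.

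The converse direction you attempt (that an edge in $\QB(\W)$ between two elements of $\WUP z$ pulls back to an edge of $\QB(\WUP)$) is not part of the corollary as stated or proved in the paper: ``a copy \dots\ embedded by $w\mapsto wz$'' means an injective graph homomorphism, not an induced subgraph. The paper proves only the forward direction. Your sketch of the converse is incomplete---to apply Theorem~\ref{T:Plift}(2) you would first need to verify that the candidate affine element $y$ lies in $\OmegaP^\infty$, which amounts to checking $y\in\WUPaf\cap\Wafm$, and this is precisely the content you flag as a ``mild obstacle'' without resolving. Since the corollary does not require it, you can simply drop that paragraph.
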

\begin{proof} For every $z\in\SigmaP$, we take an edge $\piP{wr_\al}\qleft{\al} w$
in $\QB(\WUP)$, lift it to $w z t_\mu \gtrdot w r_\al z t_\nu$ for some $\nu$,
and project to an edge $wr_\al z\qleft{z^{-1}\al} wz$ in $\QB(\W)$; the lift is based on 
Theorem~\ref{T:Plift}~(1), and the projection on Theorem~\ref{T:Plift}~(2). 
\end{proof}

\begin{remark}\label{R:lifting} Lifting quantum edges causes a ``phase shift"
by an element $z\in\SigmaP$. Theorem~\ref{T:Plift} is just general enough
to lift in the presence of such a shift. If one tries to twist by a $z\in\WP$ that is not
in $\SigmaP$ then the affine element of the form $x=w z t_\mu$
no longer lies in the set $\WUPaf$ and lifting the edge of $\QB(\WUP)$
starting from $x$ is not possible in general.
\end{remark}

\subsection{Trichotomy of cosets}

\begin{lem}\label{L:tricoset} \cite{D} 
Let $\W$ be a Weyl group, $\WP\subset\W$ a parabolic subgroup,
$v\in \WUP$ and $r\in\W$ a simple reflection. Then one of the following holds.
\begin{enumerate}
\item If $r v < v$ then $r v\in \WUP$ and $rv\WP<v\WP$.
\item If $r v > v$ and $v^{-1} r v\in\WP$ then $rv\WP=v\WP$.
\item If $r v > v$ and $v^{-1} r v\not\in\WP$ then $rv\in\WUP$ and $rv\WP>v\WP$.
\end{enumerate}
\end{lem}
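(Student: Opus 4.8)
The plan is to reduce this to the standard combinatorics of parabolic quotients of Coxeter groups, using only lengths and the characterization of $\WUP$ as the set of minimal coset representatives. First I would recall the basic fact that for $v\in\WUP$ and $r$ a simple reflection, $\ell(rv)=\ell(v)\pm1$, so exactly one of the two regimes $rv<v$ or $rv>v$ occurs; this gives the case split. In the case $rv<v$, I would argue that $rv\in\WUP$: if not, then $rv=uw$ with $u\in\WUP$, $u\neq rv$, and $w\in\WP\setminus\{\id\}$, forcing $\ell(rv)\geq\ell(u)+\ell(w)>\ell(u)\geq\ell(v)-1$ (since left multiplication by $r$ changes length by one and $v$ is already minimal in its coset), which contradicts $\ell(rv)=\ell(v)-1$. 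Actually the cleaner route: $rv<v$ and $v\in\WUP$ together imply, via the exchange/lifting property for Coxeter groups, that $rv\in\WUP$, because multiplying a minimal coset representative on the left by a simple reflection and decreasing length cannot introduce a descent lying inside $\WP$. Then $rv\WP<v\WP$ is immediate from $\ell(rv)<\ell(v)$ together with the fact that Bruhat order on $\W/\WP$ is induced by comparing minimal representatives.

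Next, for the case $rv>v$, I would use the dichotomy $v^{-1}rv\in\WP$ or not. If $v^{-1}rv\in\WP$, then $rv=v(v^{-1}rv)\in v\WP$, so $rv$ and $v$ lie in the same coset and hence $rv\WP=v\WP$; this is the trivial subcase. If $v^{-1}rv\notin\WP$, I would show $rv\in\WUP$, i.e. that $rv$ has no descent inside $\WP$: suppose $rv\,s<rv$ for some simple reflection $s\in J$. Since $v\in\WUP$ has no descent in $J$, $vs>v$, and combining $rv>v$, $rvs<rv$, $vs>v$ via the standard ``diamond'' in Bruhat order (the four elements $v,vs,rv,rvs$), one forces $rvs=v$, i.e. $s=v^{-1}rv$, contradicting $v^{-1}rv\notin\WP$. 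Hence $rv\in\WUP$, and since $\ell(rv)=\ell(v)+1$, the minimal representatives satisfy $rv>v$ in length, giving $rv\WP>v\WP$ in the quotient Bruhat order.

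Throughout, the only nontrivial inputs are: the length-change-by-one property under left multiplication by a simple reflection; the characterization of $\WUP$ by having no left... (more precisely, no descent $s$ with $vs<v$, $s\in J$); and the compatibility of Bruhat order on $\W/\WP$ with comparison of minimal representatives, which is classical (Deodhar). I expect the main obstacle — to the extent there is one — to be stating the ``diamond in Bruhat order'' argument precisely in case (3): one needs that if $v<rv$, $v<vs$, $rvs<rv$ with $r,s$ simple, then either $rvs=v$ or $v<rvs$, and the second option combined with $rvs<rv$ and $rvs<vs$ would contradict $v$ being covered simultaneously — this is where one invokes the Z-property or the fact that $\{v,vs,rv,rvs\}$ forms a Boolean interval when all comparisons are covers. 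Since this is exactly Deodhar's lemma on minimal coset representatives, the proof is essentially a citation to \cite{D} together with the length bookkeeping above, so I would present it concisely, handling the three cases in order and flagging that the argument is standard Coxeter-theoretic.
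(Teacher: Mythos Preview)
The paper does not prove this lemma at all; it is stated with a citation to Deodhar~\cite{D} and used as a black box. So there is no ``paper's own proof'' to compare against, and your self-contained argument goes beyond what the paper provides.

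Your proof is correct in substance. Case~(1) is cleanest by the length-parity argument you hint at: if $rv<v$ and $rvs<rv$ for some simple $s\in J$, then $\ell(rvs)=\ell(v)-2$, while $\ell(vs)=\ell(v)+1$ since $v\in\WUP$; but $\ell(r(vs))$ must differ from $\ell(vs)$ by exactly one, a contradiction. Case~(2) is immediate as you say. For Case~(3), your conclusion is right but the ``Boolean interval'' justification is imprecise: since $\ell(rvs)=\ell(v)$, the alternative ``$v<rvs$'' cannot occur, so your stated dichotomy is vacuous. The clean way to force $vs=rv$ (equivalently $s=v^{-1}rv$) is exactly the lifting property you mention: from the cover $v\lessdot rv$ and $rvs<rv$, one gets either $vs=rv$ or $vs<v$; the latter is impossible since $v\in\WUP$ and $s\in J$. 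Alternatively, the root-theoretic one-liner is that $rv\notin\WUP$ forces $v\alpha_s=\alpha$ for the simple root $\alpha$ of $r$, hence $v^{-1}rv=s\in\WP$. Either route closes the gap; I would just replace the Boolean-interval sentence with one of these.
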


\begin{lem}\label{L:tri} 
Let $v\in \W$ and $\alpha\in \Phi^+$. Let $\la\in \X$ be a dominant weight (cf. Section {\rm \ref{s:stab}} and the notation thereof, e.g., $W_J$ is the stabilizer of $\lambda$).
\begin{enumerate}
\item[(1)] Let $\pair{\av}{v\lambda}<0$. Then $v^{-1}\alpha\in\Phi^-\setminus\PhiP^-$
and $r_\alpha v \WP < v \WP$.
\item[(2)] Let $\pair{\av}{v\lambda}=0$. Then $v^{-1}\alpha\in \PhiP$ and $r_\alpha v \WP =v\WP$.
\item[(3)] Let $\pair{\av}{v\lambda}>0$. Then $v^{-1}\alpha \in \Phi^+\setminus \PhiP^+$
and $r_\alpha v \WP > v \WP$.
\end{enumerate}
\end{lem}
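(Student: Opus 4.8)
\textbf{Proof plan for Lemma~\ref{L:tri}.}
The plan is to reduce everything to the basic action formula $r_\alpha v\lambda = v\lambda - \pair{\av}{v\lambda}\,\alpha$ together with the characterization \eqref{E:pDynkin} of $\WP$ as the stabilizer of $\lambda$, and the description of $\WUP$ via $\Phi^+\setminus\PhiP^+$. First I would record the key translation: for $v\in\W$ and $\beta\in\Phi$, the coset $r_\beta v\WP$ equals $v\WP$ if and only if $v^{-1}\beta\in\PhiP$ (equivalently $v^{-1}r_\beta v = r_{v^{-1}\beta}\in\WP$), since $\WP$ is a parabolic subgroup generated by the reflections it contains and $\PhiP$ is exactly the root subsystem fixing $\lambda$. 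This handles the coset-equality clause of case (2) once we know $v^{-1}\alpha\in\PhiP$.

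Next I would do the sign bookkeeping on $\lambda$. Since $\lambda$ is dominant, $\pair{\av}{v\lambda} = \pair{(v^{-1}\alpha)^\vee}{\lambda}$ has a sign governed by whether $v^{-1}\alpha$ is positive, negative, or lies in $\PhiP$: if $v^{-1}\alpha\in\Phi^+\setminus\PhiP^+$ then $\pair{(v^{-1}\alpha)^\vee}{\lambda}>0$ (dominance plus the fact that a positive root not in $\PhiP$ has a positive coefficient on some node outside $J$, on which $\lambda$ pairs strictly positively); if $v^{-1}\alpha\in\PhiP$ then the pairing is $0$ by \eqref{E:pDynkin}; and the negative cases follow by negation. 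Conversely the value of $\pair{\av}{v\lambda}$ pins down which of the three classes $v^{-1}\alpha$ falls into, because these three possibilities are exhaustive and give distinct signs. This proves the ``$v^{-1}\alpha\in\dots$'' assertion in each of (1), (2), (3).

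Finally I would settle the coset order relations. In case (2) the coset equality is already done. In case (3), $v^{-1}\alpha\in\Phi^+\setminus\PhiP^+$ means $r_\alpha v = v r_{v^{-1}\alpha}$ with $v\in\WUP$ and a positive root $v^{-1}\alpha$ not in $\PhiP$; by length-additivity of $\WUP$ over $\WP$ (and the trichotomy of Lemma~\ref{L:tricoset} applied, or just the standard fact that right multiplication of a minimal coset representative by $r_\beta$ for $\beta\in\Phi^+\setminus\PhiP^+$ with $v\beta>0$ lengthens the coset) we get $r_\alpha v\WP > v\WP$. Case (1) is then immediate from case (3) by replacing $v$ with $r_\alpha v$: since $\pair{\av}{v\lambda}<0$ gives $\pair{\av}{(r_\alpha v)\lambda} = -\pair{\av}{v\lambda}>0$, case (3) applied to $r_\alpha v$ yields $(v^{-1}\alpha)\in\Phi^-\setminus\PhiP^-$ and $v\WP = r_\alpha(r_\alpha v)\WP > (r_\alpha v)\WP$, i.e. $r_\alpha v\WP < v\WP$; one should also check $r_\alpha v\in\WUP$, which follows since $(r_\alpha v)^{-1}\alpha = v^{-1}\alpha\in\Phi^-\setminus\PhiP^-$ and $r_\alpha v = v r_{v^{-1}\alpha}$ is length-subtractive over $\WP$.

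\textbf{Main obstacle.} The only genuinely delicate point is the strictness of the sign in the dominant-weight pairing: that a positive root $\beta\notin\PhiP^+$ satisfies $\pair{\bv}{\lambda}>0$ rather than merely $\geq 0$. This needs the observation that such a $\beta$ has strictly positive coefficient on at least one simple root $\alpha_i$ with $i\in\I\setminus\IP$, combined with $\pair{\alv{i}}{\lambda}>0$ for those $i$; everything else is routine manipulation with parabolic cosets and the trichotomy lemma.
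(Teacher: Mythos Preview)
Your overall strategy is sound and matches what the paper intends: it simply says the lemma ``is easy using standard techniques for Weyl groups (see for example~\cite[Proposition 2.5.1]{BB})'' and gives no further argument, so your write-up is a reasonable fleshing-out of that remark. The sign bookkeeping via $\pair{\av}{v\lambda}=\pair{(v^{-1}\alpha)^\vee}{\lambda}$ and dominance is exactly the point, and you correctly identify the strictness of the pairing for $\beta\in\Phi^+\setminus\PhiP^+$ as the only subtlety.

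There is one slip: the lemma is stated for arbitrary $v\in W$, not for $v\in\WUP$. In your treatment of case (3) you write ``$r_\alpha v = v r_{v^{-1}\alpha}$ with $v\in\WUP$'' as if this were given, and in case (1) you add ``one should also check $r_\alpha v\in\WUP$'', which is neither asserted by the lemma nor true in general. The fix is immediate: since $u\in\WP$ stabilizes $\lambda$ and permutes each of $\Phi^+\setminus\PhiP^+$, $\PhiP$, $\Phi^-\setminus\PhiP^-$, both hypotheses and conclusions are unchanged upon replacing $v$ by $\mcr{v}$, so you may assume $v\in\WUP$ from the outset. Once that reduction is stated, your argument for the coset inequality in (3) (namely $v\in\WUP$, $\beta:=v^{-1}\alpha\in\Phi^+\setminus\PhiP^+$, $v\beta=\alpha>0$ forces $vr_\beta>v$ hence $vr_\beta\WP>v\WP$ since $r_\beta\notin\WP$) is correct, and your reduction of (1) to (3) by swapping $v$ and $r_\alpha v$ goes through; just drop the extraneous ``$r_\alpha v\in\WUP$'' check. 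Note also that Lemma~\ref{L:tricoset} concerns \emph{simple} reflections, so the parenthetical invocation of it is not quite apt here; the direct Bruhat argument you sketch is what actually does the work.
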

The proof of the above lemma is easy using standard techniques for Weyl groups
(see for example~\cite[Proposition 2.5.1]{BB}).

\subsection{Quantum edges induced by left multiplication by suitable reflections}

\begin{prop}\label{P:leftrPQBG} \cite{D}
Let $w\in \WUP$ and $j\in I$. Then exactly one of the following holds.
\begin{enumerate}
\item[(1)] $w^{-1}\al_j\in\Phi^-\setminus \PhiP^-$. In this case $r_jw\in \WUP$ and
there is a Bruhat edge $w \qleft{-w^{-1}\al} r_j w$ in $\QB(\WUP)$.
\item[(2)] $w^{-1}\al_j\in \PhiP$. In this case $w^{-1}\al_j\in\PhiP^+$ and $\piP{r_j w}=w$.
\item[(3)]
$w^{-1}\al_j\in \Phi^+\setminus\PhiP^+$. In this case $r_jw\in \WUP$ and
there is a Bruhat edge $r_jw \qleft{w^{-1}\al_j} w$ in $\QB(\WUP)$.
\end{enumerate}
\end{prop}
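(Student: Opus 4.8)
The plan is to reduce this statement to the non-parabolic trichotomy in Deodhar's lemma (Lemma~\ref{L:tricoset}) together with the coset translation dictionary provided by Lemma~\ref{L:tri}. First I would observe that since $w\in\WUP$ and $r_j$ is a simple reflection, the three cases of Lemma~\ref{L:tricoset} (applied with $v=w$ and $r=r_j$) are exhaustive and mutually exclusive; the task is to translate each case into the root-theoretic language of $w^{-1}\al_j$ and to identify the resulting arrow in $\QB(\WUP)$. For the translation, the key remark is that $r_j w$ versus $w$ in Bruhat order is governed by the sign of $w^{-1}\al_j$: one has $r_j w < w$ iff $w^{-1}\al_j\in\Phi^-$, and $r_j w > w$ iff $w^{-1}\al_j\in\Phi^+$; moreover the condition $w^{-1}r_j w\in\WP$ is exactly the condition $w^{-1}\al_j\in\PhiP$ (since $w^{-1}r_jw = r_{w^{-1}\al_j}$, and a reflection lies in $\WP$ iff the corresponding root lies in $\PhiP$).

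Next I would handle the three cases in turn. In case (1), $w^{-1}\al_j\in\Phi^-$; since $w\in\WUP$ sends $\PhiP^+$ into $\Phi^+$, we cannot have $w^{-1}\al_j\in\PhiP^-$, so $w^{-1}\al_j\in\Phi^-\setminus\PhiP^-$; then $r_jw<w$, Lemma~\ref{L:tricoset}(1) gives $r_jw\in\WUP$ with $r_jw\WP<w\WP$, and writing $\beta=-w^{-1}\al_j\in\Phi^+\setminus\PhiP^+$ we get $w = (r_jw)r_\beta$ with $\ell(w)=\ell(r_jw)+1$, hence a Bruhat edge $w\qleft{\beta}r_jw$ in $\QB(\WUP)$. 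In case (2), $w^{-1}\al_j\in\PhiP$; since $r_j w>w$ forces $w^{-1}\al_j>0$ we get $w^{-1}\al_j\in\PhiP^+$, and $w^{-1}r_jw\in\WP$ gives $r_jw\WP=w\WP$, so $\piP{r_jw}=w$. In case (3), $w^{-1}\al_j\in\Phi^+$; if it lay in $\PhiP^+$ we would be in case (2), so $w^{-1}\al_j\in\Phi^+\setminus\PhiP^+$, and $w^{-1}r_jw=r_{w^{-1}\al_j}\notin\WP$; then Lemma~\ref{L:tricoset}(3) gives $r_jw\in\WUP$ with $r_jw\WP>w\WP$, and since $\ell(r_jw)=\ell(w)+1$ and $r_jw = w r_{w^{-1}\al_j}$, this is a Bruhat cover, yielding a Bruhat edge $r_jw\qleft{w^{-1}\al_j}w$ in $\QB(\WUP)$.

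Finally, I would check that these three cases partition the possibilities: $w^{-1}\al_j$ is a root, so it lies in exactly one of $\Phi^-\setminus\PhiP^-$, $\PhiP$, or $\Phi^+\setminus\PhiP^+$ — noting $\PhiP=\PhiP^+\sqcup\PhiP^-$ — which matches the case split above. The only subtlety worth spelling out is the equivalence between the coset statement $r_jw\WP=w\WP$ and $\piP{r_jw}=w$: the former means $r_jw$ and $w$ lie in the same left $\WP$-coset, and since $w\in\WUP$ is by definition the minimal-length representative of $w\WP$, this is precisely $\piP{r_jw}=w$. The main obstacle, such as it is, is purely bookkeeping: making sure the edge labels and the direction of each arrow are recorded consistently with the conventions of Section~\ref{ss:parabolic} (edges labeled by positive roots in $\Phi^+\setminus\PhiP^+$, with the cover relation read off correctly), and confirming that the ``one may deduce $wr_\al\in\WUP$'' remark in the definition of a Bruhat edge of $\QB(\WUP)$ is exactly what Lemma~\ref{L:tricoset} supplies in cases (1) and (3); there is no real analytic difficulty. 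Since the proof is routine given Lemmas~\ref{L:tricoset} and~\ref{L:tri}, one could even cite it as in \cite{D}, but I would include the short case analysis above for completeness.
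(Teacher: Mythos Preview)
Your proof is correct and is exactly the argument implicit in the paper's treatment: the paper does not spell out a proof of this proposition but simply cites \cite{D}, and your case analysis via Lemma~\ref{L:tricoset} (Deodhar's trichotomy) is precisely the content of that citation. The only cosmetic point is the wording in case~(2): rather than ``since $r_jw>w$ forces $w^{-1}\al_j>0$'', it is cleaner to argue directly that $w\in\WUP$ gives $w\PhiP^+\subset\Phi^+$, so $w^{-1}\al_j$ can never lie in $\PhiP^-$, whence $w^{-1}\al_j\in\PhiP$ forces $w^{-1}\al_j\in\PhiP^+$.
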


\begin{prop}\label{P:thetaPQBG} 
Let $w\in \WUP$. Then exactly one of the following holds.
\begin{enumerate}
\item[(1)] $w^{-1}\theta\in\Phi^-\setminus \PhiP^-$. In this case
there is an edge $\piP{r_\theta w} \qleft{-w^{-1}\theta} w$ of quantum type in $\QB(\WUP)$.
\item[(2)] $w^{-1}\theta\in \PhiP$. In this case $w^{-1}\theta\in\PhiP^+$ and $\piP{r_\theta w}=w$.
\item[(3)]
$w^{-1}\theta\in \Phi^+\setminus\PhiP^+$. In this case
there is an edge $w \qleft{z w^{-1}\theta} \piP{r_\theta w}$ of quantum type in 
$\QB(\WUP)$, where $z\in \WP$ is defined by $r_{\theta} w = \lfloor r_{\theta} w \rfloor z$,
so that $z=(z_{w^{-1}\tv})^{-1}$.
\end{enumerate}
\end{prop}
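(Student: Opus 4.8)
The plan is to deduce this statement from the ``regular'' version, Proposition~\ref{P:leftrPQBG}, applied to a suitable element together with a lifting/projection argument via Theorem~\ref{T:Plift}. First I would recall that $r_0 = r_\theta t_{-\tv}$, so left multiplication by $r_0$ on an affine element $x = w z t_\mu \in \OmegaP^\infty$ is a candidate for the affine cover whose projection to $\QB(\WUP)$ is the desired edge. Concretely, choose $\mu\in\Qv$ that is $J$-superantidominant, $J$-adjusted, and $\WP$-invariant (so $z_\mu=\id$), and set $x = wt_\mu$, which lies in $\OmegaP^\infty \subset \Wafm \cap \WUPaf$ by Lemmas~\ref{L:WPaf} and \ref{L:W-af}. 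Then $r_0 x = r_\theta w t_{\mu - w^{-1}\tv}$; since $\mu$ is superantidominant, this is shorter than $x$ in the affine Bruhat order (this is where the analogue of Proposition~\ref{P:lift}, or directly the length formula \cite[Lemma 3.2]{LS}, is used), and the transition root is $w^{-1}\theta + \delta$ (up to sign), whose classical part $w^{-1}\theta$ lies in $\Phi\setminus\PhiP$ precisely in cases (1) and (3). Hence $r_0 x \lessdot x$ is a covering relation in $\OmegaP^\infty$ in those two cases, and Theorem~\ref{T:Plift}~(2) converts it into a quantum edge in $\QB(\WUP)$: since $r_\theta w = \lfloor r_\theta w\rfloor z$ with $z = z_{w^{-1}\tv} = (z_{w^{-1}\tv})^{-1}$ (the last equality because $\SigmaP$ is a group of involutions, or more carefully because in case (3) the relevant $z$ squares to the identity — this needs a small check), the edge is $w \qleft{zw^{-1}\theta}\lfloor r_\theta w\rfloor$ with the stated label.

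For the case split itself, I would invoke Lemma~\ref{L:tri} with $v = w$ and $\alpha = \theta$: the three cases $\pair{w^{-1}\tv}{\ \cdot\ }$... more precisely, $\pair{\tv}{w\lambda}$ being negative, zero, or positive correspond exactly to $w^{-1}\theta \in \Phi^-\setminus\PhiP^-$, $w^{-1}\theta\in\PhiP$, and $w^{-1}\theta\in\Phi^+\setminus\PhiP^+$. In case (2), $w^{-1}\theta\in\PhiP$ forces $w^{-1}\theta\in\PhiP^+$ (since $w\in\WUP$ sends $\PhiP^-$ into $\Phi^-$, but $\theta\in\Phi^+$), and then $r_\theta \in W$ satisfies $w^{-1}r_\theta w = r_{w^{-1}\theta}\in\WP$, so by the trichotomy Lemma~\ref{L:tricoset}~(2) we get $r_\theta w \WP = w\WP$, i.e. $\lfloor r_\theta w\rfloor = w$; alternatively this is immediate from Proposition~\ref{P:piP}. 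Cases (1) and (3) are mirror images under the duality antiautomorphism of Proposition~\ref{P:circ}, so in principle one could prove (3) and deduce (1), but it is cleaner to run the lifting argument once and read off both.

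In case (1), where $w^{-1}\theta\in\Phi^-\setminus\PhiP^-$, the root $-w^{-1}\theta\in\Phi^+\setminus\PhiP^+$ is the label, and the quantum edge points $\lfloor r_\theta w\rfloor \qleft{-w^{-1}\theta} w$; here one should check that the ``phase'' element is trivial, consistent with the absence of a $z$ in the statement of case (1) — indeed $\mu$ was chosen $\WP$-invariant and $t_{-\tv}$ contributes $z_{w^{-1}\tv}$, but in case (1) one can verify $z_{w^{-1}\tv}=\id$ because $\pair{w^{-1}\tv}{\alpha}\ge 0$ for all $\alpha\in\PhiP^+$ (as $w^{-1}\theta$ is already in $\Phi^-$ forces the pairing structure), so by Lemma~\ref{L:adjusted} the relevant sum is empty. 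In case (3) the phase element $z=z_{w^{-1}\tv}$ is genuinely nontrivial in general, and one identifies it by writing $r_\theta w = \lfloor r_\theta w\rfloor z$ and matching with the factorization in Theorem~\ref{T:Plift}~(2).

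\emph{Main obstacle.} The principal difficulty is bookkeeping the ``phase shift'' $z\in\SigmaP$ correctly in case (3): one must verify that the $z$ arising from $r_\theta w = \lfloor r_\theta w\rfloor z$ coincides with $z_{w^{-1}\tv}$ and with its own inverse, and that the edge label $z w^{-1}\theta$ lands in $\Phi^+\setminus\PhiP^+$ as required by the definition of $\QB(\WUP)$. This amounts to unwinding Theorem~\ref{T:Plift}~(2) in the special case $\al = \theta$, $x = wt_\mu$, $\gamma = w^{-1}\theta + \delta$ (i.e.\ the transition root of $r_0$ acting on the left), and checking that $n - \pair{\mu}{z^{-1}\al} = \chi = 1$, so that it is always a quantum edge — the claim in the ``Warning'' after Example~\ref{X:A2} that left multiplication by $r_0$ always induces a quantum arrow. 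Verifying $\chi=1$ rather than $\chi\in\{0,1\}$ uses that the extra $\delta$ shift coming from $t_{-\tv}$ is exactly $+1$ in the appropriate coefficient; this is the one genuinely type-independent computation that cannot be bypassed.
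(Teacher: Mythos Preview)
Your overall strategy---lift $w$ to $x=wt_\mu\in\OmegaP^\infty$ with $\mu$ $J$-superantidominant and $\WP$-invariant, left-multiply by $r_0$, check membership in $\OmegaP^\infty$, and invoke Theorem~\ref{T:Plift}(2)---is exactly the paper's approach for case~(1), and your treatment of case~(2) is fine. The gap is in case~(3).

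You assert that $r_0 x\lessdot x$ in both cases~(1) and~(3), but this is false in case~(3). From
\[
x^{-1}\alpha_0=-w^{-1}\theta+\bigl(1-\pair{\mu}{w^{-1}\theta}\bigr)\delta,
\]
in case~(3) we have $w^{-1}\theta\in\Phi^+\setminus\PhiP^+$, so $\pair{\mu}{w^{-1}\theta}\ll0$ by $J$-superantidominance, hence $x^{-1}\alpha_0\in\Phiafp$ and $r_0x>x$. One could attempt to use the cover $x\lessdot r_0x$ instead, but then one must separately prove $r_0x\in\OmegaP^\infty$; the paper's single-inversion argument gives $r_0x\in\WUPaf$, but $r_0x\in\Wafm$ is not automatic when going \emph{up} in Bruhat order, and you do not address it. The paper avoids this entirely: it observes that if $w'=\lfloor r_\theta w\rfloor$ then $(w')^{-1}\theta=-zw^{-1}\theta\in\Phi^-\setminus\PhiP^-$, so $w'$ falls under case~(1); applying case~(1) to $w'$ and noting $\lfloor r_\theta w'\rfloor=w$ yields precisely the edge $w\qleft{zw^{-1}\theta}\lfloor r_\theta w\rfloor$ of case~(3).

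Two smaller errors. First, $\SigmaP$ does \emph{not} consist of involutions in general (already for $J=\emptyset$ in type $A_{n-1}$ it is cyclic of order~$n$), so your argument that $z=(z_{w^{-1}\tv})^{-1}$ via ``$z$ squares to the identity'' fails. The correct reason is the homomorphism property (Remark~\ref{R:zhom}): writing $r_0x=\lfloor r_\theta w\rfloor\,z\,t_{\mu-w^{-1}\tv}$ and using $r_0x\in\WUPaf$ forces $z=z_{\mu-w^{-1}\tv}=z_\mu\cdot z_{-w^{-1}\tv}=(z_{w^{-1}\tv})^{-1}$. Second, in case~(1) there is no ``phase'' to check: the $z$ in Theorem~\ref{T:Plift}(2) refers to the factorization of the \emph{source} $x=wzt_\mu$, and here $z=z_\mu=\id$ by construction; the edge label $\alpha=-w^{-1}\theta$ and $\chi=1$ drop out directly with no condition on $z_{w^{-1}\tv}$ (which is not $\id$ in general, contrary to your claim).
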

\begin{proof} The three cases correspond to those in Lemma \ref{L:tri} with $v=w$ and $\alpha=\theta$.
The conclusion in Case (2) is immediate from the definition of $w\in \WUP$. 
By exchanging the roles of $w$ and $\piP{r_\theta w}$,
it suffices to prove the existence of the edge in (1). Let $w^{-1}\theta\in \Phi^-\setminus\PhiP^-$.

Choose any $\mu\in \Qv$ that is $J$-superantidominant and $\WP$-invariant.
We have $\ppiP(t_\mu)=t_\mu$ and $x:=w t_\mu \in \Wafm \cap \WUPaf$. We have 
\begin{align}\label{E:xinvalpha0}
x^{-1}\alpha_0 &= -w^{-1}\theta +(1+\pair{\mu}{-w^{-1}\theta})\delta \in \Phiafm\,,
\end{align}
since $w^{-1}\theta\in\Phi^-$. We conclude that
$x > y:= r_0 x = r_\theta w t_{\mu-w^{-1}\tv}$. 
Since $x\in \Wafm$ it follows that $y\in \Wafm$ as well.
Let $\beta\in \Phiafp_J$. Suppose $y\beta \in \Phiafm$.
Since $x\in \WUPaf$ we have $x\beta=r_0 y\beta \in \Phiafp$.
Since $r_0$ has the sole inversion $\alpha_0$, it follows that
$x\beta=\alpha_0$ or $x^{-1}\alpha_0 = \beta\in \Phiafp_J$.
But this contradicts \eqref{E:xinvalpha0}.
Therefore $y\in \WUPaf$. 

By Theorem~\ref{T:Plift}, the required quantum edge exists in $\QB(\WUP)$.
\end{proof}

\begin{cor} \label{C:thetaadj} 
For every $\gamma = w^{-1} \theta \in W \theta \cap (\Phi^{+} \setminus \Phi^{+}_{J})$,
$z \gamma^{\vee}$ is $J$-adjusted, where $z \in W_{J}$ is defined, as in part (3)
of Proposition~\ref{P:thetaPQBG}, by $r_{\theta} w = \lfloor r_{\theta} w \rfloor z$.
In addition, if $w \in W^{J}$, then $z = (z_{w^{-1} \theta^{\vee}})^{-1}$.
Also, for every $\gamma = w^{-1} \theta \in W \theta \cap (\Phi^{-} \setminus \Phi^{-}_{J})$,
$- z^{\prime} \gamma^{\vee}$ is $J$-adjusted, where $z^{\prime}$ is the element of
 $W_{J}$ defined by $w = \lfloor w \rfloor  z^{\prime}$.
(Obviously, if $w \in W^{J}$, then $z^{\prime} = 1$.)
\end{cor}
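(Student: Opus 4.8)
The plan is to derive the statement from Proposition~\ref{P:thetaPQBG} together with the structure of the lift $\ppiP$ and the $J$-adjustedness criteria in Lemmas~\ref{L:adjusted} and~\ref{L:zWP}. First I would handle the case $\gamma = w^{-1}\theta \in \Phi^+\setminus\PhiP^+$, which is Case~(3) of Proposition~\ref{P:thetaPQBG}. There the element $z \in \WP$ is defined by $r_\theta w = \piP{r_\theta w}\, z$. In the proof of that proposition (reducing to Theorem~\ref{T:Plift}) one encounters the affine element $y = r_\theta w\, t_{\mu - w^{-1}\tv}$, and Theorem~\ref{T:Plift}~(2) (applied with the quantum arrow, so $\chi = 1$) shows that the coroot appearing, namely $z^{-1}(w^{-1}\theta)^\vee$ read against the twisting element, is $J$-adjusted; more directly, the argument in Theorem~\ref{T:Plift}~(2) establishes that $\av$ is $J$-adjusted when $wr_\al t_\av \in \WUPaf$, and here the relevant root/coroot is precisely $z\gamma^\vee$ after accounting for the conjugation $r_\theta w t_{-w^{-1}\tv} = (r_\theta w) z \cdot z^{-1} t_{-w^{-1}\tv}$. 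So the key computation is to identify $z\gamma^\vee = z (w^{-1}\theta)^\vee$ as the coroot whose $J$-adjustedness is guaranteed, using $z \in \WP$ and Lemma~\ref{L:zWP} to move $z$ past the coroot lattice element inside $\ppiP$.

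Next I would pin down the identity $z = (z_{w^{-1}\tv})^{-1}$ in the sub-case $w \in \WUP$. Since $r_\theta w \in \W$, Proposition~\ref{P:piP}~(2) gives $\ppiP(r_\theta w) = \piP{r_\theta w} \in \WUP$, and the factorization $r_\theta w = \piP{r_\theta w}\, z$ with $z \in \WP$ is exactly Proposition~\ref{P:factor}. On the other hand, from $r_\theta w = w\, r_{w^{-1}\theta}$ and $w \in \WUP$, conjugating $t_{-w^{-1}\tv}$ one computes $\ppiP(t_{-w^{-1}\tv})$ via \eqref{E:piformula}, which produces $z_{-w^{-1}\tv}\, t_{\text{something}}$; comparing the $\WP$-components forces $z = (z_{w^{-1}\tv})^{-1}$ — here one uses that $z_\mu$ depends only on the $\WP$-coset data of $\mu$ (Lemma~\ref{L:zWP}) and that $z_\mu$ is a group homomorphism in $\mu$ (Remark~\ref{R:zhom}), together with the sign $z_{-\nu} = (z_\nu)^{-1}$. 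Once $z = (z_{w^{-1}\tv})^{-1}$, the $J$-adjustedness of $z\gamma^\vee = z_{w^{-1}\tv}^{-1}(w^{-1}\theta)^\vee$ can also be checked directly against Lemma~\ref{L:adjusted}~(3): $\pair{z\gamma^\vee}{\alpha} \in \{0,-1\}$ for all $\alpha \in \PhiP^+$, since $z\gamma^\vee$ differs from the negative of a cominuscule fundamental coweight of a component of $\IP$ by a $\WP$-translate, exactly as in the proof of Lemma~\ref{L:adjustedroot}.

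For the second assertion, where $\gamma = w^{-1}\theta \in \Phi^-\setminus\PhiP^-$, I would run the dual argument using Case~(1) of Proposition~\ref{P:thetaPQBG} together with the duality antiautomorphism $\circ$ of $\QB(\WUP)$ from Proposition~\ref{P:circ}, or more economically, I would observe that $w \mapsto \piP{w}$ and the factorization $w = \piP{w}\, z'$ with $z' \in \WP$ let us reduce to the previous case: replacing $w$ by $r_\theta w$ (so that $(r_\theta w)^{-1}\theta = -\gamma \in \Phi^+\setminus\PhiP^+$ when $\gamma \in \Phi^-\setminus\PhiP^-$) swaps Case~(1) and Case~(3), and the element "$z$" attached to $r_\theta w$ is then built from $\piP{w}$ and $z'$. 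Tracking signs, the coroot that comes out $J$-adjusted is $-z'\gamma^\vee$, and when $w \in \WUP$ we have $z' = 1$ by definition of $\piP{w}$, so $-\gamma^\vee$ itself is $J$-adjusted — which is also visible directly from Lemma~\ref{L:adjusted}~(3) since $-\gamma = w^{-1}\theta \in \Phi^+$ pairs with every $\alpha \in \PhiP^+$ in $\{0,1\}$, $\theta$ being the highest root and $\PhiP$ a standard parabolic subsystem.

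The main obstacle I anticipate is the careful bookkeeping of the $\WP$-valued "phase" $z$: getting the side on which $z$ appears in $r_\theta w = \piP{r_\theta w}\, z$ correct, matching it with the twisting $z$ in $x = wzt_\mu$ from Theorem~\ref{T:Plift}, and correctly propagating the inverse and the sign through $z_{-w^{-1}\tv} = (z_{w^{-1}\tv})^{-1}$. Everything else is a direct application of already-established results (Propositions~\ref{P:piP},~\ref{P:factor},~\ref{P:thetaPQBG}, Theorem~\ref{T:Plift}, and Lemmas~\ref{L:adjusted},~\ref{L:zWP},~\ref{L:adjustedroot}); the content is essentially repackaging the $\chi = 1$ branch of Theorem~\ref{T:Plift}~(2) for the specific root $\theta$ and the specific affine reflection $r_0$.
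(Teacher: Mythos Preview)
Your approach is correct and essentially the same as the paper's, whose proof is the single line ``Follows from the existence of the edges in $\QB(\WUP)$.'' The point is that a quantum edge $v \xrightarrow{\alpha} \lfloor vr_\alpha\rfloor$ in $\QB(\WUP)$ forces, via condition ($2'$) in \S\ref{ss:parabolic}, the membership $v r_\alpha t_{\alpha^\vee} \in \WUPaf$, and then Lemma~\ref{L:WPaf} (together with Lemma~\ref{L:zWP}) immediately gives both the $J$-adjustedness of the relevant coroot and the formula $z^{-1}=z_{z\gamma^\vee}=z_{\gamma^\vee}$; your plan is a more elaborate unpacking of exactly this, and in particular the detour in your second paragraph through $\ppiP(t_{-w^{-1}\tv})$ is unnecessary once you apply Lemma~\ref{L:WPaf} directly to $\lfloor r_\theta w\rfloor\, r_{z\gamma}\, t_{z\gamma^\vee} = w z^{-1} t_{z\gamma^\vee}\in\WUPaf$.
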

\begin{proof} 
Follows from the existence of the edges in $\QB(\WUP)$.
\end{proof}

\subsection{Diamond Lemmas for $\QB(\WUP)$}
\label{subsection.diamond}
We recall the Diamond Lemma for Coxeter groups and the Bruhat order.

\begin{lem}\label{L:diamond} \cite{H} Let $\W$ be any Coxeter group,
$v,w\in \W$, and $r$ a simple reflection.
\begin{enumerate}
\item Suppose $v\lessdot w$, $rw<w$ and $v\ne rw$. Then
$rv<v$ and $rv\lessdot rw$.
\item Suppose $v \gtrdot w$, $rw>w$ and $v\ne rw$. Then $rv>v$
and $rv\gtrdot rw$.
\end{enumerate}
\end{lem}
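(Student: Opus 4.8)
The plan is to deduce both statements from the standard structure theory of the Bruhat order on an arbitrary Coxeter group: namely the lifting property (see, e.g., \cite[Proposition~2.2.7]{BB}) and the subword property of reduced words, together with the fact that the Bruhat order is graded by length, so that $a<b$ and $\ell(b)=\ell(a)+1$ already force $a\lessdot b$. Since $r$ is a reflection we have $\ell(rv)\neq\ell(v)$, so $rv\neq v$ is automatic, and the whole argument is a short dichotomy according to whether $rv<v$ or $rv>v$. Part (2) will then be obtained from part (1) by feeding in a swapped pair.

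For part (1): suppose first that $rv>v$. The lifting property applied to $v<w$ with the simple reflection $r$ (using $rw<w$) gives $v\le rw$; but $v\lessdot w$ forces $\ell(rw)=\ell(w)-1=\ell(v)$, hence $v=rw$, contradicting the hypothesis $v\neq rw$. Therefore $rv<v$. Now the other branch of the lifting property, applied again to $v<w$, $rw<w$, this time with $rv<v$, gives $rv\le rw$. Since $\ell(rv)=\ell(v)-1=\ell(w)-2$ while $\ell(rw)=\ell(w)-1$, we get $rv\neq rw$ and $\ell(rw)=\ell(rv)+1$, so $rv\lessdot rw$ by gradedness. This proves part (1).

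For part (2), rewrite the hypothesis as $w\lessdot v$. Suppose for contradiction that $rv<v$, and apply part (1) to the triple $(\,\widetilde v,\widetilde w,r\,)=(w,v,r)$: its hypotheses $\widetilde v\lessdot\widetilde w$, $r\widetilde w<\widetilde w$, $\widetilde v\neq r\widetilde w$ read $w\lessdot v$ (given), $rv<v$ (our assumption), and $w\neq rv$ — the last because $w=rv$ would give $v=rw$, contradicting $v\neq rw$. The conclusion of part (1) then yields $rw<w$, contradicting $rw>w$. Hence $rv>v$. It remains to show $rw\lessdot rv$. From $w\le v$ choose a reduced word for $v$ containing a reduced subword for $w$; prepending $r$ to both words yields reduced words for $rv$ and $rw$ respectively (using $rv>v$ and $rw>w$), one of which is a subword of the other, so $rw\le rv$. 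Finally $\ell(rw)=\ell(w)+1=\ell(v)=\ell(rv)-1$, whence $rw\lessdot rv$, i.e. $rv\gtrdot rw$.

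There is no genuine obstacle here: the lemma is a basic and well-known fact, and the proof is essentially bookkeeping. The only points requiring care are invoking the lifting property in the correct direction (it is stated for a pair $a<b$ together with a reflection that \emph{lowers} $b$, so in each application one must correctly identify which of $v,w$ plays the role of the larger element) and keeping the length arithmetic straight, so that every ``$\le$'' produced is correctly upgraded to a covering relation.
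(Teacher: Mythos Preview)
Your proof is correct. The paper does not supply its own proof of this lemma; it simply cites Humphreys \cite{H}, so there is nothing to compare against beyond noting that your argument is the standard one via the lifting/exchange property and the gradedness of Bruhat order.

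One small point of phrasing: in part (1), after establishing $rv<v$, you say you apply the lifting property ``again to $v<w$, $rw<w$, this time with $rv<v$'' to get $rv\le rw$. Strictly speaking, the lifting property as usually stated requires the simple reflection to \emph{raise} the smaller element, so you should apply it to the pair $rv<w$ (noting $r(rv)=v>rv$ and $rw<w$), which then yields $rv\le rw$. The conclusion and the rest of the argument are unaffected.
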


In the following diagrams, a dotted (resp. plain) edge represents a quantum (resp. Bruhat) edge
in $\QB(\WUP)$.
We always refer to the PQBG on $W^J$. Given $w\in W$ and 
$\gamma\in\Phi^+$, define $z,z'\in W_J$ by
\begin{equation}\label{zz}
	r_\theta \lfloor w\rfloor = \lfloor r_\theta w \rfloor z\,,\qquad r_\theta \lfloor w r_\gamma\rfloor 
	= \lfloor r_\theta \lfloor w r_\gamma \rfloor \rfloor z'
	= \lfloor r_\theta w r_\gamma \rfloor z'\,.
\end{equation}

We are now ready to state the Diamond Lemmas for the PQBG.

\begin{lem} \label{lemma.diamond.PQBG}
Let $\alpha$ be a simple root in $\Phi$ , $\gamma \in \Phi^+\setminus\Phi_J^+$, 
and $w\in W^J$. Then we have the following cases, in each of which the bottom two edges imply the top two edges in the left diagram, and the top two edges imply the bottom two edges in the right diagram. Moreover, in each diagram the weights of the two directed paths 
(on the left side and the right side) are congruent modulo $Q_{J}^{\vee}$.
\begin{enumerate}
\item \label{case.1}
In both cases we assume $\gamma\ne w^{-1}\alpha$ and have
$r_\alpha \lfloor w r_\gamma\rfloor=r_\alpha w r_\gamma = \lfloor r_\alpha  w r_\gamma\rfloor$.
\begin{equation}\label{dq1}
	\begin{diagram}\node[2]{r_\alpha \lfloor w r_\gamma\rfloor}\\
	\node{r_\alpha w} \arrow{ne,t}{\gamma} 
	\node[2]{\lfloor w r_\gamma\rfloor} \arrow{nw,t}{\lfloor w r_\gamma\rfloor^{-1}\alpha} \\\node[2]{w} \arrow{nw,b}{w^{-1}\alpha} \arrow{ne,b}{\gamma}\end{diagram}\qquad \qquad
	\begin{diagram}\node[2]{\lfloor w r_\gamma\rfloor}\\
	\node{w} \arrow{ne,t}{\gamma} 
	\node[2]{r_\alpha \lfloor w r_\gamma\rfloor} \arrow{nw,t}{-\lfloor w r_\gamma\rfloor^{-1}\alpha} \\\node[2]{r_\alpha w} \arrow{nw,b}{-w^{-1}\alpha} \arrow{ne,b}{\gamma}\end{diagram}
\end{equation}
\item \label{case.2}
Here we have $r_\alpha \lfloor w r_\gamma\rfloor=\lfloor r_\alpha  w r_\gamma\rfloor$ in both cases. 
\begin{equation}\label{dq2}
	\begin{diagram}\node[2]{r_\alpha \lfloor w r_\gamma \rfloor}\\
	\node{r_\alpha w} \arrow{ne,t,..}{\gamma}
	\node[2]{\lfloor w r_\gamma\rfloor} \arrow{nw,t}{\lfloor w r_\gamma \rfloor^{-1}\alpha} \\\node[2]{w} \arrow{nw,b}{w^{-1}\alpha} \arrow{ne,b,..}{\gamma}\end{diagram}
\qquad  \qquad
	\begin{diagram}\node[2]{\lfloor w r_\gamma \rfloor}\\
	\node{w} \arrow{ne,t,..}{\gamma}
	\node[2]{r_\alpha \lfloor w r_\gamma\rfloor} \arrow{nw,t}{-\lfloor w r_\gamma \rfloor^{-1}\alpha} \\\node[2]{r_\alpha w} \arrow{nw,b}{-w^{-1}\alpha} \arrow{ne,b,..}	{\gamma}\end{diagram}
\end{equation}
\item \label{case.3} 
Here $z,z'$ are defined as in \eqref{zz}. In subcase \eqref{dq3} (resp. \eqref{dq31}) we
assume that $\langle \gamma^\vee,\,w^{-1}\theta\rangle$ is nonzero (resp. zero).
In all cases, we have $w r_{\gamma}=\lfloor w r_{\gamma} \rfloor$.
\begin{equation}\label{dq3}
	\begin{diagram}\node[2]{\lfloor r_\theta w r_\gamma \rfloor}\\
	\node{\lfloor r_\theta w \rfloor} \arrow{ne,t,..}{z\gamma}
	\node[2]{\lfloor w r_\gamma \rfloor} \arrow{nw,t,..}{- \lfloor w r_\gamma \rfloor^{-1}\theta} \\ \node[2]{w} \arrow{nw,b,..}{-w^{-1}\theta} \arrow{ne,b}{\gamma}\end{diagram}
\qquad \qquad
	\begin{diagram}\node[2]{\lfloor w r_\gamma \rfloor}\\
	\node{w } \arrow{ne,t}{\gamma}
	\node[2]{\lfloor r_\theta w r_\gamma\rfloor} \arrow{nw,t,..}{z' \lfloor w r_\gamma\rfloor^{-1}\theta} \\ \node[2]{\lfloor r_\theta w\rfloor} \arrow{nw,b,..}{zw^{-1}\theta} \arrow{ne,b,..}
	{z\gamma}\end{diagram}
\end{equation}
\begin{equation}\label{dq31}
	\begin{diagram}\node[2]{\lfloor r_\theta w r_\gamma \rfloor}\\
	\node{\lfloor r_\theta w \rfloor} \arrow{ne,t}{z\gamma}
	\node[2]{\lfloor w r_\gamma \rfloor} \arrow{nw,t,..}{- \lfloor w r_\gamma \rfloor^{-1}\theta} \\ \node[2]{w} \arrow{nw,b,..}{-w^{-1}\theta} \arrow{ne,b}{\gamma}\end{diagram}
\qquad \qquad
	\begin{diagram}\node[2]{\lfloor w r_\gamma \rfloor}\\
	\node{w } \arrow{ne,t}{\gamma}
	\node[2]{\lfloor r_\theta w r_\gamma\rfloor} \arrow{nw,t,..}{z' \lfloor w r_\gamma\rfloor^{-1}\theta} \\ \node[2]{\lfloor r_\theta w\rfloor} \arrow{nw,b,..}{zw^{-1}\theta} \arrow{ne,b}
	{z\gamma}\end{diagram}
\end{equation}
\item \label{case.4} 
Here we assume $\gamma\ne -w^{-1}\theta$ in all cases, and $z,z'$ are  defined as in \eqref{zz}. 
In subcase \eqref{dq4} (resp. \eqref{dq41}) we assume that 
$\langle \gamma^\vee,\,w^{-1}\theta \rangle$ is nonzero (resp. zero).
\begin{equation}\label{dq4}
	\begin{diagram}\node[2]{\lfloor r_\theta w r_\gamma \rfloor}\\
	\node{\lfloor r_\theta w \rfloor} \arrow{ne,t}{z\gamma}
	\node[2]{\lfloor w r_\gamma\rfloor} \arrow{nw,t,..}{-\lfloor w r_\gamma \rfloor^{-1}\theta} \\\node[2]{w} \arrow{nw,b,..}{-w^{-1}\theta} \arrow{ne,b,..}{\gamma}
	\end{diagram}
\qquad \qquad
	\begin{diagram}\node[2]{\lfloor w r_\gamma \rfloor}\\
	\node{w } \arrow{ne,t,..}{\gamma}
	\node[2]{\lfloor r_\theta w r_\gamma\rfloor} \arrow{nw,t,..}{z' \lfloor w r_\gamma \rfloor^{-1}\theta} \\\node[2]{\lfloor r_\theta w\rfloor} \arrow{nw,b,..}{zw^{-1}\theta} 
	\arrow{ne,b}{z\gamma}\end{diagram}
\end{equation}
\begin{equation}\label{dq41}
	\begin{diagram}\node[2]{\lfloor r_\theta w r_\gamma \rfloor}\\
	\node{\lfloor r_\theta w \rfloor} \arrow{ne,t,..}{z\gamma}
	\node[2]{\lfloor w r_\gamma\rfloor} \arrow{nw,t,..}{-\lfloor w r_\gamma \rfloor^{-1}\theta} \\\node[2]{w} \arrow{nw,b,..}{-w^{-1}\theta} \arrow{ne,b,..}{\gamma}
	\end{diagram}
\qquad \qquad
	\begin{diagram}\node[2]{\lfloor w r_\gamma \rfloor}\\
	\node{w } \arrow{ne,t,..}{\gamma}
	\node[2]{\lfloor r_\theta w r_\gamma\rfloor} \arrow{nw,t,..}{z' \lfloor w r_\gamma \rfloor^{-1}\theta} \\\node[2]{\lfloor r_\theta w\rfloor} \arrow{nw,b,..}{zw^{-1}\theta} 
	\arrow{ne,b,..}{z\gamma}\end{diagram}
\end{equation}
\end{enumerate}
\end{lem}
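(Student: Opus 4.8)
The plan is to derive all four cases of the Diamond Lemma for $\QB(\WUP)$ from the classical Diamond Lemma (Lemma~\ref{L:diamond}) applied in the affine Weyl group $\Waf$, transported through the lift of Theorem~\ref{T:Plift}. The key observation is that the top edge in each diamond is, in the affine picture, obtained from left multiplication by $r_0$: by Proposition~\ref{P:thetaPQBG}, a quantum edge involving $\theta$ in $\QB(\WUP)$ is exactly the projection of an affine cover $x\gtrdot r_0 x$, while an ordinary Bruhat edge is the projection of $x\gtrdot r_j x$ for a simple $r_j$, $j\in I$ (Proposition~\ref{P:leftrPQBG}). So the strategy is: (i) start with the two bottom edges of a left diagram; (ii) lift them, via Theorem~\ref{T:Plift} and Corollary~\ref{C:PQBG2Waf}, to a saturated chain $x\gtrdot x_1\gtrdot x_2$ in $\OmegaP^\infty$, choosing the $J$-superantidominant $J$-adjusted weight $\mu$ at the top with the right ``phase'' $z\in\SigmaP$; (iii) apply Lemma~\ref{L:diamond} in $\Waf$ to the relevant simple reflection (either some $r_j$ with $j\in I$ in Cases~\ref{case.1} and \ref{case.2}, or $r_0$ in Cases~\ref{case.3} and \ref{case.4}) to produce the ``opposite'' pair of covers; (iv) project the new covers back down to $\QB(\WUP)$ via Theorem~\ref{T:Plift}~(2), reading off the edge labels and Bruhat/quantum type from the sign $\chi\in\{0,1\}$ and the formulas there. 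The right diagram in each case is handled symmetrically by part~(2) of Lemma~\ref{L:diamond}, or equivalently by composing with the duality antiautomorphism of Proposition~\ref{P:circ}.

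The case split reflects which simple reflection of $\Waf$ is used and whether it commutes past the relevant translation part. In Cases~\ref{case.1}--\ref{case.2} the ``vertical'' edges $w\to r_\alpha w$ and $\lfloor wr_\gamma\rfloor \to r_\alpha\lfloor wr_\gamma\rfloor$ come from a genuine simple reflection $r_i$ of the finite Weyl group acting on the left; here $\alpha$ is simple, $r_\alpha wr_\gamma$ is automatically already $J$-minimal (one checks $r_\alpha wr_\gamma\in\WUP$ using $w,wr_\gamma\in\WUP$ and $\ell$-additivity, giving the asserted identities like $r_\alpha\lfloor wr_\gamma\rfloor=\lfloor r_\alpha wr_\gamma\rfloor$), and the condition $\gamma\ne w^{-1}\alpha$ in Case~\ref{case.1} is exactly the hypothesis $v\ne rw$ of Lemma~\ref{L:diamond}. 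Case~\ref{case.2} is the variant where the top two edges remain Bruhat but the bottom two become quantum (dotted): this is what the Diamond Lemma yields when $\ell(r_\alpha w)=\ell(w)-\ell(r_\alpha)$, and one uses condition~$(2')$ of $\QB(\WUP)$ together with Theorem~\ref{T:Plift} to certify that the lifted cocovers still lie in $\WUPaf$. In Cases~\ref{case.3}--\ref{case.4} the vertical reflection is $r_0=r_\theta t_{-\tv}$, so the phase element $z=(z_{w^{-1}\tv})^{-1}$ appears exactly as in Proposition~\ref{P:thetaPQBG}~(3); the further split into \eqref{dq3}/\eqref{dq31} (resp. \eqref{dq4}/\eqref{dq41}) according to whether $\pair{\gamma^\vee}{w^{-1}\theta}$ is nonzero or zero tracks whether the commutation $r_0$ past $r_\gamma$ (i.e. $t_{-\tv}$ past $r_\gamma$) alters the translation lattice part, which in turn decides whether the edge $\lfloor r_\theta w\rfloor\to\lfloor r_\theta wr_\gamma\rfloor$ is Bruhat or quantum.

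For the weight/$\QvP$-congruence claim: each quantum edge contributes its label's coroot to $\wt(\bp)$, and a Bruhat edge contributes $0$. After lifting to $\OmegaP^\infty$, the difference of the two sides of a diamond is the difference of the translation parts of two affine elements that both equal the same affine Weyl group element up to an element of $\WPaf$; since $\WPaf=\prod_m (W_{I_m})_\af$ has translation part contained in $\QvP$, the two weights differ by an element of $\QvP$. Concretely, one reads the translation parts off formula~\eqref{E:piformula} and the expressions for $y$ in Theorem~\ref{T:Plift}, and checks that $z_\mu\in\SigmaP\subset\WP$ and the correction terms $\phiP(\cdot)$ all live in $\QvP$, so they drop out modulo $\QvP$; the remaining discrepancy between $z\gamma^\vee$-type labels and their ``untwisted'' counterparts $\gamma^\vee$ is absorbed because $z\in\WP$ fixes weights modulo $\QvP$ as well.

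The main obstacle I expect is bookkeeping in Cases~\ref{case.3} and \ref{case.4}: one must carefully track the phase element $z$ (and its counterpart $z'$ after multiplying by $r_\gamma$) through the commutation of $r_0$ with the translation $t_\mu$ and with $r_\gamma$, verify that the resulting affine elements remain in $\WUPaf\cap\Wafm$ (not automatic — this is precisely the subtlety flagged in Remark~\ref{R:lifting}, that twisting by a $z\notin\SigmaP$ breaks the lift), and confirm that the labels in the diagrams, namely $z\gamma$, $-w^{-1}\theta$, $z'\lfloor wr_\gamma\rfloor^{-1}\theta$, etc., are exactly the images under the projection $\ppiP$ of the affine transition roots $\gamma = z^{-1}\al+n\delta$ appearing in Theorem~\ref{T:Plift}~(2). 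Getting the signs and the $z$ versus $z^{-1}$ placements consistent with \eqref{zz} and with Corollary~\ref{C:thetaadj} is where the real work lies; the existence of all the edges themselves is essentially immediate once the lifted chain is in hand.
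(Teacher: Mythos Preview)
Your overall architecture is exactly the paper's: lift the bottom half of each diamond to $\OmegaP^\infty$ via Theorem~\ref{T:Plift}(1), complete the diamond in $\Waf$ using Lemma~\ref{L:diamond}, and project back with Theorem~\ref{T:Plift}(2); the right diagrams are then reduced to the left ones via Proposition~\ref{P:circ}. Your identification of the main difficulty (tracking $z,z'$ through the $r_0$ case and certifying membership in $\WUPaf\cap\Wafm$) is also on target.

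Two points to flag. First, your description of Case~\ref{case.2} is garbled: in the left diagram of \eqref{dq2} the $\alpha$-edges (northwest from $w$ and northwest from $\lfloor wr_\gamma\rfloor$) are both Bruhat, while the $\gamma$-edges are both quantum---it is not ``top two Bruhat, bottom two quantum.'' Correspondingly, the condition is $\ell(r_\alpha w)=\ell(w)+1$ (since $w\to r_\alpha w$ is Bruhat), not $\ell(r_\alpha w)=\ell(w)-\ell(r_\alpha)$. This does not affect the method, but you should reread the diagrams before executing.

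Second, your argument for the weight congruence is genuinely different from the paper's and in fact cleaner. The paper computes directly in one case (left diagram of \eqref{dq3}) using that $\pair{\theta^\vee}{\beta}\in\{0,1\}$ for $\beta\in\Phi^+\setminus\{\theta\}$, and asserts the rest is similar. Your observation---that along an affine lift the translation-part increment on a PQBG edge labeled $\alpha$ equals $\chi z^{-1}\alpha^\vee$, which is congruent to the PQBG weight $\chi\alpha^\vee$ modulo $\QvP$ since $z\in\WP$, and both affine paths share endpoints so have equal total translation change---handles all diagrams uniformly. Just make that per-edge congruence explicit (it is a one-line computation from the formula $y=wr_\alpha t_{\chi\alpha^\vee}zt_\mu=wr_\alpha z\,t_{\mu+\chi z^{-1}\alpha^\vee}$ in Theorem~\ref{T:Plift}).
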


\begin{remark}\label{lr-relabel} 
(1) The left diagram of~\eqref{dq1} of Lemma~\ref{lemma.diamond.PQBG} is the classical Diamond 
Lemma~\ref{L:diamond}.

(2) The right diagrams in \eqref{dq1}, \eqref{dq2}, \eqref{dq3}, \eqref{dq31}, \eqref{dq4}, and~\eqref{dq41} 
are relabelings of the left diagrams in \eqref{dq1}, \eqref{dq2}, \eqref{dq4}, \eqref{dq31}, \eqref{dq3}, 
and~\eqref{dq41}, respectively, and vice versa. For instance, we can obtain a right diagram by labeling 
$w$ the leftmost vertex in the corresponding left diagram, and by recalculating all the other vertex and 
edge labels.
\end{remark}

\begin{proof}[Proof of Lemma~{\rm \ref{lemma.diamond.PQBG}}]
By Proposition \ref{P:circ} only the left diagrams need to be established.
In all cases, the bottom half of a diamond in $\QB(W^J)$ is lifted to the affine Bruhat order
using Theorem~\ref{T:Plift}~(1). There the diamond is completed using the usual 
Diamond Lemma~\ref{L:diamond} for the affine Weyl group. 
The affine diamond is pushed down to $\QB(W^J)$ using Theorem~\ref{T:Plift}~(2).

Consider the left diagram in \eqref{dq2}. By Theorem~\ref{T:Plift}~(1), the quantum edge $\lfloor w r_\gamma \rfloor \qleft{\gamma} w$
lifts to an affine Bruhat cover $y\lessdot x$ in $\OmegaP^\infty$ where
$x=wt_\mu$, $\mu$ is $J$-superantidominant with $z_\mu=\id$, $y=wr_\gamma t_{\gamma^\vee+\mu} = xr_{\ti{\gamma}}$,
and $\ti{\gamma}= \gamma+(1+\pair{\mu}{\gamma})\delta\in\Phiafm$.
Since $r_\alpha w \gtrdot w$ and $r_\alpha w\in W^J$, it follows that $r_\alpha x \lessdot x$.
Moreover this covering relation is the affine lift
into $\OmegaP^\infty$ of the Bruhat edge $r_\alpha w \qleft{w^{-1}\alpha} w$. 
The elements $r_\alpha x$ and $y$ are distinct since they have different translation components.
By the Diamond Lemma \ref{L:diamond} for the affine Weyl group, we have
$r_\alpha x \gtrdot r_\alpha y$ and $y \gtrdot r_\alpha y$. The latter cover implies that
$r_\alpha y \in \OmegaP^\infty$. Theorem~\ref{T:Plift}~(2) yields the top half of the
left diagram in \eqref{dq2}.

Consider the bottom half of the left diagram in \eqref{dq3} (which is also the
same half diagram in \eqref{dq31}). The quantum edge $\mcr{r_\theta w} \qleft{-w^{-1}\theta} w$
lifts to the affine cover in $\OmegaP^\infty$ given by $r_0 x \lessdot x$ where $x=wt_\mu$
and $r_0x=r_\theta w t_{\mu-w^{-1}\tv}$. The Bruhat edge $wr_\gamma \qleft{\gamma} w$
lifts to the affine cover in $\OmegaP^\infty$ given by $w r_\gamma t_\mu=x r_{\ti{\gamma}} \lessdot wt_\mu$
where $\ti{\gamma} = \gamma + \pair{\mu}{\gamma} \delta$. One may verify that
$r_0 x \ne x r_{\ti{\gamma}}$. By the Diamond Lemma \ref{L:diamond} for the affine Weyl group,
we have $r_0 x \gtrdot r_0 x r_{\ti{\gamma}}$ and $x r_{\ti{\gamma}} \gtrdot r_0 x r_{\ti{\gamma}}$.
Arguing as in the proof of Proposition \ref{P:thetaPQBG} and using that $x r_{\ti{\gamma}}\in \OmegaP^\infty$,
one may show that $r_0 x r_{\ti{\gamma}} \in \OmegaP^\infty$. By Theorem~\ref{T:Plift}~(2)
we obtain edges in $\QB(W^J)$ which complete the diamond, with the only remaining issue being the type
of the edge $\mcr{r_\theta w} \to \mcr{r_\theta w r_\gamma}$. It is quantum or Bruhat
depending on whether the translation elements in the affine lift $r_0 x \gtrdot r_0 x r_{\ti{\gamma}}$
are different or the same. Since 
$r_0 x r_{\ti{\gamma}} = r_\theta w r_\gamma t_{\mu-r_\gamma w^{-1}(\tv)}$ we see that
the translation element changes in passing from $r_0x$ to $r_0xr_{\ti{\gamma}}$
if and only if $\pair{w^{-1}\tv}{\gamma}\ne0$, as required.

The cases for the diagrams \eqref{dq4} and \eqref{dq41}
are similar to those for \eqref{dq3} and \eqref{dq31}.

Let us now prove the congruence of the weights of the two paths, by focusing, as an example, on the left diagram in \eqref{dq3};
the proofs for the other diagrams are similar. 
The weight of the directed path on the left side of 
the mentioned diagram is equal to
$-w^{-1}\theta^{\vee}+z\gamma^{\vee}$, and hence 
congruent to 
\begin{equation*}
-w^{-1}\theta^{\vee}+\gamma^{\vee} \mod Q_{J}^{\vee}.
\end{equation*}
The weight of the directed path on the right side of the diagram
is equal to $-\mcr{wr_{\gamma}}^{-1}\theta^{\vee}$, and 
hence is congruent to 
\begin{equation*}
-r_{\gamma}w^{-1}\theta^{\vee}=
-w^{-1}\theta^{\vee}+\pair{w^{-1}\theta^{\vee}}{\gamma}\gamma^{\vee} \mod Q_{J}^{\vee}.
\end{equation*}
Because $w \stackrel{\gamma}{\longrightarrow} \mcr{wr_{\gamma}}$ 
is a Bruhat edge, we see that $w\gamma$ is a positive root. Notice that 
$w\gamma \ne \theta$ since $w^{-1}\theta$ is a negative root. 
Also, since $\pair{\gamma^{\vee}}{w^{-1}\theta} \ne 0$ by 
the assumption of part (3), we see that 
$\pair{w^{-1}\theta^{\vee}}{\gamma} \ne 0$. 
Recalling the well-known fact that 
$\pair{\theta^{\vee}}{\beta}$ can only be $0$ or $1$ 
for each $\beta \in \Phi^{+}$ with $\beta \ne \theta$, 
it follows that $\pair{w^{-1}\theta^{\vee}}{\gamma}=
 \pair{\theta^{\vee}}{w\gamma}=1$. 
Therefore we obtain
\begin{equation*}
-w^{-1}\theta^{\vee}+\pair{w^{-1}\theta^{\vee}}{\gamma}\gamma^{\vee}=
-w^{-1}\theta^{\vee}+\gamma^{\vee}, 
\end{equation*}
as desired.
\end{proof}

\section{Quantum Bruhat graph and the level-zero weight poset}
\label{section.level 0 WP}

In~\cite{Littelmann1995}, Littelmann introduced a poset related to Lakshmibai--Seshadri (LS) paths
for arbitrary (not necessarily dominant) integral weights. We consider this poset for level-zero weights.
Littelmann did not give a precise local description of it. Our main result in this section
is a characterization of its cover relations in terms of the PQBG.

\subsection{The level-zero weight poset}
Fix a dominant weight $\lambda$ in the finite weight lattice $X$ (cf. Section \ref{s:stab} and the notation thereof, e.g., $W_J$ is the stabilizer of $\lambda$).  We view $X$ as a sublattice of $\Xafz$. Let $\Xafz(\lambda)$ be the orbit of $\lambda$ under the action of the affine Weyl group $\Waf$. 

\begin{definition} {\rm (Level-zero weight poset~\cite{Littelmann1995})}
\label{definition.level-0 weight poset}
A poset structure is defined on $\Xafz(\lambda)$ as the transitive closure of the relation
\begin{equation}\label{lev0anti}
 \mu < r_\beta\mu \quad \Leftrightarrow \quad \langle \beta^\vee,\,\mu \rangle >0\,,
\end{equation}
where $\beta\in\Phiafp$. This poset is called the {\em level-zero weight poset for} $\lambda$.
\end{definition}

\begin{remarks} \label{R:weightposet}\hfill
\begin{enumerate}
\item Assume that $W_J$ is trivial, and we set $\mu = w\lambda$ for $w \in \Waf$.
Then, for $\beta \in \Phiafp$, we have $\mu < r_{\beta}\mu$ in the level-zero weight poset
if and only if $w^{-1} r_{\beta} \prec w^{-1}$ in the generic Bruhat order $\prec$ on $\Waf$
introduced by Lusztig~\cite{Lusztig}. Indeed, this equivalence follows from the definitions of these partial orders
by using~\cite[Claim 4.14, page 96]{Soergel}.
The generic Bruhat order also recently appeared in~\cite{Lanini}.
\item We can define the poset $\Xafz(-\lambda)$ on the orbit of the antidominant weight $-\lambda$ in the same way, using \eqref{lev0anti}. The posets $\Xafz(\lambda)$ and $\Xafz(-\lambda)$ are dual isomorphic, in the sense that, for $\mu,\nu\in \Xafz(\lambda)$, we have
\[\mu<\nu\;\;\Leftrightarrow\;\;-\mu>-\nu\,.\]
Therefore, all the statements in this section can be easily rephrased for $\Xafz(-\lambda)$. 
\end{enumerate}
\end{remarks}

An example of $\Xafz(\lambda)$ is given in Figure \ref{exposet}. As we can see from this example, $\Xafz(\lambda)$ is not a graded poset in general.

Littelmann~\cite{Littelmann1995} introduced a distance function on the level-zero weight poset. Namely, if $\mu \le \nu$
in $\Xafz(\lambda)$, then $\dist(\mu,\nu)$\footnote{The notation in \cite{Littelmann1995} is $\dist(\nu,\mu)$.} is the maximum length 
of a chain from $\mu$ to $\nu$. Clearly, covers correspond to elements at distance 1. 

\begin{lem} \cite[Lemma 4.1]{Littelmann1995}
\label{lemma.littelmann}
Let $\mu,\nu \in \Xafz(\lambda)$. 
\begin{enumerate}
\item If $\mu \le \nu$ and $\alpha$ is a simple root in $\Phiaf$ such that 
$\langle \alpha^\vee, \mu \rangle\ge 0$ but $\langle \alpha^\vee, \nu \rangle<0$, then $\mu\le r_\alpha\nu$ and $\dist(\mu,r_\alpha\nu) < 
\dist(\mu,\nu)$.
\item If $\mu \le \nu$ and $\alpha$ is a simple root in $\Phiaf$ such that 
$\langle \alpha^\vee, \mu \rangle > 0$ but $\langle \alpha^\vee, \nu \rangle \le 0$, then $r_\alpha\mu\le \nu$ and $\dist(r_\alpha\mu,\nu) < 
\dist(\mu,\nu)$.
\item If $\mu \le \nu$ and $\alpha$ is a simple root in $\Phiaf$ such that $\langle \alpha^\vee, \mu \rangle,\, 
\langle \alpha^\vee, \nu \rangle>0$ (respectively
\linebreak 
$\langle \alpha^\vee, \mu \rangle, \, \langle \alpha^\vee, \nu \rangle < 0$),
then $\dist(\mu,\nu) = \dist(r_\alpha \mu, r_\alpha \nu)$.
\end{enumerate}
\end{lem}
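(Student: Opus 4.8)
This is Littelmann's lemma~\cite{Littelmann1995}; the plan is to prove all three parts by a single induction on the distance $\dist(\mu,\nu)$, with a Diamond Lemma for the orbit $\Xafz(\la)$ as the engine. Since $\la$ has level zero, $\langle(\beta+n\delta)^\vee,\sigma\rangle=\langle\beta^\vee,\sigma\rangle$ for every $\sigma\in\Xafz$ and every $\beta+n\delta\in\Phiaf$, so the only relevant data about a simple root $\alpha$ of $\Phiaf$ at a weight $\sigma$ is the sign of $\langle\alpha^\vee,\sigma\rangle$, and $r_\alpha$ fixes $\sigma$ exactly when this sign is $0$. The version of the Diamond Lemma I need reads: for a cover $\sigma\lessdot\tau$ in $\Xafz(\la)$ and a simple $\alpha$ with $r_\alpha\tau<\tau$ and $r_\alpha\tau\neq\sigma$, one has $r_\alpha\sigma<\sigma$ and $r_\alpha\sigma\lessdot r_\alpha\tau$ (plus the order dual). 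This may be inherited from the classical Diamond Lemma~\ref{L:diamond} on $\Waf$ via the orbit map $w\mapsto w\la$, or proved directly from the defining relation of $\Xafz(\la)$ by a rank-two computation. One also needs the preliminary fact that, for $\alpha$ simple, $\langle\alpha^\vee,\sigma\rangle>0$ forces $\dist(\sigma,r_\alpha\sigma)=1$, because the hyperplane $\langle\alpha^\vee,\cdot\rangle=0$ admits no reflection hyperplane lying strictly between $\sigma$ and $r_\alpha\sigma$.

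The base case is a single cover $\mu\lessdot\nu$. If $\langle\alpha^\vee,\mu\rangle$ and $\langle\alpha^\vee,\nu\rangle$ are both positive, the Diamond Lemma applied to $\nu\gtrdot\mu$, $r_\alpha\mu>\mu$, $r_\alpha\mu\neq\nu$ gives $r_\alpha\mu\lessdot r_\alpha\nu$; the both-negative case is symmetric — this is part~(3) at distance $1$. If the two signs are strictly opposite, say $\langle\alpha^\vee,\mu\rangle>0>\langle\alpha^\vee,\nu\rangle$, then $r_\alpha\nu<\nu$; were $r_\alpha\nu\neq\mu$, the Diamond Lemma would give $r_\alpha\mu<\mu$, contradicting $\langle\alpha^\vee,\mu\rangle>0$, so $\nu=r_\alpha\mu$ — this is parts~(1) and~(2) at distance $1$. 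The residual cases with one pairing equal to $0$ are handled by the same Diamond argument, yielding the distance-$1$ instances of (1)--(3) with an endpoint fixed by $r_\alpha$.

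For the inductive step, fix a maximal chain $\mu=\mu_0\lessdot\mu_1\lessdot\dots\lessdot\mu_k=\nu$ with $k=\dist(\mu,\nu)$ and consider the last cover $\mu_{k-1}\lessdot\nu$. Branching on the sign of $\langle\alpha^\vee,\mu_{k-1}\rangle$, I apply the induction hypothesis to the pair $(\mu,\mu_{k-1})$ — using whichever of (1), (2), (3) matches that sign — and then attach the last cover via the base case, tracking distances. For part~(3) this produces, from a chain of length $k$, a chain from $r_\alpha\mu$ to $r_\alpha\nu$ of length $k$, so $\dist(r_\alpha\mu,r_\alpha\nu)\geq\dist(\mu,\nu)$; since the same construction applied in the both-negative regime turns chains of length $m$ from $r_\alpha\mu$ to $r_\alpha\nu$ into chains of length $m$ from $\mu$ to $\nu$, the reverse inequality follows, hence equality. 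For parts~(1) and~(2), the sign of $\langle\alpha^\vee,\cdot\rangle$ must flip somewhere along the chain, and the cover where it does (or a short segment around it) is exactly the ``collapse'' step $\sigma\lessdot r_\alpha\sigma$ from the base case; excising it yields a chain between $\mu$ and $r_\alpha\nu$, resp.\ between $r_\alpha\mu$ and $\nu$, that is strictly shorter, giving both the comparability and the strict drop of $\dist$.

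The hard part is controlling these sign flips. The quantity $\langle\alpha^\vee,\mu_i\rangle$ need not be monotone in $i$, so one cannot transport a maximal chain term by term; one must detour around every index where it vanishes or changes sign, and the accounting is kept consistent only by the strict decrease of $\dist$ supplied by parts~(1) and~(2) — which is why the three parts must be proved together, and with some care to avoid circularity in the joint induction. A further complication is that $\Xafz(\la)$ is genuinely not graded: the stabilizer of $\la$ in $\Waf$ (under its level-zero action on $\Xafz$) is in general strictly larger than the parabolic subgroup $\WPaf$, since it contains translations $t_\nu$ with $\langle\nu,\la\rangle=0$ that need not lie in $\QvP$, so one cannot simply invoke gradedness of a parabolic quotient — this is precisely why Littelmann introduced the distance function and carries it through the induction. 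An alternative that sidesteps part of this is to transport everything to $\Waf$ via $w\mapsto w\la$ and argue with lengths and Deodhar's lemma~\ref{L:tricoset} there, at the cost of first pinning down the coset correspondence.
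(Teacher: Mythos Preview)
The paper does not prove this lemma; it is quoted verbatim as \cite[Lemma~4.1]{Littelmann1995} with no argument supplied. So there is no in-paper proof to compare against, only Littelmann's original.

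Your overall strategy --- joint induction on $\dist(\mu,\nu)$ over all three parts --- is the right shape and matches Littelmann's. The gap is in your choice of engine. You want to use, as a \emph{preliminary}, a Diamond Lemma for $\Xafz(\la)$: ``$\sigma\lessdot\tau$, $r_\alpha\tau<\tau$, $r_\alpha\tau\neq\sigma$ $\Rightarrow$ $r_\alpha\sigma<\sigma$ and $r_\alpha\sigma\lessdot r_\alpha\tau$.'' You propose to inherit this from the Coxeter Diamond Lemma~\ref{L:diamond} via the orbit map $w\mapsto w\la$. That does not work: the level-zero weight poset is \emph{not} a quotient of the affine Bruhat order --- the defining relation $\mu<r_\beta\mu\Leftrightarrow\langle\beta^\vee,\mu\rangle>0$ is a different order (for regular $\la$ it is Lusztig's generic order, cf.\ Remark~\ref{R:weightposet}, not the ordinary Bruhat order), and you yourself note that the stabiliser of $\la$ is not parabolic. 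In this paper's own logic the weight-poset Diamond Lemma is Lemma~\ref{lemma.diamond.weight}, and its proof \emph{uses} Lemma~\ref{lemma.littelmann}; invoking it here is circular. Your fallback ``rank-two computation'' is not carried out, and it is not a formality: the conclusion $r_\alpha\sigma<\sigma$ (i.e.\ $\langle\alpha^\vee,\sigma\rangle<0$) under the hypotheses $\sigma\lessdot\tau$, $\langle\alpha^\vee,\tau\rangle<0$, $r_\alpha\tau\neq\sigma$ is exactly the contrapositive of the distance-$1$ case of part~(1). So at the base of your induction you are assuming what you must prove.

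The fix is not to abandon the induction but to restructure it: do not treat the Diamond statement as an input. Either prove the cover-level facts directly inside the inductive step (as Littelmann does), or fold the Diamond statement into the joint inductive hypothesis alongside parts~(1)--(3), so that at each stage you only invoke it at strictly smaller distance. Your final paragraph shows you sensed a circularity risk; the one that actually bites is this one, not the interplay among (1)--(3).
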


We label a cover $\mu\lessdot\nu=r_\beta\mu$ of $\Xafz(\lambda)$ by the corresponding positive real root 
$\beta$. Preliminary results about the covers of $\Xafz(\lambda)$ were obtained by Naito and Sagaki.

\begin{lem}  \label{lemma.cover}  \mbox{}
\begin{enumerate}
\item \cite[Remark 2.10 and Lemma 2.11]{NS4}.  For untwisted types,  
a necessary condition for $\mu< \nu$ to be a cover in $\Xafz(\lambda)$ is that $\nu = r_\beta\mu$
with $\beta \in \Phi^+$ or $\beta \in \delta - \Phi^+$.
\item \cite[Remark 2.10 (2)]{NS4} Let $\mu,\nu \in \Xafz(\lambda)$ be such that $\nu = r_{\alpha}\mu$ 
for a simple root $\alpha$ in $\Phiaf$ such that $\langle \alpha^\vee, \mu \rangle>0$. Then $\dist(\mu,\nu)=1$.
\end{enumerate}
\end{lem}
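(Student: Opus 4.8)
The plan is to handle the two parts separately. Part~(2) should follow quickly from Littelmann's distance lemma, while part~(1) will require constructing an explicit intermediate element, and that construction is where I expect the real work to lie.

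For part~(2) I would argue by contradiction. Suppose $\mu<\nu=r_\alpha\mu$ is not a cover, so there exists $\sigma$ with $\mu<\sigma<\nu$. Since $\nu=r_\alpha\mu$ we have $\langle\alpha^\vee,\nu\rangle=-\langle\alpha^\vee,\mu\rangle<0$. If $\langle\alpha^\vee,\sigma\rangle\ge0$, then Lemma~\ref{lemma.littelmann}(1) applied to $\sigma\le\nu$ gives $\sigma\le r_\alpha\nu=\mu$, contradicting $\mu<\sigma$. If $\langle\alpha^\vee,\sigma\rangle<0$, then Lemma~\ref{lemma.littelmann}(2) applied to $\mu\le\sigma$ gives $\nu=r_\alpha\mu\le\sigma$, contradicting $\sigma<\nu$. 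Hence $\dist(\mu,\nu)=1$. I expect no obstacle here.

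For part~(1), I would first observe that, since $\Xafz(\lambda)$ is the transitive closure of the relation \eqref{lev0anti}, any cover $\mu\lessdot\nu$ is already a single step of that relation; that is, $\nu=r_\beta\mu$ for some $\beta\in\Phiafp$ with $\langle\beta^\vee,\mu\rangle>0$. Using the untwisted decomposition $\Phiafp=\Phi^+\sqcup(\Phi+\Z_{>0}\delta)$, write $\beta=\gamma+n\delta$ with $\gamma\in\Phi$ and $n\ge0$. Then $\beta\in\Phi^+$ corresponds to the case $n=0$, $\gamma\in\Phi^+$, and $\beta\in\delta-\Phi^+$ corresponds to $n=1$, $\gamma\in\Phi^-$; these are exactly the asserted possibilities. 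It therefore remains to rule out the other two cases, namely $\gamma\in\Phi^+$ with $n\ge1$, and $\gamma\in\Phi^-$ with $n\ge2$, by exhibiting in each an element strictly between $\mu$ and $\nu$, which contradicts the cover property.

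The construction I have in mind is a ``zigzag''. Record each $\eta\in\Xafz(\lambda)$ by the pair $(\cl(\eta),d(\eta))$, where $d(\eta)\in\Z$ is the coefficient of $\delta$ in $\eta$, normalized so that $d(\mu)=0$. Since $r_{\gamma_0+k\delta}=r_{\gamma_0}t_{k\gamma_0^\vee}$, formula \eqref{E:onlevelzero} together with the fact that finite reflections preserve the $\delta$-coefficient shows that, for $\gamma_0\in\Phi$ and $k\in\Z$, the reflection $r_{\gamma_0+k\delta}$ acts by $\cl(\eta)\mapsto r_{\gamma_0}\cl(\eta)$ and $d(\eta)\mapsto d(\eta)-k\langle\gamma_0^\vee,\cl(\eta)\rangle$, and that $\eta<r_{\gamma_0+k\delta}\eta$ exactly when $\langle\gamma_0^\vee,\cl(\eta)\rangle>0$ and, in addition, either $k\ge1$, or $k=0$ and $\gamma_0\in\Phi^+$. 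Now let $\gamma_0$ be the positive one of $\pm\gamma$, set $h:=|\langle\gamma_0^\vee,\cl(\mu)\rangle|>0$, and starting from $\mu$ alternate the moves $r_{\gamma_0}$ and $r_{-\gamma_0+\delta}$, beginning with $r_{\gamma_0}$ in the first case and with $r_{-\gamma_0+\delta}$ in the second. The key observation is that $\langle\gamma_0^\vee,\cl(\eta)\rangle$ reverses sign under $r_{\gamma_0}$, so pairing $r_{\gamma_0}$ with $r_{-\gamma_0+\delta}$ makes every move strictly increasing in the poset, while one full round trip restores the $\cl$-component and lowers $d$ by exactly $h$. After the appropriate number of moves the walk reaches the weight with $\cl$-component $r_{\gamma_0}\cl(\mu)$ and $d$-component $-nh$, which is $\nu$; this produces a chain $\mu<\eta_1<\cdots<\nu$ of length $2n+1$ in the first case and $2n-1$ in the second, in particular of length at least $3$, and all terms are distinct because consecutive terms have different $\cl$-components. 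Thus $\eta_1$ is the required intermediate element. The main obstacle is exactly this bookkeeping: verifying that every zigzag move is genuinely upward --- this is where the sign reversal under $r_{\gamma_0}$ and the precise membership rules for $\Phiafp$ are used --- and checking that the walk terminates at $\nu$ itself rather than at some $\delta$-translate of it, which is why the exact value of $h$ and the lower bounds on $n$ matter.
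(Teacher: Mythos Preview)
The paper does not give its own proof of Lemma~\ref{lemma.cover}; both parts are simply cited from Naito--Sagaki~\cite{NS4}. So there is nothing to compare against, and your task was really to supply a proof from scratch. Your argument is correct.

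Your proof of part~(2) via Lemma~\ref{lemma.littelmann} is clean and exactly the right idea: the two cases on the sign of $\langle\alpha^\vee,\sigma\rangle$ force $\sigma\le\mu$ or $\nu\le\sigma$, either of which is a contradiction.

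For part~(1), your preliminary observation that a cover in the transitive closure must already be a single generating step $\nu=r_\beta\mu$ is correct and worth stating explicitly, as you do. The zigzag construction is correct as well. I checked the bookkeeping: with $\beta=\gamma+n\delta$ one indeed has $\langle\beta^\vee,\mu\rangle=\langle\gamma^\vee,\cl(\mu)\rangle$, so $h>0$; each move lies in $\Phiafp$ and has positive coroot pairing by the sign-flip under $r_{\gamma_0}$; one full round trip restores $\cl$ and lowers $d$ by exactly $h$; and the walk terminates at $(r_{\gamma_0}\cl(\mu),-nh)=\nu$ after $2n+1$ (resp.\ $2n-1$) steps, giving an intermediate element since $n\ge1$ (resp.\ $n\ge2$). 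One minor wording point: your phrase ``$\eta<r_{\gamma_0+k\delta}\eta$ exactly when \dots'' is literally an ``if'' rather than an ``iff'' (the same reflection could be written with the opposite root), but only the ``if'' direction is used, so the proof is unaffected. Likewise, your distinctness remark about consecutive terms is weaker than needed, but the chain is strictly increasing at every step, which already forces $\mu<\eta_1<\nu$ with $\eta_1\notin\{\mu,\nu\}$.
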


We consider the standard projection map $\clproj$ from $\Xafz(\lambda)$ to the orbit of $\lambda$ under the 
finite Weyl group (by factoring out the $\delta$ part). 
We identify $W\lambda\simeq W/W_J\simeq W^J$, and consider on $W^J$ the PQBG
structure. Note that, by contrast with $\Xafz(\lambda)$, the edges of the latter 
are labeled by positive roots $\gamma\in\Phi^+$ (of the finite root system) corresponding to right multiplication 
by $r_\gamma$. We use solid arrows to denote covers in the Bruhat order, whereas dotted arrows denote quantum
edges in the PQBG on $W^J$. 

Our main result is that the level-zero weight poset is an affine lift of the corresponding parabolic quantum 
Bruhat graph. This is illustrated in Figure~\ref{exposet}, where the edges of the (parabolic) Bruhat graph 
(i.e., the slice of the level-zero weight poset with no $\delta$, onto which we project) are shown in red.
Projecting all vertices onto the red part, one obtains the QBG of Figure~\ref{fig:qB}.

\begin{thm} \label{theorem.level 0 weight poset}
Let $\mu\in \Xafz(\lambda)$ and $w:=\clproj(\mu)\in W^J$.
If $\mu \lessdot \nu$ is a cover in $\Xafz(\lambda)$ labeled by $\beta \in \Phi^{{\rm af}+}$, 
then $w \to \clproj(\nu)$ is a Bruhat 
(respectively quantum) edge in the PQBG on $W^J$ labeled by 
$w^{-1}\beta\in\Phi^+\setminus\Phi_J^+$ (respectively $w^{-1}(\beta-\delta)$), depending on 
$\beta\in\Phi^+$ (respectively $\beta\in\delta-\Phi^+$). Conversely, if $\begin{diagram}\dgARROWLENGTH=1.5em\node{w} \arrow{e,t}{\gamma}\node{wr_\gamma=w'}\end{diagram}$ 
(respectively $\begin{diagram}\dgARROWLENGTH=1.5em\node{w} \arrow{e,t,..}{\gamma}
\node{\lfloor wr_\gamma\rfloor=w'}\end{diagram}$) in the PQBG for 
$\gamma \in \Phi^{+} \setminus \Phi_{J}^{+}$, then there exists a cover $\mu \lessdot \nu$ in 
$\Xafz(\lambda)$ labeled by $w\gamma$ (respectively $\delta+w\gamma$) with $\clproj(\nu)=w'$. 
\end{thm}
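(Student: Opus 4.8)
The plan is to deduce Theorem~\ref{theorem.level 0 weight poset} from the first lift (Theorem~\ref{T:Plift}) together with the projection $\ppiP$ machinery, essentially reducing the level-zero weight poset to the affine Bruhat order $\Wafm$ (or rather $\OmegaP^\infty$) and then pushing down to $\QB(\WUP)$. First I would set up the dictionary: every $\mu\in\Xafz(\lambda)$ can be written as $x\lambda$ for a unique $x\in\Wafm$ (minimal coset representative modulo the stabilizer, using Lemma~\ref{L:affstab}), and under this identification $\clproj(\mu)=\ppiP(x)$ restricted suitably, landing in $\WUP$ via Proposition~\ref{P:piP}. A cover $\mu\lessdot\nu=r_\beta\mu$ in $\Xafz(\lambda)$ with $\langle\beta^\vee,\mu\rangle>0$ translates, via Lemma~\ref{lemma.littelmann} and the analysis in Lemma~\ref{lemma.cover}, into a Bruhat cover in $\Wafm$ of the form $x\lessdot r_\beta x$ or a comparable minimal relation; the point is that the distance-$1$ condition in $\Xafz(\lambda)$ matches the covering condition in the affine Bruhat poset after choosing representatives correctly.

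The key steps, in order: (i) translate the cover relation \eqref{lev0anti} into a statement about affine Weyl group elements using the parametrization $\mu=x\lambda$, and use Lemma~\ref{lemma.cover}(1) to restrict the root $\beta$ to $\Phi^+$ or $\delta-\Phi^+$; (ii) in the case $\beta\in\Phi^+$, show the cover lifts to a Bruhat cover $x\lessdot xr_\gamma$-type relation in $\OmegaP^\infty$ (after translating $\mu$ by a $J$-superantidominant $\WP$-invariant coweight, exactly as in the proof of Corollary~\ref{C:PQBG2Waf}), whose classical part gives a Bruhat edge $w\to\lfloor wr_\gamma\rfloor$ in $\QB(\WUP)$ with $\gamma=w^{-1}\beta$, via Theorem~\ref{T:Plift}(2); (iii) in the case $\beta\in\delta-\Phi^+$, write $\beta=\delta-\alpha$ with $\alpha\in\Phi^+$ and use the isomorphism $r_{\delta-\alpha}\mapsto r_\alpha t_{\alpha^\vee}$ to recognize the affine cover as one in which the translation part changes, which Theorem~\ref{T:Plift}(2) identifies as a quantum edge labeled $w^{-1}(\beta-\delta)=w^{-1}(-\alpha)$, matching the stated label $w^{-1}(\beta-\delta)$ up to the sign conventions in the PQBG edge labeling; (iv) for the converse, start from an edge in $\QB(\WUP)$, apply Theorem~\ref{T:Plift}(1) to lift it to a Bruhat cover in $\OmegaP^\infty$, then project the affine cover down to a relation between $x\lambda$ and $y\lambda$ in $\Xafz(\lambda)$, and finally invoke Lemma~\ref{lemma.cover}(2) (or Lemma~\ref{lemma.littelmann}) to upgrade this relation to an actual \emph{cover} (distance~$1$), not merely an order relation.

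The main obstacle I anticipate is step (iv), specifically verifying that the lifted affine Bruhat cover projects to a genuine \emph{cover} in $\Xafz(\lambda)$ rather than just a comparability $\mu<\nu$; the level-zero weight poset is not graded (as the excerpt emphasizes via Figure~\ref{exposet}), so a covering relation downstairs in $\QB(\WUP)$ need not obviously correspond to a covering relation in $\Xafz(\lambda)$ after passing through the affine group. Resolving this requires controlling the distance function $\dist$ under the projection $\clproj$: one must show $\dist(\mu,\nu)=1$ by producing a simple-reflection-by-simple-reflection argument as in the proof of Lemma~\ref{lemma.cover}(2), using Lemma~\ref{lemma.littelmann}(3) to slide along simple reflections until the root $w\gamma$ (or $\delta+w\gamma$) becomes a simple affine root, at which point Lemma~\ref{lemma.cover}(2) applies directly. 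A secondary subtlety is bookkeeping the edge labels and their signs: the PQBG labels edges by positive roots of $\Phi$ via \emph{right} multiplication by $r_\gamma$, while $\Xafz(\lambda)$ labels covers by positive \emph{affine} real roots via \emph{left} multiplication, so the identity $\clproj(r_\beta\mu)=\lfloor\clproj(\mu)\,r_{w^{-1}\beta}\rfloor$ (when $w^{-1}\beta\in\Phi^+\setminus\PhiP^+$) must be checked carefully, together with the analogous identity in the quantum case using $\beta=\delta-\alpha$ and the $z\in\SigmaP$ phase shift of Theorem~\ref{T:Plift}.
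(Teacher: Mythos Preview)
Your proposal has a genuine gap at its core. The strategy hinges on a direct correspondence between covers in $\Xafz(\lambda)$ and Bruhat covers in $\OmegaP^\infty$, which you assert in step~(i)--(iii) with the phrase ``translates, via Lemma~\ref{lemma.littelmann} and Lemma~\ref{lemma.cover}, into a Bruhat cover in $\Wafm$.'' But neither lemma gives you this. Lemma~\ref{lemma.cover}(1) only restricts the root $\beta$ to $\Phi^+\cup(\delta-\Phi^+)$; Lemma~\ref{lemma.littelmann} manipulates the distance function but says nothing about affine Bruhat covers. The map $x\mapsto x\lambda$ from $\OmegaP^\infty$ to $\Xafz(\lambda)$ is neither injective (many $J$-adjusted $\mu$ give the same $\pair{\mu}{\lambda}$) nor order-reflecting in any obvious way: $\Xafz(\lambda)$ is explicitly \emph{not} graded, while Bruhat order on $\OmegaP^\infty$ is graded by length. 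So the claimed translation in the forward direction is precisely the content of the theorem you are trying to prove composed with Theorem~\ref{T:Plift}, and you cannot invoke it as input.

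The paper's proof is organized quite differently: it is an induction on the height of $\beta$. The base case ($\beta$ a simple affine root) is Lemma~\ref{lemma.simple.root}, proved by a direct computation together with Propositions~\ref{P:leftrPQBG} and~\ref{P:thetaPQBG}. The inductive step uses Lemma~\ref{lem.covers} (respectively Lemma~\ref{lem.covers.PQBG}) to find a simple root $\alpha$ with $\pair{\alpha^\vee}{\beta}>0$, applies the Diamond Lemma~\ref{lemma.diamond.weight} on $\Xafz(\lambda)$ to move to a cover labeled by $r_\alpha\beta$ (lower height), applies the inductive hypothesis, and then uses the Diamond Lemma~\ref{lemma.diamond.PQBG} on $\QB(\WUP)$ to recover the desired edge. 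Theorem~\ref{T:Plift} enters only indirectly, as the tool used to \emph{prove} Lemma~\ref{lemma.diamond.PQBG}, not as a direct bridge between $\Xafz(\lambda)$ and $\OmegaP^\infty$. Interestingly, the resolution you sketch for your anticipated obstacle in step~(iv)---sliding along simple reflections via Lemma~\ref{lemma.littelmann}(3) until the root becomes simple---is essentially the paper's idea, but the paper runs this reduction symmetrically in \emph{both} directions and on \emph{both} posets, matched through the Diamond Lemmas, rather than trying to pass through $\OmegaP^\infty$.
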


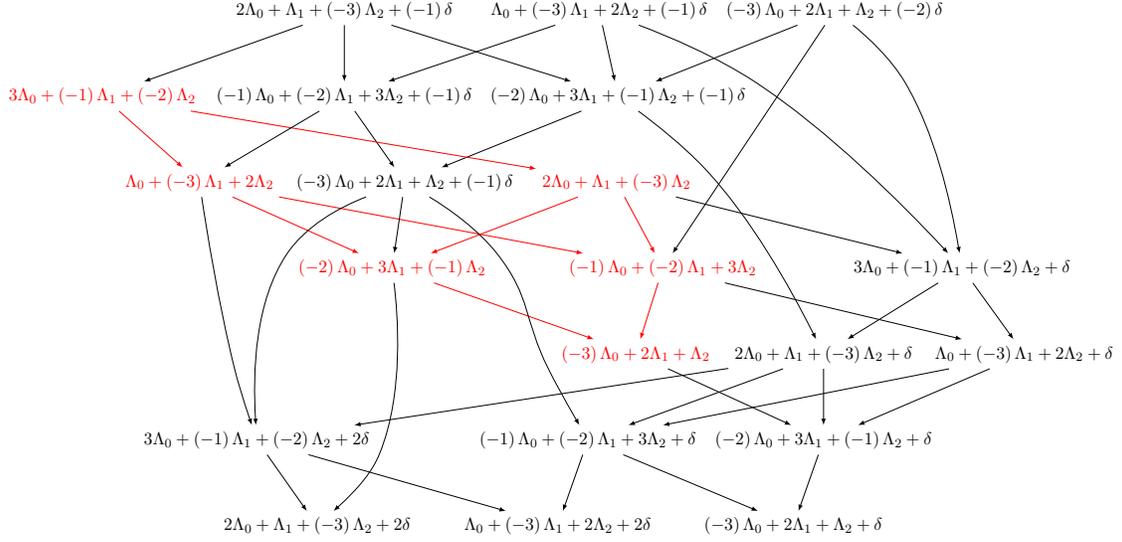
\begin{figure}\label{exposet}
\scalebox{.6}{
\begin{tikzpicture}[>=latex,line join=bevel]
\node (2*Lambda0+Lambda1-3*Lambda2) at (380bp,225bp) [red, draw,draw=none] {$2\Lambda_{0} + \Lambda_{1} + \left(-3\right)\Lambda_{2}$};
  \node (3*Lambda0-Lambda1-2*Lambda2) at (57bp,279bp) [red, draw,draw=none] {$3\Lambda_{0} + \left(-1\right)\Lambda_{1} + \left(-2\right)\Lambda_{2}$};
  \node (-Lambda0-2*Lambda1+3*Lambda2-delta) at (209bp,279bp) [draw,draw=none] {$\left(-1\right)\Lambda_{0} + \left(-2\right)\Lambda_{1} + 3\Lambda_{2} + \left(-1\right)\delta$};
  \node (-2*Lambda0+3*Lambda1-Lambda2) at (239bp,171bp) [red, draw,draw=none] {$\left(-2\right)\Lambda_{0} + 3\Lambda_{1} + \left(-1\right)\Lambda_{2}$};
  \node (3*Lambda0-Lambda1-2*Lambda2+2*delta) at (154bp,63bp) [draw,draw=none] {$3\Lambda_{0} + \left(-1\right)\Lambda_{1} + \left(-2\right)\Lambda_{2} + 2\delta$};
  \node (-3*Lambda0+2*Lambda1+Lambda2+delta) at (491bp,9bp) [draw,draw=none] {$\left(-3\right)\Lambda_{0} + 2\Lambda_{1} + \Lambda_{2} + \delta$};
  \node (2*Lambda0+Lambda1-3*Lambda2-delta) at (209bp,333bp) [draw,draw=none] {$2\Lambda_{0} + \Lambda_{1} + \left(-3\right)\Lambda_{2} + \left(-1\right)\delta$};
  \node (2*Lambda0+Lambda1-3*Lambda2+delta) at (510bp,117bp) [draw,draw=none] {$2\Lambda_{0} + \Lambda_{1} + \left(-3\right)\Lambda_{2} + \delta$};
  \node (-3*Lambda0+2*Lambda1+Lambda2-2*delta) at (517bp,333bp) [draw,draw=none] {$\left(-3\right)\Lambda_{0} + 2\Lambda_{1} + \Lambda_{2} + \left(-2\right)\delta$};
  \node (Lambda0-3*Lambda1+2*Lambda2+delta) at (636bp,117bp) [draw,draw=none] {$\Lambda_{0} + \left(-3\right)\Lambda_{1} + 2\Lambda_{2} + \delta$};
  \node (-3*Lambda0+2*Lambda1+Lambda2) at (392bp,117bp) [red, draw,draw=none] {$\left(-3\right)\Lambda_{0} + 2\Lambda_{1} + \Lambda_{2}$};
  \node (Lambda0-3*Lambda1+2*Lambda2) at (118bp,225bp) [red, draw,draw=none] {$\Lambda_{0} + \left(-3\right)\Lambda_{1} + 2\Lambda_{2}$};
  \node (Lambda0-3*Lambda1+2*Lambda2-delta) at (369bp,333bp) [draw,draw=none] {$\Lambda_{0} + \left(-3\right)\Lambda_{1} + 2\Lambda_{2} + \left(-1\right)\delta$};
  \node (3*Lambda0-Lambda1-2*Lambda2+delta) at (597bp,171bp) [draw,draw=none] {$3\Lambda_{0} + \left(-1\right)\Lambda_{1} + \left(-2\right)\Lambda_{2} + \delta$};
  \node (-Lambda0-2*Lambda1+3*Lambda2+delta) at (362bp,63bp) [draw,draw=none] {$\left(-1\right)\Lambda_{0} + \left(-2\right)\Lambda_{1} + 3\Lambda_{2} + \delta$};
  \node (2*Lambda0+Lambda1-3*Lambda2+2*delta) at (192bp,9bp) [draw,draw=none] {$2\Lambda_{0} + \Lambda_{1} + \left(-3\right)\Lambda_{2} + 2\delta$};
  \node (-Lambda0-2*Lambda1+3*Lambda2) at (409bp,171bp) [red, draw,draw=none] {$\left(-1\right)\Lambda_{0} + \left(-2\right)\Lambda_{1} + 3\Lambda_{2}$};
  \node (-3*Lambda0+2*Lambda1+Lambda2-delta) at (247bp,225bp) [draw,draw=none] {$\left(-3\right)\Lambda_{0} + 2\Lambda_{1} + \Lambda_{2} + \left(-1\right)\delta$};
  \node (-2*Lambda0+3*Lambda1-Lambda2-delta) at (381bp,279bp) [draw,draw=none] {$\left(-2\right)\Lambda_{0} + 3\Lambda_{1} + \left(-1\right)\Lambda_{2} + \left(-1\right)\delta$};
  \node (-2*Lambda0+3*Lambda1-Lambda2+delta) at (510bp,63bp) [draw,draw=none] {$\left(-2\right)\Lambda_{0} + 3\Lambda_{1} + \left(-1\right)\Lambda_{2} + \delta$};
  \node (Lambda0-3*Lambda1+2*Lambda2+2*delta) at (343bp,9bp) [draw,draw=none] {$\Lambda_{0} + \left(-3\right)\Lambda_{1} + 2\Lambda_{2} + 2\delta$};
  \draw [->] (2*Lambda0+Lambda1-3*Lambda2-delta) ..controls (209bp,317.01bp) and (209bp,306.86bp)  .. (-Lambda0-2*Lambda1+3*Lambda2-delta);
  \draw [->] (Lambda0-3*Lambda1+2*Lambda2-delta) ..controls (372.57bp,316.93bp) and (374.86bp,306.64bp)  .. (-2*Lambda0+3*Lambda1-Lambda2-delta);
  \draw [red,->] (Lambda0-3*Lambda1+2*Lambda2) ..controls (156.91bp,207.64bp) and (187.89bp,193.81bp)  .. (-2*Lambda0+3*Lambda1-Lambda2);
  \draw [red,->] (-2*Lambda0+3*Lambda1-Lambda2) ..controls (288.87bp,153.4bp) and (329.49bp,139.06bp)  .. (-3*Lambda0+2*Lambda1+Lambda2);
  \draw [->] (3*Lambda0-Lambda1-2*Lambda2+2*delta) ..controls (216.16bp,45.241bp) and (267.53bp,30.562bp)  .. (Lambda0-3*Lambda1+2*Lambda2+2*delta);
  \draw [->] (Lambda0-3*Lambda1+2*Lambda2+delta) ..controls (544.48bp,98.964bp) and (466.91bp,83.675bp)  .. (-Lambda0-2*Lambda1+3*Lambda2+delta);
  \draw [->] (-Lambda0-2*Lambda1+3*Lambda2) ..controls (484.32bp,153.08bp) and (547.49bp,138.06bp)  .. (Lambda0-3*Lambda1+2*Lambda2+delta);
  \draw [->] (2*Lambda0+Lambda1-3*Lambda2+delta) ..controls (510bp,101.01bp) and (510bp,90.861bp)  .. (-2*Lambda0+3*Lambda1-Lambda2+delta);
  \draw [->] (-2*Lambda0+3*Lambda1-Lambda2) ..controls (242.64bp,142.38bp) and (246.6bp,91.832bp)  .. (231bp,54bp) .. controls (226.45bp,42.961bp) and (218.06bp,32.723bp)  .. (2*Lambda0+Lambda1-3*Lambda2+2*delta);
  \draw [->] (3*Lambda0-Lambda1-2*Lambda2+delta) ..controls (569.45bp,153.9bp) and (548.62bp,140.97bp)  .. (2*Lambda0+Lambda1-3*Lambda2+delta);
  \draw [->] (-2*Lambda0+3*Lambda1-Lambda2-delta) ..controls (337.62bp,261.52bp) and (302.68bp,247.44bp)  .. (-3*Lambda0+2*Lambda1+Lambda2-delta);
  \draw [->] (2*Lambda0+Lambda1-3*Lambda2+delta) ..controls (392.75bp,99.215bp) and (288.21bp,83.358bp)  .. (3*Lambda0-Lambda1-2*Lambda2+2*delta);
  \draw [->] (3*Lambda0-Lambda1-2*Lambda2+2*delta) ..controls (165.64bp,46.455bp) and (173.52bp,35.264bp)  .. (2*Lambda0+Lambda1-3*Lambda2+2*delta);
  \draw [->] (-3*Lambda0+2*Lambda1+Lambda2-2*delta) ..controls (537.86bp,316.16bp) and (552.29bp,302.65bp)  .. (561bp,288bp) .. controls (579.84bp,256.3bp) and (589.68bp,213.82bp)  .. (3*Lambda0-Lambda1-2*Lambda2+delta);
  \draw [red,->] (Lambda0-3*Lambda1+2*Lambda2) ..controls (215.41bp,206.92bp) and (298.27bp,191.55bp)  .. (-Lambda0-2*Lambda1+3*Lambda2);
  \draw [->] (-2*Lambda0+3*Lambda1-Lambda2-delta) ..controls (404.29bp,261.87bp) and (421.76bp,248.02bp)  .. (435bp,234bp) .. controls (464.28bp,203bp) and (488.88bp,159.03bp)  .. (2*Lambda0+Lambda1-3*Lambda2+delta);
  \draw [->] (2*Lambda0+Lambda1-3*Lambda2) ..controls (451.92bp,207.1bp) and (512.13bp,192.12bp)  .. (3*Lambda0-Lambda1-2*Lambda2+delta);
  \draw [->] (-Lambda0-2*Lambda1+3*Lambda2+delta) ..controls (356.32bp,46.849bp) and (352.64bp,36.409bp)  .. (Lambda0-3*Lambda1+2*Lambda2+2*delta);
  \draw [->] (Lambda0-3*Lambda1+2*Lambda2-delta) ..controls (316.73bp,315.36bp) and (274bp,300.94bp)  .. (-Lambda0-2*Lambda1+3*Lambda2-delta);
  \draw [->] (-3*Lambda0+2*Lambda1+Lambda2) ..controls (429.94bp,99.636bp) and (460.15bp,85.812bp)  .. (-2*Lambda0+3*Lambda1-Lambda2+delta);
  \draw [->] (2*Lambda0+Lambda1-3*Lambda2-delta) ..controls (265.44bp,315.28bp) and (311.92bp,300.69bp)  .. (-2*Lambda0+3*Lambda1-Lambda2-delta);
  \draw [->] (2*Lambda0+Lambda1-3*Lambda2-delta) ..controls (159.46bp,315.4bp) and (119.11bp,301.06bp)  .. (3*Lambda0-Lambda1-2*Lambda2);
  \draw [->] (-Lambda0-2*Lambda1+3*Lambda2-delta) ..controls (220.64bp,262.46bp) and (228.52bp,251.26bp)  .. (-3*Lambda0+2*Lambda1+Lambda2-delta);
  \draw [->] (Lambda0-3*Lambda1+2*Lambda2) ..controls (122.21bp,196.96bp) and (130.02bp,148.12bp)  .. (140bp,108bp) .. controls (142.22bp,99.056bp) and (145.2bp,89.271bp)  .. (3*Lambda0-Lambda1-2*Lambda2+2*delta);
  \draw [->] (-3*Lambda0+2*Lambda1+Lambda2-2*delta) ..controls (472.97bp,315.52bp) and (437.51bp,301.44bp)  .. (-2*Lambda0+3*Lambda1-Lambda2-delta);
  \draw [->] (Lambda0-3*Lambda1+2*Lambda2-delta) ..controls (413.13bp,316.57bp) and (443.22bp,303.63bp)  .. (467bp,288bp) .. controls (513.07bp,257.71bp) and (559.19bp,211.52bp)  .. (3*Lambda0-Lambda1-2*Lambda2+delta);
  \draw [->] (-Lambda0-2*Lambda1+3*Lambda2+delta) ..controls (403.67bp,45.557bp) and (437.1bp,31.563bp)  .. (-3*Lambda0+2*Lambda1+Lambda2+delta);
  \draw [red,->] (2*Lambda0+Lambda1-3*Lambda2) ..controls (334.25bp,207.48bp) and (297.26bp,193.31bp)  .. (-2*Lambda0+3*Lambda1-Lambda2);
  \draw [->] (-2*Lambda0+3*Lambda1-Lambda2+delta) ..controls (504.32bp,46.849bp) and (500.64bp,36.409bp)  .. (-3*Lambda0+2*Lambda1+Lambda2+delta);
  \draw [red,->] (3*Lambda0-Lambda1-2*Lambda2) ..controls (75.818bp,262.34bp) and (89.406bp,250.31bp)  .. (Lambda0-3*Lambda1+2*Lambda2);
  \draw [red,->] (-Lambda0-2*Lambda1+3*Lambda2) ..controls (403.92bp,154.85bp) and (400.63bp,144.41bp)  .. (-3*Lambda0+2*Lambda1+Lambda2);
  \draw [->] (-3*Lambda0+2*Lambda1+Lambda2-delta) ..controls (244.62bp,208.93bp) and (243.09bp,198.64bp)  .. (-2*Lambda0+3*Lambda1-Lambda2);
  \draw [->] (-3*Lambda0+2*Lambda1+Lambda2-delta) ..controls (205.79bp,209.28bp) and (184.76bp,197.46bp)  .. (173bp,180bp) .. controls (152.85bp,150.09bp) and (151.39bp,106.3bp)  .. (3*Lambda0-Lambda1-2*Lambda2+2*delta);
  \draw [red,->] (2*Lambda0+Lambda1-3*Lambda2) ..controls (388.76bp,208.69bp) and (394.53bp,197.95bp)  .. (-Lambda0-2*Lambda1+3*Lambda2);
  \draw [->] (-3*Lambda0+2*Lambda1+Lambda2-delta) ..controls (274.85bp,208.25bp) and (293.05bp,195.25bp)  .. (305bp,180bp) .. controls (326.6bp,152.43bp) and (321.37bp,139.34bp)  .. (337bp,108bp) .. controls (341.64bp,98.688bp) and (347.24bp,88.537bp)  .. (-Lambda0-2*Lambda1+3*Lambda2+delta);
  \draw [->] (Lambda0-3*Lambda1+2*Lambda2+delta) ..controls (595.39bp,99.596bp) and (562.94bp,85.687bp)  .. (-2*Lambda0+3*Lambda1-Lambda2+delta);
  \draw [red,->] (3*Lambda0-Lambda1-2*Lambda2) ..controls (166.87bp,260.63bp) and (262.78bp,244.6bp)  .. (2*Lambda0+Lambda1-3*Lambda2);
  \draw [->] (3*Lambda0-Lambda1-2*Lambda2+delta) ..controls (608.95bp,154.46bp) and (617.03bp,143.26bp)  .. (Lambda0-3*Lambda1+2*Lambda2+delta);
  \draw [->] (-Lambda0-2*Lambda1+3*Lambda2-delta) ..controls (180.05bp,261.82bp) and (157.99bp,248.73bp)  .. (Lambda0-3*Lambda1+2*Lambda2);
  \draw [->] (-3*Lambda0+2*Lambda1+Lambda2-2*delta) ..controls (494.05bp,298.58bp) and (443.44bp,222.66bp)  .. (-Lambda0-2*Lambda1+3*Lambda2);
  \draw [->] (2*Lambda0+Lambda1-3*Lambda2+delta) ..controls (461.87bp,99.438bp) and (422.81bp,85.189bp)  .. (-Lambda0-2*Lambda1+3*Lambda2+delta);
\end{tikzpicture}
}
\caption{Slice of the level-zero weight poset for type $A_2^{(1)}$ and weight $2\Lambda_1 + \Lambda_2 - 3 \Lambda_0$.\label{figure.poset}}
\end{figure}

The proof of Theorem~\ref{theorem.level 0 weight poset} is given in the remainder of this section.

\subsection{Outline of the proof}
Let us begin by giving a brief outline of the proof. To relate the cover relations in the level-zero weight poset $\Xafz(\lambda)$
and the edges in the PQBG, we use the so-called Diamond Lemma on $\Xafz(\lambda)$ 
to successively move a cover $\mu \lessdot r_\beta \mu$ ``closer'' to a cover $\mu \lessdot r_{\alpha} \mu$ for a simple
root $\alpha$ in $\Phiaf$. For simple roots, the statement of Theorem~\ref{theorem.level 0 weight poset} is proved in
Section~\ref{section.simple roots}. The Diamond Lemma in the level-zero weight poset 
is the subject of Section~\ref{section.diamond.poset}. Recall that the Diamond Lemmas for the
PQBG were treated in Section~\ref{subsection.diamond}. In Section~\ref{section.diamond.PQBG} we 
prove some further statements related to the Diamond Lemmas for the PQBG that we need
for our arguments. We conclude in Section~\ref{section.main argument} with the main argument, based on matching 
the diamond reductions in the level-zero weight poset with those in the PQBG.

\subsection{Results for simple roots}
\label{section.simple roots}

In this section, we characterize a cover relation $\mu\lessdot \nu$ in $\Xafz(\lambda)$ when $\mu$ and $\nu$ are related 
by an affine simple reflection. 

We start with a simple lemma. Since some versions of it will be needed beyond this section, we collect all of them here. 

\begin{lem}\label{lem.proj} Let $\alpha$ be a simple root, $\beta$ a positive root (both in $\Phiaf$), and 
$\mu=wt_\tau\lambda$ with $w\in W$ and $\tau\in Q^\vee$. Let $\gamma\in\Phi^+$ be given by 
$\beta=\pm w\gamma+k\delta$. 

{\rm (1)} We have
\[\clproj(\mu)=\lfloor w\rfloor\,,\qquad\clproj(r_\beta\mu)=\lfloor w r_\gamma\rfloor\,.\]

{\rm (2)} If $\alpha\ne\alpha_0$, assume that $r_\alpha \lfloor w\rfloor\in W^J$, i.e., $\clproj(\mu)\ne\clproj(r_\alpha\mu)$. Then we have
\[
	\clproj(r_\alpha\mu)=\casetwo{r_\alpha \lfloor w\rfloor
	=\lfloor r_\alpha w\rfloor}{\alpha\ne\alpha_0}{\lfloor r_\theta w\rfloor}{\alpha=\alpha_0}
\]

{\rm (3)} If $\alpha\ne\alpha_0$, assume that $r_\alpha \lfloor w r_\gamma\rfloor\in W^J$, i.e., 
$\clproj(r_\beta\mu)\ne\clproj(r_\alpha r_\beta\mu)$. Then we have
\[
	\clproj(r_\alpha r_\beta\mu)
	=\casetwo{r_\alpha \lfloor w r_\gamma\rfloor
	=\lfloor r_\alpha \lfloor w r_\gamma\rfloor \rfloor}{\alpha\ne\alpha_0}{\lfloor r_\theta w r_\gamma\rfloor}{\alpha=\alpha_0}
\]
\end{lem}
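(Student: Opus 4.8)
This lemma is a purely formal computation about how the projection $\clproj\colon\Xafz(\lambda)\to W^J$ behaves under left and right multiplication by reflections, so I would simply unwind the definitions. Recall that $\mu=wt_\tau\lambda$ means $\clproj(\mu)=\lfloor w\rfloor$ by definition of the projection (kill the $\delta$-part, then pass to $W/W_J\cong W^J$); this is exactly the first equation of part (1). For the second equation of part (1), write out what $r_\beta\mu$ is: using $\beta=\pm w\gamma+k\delta$ and the level-zero action \eqref{E:onlevelzero}, one gets $r_\beta\mu = w r_\gamma t_{\tau'}\lambda$ for some $\tau'\in Q^\vee$ (the reflection $r_\beta$ conjugates into $w r_\gamma w^{-1}$ times a translation, and the translation part lies in $Q^\vee$ and dies under $\clproj$). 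Hence $\clproj(r_\beta\mu)=\lfloor w r_\gamma\rfloor$.

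For parts (2) and (3) the point is that left multiplication by the affine simple reflection $r_\alpha$ acts on $\mu=wt_\tau\lambda$ by $r_\alpha(wt_\tau\lambda)=(r_\alpha w)t_\tau\lambda$ when $\alpha\ne\alpha_0$, and by $r_0(wt_\tau\lambda)=(r_\theta w)t_{\tau-w^{-1}\theta^\vee}\lambda$ when $\alpha=\alpha_0$, using $r_0=r_\theta t_{-\theta^\vee}$ from the Notation section together with $wt_\mu w^{-1}=t_{w\mu}$. In either case the translation part is in $Q^\vee$ and is annihilated by $\clproj$, so $\clproj(r_\alpha\mu)=\lfloor r_\alpha w\rfloor$ (resp.\ $\lfloor r_\theta w\rfloor$). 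The only remaining point for part (2) is the first equality $r_\alpha\lfloor w\rfloor=\lfloor r_\alpha w\rfloor$ in the case $\alpha\ne\alpha_0$: writing $w=\lfloor w\rfloor u$ with $u\in W_J$ length-additively, we have $\lfloor r_\alpha w\rfloor=\lfloor r_\alpha\lfloor w\rfloor u\rfloor=\lfloor r_\alpha\lfloor w\rfloor\rfloor$, and the hypothesis $r_\alpha\lfloor w\rfloor\in W^J$ says precisely that this last element equals $r_\alpha\lfloor w\rfloor$ itself. Part (3) is identical with $w$ replaced by $\lfloor w r_\gamma\rfloor$ (note $\clproj(r_\beta\mu)=\lfloor w r_\gamma\rfloor$ by part (1), so we are computing $\clproj(r_\alpha r_\beta\mu)$ by applying part (2)'s argument to the element $r_\beta\mu$, whose underlying Weyl group element modulo $W_J$ is $\lfloor w r_\gamma\rfloor$); the hypothesis $r_\alpha\lfloor w r_\gamma\rfloor\in W^J$ supplies the identity $r_\alpha\lfloor w r_\gamma\rfloor=\lfloor r_\alpha\lfloor w r_\gamma\rfloor\rfloor$.

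The only mildly delicate bookkeeping is checking that $r_\beta\mu$ really has the form $(w r_\gamma)t_{\tau'}\lambda$ with $\tau'\in Q^\vee$: from $\beta=\pm w\gamma+k\delta$ one has $r_\beta=w r_{\pm\gamma+k'\delta}w^{-1}$ for a suitable $k'$ (using $\Waf$-equivariance of \eqref{E:reflection}, \eqref{E:coroot}), and $r_{\gamma+k'\delta}=r_\gamma t_{k'\gamma^\vee}$ by the formula $r_{\alpha+n\delta}\mapsto r_\alpha t_{n\av}$ from the Notation section; conjugating by $w$ and multiplying by $wt_\tau$ keeps everything inside $W\ltimes Q^\vee$, and the sign ($\pm$) does not matter since $r_{-\gamma}=r_\gamma$ and $r_\gamma=r_{w\gamma}$ is really determined by $\beta$ alone via \eqref{E:reflection}. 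I do not anticipate a genuine obstacle here; the whole lemma is a matter of carefully tracking translation parts and invoking the length-additive factorization $w=\lfloor w\rfloor\cdot(\text{element of }W_J)$. The one place to be careful is not to conflate $\lfloor r_\alpha w\rfloor$ with $r_\alpha\lfloor w\rfloor$ without the standing hypothesis that the latter lies in $W^J$ — i.e.\ without the assumption $\clproj(\mu)\ne\clproj(r_\alpha\mu)$, which by Lemma~\ref{L:tricoset} is exactly what rules out the "stays in the same coset" case.
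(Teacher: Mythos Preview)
Your proposal is correct and follows essentially the same approach as the paper: both unwind the semidirect product $\Waf\cong W\ltimes Q^\vee$ to see that translation parts die under $\clproj$, and both use the trichotomy of cosets (Lemma~\ref{L:tricoset}) to identify $\lfloor r_\alpha w\rfloor$ with $r_\alpha\lfloor w\rfloor$ under the standing hypothesis. Your treatment of this last point via the factorization $w=\lfloor w\rfloor u$ with $u\in W_J$ is a slightly more explicit variant of the paper's appeal to the trichotomy, but the substance is the same.
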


\begin{proof}
We have
\begin{equation}\label{translw} 
	\mu=wt_\tau\lambda=w\lambda-\langle \tau,\,\lambda \rangle\delta\,.
\end{equation}
So $\clproj(\mu)=\lfloor w\rfloor$. Similarly, we have
\[\
	\clproj(r_{{\beta}}\mu)=\clproj(r_{w\gamma}t_{\pm kw\gamma^\vee}w t_\tau(\lambda))
	=\lfloor w r_\gamma\rfloor\,,\quad\mbox{and}\quad \clproj(r_{{\alpha}}\mu)
	=\casetwo{\lfloor r_\alpha w \rfloor}{\alpha\ne\alpha_0}{\lfloor r_\theta w \rfloor}{\alpha=\alpha_0}
\]
In addition, if $\alpha\ne\alpha_0$, then $\lfloor r_\alpha w\rfloor$ can only be $\lfloor w\rfloor$ or $r_\alpha \lfloor w\rfloor$, by
Lemma~\ref{L:tricoset}; but the first case cannot happen by the assumptions of the lemma. The calculation 
of $\clproj(r_\alpha r_\beta\mu)$ is similar, by also noting that, if ${\alpha}\ne\alpha_0$, then 
$\lfloor r_\alpha w r_\gamma\rfloor=\lfloor r_\alpha \lfloor w r_\gamma\rfloor\rfloor$. 
\end{proof}

\begin{lem}\label{lemma.simple.root}
Let $\mu,\nu \in \Xafz(\lambda)$ be such that $\nu = r_{\alpha}\mu$ for a simple root $\alpha$ in $\Phiaf$.
Then $\mu \lessdot \nu$ is a cover in $\Xafz(\lambda)$ if and only if  $\clproj(\mu)\to \clproj(\nu)$ is a Bruhat (respectively quantum) 
edge in the PQBG on $W^J$ labeled by $w^{-1}\alpha$ (respectively $-w^{-1}\theta$), 
where $w=\clproj(\mu)\in W^J$, depending on $\alpha\ne\alpha_0$ (respectively $\alpha=\alpha_0$). 
\end{lem}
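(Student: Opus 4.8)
## Proof Plan for Lemma~\ref{lemma.simple.root}

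The plan is to split into the two cases $\alpha \ne \alpha_0$ and $\alpha = \alpha_0$, and in each case to use Lemma~\ref{lem.proj} to track the projection $\clproj$ together with the characterization of the edges of $\QB(\WUP)$ provided by Propositions~\ref{P:leftrPQBG} and~\ref{P:thetaPQBG}. Write $\mu = w t_\tau \lambda$ with $w \in \WUP$ (we may assume $w \in \WUP$ since $\clproj(\mu) = \lfloor w \rfloor$ and $\WP$ stabilizes $\lambda$), so $\clproj(\mu) = w$. For $\alpha = \alpha_j$ with $j \in I$, we have $\nu = r_{\alpha_j}\mu = r_{\alpha_j} w\, \lambda - \langle \tau, \lambda\rangle \delta$, so $\clproj(\nu) = \lfloor r_j w\rfloor$; for $\alpha = \alpha_0 = \delta - \theta$, the level-zero action gives $\nu = r_0 \mu = r_\theta w\, \lambda - (\langle \tau,\lambda\rangle - \langle w^{-1}\theta^\vee,\lambda\rangle)\delta$, so $\clproj(\nu) = \lfloor r_\theta w\rfloor$.

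First I would handle $\alpha = \alpha_j$, $j \in I$. By Definition~\ref{definition.level-0 weight poset}, $\mu \lessdot \nu = r_{\alpha_j}\mu$ requires $\langle \alpha_j^\vee, \mu\rangle > 0$; combined with part (2) of Lemma~\ref{lemma.cover}, this already forces $\dist(\mu, r_{\alpha_j}\mu) = 1$, so \emph{whenever} $\langle \alpha_j^\vee, \mu\rangle > 0$ we have a cover. Now $\langle \alpha_j^\vee, \mu \rangle = \langle \alpha_j^\vee, w\lambda\rangle = \langle w^{-1}\alpha_j^\vee, \lambda\rangle$, and by Lemma~\ref{L:tri} (with $v = w$, $\alpha = \alpha_j$) this is $> 0$ exactly when $w^{-1}\alpha_j \in \Phi^+ \setminus \PhiP^+$, which by Proposition~\ref{P:leftrPQBG}(3) is precisely the case where there is a Bruhat edge $r_j w = \lfloor r_j w\rfloor \qleft{w^{-1}\alpha_j} w$, i.e. a Bruhat edge $w \to \clproj(\nu)$ labeled $w^{-1}\alpha_j$. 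Conversely, if $w \xrightarrow{\gamma} wr_\gamma$ is a Bruhat edge with $\gamma = w^{-1}\alpha_j$ for some simple $\alpha_j$, run the same equivalence backwards. One must also check the converse direction handles the case $\clproj(\mu) = \clproj(\nu)$ correctly: this is exactly Proposition~\ref{P:leftrPQBG}(2), where $\langle \alpha_j^\vee,\mu\rangle = 0$, no edge and no cover — consistent. This disposes of the Bruhat case.

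Next I would treat $\alpha = \alpha_0$. Here $\mu \lessdot \nu = r_0 \mu$ requires $\langle \alpha_0^\vee, \mu\rangle > 0$, i.e. $\langle \delta - \theta,\,\cdot\,\rangle$-pairing... more precisely $\langle \alpha_0^\vee, \mu\rangle = -\langle w^{-1}\theta^\vee, \lambda\rangle + (\text{the }\delta\text{-coefficient contribution, which vanishes on }\Xafz)$; since $\mu$ has level zero this equals $-\langle \theta^\vee, w\lambda\rangle = -\langle w^{-1}\theta^\vee,\lambda\rangle$, so the cover condition is $\langle w^{-1}\theta^\vee, \lambda\rangle < 0$. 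By Lemma~\ref{lemma.cover}(2) again (applied to the affine simple root $\alpha_0$), this condition alone gives $\dist(\mu,\nu) = 1$. By Lemma~\ref{L:tri}(1) (with $v = w$, $\alpha = \theta$), $\langle w^{-1}\theta^\vee,\lambda\rangle < 0$ is equivalent to $w^{-1}\theta \in \Phi^- \setminus \PhiP^-$, which by Proposition~\ref{P:thetaPQBG}(1) is exactly the situation producing a quantum edge $\lfloor r_\theta w\rfloor = \clproj(\nu) \qleft{-w^{-1}\theta} w$ in $\QB(\WUP)$, labeled $-w^{-1}\theta$ as claimed. The converse direction reverses this chain of equivalences, using that a quantum edge $w \xrightarrow{\gamma} \lfloor wr_\gamma\rfloor$ of the form covered by Proposition~\ref{P:thetaPQBG}(1) forces $\gamma = -w^{-1}\theta$ and $\langle w^{-1}\theta^\vee,\lambda\rangle < 0$, hence gives the cover via Lemma~\ref{lemma.cover}(2).

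The main subtlety — and where I would be most careful — is \textbf{not} the edge bookkeeping but pinning down that $\mu \lessdot \nu$ is a \emph{cover} (distance exactly $1$) rather than just a relation $\mu < \nu$: a priori $r_\beta \mu$ with $\langle \beta^\vee, \mu\rangle > 0$ only gives $\mu < r_\beta\mu$ in the transitive closure, and could fail to be a cover. This is precisely why Lemma~\ref{lemma.cover}(2) of Naito--Sagaki is invoked: it guarantees that for a simple (affine) root $\alpha$ with $\langle \alpha^\vee,\mu\rangle > 0$, the relation $\mu < r_\alpha\mu$ is automatically a cover. So the logical skeleton is: (cover) $\iff$ ($\langle\alpha^\vee,\mu\rangle > 0$, using Lemma~\ref{lemma.cover}(2) for one direction and Definition~\ref{definition.level-0 weight poset} for the other) $\iff$ (the appropriate root-sign condition on $w^{-1}\alpha$ or $w^{-1}\theta$, using Lemma~\ref{L:tri}) $\iff$ (existence of the stated Bruhat/quantum edge, using Propositions~\ref{P:leftrPQBG} and~\ref{P:thetaPQBG}). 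A final check of the edge labels — matching $w^{-1}\alpha_j$ and $-w^{-1}\theta$ with the labels appearing in those propositions, and confirming the label lies in $\Phi^+ \setminus \PhiP^+$ — completes the argument; this is routine given Lemma~\ref{L:tri}.
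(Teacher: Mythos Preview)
Your proposal is correct and follows essentially the same approach as the paper's proof: both arguments reduce the cover condition to the positivity $\langle \alpha^\vee,\mu\rangle > 0$ via Lemma~\ref{lemma.cover}(2), compute this pairing explicitly as $\langle w^{-1}\alpha^\vee,\lambda\rangle$ (resp.\ $-\langle w^{-1}\theta^\vee,\lambda\rangle$), translate to the sign of $w^{-1}\alpha$ (resp.\ $w^{-1}\theta$) via Lemma~\ref{L:tri}, and then identify the resulting edge using Lemma~\ref{lem.proj} together with Proposition~\ref{P:leftrPQBG} or~\ref{P:thetaPQBG}. The only cosmetic difference is that in the case $\alpha\ne\alpha_0$ the paper cites Lemma~\ref{L:tri} directly to obtain the Bruhat cover $w\lessdot r_\alpha w$ in $\WUP$, whereas you route through Proposition~\ref{P:leftrPQBG}(3), which amounts to the same thing.
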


\begin{proof}
Since $\alpha$ is a simple root, we have by Lemma~\ref{lemma.cover}~(2) that $\dist(\mu,\nu)=1$ if $\mu<\nu$. 
So in this case $\mu \lessdot \nu$ is equivalent to $\mu<\nu$. Letting $\mu=wt_\tau(\lambda)$ with $w\in W^J$ 
and $\tau \in Q^\vee$, we have $\clproj(\mu)=w$, by Lemma \ref{lem.proj}~(1). 
Let us first assume that $\alpha\ne\alpha_0$. Then 
\begin{equation*}
	\langle \alpha^\vee, \mu \rangle  = \langle \alpha^\vee, w\lambda \rangle 
	= \langle w^{-1}\alpha^\vee, \lambda \rangle ,
\end{equation*}
where for the first equality we used~\eqref{translw}. Hence
\begin{equation*}
	\mu < r_{\alpha}\mu \quad \Leftrightarrow \quad  \langle \alpha^\vee, \mu \rangle>0 \quad 
	\Leftrightarrow \quad  w^{-1}\alpha \in \Phi^+ \setminus \Phi^+_J 
	\quad \Leftrightarrow \quad  w \lessdot r_{\alpha} w \:\mbox{ in $W^J$}\,,
\end{equation*}
where the last equivalence is based on Lemma~\ref{L:tri}. The last condition is equivalent to $\clproj(\mu)\to \clproj(\nu)$ 
being a Bruhat edge in the PQBG, by Lemma~\ref{lem.proj}~(2). 
This proves the claim for $\alpha\ne\alpha_0$.

Now assume $\alpha=\alpha_0$.  Similarly to before
\begin{equation*}
	\langle \alpha^\vee, \mu \rangle  = \langle \alpha^\vee, w\lambda \rangle   
	= \langle -\theta^\vee, w\lambda \rangle 
	= -\langle w^{-1}\theta^\vee, \lambda \rangle ,
\end{equation*}
where we used $\alpha_0=-\theta + \delta$, or $\alpha_0^\vee+\theta^\vee=c$. Hence
\begin{equation*}
	\mu > r_{\alpha}\mu \quad \Leftrightarrow \quad \langle \alpha^\vee, \mu \rangle>0 
	\quad \Leftrightarrow \quad w^{-1}\theta \in \Phi^- \setminus \Phi^-_J .
\end{equation*}
By Proposition~\ref{P:thetaPQBG}, the last condition is equivalent to the fact that 
$\begin{diagram}\dgARROWLENGTH=3em\node{w} \arrow{e,t,..}{-w^{-1}\theta}\node{\lfloor r_\theta w \rfloor}\end{diagram}$ 
is a quantum edge in the PQBG. Also note that 
$\clproj(r_\alpha\mu)=\lfloor r_\theta w \rfloor $, by Lemma \ref{lem.proj}~(2). 
This proves the claim for $\alpha=\alpha_0$.
\end{proof}

\subsection{The Diamond Lemma in the level-zero weight poset}
\label{section.diamond.poset}

In this section we investigate the Diamond Lemma in the level-zero weight poset
$\Xafz(\lambda)$.

\begin{lem} \label{lem.covers}
Let $\mu\in \Xafz(\lambda)$ and $\mu < r_\beta\mu$ in $\Xafz(\lambda)$, where $\beta\in\Phiafp$.
Then there exists a simple root $\alpha$ in $\Phiaf$ (in fact, $\alpha\ne\alpha_0$ if $\beta\in\Phi$) 
such that $\langle\alpha^\vee,\beta\rangle>0$, and either
\[
 	\text{\rm{(1)}} \quad \mu \lessdot r_{\alpha}\mu \qquad \text{or} \qquad 
 	\text{\rm{(2)}} \quad r_{\alpha} r_\beta\mu \lessdot r_\beta\mu
\]
is a cover in $\Xafz(\lambda)$.
\end{lem}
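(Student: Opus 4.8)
The strategy is to induct on $\dist(\mu, r_\beta\mu)$, using the Diamond Lemma for $\Xafz(\lambda)$ (Lemma~\ref{lemma.littelmann}) to reduce to the base case, which is exactly a cover $\mu \lessdot r_\alpha\mu$ with $\alpha$ a simple root. First I would observe that since $\mu < r_\beta\mu$ we have $\pair{\beta^\vee}{\mu} > 0$, so $\mu \neq r_\beta\mu$; if $\dist(\mu, r_\beta\mu) = 1$, then $\mu \lessdot r_\beta\mu$ is itself a cover and I must produce a simple root $\alpha$ with $\pair{\alpha^\vee}{\beta} > 0$ and either (1) or (2). For this I would pick $\alpha$ to be any simple root appearing with positive coefficient in the expansion of $\beta$; since $\beta \in \Phiafp$, such an $\alpha$ exists and automatically $\pair{\alpha^\vee}{\beta} > 0$ unless $\beta$ is the unique simple root in a rank-one situation, in which case $\beta = \alpha$ and there is nothing to do. The condition $\beta \in \Phi$ forces $\alpha \neq \alpha_0$ because $\alpha_0$ does not occur in any element of $\Phi^+$.

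The inductive step is where Lemma~\ref{lemma.littelmann} does the work. Given $\mu < \nu := r_\beta\mu$ with $\dist(\mu,\nu) \geq 2$, I would choose a simple root $\alpha$ in $\Phiaf$ with $\pair{\alpha^\vee}{\beta} > 0$ as above. The quantity $\pair{\alpha^\vee}{\beta}>0$ together with $\pair{\beta^\vee}{\mu}>0$ lets me compare $\pair{\alpha^\vee}{\mu}$ and $\pair{\alpha^\vee}{\nu} = \pair{\alpha^\vee}{\mu} - \pair{\beta^\vee}{\mu}\pair{\alpha^\vee}{\beta}$, so that $\pair{\alpha^\vee}{\nu} < \pair{\alpha^\vee}{\mu}$. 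I distinguish cases on the signs of $\pair{\alpha^\vee}{\mu}$ and $\pair{\alpha^\vee}{\nu}$: if $\pair{\alpha^\vee}{\mu} > 0 \geq \pair{\alpha^\vee}{\nu}$, then part (2) of Lemma~\ref{lemma.littelmann} gives $r_\alpha\mu \leq \nu$ with strictly smaller distance, and I apply the inductive hypothesis to $r_\alpha\mu < \nu$ — but I would also need to check that this reduction is compatible, i.e., that a simple-root cover obtained downstream still yields (1) or (2) for the original pair. Actually the cleaner route is: if $\pair{\alpha^\vee}{\mu} \geq 0 > \pair{\alpha^\vee}{\nu}$ then Lemma~\ref{lemma.littelmann}(1) gives $\mu \leq r_\alpha\nu$ with $\dist(\mu, r_\alpha\nu) < \dist(\mu,\nu)$, and I recurse on the pair $\mu < r_\alpha\nu = r_{r_\alpha\beta}\mu$; if instead $\pair{\alpha^\vee}{\mu}, \pair{\alpha^\vee}{\nu} > 0$ then Lemma~\ref{lemma.littelmann}(3) says $r_\alpha\mu < r_\alpha\nu$ has the same distance, and I recurse there, noting $r_\alpha$ preserves the root $\beta \mapsto r_\alpha\beta$ and that at some stage the $\alpha$-value must drop to $\leq 0$ since each application of $r_\alpha$ strictly decreases $\pair{\alpha^\vee}{\cdot}$ by a fixed positive amount. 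The case $\pair{\alpha^\vee}{\mu} < 0$ cannot occur simultaneously with $\mu < r_\beta\mu$ and $\pair{\alpha^\vee}{\beta}>0$ forcing the wrong sign — I would need to handle it by instead selecting $\alpha$ from the support of $\beta$ so that $\pair{\alpha^\vee}{\mu}$ has a controlled sign, or argue that after finitely many reflections the sign pattern lands in one of the three good cases of Lemma~\ref{lemma.littelmann}.

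The main obstacle is bookkeeping the recursion so that it genuinely terminates and so that the simple root $\alpha$ ultimately produced at distance $1$ satisfies the conclusion for the \emph{original} $\beta$ rather than some transformed root. The key insight to make this work is that throughout the reduction the root being reflected stays in the $W_{\mathrm{af}}$-orbit of $\beta$ under the simple reflections used, and the final simple-root cover $\mu' \lessdot r_{\alpha'}\mu'$ produced by the base case, when pulled back through the chain of Diamond-Lemma moves, lands as either $\mu \lessdot r_{\alpha'}\mu$ or $r_{\alpha'}r_\beta\mu \lessdot r_\beta\mu$ — i.e. exactly alternatives (1) and (2) — because each move either fixes $\mu$ and changes $\nu$ (feeding into (1)-type conclusions) or fixes $\nu$ and changes $\mu$ (feeding into (2)-type conclusions), and these two families are closed under the recursion. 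I would organize the induction on $\dist(\mu,\nu)$ with the two-sided statement "(1) or (2)" as the invariant, checking at each of the three Lemma~\ref{lemma.littelmann} branches that the invariant is preserved, and verifying the $\pair{\alpha^\vee}{\beta}>0$ requirement is inherited because $\alpha$ can be chosen from the support of $\beta$ and the support is stable enough under the reductions; the remaining verification that $\alpha \neq \alpha_0$ when $\beta \in \Phi$ is immediate from $\Phiafp = \Phi^+ \sqcup (\Phi + \Z_{>0}\delta)$ and the fact that a root in $\Phi^+$ has zero $\alpha_0$-coefficient.
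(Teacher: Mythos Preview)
Your approach is much more complicated than necessary, and the induction you set up is never needed. The paper's proof is direct and takes only a few lines: pick a simple root $\alpha$ in the support of $\beta$ with $\pair{\alpha^\vee}{\beta}>0$ (this is where $\alpha\ne\alpha_0$ when $\beta\in\Phi$ comes from). The reflection formula gives
\[
\pair{\alpha^\vee}{\mu}-\pair{\alpha^\vee}{r_\beta\mu}=\pair{\alpha^\vee}{\beta}\,\pair{\beta^\vee}{\mu}>0,
\]
so it is impossible that simultaneously $\pair{\alpha^\vee}{\mu}\le 0$ and $\pair{\alpha^\vee}{r_\beta\mu}\ge 0$. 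Hence either $\pair{\alpha^\vee}{\mu}>0$ or $\pair{\alpha^\vee}{r_\beta\mu}<0$. Now the key fact you omitted is Lemma~\ref{lemma.cover}\,(2): for a \emph{simple} root $\alpha$, the relation $\mu<r_\alpha\mu$ is automatically a cover. Applying this to $\mu$ in the first case and to $r_\alpha r_\beta\mu<r_\beta\mu$ in the second gives (1) or (2) immediately. No recursion on $\dist$, no Diamond Lemma.

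Your proposal does contain the germ of this computation (you observe $\pair{\alpha^\vee}{\nu}<\pair{\alpha^\vee}{\mu}$), but you then branch into Lemma~\ref{lemma.littelmann} cases and an induction whose bookkeeping you yourself flag as problematic: the simple root produced at the end of the recursion would satisfy $\pair{\alpha^\vee}{r_\bullet\beta}>0$ for some transformed root, not $\pair{\alpha^\vee}{\beta}>0$ for the original $\beta$, and nothing in your argument repairs this. Moreover, your base case $\dist(\mu,r_\beta\mu)=1$ is not actually handled---you assert you must ``produce a simple root $\alpha$ \dots\ and either (1) or (2)'' but never say how, and the argument needed there is exactly the direct one above, which already works without any hypothesis on $\dist$.
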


\begin{proof}
We pick a simple root $\alpha$ in the decomposition of $\beta$ 
such that $\langle \alpha^\vee, \beta \rangle > 0$. This clearly exists, and in fact $\alpha\ne\alpha_0$ if $\beta\in\Phi$. 

By Definition~\ref{definition.level-0 weight poset}, we have 
$\langle \beta^\vee, \mu \rangle > 0$. We claim that either
\begin{equation} \label{equation.choices}
	\langle \alpha^\vee, \mu \rangle > 0 \qquad \text{or} \qquad \langle -r_\beta\alpha^\vee, \mu\rangle
	=-\langle \alpha^\vee, r_\beta\mu \rangle >0\,.
\end{equation}
Indeed, the reflection formula
\[
	r_\beta\alpha^\vee = \alpha^\vee - \langle \alpha^\vee, \beta \rangle \beta^\vee
\]
implies that $\alpha^\vee - r_\beta\alpha^\vee$ is a positive multiple of $\beta^\vee$. 
Now (\ref{equation.choices}) follows since $\langle \beta^\vee, \mu \rangle > 0$.
We conclude the proof by combining \eqref{equation.choices} with Lemma~\ref{lemma.cover}~(2). 
\end{proof}

Next we state the Diamond Lemma for the level-zero
weight poset.

\begin{lem} \label{lemma.diamond.weight}
Let $\alpha$ be a simple root, $\beta\ne\alpha$ a positive root (both in $\Phiaf$), and $\mu\in \Xafz(\lambda)$. In the left diagram, 
the bottom two covers imply the top two covers, while the top two covers imply the bottom two covers in the right diagram. 
\begin{equation}\label{dw}\begin{diagram}\node[2]{r_\alpha r_\beta\mu}\\
\node{r_\alpha\mu} \arrow{ne,t}{r_\alpha\beta} 
\node[2]{r_\beta\mu} \arrow{nw,t}{\alpha} \\\node[2]{\mu} \arrow{nw,b}{\alpha} \arrow{ne,b}{\beta}\end{diagram}
\qquad \qquad
\begin{diagram}\node[2]{r_\beta\mu}\\
\node{\mu} \arrow{ne,t}{\beta} 
\node[2]{r_\alpha r_\beta\mu} \arrow{nw,t}{\alpha} \\ \node[2]{r_\alpha\mu} \arrow{nw,b}{\alpha} \arrow{ne,b}{r_\alpha\beta}\end{diagram}
\end{equation}
\end{lem}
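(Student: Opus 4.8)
The plan is to prove the Diamond Lemma for the level-zero weight poset (Lemma~\ref{lemma.diamond.weight}) as a direct consequence of Littelmann's distance estimates in Lemma~\ref{lemma.littelmann} together with the ``simple root cover'' criterion of Lemma~\ref{lemma.cover}~(2), following the same pattern used in the classical Diamond Lemma~\ref{L:diamond}. By Remark~\ref{lr-relabel}-style duality (here, using Remark~\ref{R:weightposet}~(2) which passes to the dual poset $\Xafz(-\lambda)$), it suffices to treat one implication; I would prove the left diagram, i.e., that the two bottom covers $\mu\lessdot r_\alpha\mu$ and $\mu\lessdot r_\beta\mu$ force the two top covers $r_\alpha\mu\lessdot r_\alpha r_\beta\mu$ and $r_\beta\mu\lessdot r_\alpha r_\beta\mu$ (with the indicated labels).

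First I would record the label bookkeeping: since $r_\alpha(r_\beta\mu)=r_{r_\alpha\beta}(r_\alpha\mu)$, the edge from $r_\alpha\mu$ upward is labeled by $r_\alpha\beta$, and one checks $r_\alpha\beta\in\Phiafp$ because $\beta\ne\alpha$ is a positive root and $r_\alpha$ permutes $\Phiafp\setminus\{\alpha\}$; the edge from $r_\beta\mu$ upward is labeled $\alpha$ since $r_\alpha(r_\beta\mu)=r_\beta\mu-\langle\alpha^\vee,r_\beta\mu\rangle\alpha$. Next, from the two hypothesized covers and Definition~\ref{definition.level-0 weight poset} I extract the sign data: $\langle\alpha^\vee,\mu\rangle>0$ and $\langle\beta^\vee,\mu\rangle>0$. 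Then I would compute $\langle\alpha^\vee,r_\beta\mu\rangle=\langle\alpha^\vee,\mu\rangle-\langle\alpha^\vee,\beta\rangle\langle\beta^\vee,\mu\rangle$ and $\langle\beta^\vee,r_\alpha\mu\rangle=\langle\beta^\vee,\mu\rangle-\langle\alpha^\vee,\beta\rangle\langle\alpha^\vee,\mu\rangle$ (using $\langle\alpha^\vee,\beta\rangle=\langle\beta^\vee,\alpha\rangle\cdot(\text{ratio of squared lengths})$ with the same sign), splitting into the cases $\langle\alpha^\vee,\beta\rangle\le 0$ and $\langle\alpha^\vee,\beta\rangle>0$. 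In the case $\langle\alpha^\vee,\beta\rangle\le 0$, both new pairings are $>0$, so the two top relations are cover-by-simple-reflection-times-something; but to get actual \emph{covers} I invoke Lemma~\ref{lemma.littelmann}~(3): since $\langle\alpha^\vee,\mu\rangle,\langle\alpha^\vee,r_\beta\mu\rangle>0$ we get $\dist(r_\alpha\mu,r_\alpha r_\beta\mu)=\dist(\mu,r_\beta\mu)=1$, hence a cover, and symmetrically for the other edge using $\dist(r_\beta\mu,r_\alpha r_\beta\mu)$ — here I would instead use that $r_\alpha\beta$ need not be simple, so I route through $\dist(\mu,r_\alpha\mu)=1$ combined with $\dist(r_\beta\mu, r_\beta r_\alpha\mu)$, i.e. apply part~(3) with the roles of $\mu$ and $r_\alpha\mu$ swapped by $r_\beta$. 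In the remaining case $\langle\alpha^\vee,\beta\rangle>0$, one of the two quantities $\langle\alpha^\vee,r_\beta\mu\rangle$ could be $\le 0$; but then Lemma~\ref{lemma.littelmann}~(1) applied to $\mu\le r_\beta\mu$ with the simple root $\alpha$ (note $\langle\alpha^\vee,\mu\rangle\ge 0$, $\langle\alpha^\vee,r_\beta\mu\rangle<0$) gives $\mu\le r_\alpha r_\beta\mu$ with $\dist(\mu,r_\alpha r_\beta\mu)<\dist(\mu,r_\beta\mu)=1$, which is absurd — so in fact this subcase cannot occur, and we are back to the previous analysis.

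The right diagram is obtained from the left one by the substitution $\mu\mapsto r_\alpha\mu$, which turns the two top covers into the two bottom covers of a left-type diamond (with $\beta$ replaced by $r_\alpha\beta$); alternatively, I would give the symmetric argument directly, starting from $\mu\lessdot r_\beta\mu$ and $r_\alpha\mu\lessdot\mu$, extracting $\langle\beta^\vee,\mu\rangle>0$ and $\langle\alpha^\vee,\mu\rangle<0$, and running the same pairing computation plus Lemma~\ref{lemma.littelmann}~(2),(3) to conclude $r_\alpha\mu\lessdot\mu$ lifts and $r_\alpha\mu\lessdot r_\alpha r_\beta\mu$.

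The main obstacle I anticipate is the distance/grading argument: because $\Xafz(\lambda)$ is not graded (as the text emphasizes), one cannot argue that ``$\mu<\nu$ and one reflection step'' automatically yields a cover; the only available tool is Lemma~\ref{lemma.littelmann}, and its part~(3) requires \emph{both} endpoints to have pairings of the same strict sign with the simple root one conjugates by. So the delicate point is to verify, in every subcase of the sign of $\langle\alpha^\vee,\beta\rangle$, that the relevant pairings are strictly positive (or to rule out the bad subcase via part~(1), as sketched above), and only then transport the distance-$1$ property along the reflection. Handling the label $r_\alpha\beta$ — which is generally not simple — also forces me to apply part~(3) with a judicious choice of which endpoint to call ``$\mu$'' and which simple root to conjugate by, rather than naively along the edge $r_\alpha\mu\to r_\alpha r_\beta\mu$ itself.
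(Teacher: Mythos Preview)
Your overall strategy matches the paper's: both arguments rest on Lemma~\ref{lemma.littelmann} and Lemma~\ref{lemma.cover}(2). However, there is a genuine confusion in your handling of the edge $r_\beta\mu \lessdot r_\alpha r_\beta\mu$, and it causes an actual error.

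That edge is labeled by the \emph{simple} root $\alpha$, not by $r_\alpha\beta$. So once you have established $\langle\alpha^\vee,r_\beta\mu\rangle>0$, Lemma~\ref{lemma.cover}(2) immediately gives $\dist(r_\beta\mu,r_\alpha r_\beta\mu)=1$; nothing more is needed. Your proposed alternative---``apply part~(3) with the roles of $\mu$ and $r_\alpha\mu$ swapped by $r_\beta$'' to obtain $\dist(r_\beta\mu,r_\beta r_\alpha\mu)$---fails on two counts: Lemma~\ref{lemma.littelmann}(3) requires reflecting by a \emph{simple} root, which $\beta$ is not, and $r_\beta r_\alpha\mu$ is not the vertex $r_\alpha r_\beta\mu$ you want anyway. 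You have in fact already done the hard edge correctly: your application of part~(3) with the simple root $\alpha$ to $\mu\lessdot r_\beta\mu$ gives $\dist(r_\alpha\mu,r_\alpha r_\beta\mu)=1$, which is exactly the paper's argument for the $r_\alpha\beta$-labeled edge.

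Two smaller points. First, the case split on the sign of $\langle\alpha^\vee,\beta\rangle$ is unnecessary. The paper argues uniformly: assume $\langle\alpha^\vee,r_\beta\mu\rangle\le 0$ and apply Lemma~\ref{lemma.littelmann}(2) (not (1)) to $\mu\le\nu=r_\beta\mu$, obtaining $\dist(r_\alpha\mu,\nu)<1$, hence $r_\alpha\mu=r_\beta\mu$; since $\langle\alpha^\vee,\mu\rangle\alpha=\langle\beta^\vee,\mu\rangle\beta$ with both coefficients positive this forces $\alpha=\beta$, a contradiction. Your use of part~(1) instead of part~(2) leaves the boundary case $\langle\alpha^\vee,r_\beta\mu\rangle=0$ unhandled. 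Second, the claimed reduction of the right diagram to the left via $\mu\mapsto r_\alpha\mu$ (or via the duality of Remark~\ref{R:weightposet}(2)) does not line up as stated; the paper simply repeats the argument symmetrically, using Lemma~\ref{lemma.littelmann}(1) in place of (2), then (3).
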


\begin{proof}
We start by assuming that the bottom two arrows are covers in the left diagram. Set $\nu:=r_\beta\mu$. By definition, we have 
$\langle \alpha^\vee, \mu \rangle >0$ and $\langle \beta^\vee, \mu \rangle >0$. We 
first show that $\langle \alpha^\vee, \nu \rangle >0$, which implies that we have the cover $\nu\lessdot r_\alpha\nu$, 
by Lemma~\ref{lemma.cover}~(2). Indeed, if  $\langle \alpha^\vee, \nu \rangle \le 0$, then Lemma~\ref{lemma.littelmann}~(2) would 
imply $\dist(r_\alpha\mu,\nu) < \dist(\mu,\nu)$; since $\dist(\mu,\nu)=1$, it would follow 
that $\nu=r_\alpha\mu$, which is impossible, since $\alpha\ne\beta$.

Turning to the remaining edge of the diamond, we clearly have $r_\alpha\mu< r_\alpha\nu$, as 
\[
	r_\alpha\nu=r_{r_\alpha\beta}(r_\alpha\mu)\;\;\;\;\mbox{and}\;\;\;\;\langle r_\alpha\beta^\vee, r_\alpha\mu \rangle
	=\langle \beta^\vee, \mu\rangle>0\,;
\]
note that $r_\alpha\beta$ is a positive root, as $\alpha\ne\beta$. The hypotheses of Lemma~\ref{lemma.littelmann}~(3) apply, so we have 
$1=\dist(\mu,\nu) = $ $\dist(r_\alpha\mu,r_\alpha\nu)$. We conclude that we have the cover $r_\alpha\mu \lessdot 
r_\alpha\nu$.

The proof for the right diagram is similar, where we now assume that the top two arrows are covers. More precisely, in order to prove that the bottom 
arrows are covers, we use Lemma~\ref{lemma.littelmann}~(1) for the left one, and then Lemma~\ref{lemma.littelmann}~(3) for the right one. 
\end{proof}

\subsection{More on the Diamond Lemmas for the PQBG}
\label{section.diamond.PQBG}

Recall the Diamond Lemmas for the PQBG on $W^J$ from Section~\ref{subsection.diamond}.
Recall that, given $w\in W$ and $\gamma\in\Phi^+$, in \eqref{zz} we defined $z,z'\in W_J$ by
\begin{equation*}
	r_\theta \lfloor w\rfloor  = \lfloor r_\theta w \rfloor z\,,\qquad r_\theta \lfloor w r_\gamma\rfloor = \lfloor r_\theta \lfloor w r_\gamma \rfloor \rfloor z'
	= \lfloor r_\theta w r_\gamma \rfloor z'\,.
\end{equation*} 

We need an analogue of Lemma~\ref{lem.covers}. 

\begin{lem} \label{lem.covers.PQBG}
Let $w\in W$, and let 
$\gamma\in\Phi^+\setminus\Phi^+_J$. Define $\beta\in\Phiafp$ by
\begin{equation}\label{ahbh}
	\beta:=\casetwo{w\gamma}{w\gamma\in\Phi^+}
	{\delta+w\gamma}{w\gamma\in\Phi^-}
\end{equation}
There exists an affine simple root $\alpha$ (in fact, $\alpha\ne\alpha_0$ if $w\gamma\in\Phi^+$)
such that $\langle \alpha^\vee, \beta \rangle>0$, 
and we have the edge in the PQBG indicated either in case {\rm (1)} or {\rm (2)} below:
\[
 	\text{\rm{(1)}} \;\; \casetwoc{\begin{diagram}\node{\lfloor w\rfloor} \arrow{e,t}{\lfloor w\rfloor^{-1}\alpha}\node{r_\alpha \lfloor w\rfloor}\end{diagram}}{\alpha\ne\alpha_0}
	{\begin{diagram}\dgARROWLENGTH=3em\node{\lfloor w\rfloor} \arrow{e,t,..}{-\lfloor w\rfloor^{-1}\theta}\node{\lfloor r_\theta w \rfloor}\end{diagram}}{\alpha=\alpha_0} \;\;\; 
 	\text{\rm{(2)}} \;\; \casetwo{\begin{diagram}\dgARROWLENGTH=4.5em\node{r_\alpha \lfloor w r_\gamma\rfloor} \arrow{e,t}{-\lfloor w r_\gamma\rfloor^{-1}\alpha}
	\node{\lfloor w r_\gamma\rfloor}\end{diagram}}{\alpha\ne\alpha_0}{\begin{diagram}\dgARROWLENGTH=5em\node{\lfloor r_\theta w r_\gamma\rfloor} \arrow{e,t,..}
	{z'\lfloor w r_\gamma\rfloor^{-1}\theta}\node{\lfloor w r_\gamma\rfloor}\end{diagram}}{\alpha=\alpha_0}
\]
\end{lem}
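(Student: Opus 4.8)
The plan is to deduce this statement from its already-established counterpart in the level-zero weight poset, namely Lemma~\ref{lem.covers}, by transporting the situation to $\Xafz(\lambda)$ via the lift supplied by Theorem~\ref{theorem.level 0 weight poset}. First I would fix $w\in W$ and $\gamma\in\Phi^+\setminus\Phi^+_J$, set $\beta$ as in~\eqref{ahbh}, and choose any $\mu\in\Xafz(\lambda)$ with $\clproj(\mu)=\lfloor w\rfloor$; writing $\mu=wt_\tau\lambda$ is harmless. The converse direction of Theorem~\ref{theorem.level 0 weight poset}, applied to the edge $\lfloor w\rfloor\to\lfloor wr_\gamma\rfloor$ in the PQBG (which is present by hypothesis — this is exactly the content of ``we have the edge indicated in case (1) or (2)'' being what we want to prove, so more carefully: I should instead start from whichever of the two PQBG edges I am trying to produce). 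Let me reorganize: the cleaner route is to note that $\lfloor w\rfloor\stackrel{\gamma}{\to}\lfloor wr_\gamma\rfloor$ is \emph{not} assumed; rather, by Lemma~\ref{lem.proj}~(1), $\clproj(r_\beta\mu)=\lfloor wr_\gamma\rfloor$, and I should compare $\langle\beta^\vee,\mu\rangle$ with the sign of the relevant pairing with $\lambda$ to see that, after possibly replacing $\mu$ by $r_\beta\mu$, one of the two vertices $\mu,\ r_\beta\mu$ lies below the other in $\Xafz(\lambda)$; since $\gamma\notin\Phi^+_J$, these project to distinct elements of $W^J$, so the relation $\mu<r_\beta\mu$ (or $r_\beta\mu<\mu$) is genuine. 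Concretely, the direction of the inequality is governed by whether $w\gamma\in\Phi^+$ or $\Phi^-$, which matches the case split in~\eqref{ahbh} so that $\mu<r_\beta\mu$ holds.

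Next I would apply Lemma~\ref{lem.covers} to the relation $\mu<r_\beta\mu$. It produces an affine simple root $\alpha$ with $\langle\alpha^\vee,\beta\rangle>0$ such that either (1) $\mu\lessdot r_\alpha\mu$ or (2) $r_\alpha r_\beta\mu\lessdot r_\beta\mu$ is a cover in $\Xafz(\lambda)$; moreover Lemma~\ref{lem.covers} records that $\alpha\ne\alpha_0$ when $\beta\in\Phi$, i.e.\ when $w\gamma\in\Phi^+$, which is precisely the parenthetical assertion in the statement. Then I invoke Lemma~\ref{lemma.simple.root} (the ``simple root'' case of Theorem~\ref{theorem.level 0 weight poset}) to convert each of these affine-simple covers into an edge of the PQBG on $W^J$: in case (1), $\clproj(\mu)\to\clproj(r_\alpha\mu)$ is a Bruhat edge labeled $\lfloor w\rfloor^{-1}\alpha$ if $\alpha\ne\alpha_0$, and a quantum edge $\lfloor w\rfloor\stackrel{-\lfloor w\rfloor^{-1}\theta}{\dashrightarrow}\lfloor r_\theta w\rfloor$ if $\alpha=\alpha_0$; in case (2), applying Lemma~\ref{lemma.simple.root} with $r_\beta\mu$ in place of $\mu$ and using $\clproj(r_\beta\mu)=\lfloor wr_\gamma\rfloor$ (Lemma~\ref{lem.proj}~(1)) together with $\clproj(r_\alpha r_\beta\mu)$ computed by Lemma~\ref{lem.proj}~(3), one gets the edge $r_\alpha\lfloor wr_\gamma\rfloor\stackrel{-\lfloor wr_\gamma\rfloor^{-1}\alpha}{\to}\lfloor wr_\gamma\rfloor$ when $\alpha\ne\alpha_0$, and $\lfloor r_\theta wr_\gamma\rfloor\stackrel{z'\lfloor wr_\gamma\rfloor^{-1}\theta}{\dashrightarrow}\lfloor wr_\gamma\rfloor$ when $\alpha=\alpha_0$. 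These are exactly the four displayed alternatives.

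The last thing to verify is the precise \emph{form of the edge label} in the $\alpha=\alpha_0$ subcase of (2): Lemma~\ref{lemma.simple.root} gives the quantum edge out of $\clproj(r_\beta\mu)=\lfloor wr_\gamma\rfloor$ labeled by $-\lfloor wr_\gamma\rfloor^{-1}\theta$ \emph{directed into} $\lfloor r_\theta\lfloor wr_\gamma\rfloor\rfloor=\lfloor r_\theta wr_\gamma\rfloor$, whereas the statement wants the reversed edge $\lfloor r_\theta wr_\gamma\rfloor\dashrightarrow\lfloor wr_\gamma\rfloor$ with label $z'\lfloor wr_\gamma\rfloor^{-1}\theta$. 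This is handled by Proposition~\ref{P:thetaPQBG}~(3): reversing the quantum $\theta$-edge replaces the label $-v^{-1}\theta$ by $z'\,v^{-1}\theta$ where $v=\lfloor wr_\gamma\rfloor$ and $z'\in W_J$ is exactly the element defined in~\eqref{zz} by $r_\theta\lfloor wr_\gamma\rfloor=\lfloor r_\theta wr_\gamma\rfloor z'$. So the bookkeeping closes up. I expect the main obstacle to be precisely this label/orientation bookkeeping together with getting the case split in~\eqref{ahbh} to line up with the direction $\mu<r_\beta\mu$ (as opposed to $r_\beta\mu<\mu$) — once $\mu<r_\beta\mu$ is arranged, the rest is a direct appeal to Lemmas~\ref{lem.covers},~\ref{lemma.simple.root},~\ref{lem.proj}, and Proposition~\ref{P:thetaPQBG}; there is no new geometric input.
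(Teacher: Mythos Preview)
Your proposal is correct and follows essentially the same route as the paper: lift to $\Xafz(\lambda)$, apply Lemma~\ref{lem.covers}, and project the resulting simple-root cover back via Lemma~\ref{lemma.simple.root} and Lemma~\ref{lem.proj}. The paper is slightly more direct on two points. First, it simply sets $\mu=w\lambda$ and computes $\langle\beta^\vee,\mu\rangle=\langle\gamma^\vee,\lambda\rangle>0$ in both cases of~\eqref{ahbh}, so $\mu<r_\beta\mu$ holds uniformly; there is no need to worry about a possible direction reversal. Second, your detour through Proposition~\ref{P:thetaPQBG}(3) in the $\alpha=\alpha_0$ subcase of~(2) is unnecessary: the cover is $r_\alpha r_\beta\mu\lessdot r_\beta\mu$, so Lemma~\ref{lemma.simple.root} should be applied with $r_\alpha r_\beta\mu$ (not $r_\beta\mu$) in the role of its ``$\mu$'', which already gives the edge $\lfloor r_\theta wr_\gamma\rfloor\to\lfloor wr_\gamma\rfloor$ with label $-\lfloor r_\theta wr_\gamma\rfloor^{-1}\theta$; the identity $r_\theta\lfloor wr_\gamma\rfloor=\lfloor r_\theta wr_\gamma\rfloor z'$ then yields $-\lfloor r_\theta wr_\gamma\rfloor^{-1}\theta=z'\lfloor wr_\gamma\rfloor^{-1}\theta$ directly.
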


\begin{remark} 
If 
$\begin{diagram}\dgARROWLENGTH=1.5em\node{w} \arrow{e,t,..}{\gamma}\node{\lfloor wr_\gamma\rfloor}\end{diagram}$,
then $w\gamma\in \Phi^{-}$ for the following reason.
Observe that $\ell(\lfloor w r_{\gamma} \rfloor) \leq \ell(w) - 1$
since $\gamma \in \Phi^{+} \setminus \Phi_{J}^{+}$. Now suppose, by contradiction, that $w \gamma \in \Phi^{+}$.
Then, we have $w r_{\gamma} > w$ in the usual Bruhat order on $W$. Therefore, by~\cite[Proposition 2.5.1]{BB}, 
we obtain $\lfloor w r_{\gamma} \rfloor \geq \lfloor w \rfloor = w$, which implies that
$\ell(\lfloor w r_{\gamma} \rfloor) \geq \ell(w)$. This is a contradiction.
This proves that $w \gamma \in \Phi^{-}$.
\end{remark}

\begin{proof} 
Let $\mu:=w\lambda$, where $\lambda\in\Xafz$ is the fixed dominant element in the finite weight lattice whose stabilizer 
is $W_J$. We claim that $\mu<r_{\beta}\mu$ in 
$\Xafz(\lambda)$, which means that $\langle \beta^\vee, \mu\rangle>0$. Indeed, since $\gamma\in\Phi^+\setminus \Phi_J^+$, 
it follows from \eqref{ahbh} that in both cases we have
\[
	\langle \beta^\vee, \mu\rangle=\langle w\gamma^\vee, w\lambda \rangle=\langle \gamma^\vee, \lambda \rangle>0\,.
\]

We now apply Lemma \ref{lem.covers} to deduce the existence of a simple root $\alpha$ in $\Phiaf$ (in fact, $\alpha\ne\alpha_0$ if 
$\begin{diagram}\dgARROWLENGTH=1.5em\node{w} \arrow{e,t}{\gamma}\node{wr_\gamma}\end{diagram}$)
such that $\langle \alpha^\vee, \beta \rangle>0$, and either
\[
 	\text{\rm{(1)}} \quad \mu \lessdot r_{\alpha}\mu \qquad \text{or} \qquad 
 	\text{\rm{(2)}} \quad r_{\alpha} r_{\beta}\mu \lessdot r_{\beta}\mu
\]
in $\Xafz(\lambda)$. By Lemma \ref{lemma.simple.root} and Lemma \ref{lem.proj}, cases (1) and (2) can be rephrased as cases (1) and (2) in the lemma to 
be proved, respectively. 
\end{proof}

Note that we do not need all the cases of the diamond Lemma~\ref{lemma.diamond.PQBG} for the PQBG,
for instance the one where all four edges are quantum edges. By stating that we have a certain edge in the PQBG, 
we implicitly assume that both its vertices are in $W^J$.

\subsection{Main argument}
\label{section.main argument}

We address separately the direct $(\Rightarrow)$ and the converse $(\Leftarrow)$ statements. Recall that the height 
of a root is the sum of the coefficients in its expansion in the basis of simple roots.

\begin{proof}[Proof of $(\Rightarrow)$ in Theorem {\rm \ref{theorem.level 0 weight poset}}] 
Consider the cover $\mu\lessdot\nu=r_\beta\mu$  in $\Xafz(\lambda)$ labeled by $\beta$, and let $w:=\clproj(\mu)$. 
We proceed by induction on the height of $\beta$. If $\beta$ is a simple root, the conclusion follows directly from 
Lemma~\ref{lemma.simple.root}. If $\beta$ is not a simple root, we apply Lemma~\ref{lem.covers}; this gives an affine 
simple root $\alpha\ne\beta$ with $\langle \alpha^\vee, \beta \rangle>0$, which 
also satisfies condition (1) or (2) in the mentioned lemma.  
Depending on these two cases, by Lemma~\ref{lemma.diamond.weight}, we have one of the two diamonds in~\eqref{dw} 
(in $\Xafz(\lambda)$). 
Let $\beta':=r_\alpha\beta$. We will need the fact that $\beta$ and $\beta'$ are in $\Phi^+$ or $\delta-\Phi^+$ 
(not necessarily both in the same set), by Lemma~\ref{lemma.cover}~(1). 

Assume that we have the left diamond in \eqref{dw}, as the reasoning is completely similar for the right diamond (we simply 
interchange the statements of the form ``bottom implies top'' and ``top implies bottom'' provided by 
Lemmas~\ref{lemma.diamond.weight} and \ref{lemma.diamond.PQBG}). Lemma~\ref{lemma.simple.root} tells us that, by projecting 
its  edges pointing northwest (labeled by the simple root $\alpha$) via the map $\clproj$, we obtain two Bruhat edges or 
two quantum edges in the PQBG (depending on $\alpha\ne\alpha_0$ or $\alpha=\alpha_0$, respectively). Moreover, by 
Lemma~\ref{lem.proj}, the 
four vertices of the projected diamond and its top left edge are labeled as in left diamond in \eqref{dq1} (or \eqref{dq2}, which has the same labels), and \eqref{dq4}, respectively, where $\gamma$ is defined as in Lemma \ref{lem.proj}; indeed, if 
$\gamma^{\prime}$ is defined with respect to $\beta^{\prime}$ and $r_{\alpha} w$ as $\gamma$ is defined with respect to $\beta$ and $w$ in Lemma \ref{lem.proj}, then $\gamma^{\prime}=\gamma$ in the first case, and $\gamma^{\prime}=z(\gamma)$ in the second case. Since $\langle \alpha^\vee, \beta\rangle>0$, the height of $\beta'$ is strictly smaller than the height of $\beta$; 
so by induction we know that the top left edge of the projected diamond is a Bruhat or quantum edge in the PQBG,
depending on $\beta'\in\Phi^+$ or $\beta'\in\delta-\Phi^+$, respectively. 

By Lemma \ref{lem.covers}, we have one of the following three cases:
\begin{equation}\label{c31}
	(\beta\in\Phi^+,\alpha\ne\alpha_0)\,,\qquad (\beta\in\delta-\Phi^+,\alpha\ne\alpha_0)\,,\qquad (\beta\in\delta-\Phi^+,\alpha=\alpha_0)\,.
\end{equation}
By calculating $\beta'=r_{\alpha}\beta$, we deduce that, in the mentioned three cases, we have 
\begin{equation}\label{c32}
	\beta'\in\Phi^+\,,\qquad\beta'\in\delta-\Phi^+\,,\qquad\beta'\in\Phi^+\,,
\end{equation}
respectively. For the last computation, let $\beta=\delta-\overline{\beta}$ and write
\begin{equation}\label{ra0db}
	\beta'=r_{\alpha_0}(\delta-\overline{\beta})=r_\theta t_{-\theta^\vee}(\delta-\overline{\beta})
	=-r_\theta \overline{\beta}+(1-\langle\theta^\vee, \overline{\beta}\rangle)\delta\,;
\end{equation}
here the coefficient of $\delta$ needs to be $0$ or $1$, as noted above, but the second case cannot happen since 
\begin{equation}\label{nzero}
	\langle \theta^\vee, \overline{\beta} \rangle=\langle \alpha^\vee, \beta \rangle\ne 0\,.
\end{equation} 
Hence, in the three cases in \eqref{c31} and \eqref{c32}, the top two edges of the projected diamond (and their vertices) are 
as in the left diamonds in \eqref{dq1}, \eqref{dq2}, and \eqref{dq4}, respectively. By Remark \ref{lr-relabel}, these three diamonds coincide, up to relabeling, with the right diamonds in \eqref{dq1}, \eqref{dq2}, and \eqref{dq3}, respectively. Therefore, we 
can apply the statements in Lemma \ref{lemma.diamond.PQBG} associated with the latter diamonds (stating 
that their top two edges imply their bottom two edges) to deduce that the projection 
of the edge $\mu\lessdot\nu$ is as claimed, namely a Bruhat edge in the first case, and a quantum edge in the last two 
cases (in the PQBG). Note that the condition $\gamma\ne |w^{-1}\alpha|$ needed in the 
first case is satisfied since  $\beta=|w\gamma|$ in this case and $\beta\ne\alpha$; here $|\alpha| = \pm \alpha$
depending on whether $\alpha$ is positive or negative. In addition, the condition 
$\langle \gamma^\vee, w^{-1}\theta \rangle=\langle w\gamma^\vee, \theta \rangle\ne 0$ 
needed in the third case is precisely \eqref{nzero}. This concludes the induction step.
\end{proof}

Now let us turn to the converse statement.

\begin{proof}[Proof of $(\Leftarrow)$ in Theorem {\rm \ref{theorem.level 0 weight poset}}] 
Assume that $\clproj(\mu)=w$ and we have the edge in the PQBG
$\begin{diagram}\dgARROWLENGTH=1.5em\node{w} \arrow{e,t}{\gamma}\node{wr_\gamma=w'}\end{diagram}$ or 
$\begin{diagram}\dgARROWLENGTH=1.5em\node{w} \arrow{e,t,..}{\gamma}\node{\lfloor wr_\gamma\rfloor=w'}\end{diagram}$. 
Defining $\beta$ as in \eqref{ahbh}, we claim that $\nu:=r_\beta\mu$ satisfies the conditions in the theorem. Indeed, note first that 
$\clproj(\nu)=w'$, by Lemma \ref{lem.proj}~(1). We now proceed by induction on the height of $\beta$. If $\beta$ is an affine simple root, 
the conclusion follows directly from Lemma \ref{lemma.simple.root}. If $\beta$ is not a simple root, we apply 
Lemma~\ref{lem.covers.PQBG}; 
this gives an affine simple root $\alpha\ne\beta$ satisfying $\langle \alpha^\vee, \beta \rangle>0$ and either 
condition (1) or (2) in the mentioned 
lemma. Assume that condition (1) holds, as the reasoning is completely similar if condition (2) holds (we simply interchange the statements of the form ``bottom implies top'' and ``top implies bottom'' provided by Lemmas~\ref{lemma.diamond.PQBG} 
and~\ref{lemma.diamond.weight}).

By Lemma~\ref{lem.covers.PQBG}, we have one of the following three cases:
\begin{equation}\label{c31h}
	(\beta\in\Phi^+,\alpha\ne\alpha_0)\,,\qquad (\beta\in\delta-\Phi^+,\alpha\ne\alpha_0)\,,\qquad (\beta\in\delta-\Phi^+,\alpha=\alpha_0)\,.
\end{equation}
By Lemma \ref{lemma.diamond.PQBG}, we have the left diamonds in \eqref{dq1}, \eqref{dq2}, and \eqref{dq4}, respectively. Note that the conditions $\gamma\ne w^{-1}\alpha$ and $\gamma\ne -w^{-1}\theta$ needed in the first and third cases, respectively, are satisfied since $\beta\ne\alpha$, where we recall the definition of $\beta$ in (\ref{ahbh}); in addition, the condition 
$\langle \gamma^\vee, w^{-1}\theta\rangle\ne 0$ needed in the third case follows from $\langle \alpha^\vee, \beta \rangle>0$, 
cf. \eqref{nzero} above. Let $\beta'$ be defined as in \eqref{ahbh} for the top left edge of these diamonds. It is not hard to check 
that in all cases $\beta'=r_{\alpha}\beta$. For instance, letting $\beta=\delta-\overline{\beta}$ in the third case (where 
$\overline{\beta}=-w\gamma\in\Phi^+$), we have
\[
	\beta'=\lfloor r_\theta w\rfloor z(\gamma)=r_\theta w\gamma=-r_\theta\overline{\beta}=r_{\alpha_0}(\delta-\overline{\beta})\,;
\]
here the last equality follows from~\eqref{ra0db} and~\eqref{nzero} above, as well as the well-known fact that 
$\langle \theta^\vee, \overline{\beta}\rangle$ 
can only be $0$ or $1$ if $\overline{\beta}\ne\theta$ (which is clearly true). 

Since $\langle \alpha^\vee, \beta \rangle>0$, the height of $\beta'=r_{\alpha}\beta$ is strictly smaller than the height of $\beta$. 
Therefore, we can use induction (together with the calculation of $\clproj(r_{\alpha}\mu)$ from Lemma \ref{lem.proj}~(2)) to deduce that we have a cover
\[
	\begin{diagram}\dgARROWLENGTH=1.5em\node{r_{\alpha}\mu} \arrow{e,t}{\beta'}\node{r_{\beta'}r_{\alpha}\mu=r_{\alpha}r_{\beta}\mu}
	\end{diagram}
\]
in $\Xafz(\lambda)$. On the other hand, by Lemma~\ref{lem.proj}, we can see that $r_{\beta}\mu$ and $r_{\alpha}r_{\beta}\mu$ project to 
the vertices of the top right edge of the left diamonds in \eqref{dq1}, \eqref{dq2}, and \eqref{dq4}, depending on the case. Therefore, 
by Lemma~\ref{lemma.simple.root}, we also have the cover
\[
	\begin{diagram}\dgARROWLENGTH=1.5em\node{r_{\beta}\mu} \arrow{e,t}{\alpha}\node{r_{\alpha}r_{\beta}\mu}\end{diagram}
\]
in $\Xafz(\lambda)$. We now proved that we have the top two edges in the left diamond in~\eqref{dw}. As $\beta\ne\alpha$, we can now apply the statement of 
Lemma~\ref{lemma.diamond.weight} corresponding to the right diamond in~\eqref{dw} (which is just a relabeling of the left one) 
to deduce that we have the cover $\mu\lessdot r_{\beta}\mu$ labeled by $\beta$ in $\Xafz(\lambda)$. This concludes the induction step.
\end{proof}

\subsection{Connectivity of the parabolic quantum Bruhat graph and quantum length}
\label{subsection.quantum length}

In this subsection we show that the PQBG is strongly connected
when using only simple reflections. 
For the QBG, this result is~\cite[Theorem 4.2]{HST}.

We use the following notation:
\begin{equation*}
\ti{\alpha}_{i}:=
\begin{cases}
\alpha_{i} & \text{if $i \ne 0$}, \\[1.5mm]
-\theta & \text{if $i = 0$},
\end{cases}
\qquad
s_{i}:=
\begin{cases}
r_{i} & \text{if $i \ne 0$}, \\[1.5mm]
r_{\theta} & \text{if $i = 0$}.
\end{cases}
\end{equation*}
Also, in this subsection we do not draw quantum edges in the PQBG  by dotted lines.

%
%
\begin{lem} \label{lem:ql2}
For each $u,\,v \in W^{J}$, 
there exist a sequence $u=x_{0},\,x_{1},\,\dots,\,x_{n}=v$ of 
elements of $W^{J}$ and a sequence $i_{1},\,i_{2},\,\dots,\,i_{n} \in I_{\af}$ 
such that $x_{k+1}=\mcr{ s_{i_{k+1}}x_{k} }$ 
with $x_{k}^{-1}\ti{\alpha}_{i_{k+1}} \in \Phi^{+} \setminus \Phi^{+}_{J}$ 
for each $0 \le k \le n-1$. 
\end{lem}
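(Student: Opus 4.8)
\emph{Proof proposal.} The plan is to deduce this from its non-parabolic shadow, which is the special case $J=\emptyset$ and is \cite[Theorem~4.2]{HST}, by pushing a connecting sequence in $\W$ down to $\WUP$ along the projection $w\mapsto\piP{w}$. Given $u,v\in\WUP\subseteq\W$, I would first apply \cite[Theorem~4.2]{HST} to get a sequence $u=y_0,y_1,\dots,y_m=v$ in $\W$ and indices $i_1,\dots,i_m\in\Iaf$ with $y_{k+1}=s_{i_{k+1}}y_k$ and $y_k^{-1}\ti{\alpha}_{i_{k+1}}\in\Phi^+$ for $0\le k\le m-1$. Set $x_k:=\piP{y_k}$; then $x_0=u$ and $x_m=v$ (as $u,v\in\WUP$), and, since right multiplication by an element of $\WP$ does not change the minimal coset representative, $x_{k+1}=\piP{s_{i_{k+1}}x_k}$ for each $k$.

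The heart of the matter is that the step from $x_k$ to $x_{k+1}$ is always either constant or a forward edge of $\QB(\WUP)$ of the required type, and is never a reversed edge. Write $y_k=x_k z_k$ with $z_k\in\WP$ and $\ell(y_k)=\ell(x_k)+\ell(z_k)$. Any $z\in\WP$ maps $\Phi^+\setminus\PhiP^+$ into itself (the coefficient of a simple root outside $J$ is unchanged), hence also maps $\Phi^-\setminus\PhiP^-$ into itself; therefore, from $z_k^{-1}\bigl(x_k^{-1}\ti{\alpha}_{i_{k+1}}\bigr)=y_k^{-1}\ti{\alpha}_{i_{k+1}}\in\Phi^+$ we conclude $x_k^{-1}\ti{\alpha}_{i_{k+1}}\notin\Phi^-\setminus\PhiP^-$. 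Applying Proposition~\ref{P:leftrPQBG} when $i_{k+1}\ne 0$ and Proposition~\ref{P:thetaPQBG} when $i_{k+1}=0$ (recall $x_k^{-1}\ti{\alpha}_0=-x_k^{-1}\theta$), and using that the case $x_k^{-1}\ti{\alpha}_{i_{k+1}}\in\Phi^-\setminus\PhiP^-$ has just been excluded, only two possibilities remain: either $x_k^{-1}\ti{\alpha}_{i_{k+1}}\in\PhiP$, in which case $x_{k+1}=\piP{s_{i_{k+1}}x_k}=x_k$; or $x_k^{-1}\ti{\alpha}_{i_{k+1}}\in\Phi^+\setminus\PhiP^+$, in which case $x_k\to x_{k+1}=\piP{s_{i_{k+1}}x_k}$ is an edge of $\QB(\WUP)$ — a Bruhat edge if $i_{k+1}\ne 0$, a quantum edge if $i_{k+1}=0$ — satisfying exactly the condition $x_k^{-1}\ti{\alpha}_{i_{k+1}}\in\Phi^+\setminus\PhiP^+$ demanded in the statement.

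Deleting the constant steps from $u=x_0,x_1,\dots,x_m=v$ (retaining the indices $i_{k+1}$ at the surviving steps) then yields the desired sequence. The only genuine obstacle is the sign dichotomy of the second paragraph, i.e.\ ruling out the reversed-edge case; this is precisely where the length-additive factorization $y_k=x_k z_k$ and the sign-invariance of $\WP$ on $\Phi\setminus\PhiP$ are used, and it is what makes \cite[Theorem~4.2]{HST} transfer to the parabolic setting with no further induction. (Equivalently, one may run the same argument inside the level-zero weight poset $\Xafz(\lambda)$: lift $(y_k)$ to a weakly increasing chain there by applying the affine simple reflections $r_{i_{k+1}}$ to a lift of $u$, use Lemma~\ref{lemma.cover}~(2) to recognize the strict steps as covers, and project back via Theorem~\ref{theorem.level 0 weight poset} and Lemma~\ref{lemma.simple.root}.)
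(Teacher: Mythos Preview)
Your argument is correct, but it follows a different route from the paper. The paper does not reduce to the non-parabolic case via \cite[Theorem~4.2]{HST}; instead it works directly on the weight orbit $W\lambda\cong W^J$ and invokes \cite[Lemma~1.4]{AK}, which produces indices $i_1,\dots,i_n\in\Iaf$ with $s_{i_n}\cdots s_{i_1}(u\lambda)=v\lambda$ and $\pair{\alpha_{i_{k+1}}^\vee}{s_{i_k}\cdots s_{i_1}(u\lambda)}>0$ at every step. Setting $x_k=\piP{s_{i_k}\cdots s_{i_1}u}$ immediately gives the required sequence, with the strict inequality guaranteeing $x_k^{-1}\ti{\alpha}_{i_{k+1}}\in\Phi^+\setminus\PhiP^+$ and hence no constant steps to delete. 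Your approach instead pushes an [HST]-sequence in $W$ down along $w\mapsto\piP{w}$; the substantive point is your observation that the sign of $x_k^{-1}\ti{\alpha}_{i_{k+1}}$ on $\Phi\setminus\PhiP$ agrees with that of $y_k^{-1}\ti{\alpha}_{i_{k+1}}$ because $\WP$ permutes $\Phi^+\setminus\PhiP^+$, which cleanly rules out reversed edges and leaves only forward or constant steps. The paper's route is a touch shorter (no deletion step, and the strict positivity is built in), while yours has the merit of displaying the parabolic statement as a direct corollary of its $J=\emptyset$ special case via a natural projection argument.
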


\begin{remark} \label{remark.strongly connected}
Keep the notation in the lemma above. 
We see from Lemma \ref{lemma.simple.root} and Lemma \ref{lemma.cover}~(2) that
\begin{equation*}
u=x_{0} 
 \xrightarrow{x_{0}^{-1}\ti{\alpha}_{i_{1}}}
x_{1} 
 \xrightarrow{x_{1}^{-1}\ti{\alpha}_{i_{2}}} \cdots \cdots
 \xrightarrow{x_{n-2}^{-1}\ti{\alpha}_{i_{n-1}}}
x_{n-1}
 \xrightarrow{x_{n-1}^{-1}\ti{\alpha}_{i_{n}}}
x_{n}=v
\end{equation*}
in the PQBG. In particular, 
the PQBG is strongly connected when using only
simple reflections (i.e.,  for each $u,\,v \in W^{J}$, there exists a directed path from $u$ to $v$ 
in the PQBG, where the edges correspond to 
multiplying on the left by simple reflections). Note that a similar result for the QBG is stated in~\cite[Lemma 1 (1)]{Po}.
\end{remark}

We are now ready to define the notion of {\em quantum length} of an element in $W^J$. This will be
used in the proofs of the tilted Bruhat Theorem~\ref{thm:tilted} and the generalization of 
Postnikov's lemma (Proposition~\ref{prop:weight}).
\begin{definition}
\label{definition.quantum length}
Let $u \in W^J$. We see from Lemma~{\rm \ref{lem:ql2}} (with $v=e$, where $e$ is the identity
in $W_J$) and Remark~{\rm \ref{remark.strongly connected}} that there exist a sequence 
$u=x_{0},\,x_{1},\,\dots,\,x_{n}=v$ of elements of $W^J$ and a sequence 
$i_{1},\,i_{2},\,\dots,\,i_{n} \in I_{\af}$ 
such that 
\begin{equation*}
u=x_{0} 
 \xrightarrow{x_{0}^{-1}\ti{\alpha}_{i_{1}}} 
x_{1} 
 \xrightarrow{x_{1}^{-1}\ti{\alpha}_{i_{2}}} \cdots \cdots
 \xrightarrow{x_{n-2}^{-1}\ti{\alpha}_{i_{n-1}}}
x_{n-1}
 \xrightarrow{x_{n-1}^{-1}\ti{\alpha}_{i_{n}}}
x_{n}=e
\end{equation*}
in the PQBG. 
We define the quantum length $\qlj(u)$ of $u$ to be 
the minimal of the length $n$ of such sequences.

When $J=\emptyset$, we denote the quantum length (in the QBG) by $\ql(u)$. 
\end{definition}

\begin{proof}[Proof of Lemma~{\rm \ref{lem:ql2}}]
Let $\lambda$ be a dominant weight such that 
$\bigl\{j \in I \mid \pair{\alpha_j^\vee}{\lambda}=0\bigr\}=J$; 
note that the stabilizer of $\lambda$ in $W$ is identical to $W_{J}$, 
and hence $W\lambda \cong W/W_{J} = W^{J}$. 
Set $\mu:=u\lambda$ and $\nu:=v\lambda$. 
We see from \cite[Lemma 1.4]{AK} 
that there exists $i_{1},\,i_{2},\,\dots,\,i_{n} \in I_{\af}$ 
such that 
\begin{equation*}
\begin{cases}
s_{i_n} \cdots s_{i_2}s_{i_1}\mu=\nu, & \\[1.5mm]
\pair{\alpha^\vee_{i_{k+1}}}{s_{i_{k}} \cdots s_{i_{2}}s_{i_{1}}\mu} > 0 
& \text{for all $0 \le k \le n-1$}.
\end{cases}
\end{equation*}
For each $0 \le k \le n$, we define $x_{k} \in W^{J}$ to be 
the minimal coset representative for the coset containing 
$s_{i_k} \cdots s_{i_2}s_{i_1}$; 
note that $x_{0}=u$ and $x_{n}=v$. 
It is obvious that $x_{k+1}=\mcr{s_{i_{k+1}}x_{k}}$ for every $0 \le k \le n-1$. 
Also, because 
\begin{equation*}
\pair{\alpha^\vee_{i_{k+1}}}{x_{k}\lambda}=
\pair{\alpha^\vee_{i_{k+1}}}{s_{i_{k}} \cdots s_{i_{2}}s_{i_{1}}\mu}> 0,
\end{equation*}
it follows immediately that $x_{k}^{-1}\ti{\alpha}_{i_{k+1}} 
\in \Phi^{+} \setminus \Phi^{+}_{J}$. 
Thus we have proved the lemma.
\end{proof}

\section{Tilted Bruhat theorem}
\label{section.tilted}

\subsection{Tilted Bruhat order}\label{section.tbo}
Given $u\in W$ the \emph{$u$-tilted Bruhat order} on $W$ \cite{BFP} is defined by $w_1 \preccurlyeq_u w_2$
if there is a shortest path in the quantum Bruhat graph $\QB(\W)$ from $u$ to $w_2$ that passes through $w_1$.
More precisely, if we denote by $\sdp{w_{1}}{w_{2}}$ the length of a shortest directed path from 
$w_{1}$ to $w_{2}$ in the quantum Bruhat graph $\QB(\W)$, then for $u,\,w_{1},\,w_{2} \in W$, 
\begin{equation*}
w_{1} \preccurlyeq_{u} w_{2} \quad \Longleftrightarrow \quad 
 \sdp{u}{w_{2}}=\sdp{u}{w_{1}}+\sdp{w_{1}}{w_{2}}.
\end{equation*}
It was shown in~\cite{BFP} that this is a partial order.
In~\cite[Theorem 4.8]{LS} it was reproved by showing that $(\W,\preccurlyeq)$
is (dual to) an induced subposet of the affine Bruhat order.

Here we prove a property of the $u$-tilted Bruhat order with respect to any parabolic subgroup
$\WP \subset W$ of the finite Weyl group.
\begin{thm}[Tilted Bruhat Theorem] \label{thm:tilted}
For every $u, z \in \W$ and any parabolic subgroup $\WP\subset\W$, 
the coset $z\WP$ contains a unique $\preccurlyeq_u$-minimal element.
\end{thm}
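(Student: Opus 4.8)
The plan is to recast the statement in terms of shortest paths in $\QB(\W)$ and prove it by induction on the quantum length $\ql(u)$, with Deodhar's classical theorem as the base case and the Diamond Lemmas for $\QB(\W)$ powering the inductive step. Throughout write $C:=z\WP$; since $z\WP=\lfloor z\rfloor\WP$ we may assume $z\in\WUP$.

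Two elementary facts about $\QB(\W)$ are used repeatedly. \emph{(Parity.)} Every directed edge changes $\ell$ by an odd integer --- a Bruhat edge by $+1$, a quantum edge $w\xrightarrow{\al}wr_\al$ by $-\ell(r_\al)$, which is odd since $\det(r_\al)=-1$. Hence all directed paths with given endpoints have lengths of the same parity, so $\sdp{x}{y}\equiv\ell(y)-\ell(x)\pmod 2$; together with the triangle inequality $\sdp{x}{z}\le\sdp{x}{y}+\sdp{y}{z}$ this pins down $\sdp{u}{\cdot}$ up to small ambiguity. \emph{(Cosets are internally connected.)} For $j\in J$ the simple root $\al_j$ is always a quantum root (one computes $\pair{\alv{j}}{2\rho}=2$), so both $v\to vr_j$ and $vr_j\to v$ are edges of $\QB(\W)$; thus $\sdp{v}{vr_j}=\sdp{vr_j}{v}=1$, $C$ is strongly connected inside $\QB(\W)$ using only such edges, $\sdp{v}{v'}<\infty$ for $v,v'\in C$, and $|\sdp{u}{v}-\sdp{u}{vr_j}|=1$ for all $v$ and $j\in J$.

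Reformulation: if $w$ is the $\preccurlyeq_u$-minimum of $C$ then $\sdp{u}{v}=\sdp{u}{w}+\sdp{w}{v}>\sdp{u}{w}$ for all $v\in C\setminus\{w\}$, so $w$ is the \emph{unique} minimizer of $v\mapsto\sdp{u}{v}$ on $C$; conversely, uniqueness of the minimal element will follow automatically once we produce a minimizer $w$ of $\sdp{u}{\cdot}$ on $C$ with $\sdp{u}{v}=\sdp{u}{w}+\sdp{w}{v}$ for every $v\in C$. Equivalently, it suffices to produce $w\in C$ such that $w\preccurlyeq_u v$ for all $v\in C$. We prove this by induction on $\ql(u)$ (well-defined by Lemma~\ref{lem:ql2}/Remark~\ref{remark.strongly connected} and Definition~\ref{definition.quantum length}, taken with $J=\emptyset$). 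If $\ql(u)=0$, i.e.\ $u=e$, then $\sdp{e}{v}=\ell(v)$ with shortest paths being exactly the saturated Bruhat chains, so $\preccurlyeq_e$ is the Bruhat order and $w=\lfloor z\rfloor=z$ works by the classical Deodhar theorem (cf.\ Lemma~\ref{L:tricoset}). If $\ql(u)\ge1$, pick by Lemma~\ref{lem:ql2} (with $J=\emptyset$, $v=e$) an index $i\in\Iaf$ with $x_1:=s_iu$, $\ql(x_1)=\ql(u)-1$, and $u\to x_1$ an edge of $\QB(\W)$ (Bruhat if $i\ne0$, quantum if $i=0$), so $\sdp{u}{x_1}=1$; by induction $C$ has a unique $\preccurlyeq_{x_1}$-minimum $w_1$. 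One then shows that, starting from $w_1$ and repeatedly replacing the current element $w$ by $wr_j$ whenever $j\in J$ satisfies $\sdp{u}{wr_j}=\sdp{u}{w}-1$ (a ``downward-in-$C$'' move, which by the Parity fact is the only alternative to $\sdp{u}{wr_j}=\sdp{u}{w}+1$), one reaches a terminal element $w$; since each such move gives $wr_j\preccurlyeq_u w$ (there is a shortest path from $u$ to $w$ through $wr_j$), the terminal $w$ satisfies $w\preccurlyeq_u v$ along any descending path reaching it from $v$, hence $w$ is $\preccurlyeq_u$-below all of $C$ provided the descent is confluent --- and local confluence of these moves is what the Diamond Lemmas for $\QB(\W)$ (Lemma~\ref{lemma.diamond.PQBG} with $J=\emptyset$, together with the classical Diamond Lemma~\ref{L:diamond}) supply, with termination guaranteed by a subsidiary induction on $\max_{v\in C}\sdp{u}{v}$.

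The main obstacle is this inductive step. Passing from $\preccurlyeq_{x_1}$ to $\preccurlyeq_u$ shifts the shortest-path distances from the basepoint --- by exactly $\pm1$ along $C$, by the Parity bookkeeping --- so the $\preccurlyeq_u$-minimum of $C$ genuinely need not coincide with $w_1$, and one must (a) use the explicit form of the edge $u\to x_1$ and the Diamond Lemmas to slide cover relations into and out of the coset while tracking which coset elements become closer to, or farther from, the basepoint, and (b) establish the confluence --- hence uniqueness --- of the within-coset descent. It is at (b) that quantum length is indispensable: it supplies the well-founded quantity making the outer induction run, and its very availability rests on the strong connectivity of the (parabolic) quantum Bruhat graph via simple reflections (Lemma~\ref{lem:ql2}).
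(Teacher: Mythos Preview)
Your overall architecture---induction on $\ql(u)$, Deodhar's theorem as the base case, and Diamond Lemmas driving the inductive step---matches the paper's. The parity observation and the simple-reflection connectivity of the coset are correct and useful. The gap is in the inductive step.

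You reduce everything to confluence of the within-coset descent $v\mapsto vr_j$ (for $j\in J$ with $\sdp{u}{vr_j}=\sdp{u}{v}-1$) and then assert that ``local confluence of these moves is what the Diamond Lemmas for $\QB(\W)$ supply.'' They do not, at least not as cited: Lemma~\ref{lemma.diamond.PQBG} (and Lemma~\ref{L:diamond}) complete a half-diamond one of whose sides is \emph{left} multiplication by a single simple reflection against an arbitrary edge; they say nothing about two simultaneous \emph{right} multiplications by simple generators of $\WP$, nor about how $\sdp{u}{\cdot}$ behaves under such moves. Equally telling: if your confluence argument worked as written, the inductive hypothesis would never be used---you obtain $w_1$ from $\preccurlyeq_{x_1}$ and then discard all information about it, merely descending from $w_1$ as you could from any element of $C$. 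That is a sign the inductive mechanism is missing rather than implicit.

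The paper's inductive step is organized differently and uses the hypothesis constructively. With $v=s_i u$, it splits according to whether $z^{-1}\ti{\alpha}_i$ lies in $\Phi^-\setminus\Phi_J^-$, $\Phi^+\setminus\Phi_J^+$, or $\Phi_J$, and in each case \emph{names} the $\preccurlyeq_u$-minimum explicitly in terms of the $\preccurlyeq_v$-minimum (of $zW_J$ or of $s_i zW_J$; in the $\Phi_J$ case the answer may be that minimum or its $s_i$-translate). It then verifies minimality by building, for every $y\in W_J$, a shortest path from $u$ to $zy$ through the named element. The engine for this is not the raw Diamond Lemmas but their packaged consequence Lemma~\ref{lem:diamond1}, which lets one shorten or translate a path by one step when the endpoint signs of $\ti{\alpha}_i$ are as specified. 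That lemma (its parts (1) and (3), together with the sign bookkeeping on $\ti{\alpha}_i$) is exactly the missing ingredient in your sketch: it is what links the edge $u\to v$ to the coset geometry and makes the induction bite.
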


The tilted Bruhat theorem is a QBG analogue of the Deodhar lift~\cite{D}
(see also~\cite[Proposition 3.1]{LeSh}), which states that if $\tau \in \W/\WP$ and $v\in \W$ such that
$v\WP \le \tau$ in $\W/\WP$, then the set
\[
	\{ w\in \W \mid v\le w \text{ and } w\WP = \tau\}
\]
has a Bruhat-minimum.

We start by stating a weaker version of Theorem~\ref{thm:tilted}, which is easily proved. 

\begin{prop}\label{prop:onemin} 
Fix $u,z \in W$. There exists a unique element $x \in z W_{J}$ such that
the distance $\sdp{u}{x}$ attains its minimum value.
\end{prop}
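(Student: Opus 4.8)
The coset $zW_J$ is finite and every distance $\sdp{u}{x}$ is finite, since $\QB(\W)$ is strongly connected (the case $J=\emptyset$ of Lemma~\ref{lem:ql2}, cf.\ Remark~\ref{remark.strongly connected}); so $d:=\min_{x\in zW_J}\sdp{u}{x}$ is attained. The point is that the set $X$ of minimizers is a singleton, and the plan is to prove this by induction on $d$. If $d=0$ then $u\in zW_J$ and $X=\{u\}$; so assume $d\ge 1$.

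\textbf{Inductive step.} I would first record the local structure of $zW_J$ in $\QB(\W)$: replacing $z$ by $\mcr z$ we may take $z\in\WUP$, so $zW_J=\{zv:v\in\WP\}$ with $\ell(zv)=\ell(z)+\ell_{W_J}(v)$; for $j\in J$ the edge $zv\to zvr_j$ is a Bruhat edge of $\QB(\W)$ when $vr_j>v$ in $W_J$ and a quantum edge of weight $\alv{j}$ when $vr_j<v$ (a short length count using $\pair{\alv{j}}{2\rho}=2$), so $X\subseteq zW_J$ and the induced subgraph on $zW_J$ is strongly connected. Now fix $x_1\in X$ together with a shortest path $u\to u_1\to\cdots\to x_1$ in $\QB(\W)$; then $\sdp{u_1}{x_1}=d-1$, and since every $x\in zW_J$ satisfies $\sdp{u_1}{x}\ge\sdp{u}{x}-1\ge d-1$, the minimum of $\sdp{u_1}{-}$ over $zW_J$ is $d-1$, attained by a unique $x_1^{*}$ by the inductive hypothesis; as $\sdp{u_1}{x_1}=d-1$ too, $x_1=x_1^{*}$. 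The remaining task is to choose the edge $u\to u_1$ so that $\sdp{u_1}{x}=d-1$ for \emph{every} $x\in X$, not just for $x_1$; once this is known, uniqueness of $x_1^{*}$ forces $X=\{x_1\}$. To arrange it I would take $u\to u_1$ to be a left multiplication $u\to s_iu$ with $i\in\Iaf$ (the kind of step produced by the connectivity mechanism of Lemma~\ref{lem:ql2}), and transport a shortest path from $u$ to any competing $x\in X$ across this edge by repeated application of the Diamond Lemma for $\QB(\W)$ (the $J=\emptyset$ case of Lemma~\ref{lemma.diamond.PQBG}, whose left-multiplication edges are precisely by such $s_i$), using the trichotomy of Lemma~\ref{L:tricoset} (and its affine counterpart) to keep the transported path inside $zW_J$.

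\textbf{Main obstacle.} The delicate point, which I expect to be the crux, is exactly this coherent choice: a \emph{single} first edge out of $u$ lying on a shortest path to \emph{all} of the minimizers simultaneously. Local information alone does not suffice — from the appended-path bound and the parity $\sdp{u}{x}\equiv\ell(x)-\ell(u)\pmod 2$ (since $\pair{\Qv}{2\rho}\subseteq 2\Z$) one gets that $\sdp{u}{-}$ changes by exactly $\pm1$ along each simple-reflection edge of $zW_J$, yet a function on $W_J$ with only that property need not have a unique minimum; so the convexity encoded in the Diamond Lemmas must be used in an essential way. A possibly cleaner route is to bypass the induction and invoke \cite[Theorem 4.8]{LS}, which realizes $(W,\preccurlyeq_u)$ as the dual of an induced subposet of the affine Bruhat order: distance-minimizers of $zW_J$ are then the Bruhat-maximal elements of the image of $zW_J$ in $\Waf$, and their uniqueness reduces, modulo unwinding the explicit embedding of \cite{LS}, to the uniqueness of longest coset representatives in the affine Weyl group.
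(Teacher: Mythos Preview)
Your proposal correctly isolates the crux and then honestly leaves it open: the inductive step needs a \emph{single} first edge $u\to u_1$ that lies on a shortest path to \emph{every} minimizer $x\in X$ simultaneously, and you have not produced such an edge. The Diamond Lemmas do transport a given shortest path across a fixed left-$s_i$ edge, but they do not by themselves tell you which $i$ works uniformly for all of $X$; and the alternative via \cite[Theorem~4.8]{LS} would require identifying the image of the coset $zW_J$ under that embedding as something with a unique Bruhat-maximum, which you have not carried out. So as written the argument has a genuine gap at exactly the place you flag.

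The paper's proof takes a completely different and much shorter route, bypassing induction entirely. It invokes the \emph{shellability} of $\QB(W)$ (Theorem~\ref{thm:shell}, due to \cite{BFP}): for any reflection ordering on $\Phi^+$, there is a unique label-increasing path between any two vertices, and it is shortest. One fixes a reflection ordering whose bottom block is $\Phi_J^+$ and whose top block is $\Phi^+\setminus\Phi_J^+$. Lemma~\ref{lem1} then shows that the increasing path from $u$ to any minimizer $x_0$ uses only labels in $\Phi^+\setminus\Phi_J^+$ (otherwise its $\Phi_J^+$-tail would lie inside $zW_J$ and exhibit a strictly closer element). Lemma~\ref{lem2} uses your observation that the induced subgraph on $zW_J$ is a copy of $\QB(W_J)$: given two such minimizers $x_0\ne x_1$, concatenate the increasing $(\Phi^+\setminus\Phi_J^+)$-path $u\to x_0$ with an increasing $\Phi_J^+$-path $x_0\to x_1$ inside $zW_J$; the result is an increasing path $u\to x_1$ distinct from the canonical one, contradicting uniqueness. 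The whole proof is two short lemmas once shellability is available.
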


Proposition \ref{prop:onemin} suffices for our main application in \cite{LNSSS},
namely for bijecting the models for KR crystals based on projected LS-path and quantum Bruhat chains. However, an explicit 
construction of this bijection depends on an algorithm for determining $x=x_0\in z W_J$ 
minimizing $\sdp{u}{x}$; such an algorithm 
is given in the proof of Theorem \ref{thm:tilted}. The proof of Proposition \ref{prop:onemin} relies on the {\em shellability} of the 
QBG with respect to a reflection ordering on the positive roots \cite{Dy}, which we now recall. 

\begin{thm}\cite{BFP}\label{thm:shell} 
Fix a reflection ordering on $\Phi^+$.
\begin{enumerate}
\item For any pair of elements $v,w\in W$, there is a unique path from $v$ to $w$ in the quantum 
Bruhat graph $\QB(W)$ such that its sequence of edge labels is strictly increasing (resp., decreasing) 
with respect to the reflection ordering.
\item The path in {\rm (1)} has the smallest possible length $\sdp{v}{w}$ and is lexicographically minimal 
(resp., maximal) among all shortest paths from $v$ to $w$.
\end{enumerate}
\end{thm}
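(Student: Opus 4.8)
The plan is to recall the argument of Brenti--Fomin--Postnikov \cite{BFP}, since Theorem~\ref{thm:shell} is their result, and to indicate in parallel a deduction from material already set up in this paper. First I would reduce to the ``increasing'' half of each statement: reversing the chosen reflection ordering of $\Phi^+$ turns increasing paths into decreasing ones and interchanges lexicographically minimal with maximal, so the ``resp.'' clauses are automatic. Next I would invoke the standard dictionary between reflection orderings $\beta_1 < \cdots < \beta_N$ of $\Phi^+$ and reduced words $w_0 = r_{i_1}\cdots r_{i_N}$, under which $\beta_k = r_{i_1}\cdots r_{i_{k-1}}\alpha_{i_k}$, together with the defining root-string compatibility: if $\beta_p+\beta_q$ is a positive multiple of $\beta_r$, then $r$ lies strictly between $p$ and $q$. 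This compatibility is the only combinatorial input about reflection orderings that the proof uses.

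For part~(1) I would argue by induction (on $|\Phi^+|$ after fixing the ordering), building the increasing path from $v$ to $w$ greedily: among the out-edges $v\xrightarrow{\beta}vr_\beta$ of $\QB(W)$, of either Bruhat or quantum type, there is a unique one that prolongs to an increasing path to $w$, and prepending it to the inductively unique increasing path from $vr_\beta$ to $w$ yields the claim. Alternatively---and this is the route that best fits the present paper---I would deduce Theorem~\ref{thm:shell} from Dyer's EL-shellability of the Bruhat order on $\Waf$ with respect to reflection orderings \cite{Dy}, using the length-compatible lift of $\QB(W)$ into the affine Bruhat order recorded in Proposition~\ref{P:lift} (equivalently, \cite[Theorem~4.8]{LS}, where $(\W,\preccurlyeq)$ is realized as an induced subposet of the affine Bruhat poset). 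Concretely: extend the reflection ordering on $\Phi^+$ to one on $\Phiafp$; for $\mu$ superantidominant, a directed path in $\QB(W)$ from $v$ to $w$ lifts to a saturated chain in affine Bruhat order from $vt_\mu$ down to some $wt_\nu$, with affine edge labels refining the finite ones; the unique increasing maximal chain of the affine interval $[wt_\nu,vt_\mu]$ then projects to the sought increasing path in $\QB(W)$. Part~(2) follows in the same spirit: all saturated chains of that affine interval have equal length, and by Proposition~\ref{P:lift} (and Corollary~\ref{C:PQBG2Waf} with $J=\emptyset$) the shortest paths in $\QB(W)$ are exactly the projections of saturated affine chains, so the increasing path has length $\sdp{v}{w}$, and its lexicographic minimality among all shortest paths is inherited from the affine EL-labeling.

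The main obstacle, in either approach, is showing that the greedy increasing path never gets stuck---equivalently, that a shortest path whose label sequence is lexicographically minimal is automatically increasing. This is the quantum analogue of the fact that the standard reflection-ordering labeling of Bruhat order is a lexicographic shelling, and it is where one must combine the root-string compatibility of the reflection ordering with the length bookkeeping for quantum edges, i.e. the characterization of quantum roots in Lemmas~\ref{L:reflectionlength} and~\ref{L:quantumroot}, to perform a local ``descent-removing'' move on a pair of consecutive edges $x\xrightarrow{\beta_i}xr_{\beta_i}\xrightarrow{\beta_j}xr_{\beta_i}r_{\beta_j}$ with $\beta_i>\beta_j$ without changing the path length. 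In the affine-lift approach this obstacle is absorbed into Dyer's theorem, at the cost of verifying that the lift is order-convex between the two chosen affine elements; for $J=\emptyset$ this is precisely the content cited from \cite{LS}.
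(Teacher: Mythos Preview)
The paper does not give its own proof of Theorem~\ref{thm:shell}; it is stated with the citation \cite{BFP} and used as a black box in the proofs of Lemmas~\ref{lem1} and~\ref{lem2}. Your proposal therefore goes beyond what the paper does, and there is nothing to compare against.

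On the substance of your sketch: the reduction to the increasing case and the descent-removing strategy are indeed the shape of the argument in \cite{BFP}, although there the local rewriting is packaged as the quantum Yang--Baxter relations for mixed Bruhat operators rather than as a direct case analysis on quantum roots.

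Your alternative affine-lift route, however, has a gap you should flag. A directed path from $v$ to $w$ in $\QB(W)$ lifts to a saturated chain from $vt_\mu$ down to $wt_{\mu+\xi}$, where $\xi$ is the path weight (the sum of $\alpha^\vee$ over quantum edges). To apply Dyer's EL-shellability to a \emph{single} affine interval $[wt_\nu,\,vt_\mu]$ you need all shortest paths to share the same weight $\xi$; but that is precisely Postnikov's lemma, whose proof in \cite{BFP,Po} uses Theorem~\ref{thm:shell}. The present paper's Proposition~\ref{P:lift} and Corollary~\ref{C:PQBG2Waf} do not by themselves establish the order-convexity you allude to, and the paper's own proof of Proposition~\ref{prop:weight} (the parabolic analogue) also depends on material downstream of Theorem~\ref{thm:shell}. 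So while the affine approach can be made to work via the full strength of \cite[Theorem~4.8]{LS}, it is not self-contained from what is set up here, and as written your sketch risks circularity.
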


The proof of Proposition \ref{prop:onemin} is immediate once we have the following two easy lemmas. 
These are in terms of a reflection ordering whose top (also called an initial section) consists of the roots in $\Phi^+\setminus\Phi_J^+$, while its bottom is a reflection ordering on $\Phi_J^+$. Such an order was constructed 
in~\cite[Section 4.3]{LeSh} in terms of a dominant weight $\lambda$ whose stabilizer is $W_J$. The roots in $\Phi^+\setminus\Phi_J^+$ are ordered according to the lexicographic order on their images in ${\mathbb Q}^r$ via the injective map
\[\alpha\mapsto\frac{1}{\langle \alpha^\vee, \lambda \rangle}(c_1,\ldots,c_r)\,,\]
where $\alpha^\vee=c_1\alpha_1^\vee+\cdots+c_r\alpha_r^\vee$ expresses $\alpha^\vee$ in the basis of simple coroots (on which we fix an order). 
For the roots in $\Phi_J^+$, we choose any reflection ordering. 

\begin{lem}\label{lem1}
Assume that $\sdp{u}{x}$, as a function of $x\in z W_J$, has a minimum at $x=x_0$. Then the path from $u$ to $x_0$ 
with increasing edge labels has all its labels in $\Phi^+\setminus\Phi_J^+$.
\end{lem}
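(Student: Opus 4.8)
The plan is to argue by contradiction using the shellability result (Theorem~\ref{thm:shell}) together with the structure of the chosen reflection ordering, in which every root of $\Phi^+\setminus\Phi_J^+$ precedes every root of $\Phi_J^+$. Suppose the increasing-label path $\bp$ from $u$ to $x_0$ uses some label in $\Phi_J^+$. Because the ordering puts all of $\Phi^+\setminus\Phi_J^+$ at the top, the labels of $\bp$ split as an initial block of labels in $\Phi^+\setminus\Phi_J^+$ followed by a nonempty terminal block of labels lying in $\Phi_J^+$; write $y$ for the vertex where the path enters the $\Phi_J^+$-block, so $\bp$ factors as a shortest path $u\to y$ using only labels outside $\Phi_J^+$, followed by a nonempty shortest path $y\to x_0$ using only labels inside $\Phi_J^+$. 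Each step $w\to wr_\gamma$ with $\gamma\in\Phi_J^+$ keeps one in a fixed right coset of $W_J$ (whether it is a Bruhat or quantum edge, $r_\gamma\in W_J$), so $y$ and $x_0$ lie in the same coset $zW_J$. Thus $y\in zW_J$ as well.

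The key point is then that $\sdp{u}{y}<\sdp{u}{x_0}$: indeed $\sdp{u}{y}$ is the length of the initial block, which is strictly smaller than $|\bp|=\sdp{u}{x_0}$ since the terminal block is nonempty. This contradicts the assumed minimality of $\sdp{u}{x}$ over $x\in zW_J$ at $x=x_0$. Hence the increasing-label path from $u$ to $x_0$ has no labels in $\Phi_J^+$, i.e.\ all its labels lie in $\Phi^+\setminus\Phi_J^+$, which is the assertion.

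The only genuinely substantive ingredient is Theorem~\ref{thm:shell}: it guarantees both that the increasing-label path exists and is unique, and that it realizes the minimal length $\sdp{u}{x_0}$, so that reading off its initial segment gives a path realizing $\sdp{u}{y}$. Everything else is bookkeeping with the reflection ordering (that $\Phi^+\setminus\Phi_J^+$ forms an initial section, as recalled from~\cite[Section 4.3]{LeSh}) and the elementary observation that right multiplication by $r_\gamma$ for $\gamma\in\Phi_J^+$ preserves right $W_J$-cosets. I expect the main (very mild) obstacle to be making precise the claim that the labels of an increasing path break into the ``outside $\Phi_J^+$'' block followed by the ``inside $\Phi_J^+$'' block — but this is immediate from the definition of an initial section of a reflection ordering, since once a label lies in $\Phi_J^+$ all subsequent (larger) labels must also lie in $\Phi_J^+$.
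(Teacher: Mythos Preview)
Your proposal is correct and follows essentially the same approach as the paper's proof: argue by contradiction, use the reflection ordering in which $\Phi^+\setminus\Phi_J^+$ forms the initial section so that any $\Phi_J^+$-labels on the increasing path must occur in a terminal block, observe that this terminal block stays inside the coset $zW_J$, and conclude that the vertex at the start of that block (your $y$, the paper's $x_1$) lies in $zW_J$ with strictly smaller distance from $u$, contradicting minimality. The paper's write-up is terser but the logic is identical.
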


\begin{proof} The mentioned path has length $\sdp{u}{x_0}$, by Theorem \ref{thm:shell}~(2). 
Assume that it has at least one label in $\Phi_J^+$. By the structure of our particular reflection ordering, 
all of these labels must be at the end of the path. This means that the tail of the path starting with some $x_1\ne x_0$ consists entirely of 
elements in $z W_J$. Since $\sdp{u}{x_1} < \sdp{u}{x_0}$, we reached a contradiction.
\end{proof} 

\begin{lem}\label{lem2}
Assume that the paths with increasing edge labels from $u$ to two elements $x_0,x_1$ in $zW_J$ 
have all labels in $\Phi^+\setminus\Phi_J^+$. Then $x_0=x_1$.
\end{lem}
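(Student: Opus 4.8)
The plan is to take the unique strictly increasing path $P_0$ from $u$ to $x_0$ and the unique strictly increasing path $P_1$ from $u$ to $x_1$ in $\QB(W)$ — both of which, by hypothesis, have all their edge labels in $\Phi^+\setminus\Phi_J^+$ — and then to \emph{prolong} $P_0$ by a strictly increasing path $Q$ from $x_0$ to $x_1$ running entirely through labels in $\Phi_J^+$. Since $\Phi^+\setminus\Phi_J^+$ is an initial section of our fixed reflection ordering, every label of $Q$ comes strictly after every label of $P_0$, so the concatenation $P_0\cdot Q$ is again a strictly increasing path, now from $u$ to $x_1$. By the uniqueness in Theorem~\ref{thm:shell}~(1) we must then have $P_1=P_0\cdot Q$; but $P_1$ uses no label from $\Phi_J^+$, whereas a nontrivial $Q$ would contribute one, so $Q$ is trivial and $x_0=x_1$.

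To produce $Q$ I first need to understand $\QB(W)$ restricted to the coset $zW_J$, keeping only the edges whose label lies in $\Phi_J^+$ (note that such an edge out of an element of $zW_J$ automatically lands back in $zW_J$, since $\gamma\in\Phi_J$ forces $r_\gamma\in W_J$). Writing the elements of $zW_J$ as $\lfloor z\rfloor v$ with $v\in W_J$ and $\lfloor z\rfloor\in W^J$, the factorization $\lfloor z\rfloor v$ is length-additive; moreover $W_J$ permutes $\Phi^+\setminus\Phi_J^+$, whose sum is $2\rho-2\rho_J$, so for every $\gamma\in\Phi_J$ the reflection $r_\gamma$ fixes $2\rho-2\rho_J$ and fixes $\Phi^+\setminus\Phi_J^+$ setwise, whence $\langle\gamma^\vee,2\rho\rangle=\langle\gamma^\vee,2\rho_J\rangle$ and the length of $r_\gamma$ is the same computed in $W$ or in $W_J$; consequently $\gamma$ is a quantum root of $\Phi$ if and only if it is a quantum root of $\Phi_J$. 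Combining these remarks, the map $v\mapsto\lfloor z\rfloor v$ identifies the restricted subgraph, as an edge-labeled directed graph, with $\QB(W_J)$ (matching Bruhat edges with Bruhat edges and quantum edges with quantum edges). In particular the restricted subgraph is strongly connected, and — applying Theorem~\ref{thm:shell}~(1) to $\QB(W_J)$ with the reflection ordering on $\Phi_J^+$ that forms the lower part of our fixed ordering, and transporting along the identification — any two of its vertices are joined by a unique path with strictly increasing labels. Take $Q$ to be that path from $x_0$ to $x_1$.

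The remainder is the bookkeeping announced above: all labels of $Q$ lie in $\Phi_J^+$ and increase along the reflection ordering on $\Phi_J^+$, and each of them follows each label of $P_0$ (those lying in the initial section $\Phi^+\setminus\Phi_J^+$), so $P_0\cdot Q$ is a strictly increasing path from $u$ to $x_1$ in $\QB(W)$; Theorem~\ref{thm:shell}~(1) forces $P_1=P_0\cdot Q$, and comparing labels gives $x_0=x_1$. The hard part will be the structural identification of the coset subgraph with $\QB(W_J)$ — in particular the matching of quantum edges — but this reduces entirely to the elementary facts about $W_J$ recalled above, the crucial one being that $W_J$ permutes $\Phi^+\setminus\Phi_J^+$; everything else is formal consequence of Theorem~\ref{thm:shell} and of the special shape of the chosen reflection ordering.
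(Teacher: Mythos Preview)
Your argument is correct and follows essentially the same route as the paper: identify the induced subgraph on $zW_J$ with $\QB(W_J)$ via $v\mapsto\lfloor z\rfloor v$, use shellability there to find an increasing path $Q$ from $x_0$ to $x_1$ with labels in $\Phi_J^+$, concatenate with $P_0$ to get a second increasing path from $u$ to $x_1$, and invoke uniqueness. The only differences are cosmetic---the paper argues by contradiction (assuming $x_0\ne x_1$) while you argue directly, and you spell out in more detail why quantum edges match under the identification (via $\langle\gamma^\vee,2\rho\rangle=\langle\gamma^\vee,2\rho_J\rangle$ for $\gamma\in\Phi_J$), a point the paper leaves as ``immediate from definitions and the length-additive factorization.''
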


\begin{proof} Assume $x_0\ne x_1$. The induced subgraph of $\QB(W)$ on $z W_J$, 
to be denoted $\QB(z W_J)$, is isomorphic to $\QB(W_J)$ under the map $w\mapsto \lfloor z\rfloor w$ for $w\in W_J$ (this is immediate from definitions and the length-additive factorization of the elements in $z W_J$). Thus, by 
Theorem~\ref{thm:shell}~(1), we can consider the path from $x_0$ to $x_1$ in $\QB(z W_J)$ with 
increasing edge labels (in $\Phi_J^+$). By concatenating this path with the one from $u$ to $x_0$ 
in the hypothesis (whose labels are in $\Phi^+\setminus\Phi_J^+$), we obtain a path with increasing 
edge labels from $u$ to $x_1$. But this path is clearly different from the one in the hypothesis between 
the same vertices. This  contradicts the uniqueness statement in Theorem \ref{thm:shell}~(1). 
\end{proof}

\begin{proof}[Proof of Proposition {\rm \ref{prop:onemin}}] 
This is immediate by combining Lemmas \ref{lem1} and \ref{lem2}.
\end{proof}

Next we prepare for the proof of the tilted Bruhat Theorem~\ref{thm:tilted}.

\subsection{Preliminaries}

We use the same notation for $\ti{\alpha}_{i}$ and $s_i$ as in Section~\ref{subsection.quantum length}.
In addition, we denote the identity of $\W$ by $e$.
%
%
\begin{remark} \label{rem:arrow}
Let $w \in W$, and $i \in I_{\af}$. 
If $w^{-1}\ti{\alpha}_{i}$ is positive, then 
we have 
\begin{equation*}
\begin{diagram}
\node{w} \arrow{e,t}{w^{-1}\ti{\alpha}_{i}} \node{s_{i}w}
\end{diagram}
\end{equation*}
in the QBG by Theorem~\ref{theorem.level 0 weight poset}. 
Here, this arrow is an Bruhat arrow (resp., quantum arrow) if $i \ne 0$ (resp., $i=0$).
\end{remark}
%
%
The following lemma will be needed in the proof of the tilted Bruhat Theorem~\ref{thm:tilted},  
in generalizing Postnikov's lemma (in Section \ref{section.poslem}), as well as in our second paper. Only certain parts of the lemma are needed in each
of the mentioned proofs; for instance, in this section we only need weaker versions 
of parts (1) and (3), and no reference to the weights of the considered paths.
In the sequel, the symbol $\equiv$ means equivalence modulo $Q_{J}^{\vee}$.

\begin{lem} \label{lem:diamond1}
Let $w_{1},\,w_{2} \in W^{J}$, and let $j \in I_{\af}$. 
Let
%
%
\begin{equation} \label{eq:bp0}
\bp : 
w_1=
x_{0} \stackrel{\gamma_{1}}{\longrightarrow} 
x_{1} \stackrel{\gamma_{2}}{\longrightarrow}
\cdots \stackrel{\gamma_{n}}{\longrightarrow} 
x_{n}=w_2
\end{equation}
be a directed path from $w_{1}$ to $w_{2}$ of length $n$ in the PQBG. 
In addition, $\lambda$ is a dominant weight with stabilizer $W_J$.

{\rm (1)} 
If $\pair{\ti{\alpha}_{j}^{\vee}}{w_{2}\lambda} < 0$, and 
there exists $0 \le k \le n$ such that 
$\pair{\ti{\alpha}_{j}^{\vee}}{x_{k}\lambda} \ge 0$, 
then there exists a directed path $\bp'$ from 
$w_{1}$ to $\mcr{s_{j}w_{2}}$ of length $n-1$ in the PQBG 
such that
\begin{equation*}
\wt(\bp') \equiv \wt(\bp) + 
 \delta_{j,\,0}w_{2}^{-1}\ti{\alpha}_{j}^{\vee}.
\end{equation*}

%

{\rm (2)} If $\pair{\ti{\alpha}_{j}^{\vee}}{w_{2}\lambda} < 0$ and 
$\pair{\ti{\alpha}_{j}^{\vee}}{w_{1}\lambda} < 0$, 
then there exists a directed path $\bp'$ from 
$\mcr{s_{j}w_{1}}$ to $\mcr{s_{j}w_{2}}$ of length $n$ in the PQBG 
such that
\begin{equation*}
\wt(\bp') \equiv \wt(\bp) 
-\delta_{j,\,0}w_{1}^{-1}\ti{\alpha}_{j}^{\vee}
+\delta_{j,\,0}w_{2}^{-1}\ti{\alpha}_{j}^{\vee}. 
\end{equation*}

{\rm (3)} If $\pair{\ti{\alpha}_{j}^{\vee}}{w_{1}\lambda} > 0$, and 
there exists $0 \le k \le n$ such that 
$\pair{\ti{\alpha}_{j}^{\vee}}{x_{k}\lambda} \le 0$, 
then there exists a directed path $\bp'$ from 
$\mcr{s_{j}w_{1}}$ to $w_{2}$ of length $n-1$ in the PQBG 
such that
\begin{equation*}
\wt(\bp') \equiv \wt(\bp) -
\delta_{j,\,0}w_{1}^{-1}\ti{\alpha}_{j}^{\vee}. 
\end{equation*}

{\rm (4)} If $\pair{\ti{\alpha}_{j}^{\vee}}{w_{1}\lambda} > 0$, and 
$\pair{\ti{\alpha}_{j}^{\vee}}{w_{2}\lambda} > 0$, 
then there exists a directed path $\bp'$ from 
$\mcr{s_{j}w_{1}}$ to $\mcr{s_{j}w_{2}}$ of length $n$ 
in the PQBG such that
\begin{equation*}
\wt(\bp') \equiv \wt(\bp) -
\delta_{j,\,0}w_{1}^{-1}\ti{\alpha}_{j}^{\vee} + 
\delta_{j,\,0}w_{2}^{-1}\ti{\alpha}_{j}^{\vee}. 
\end{equation*}

{\rm (5)} In each of parts above, 
if the directed path $\bp$ is shortest in the PQBG, then 
the directed path $\bp'$ is also shortest in the PQBG. 
\end{lem}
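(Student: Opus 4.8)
The plan is to prove parts (1)--(4) together by a single induction on the length $n$ of the path $\bp$, using the Diamond Lemma for the level-zero weight poset (Lemma~\ref{lemma.diamond.weight}) lifted through Theorem~\ref{theorem.level 0 weight poset}, or equivalently by working directly in the PQBG via the Diamond Lemmas of Lemma~\ref{lemma.diamond.PQBG}. Since all four parts are mirror images of each other under the duality antiautomorphism of Proposition~\ref{P:circ} (which reverses arrows and swaps the roles of source and target, sending $j$-related conditions at $w_1$ to $j$-related conditions at $w_2$), it suffices to prove, say, parts (1) and (2) carefully; parts (3) and (4) then follow by applying Proposition~\ref{P:circ} to the reversed path. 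First I would translate the hypotheses into the language of $\Xafz(\lambda)$: set $\mu_i := x_i\lambda$ (lifted suitably), so the chain $\bp$ becomes a saturated path of covers in $\Xafz(\lambda)$ by Theorem~\ref{theorem.level 0 weight poset}, and the sign conditions $\pair{\ti\alpha_j^\vee}{x_k\lambda}$ become exactly the hypotheses of Lemma~\ref{lemma.littelmann}, where the affine simple root in question is $\alpha_j$ (with $\alpha_0$ corresponding to $\ti\alpha_0=-\theta$).

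The inductive step proceeds as follows. In part (1), since $\pair{\ti\alpha_j^\vee}{x_n\lambda}<0$ but $\pair{\ti\alpha_j^\vee}{x_k\lambda}\ge 0$ for some $k$, there is a first index $m$ (reading from $0$ to $n$) where the sign flips: $\pair{\ti\alpha_j^\vee}{x_{m-1}\lambda}\ge 0$ and $\pair{\ti\alpha_j^\vee}{x_m\lambda}<0$. Apply Lemma~\ref{lemma.littelmann}~(1) (or the corresponding Diamond Lemma in the PQBG, cases \eqref{dq1}--\eqref{dq41} of Lemma~\ref{lemma.diamond.PQBG}) to the cover $x_{m-1}\lambda \lessdot x_m\lambda$ and the simple reflection $s_j$: this produces a cover $x_{m-1}\lambda \lessdot \mcr{s_j x_m}\lambda$ (if the signs are strict) and replaces the edge $x_{m-1}\to x_m\to x_{m+1}$ by a shorter configuration, while simultaneously the diamond lets us push $s_j$ past the edges $x_m\to x_{m+1}\to\cdots\to x_n$, flipping each to $\mcr{s_j x_m}\to\mcr{s_j x_{m+1}}\to\cdots\to\mcr{s_j x_n}=\mcr{s_j w_2}$ by repeated use of Lemma~\ref{lemma.littelmann}~(3) (whose hypothesis holds because all of $x_m\lambda,\dots,x_n\lambda$ now pair negatively with $\ti\alpha_j^\vee$ — here I would need a small argument that once the sign goes negative along the remaining chain it stays negative, or handle the general case by iterating the construction). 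The weight bookkeeping is then the routine part: each time a quantum edge is created, destroyed, or relabelled by a Diamond move, its contribution to $\wt$ changes by an element of $Q_J^\vee$ (this is exactly the congruence statement built into Lemma~\ref{lemma.diamond.PQBG}), and the only genuinely new term is $\delta_{j,0}$ times the coroot picked up when $j=0$ (i.e. when the simple reflection is $r_\theta$ and a quantum edge with label involving $\theta^\vee$ enters or leaves), which accounts for the explicit $\delta_{j,0}w_2^{-1}\ti\alpha_j^\vee$ correction. Part (2) is similar but without a sign flip: both endpoints pair negatively, so Lemma~\ref{lemma.littelmann}~(3) applies uniformly along the whole chain, producing a path of the same length $n$ from $\mcr{s_j w_1}$ to $\mcr{s_j w_2}$, with the two boundary correction terms at $w_1$ and $w_2$.

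For part (5): if $\bp$ is a shortest path, I want $\bp'$ to be shortest as well. For parts (1) and (3) this follows because $\bp'$ has length $n-1$ and any path shorter than $\bp'$ could be extended by one edge (using Remark~\ref{rem:arrow}, i.e. the edge $\mcr{s_j w_2}\to w_2$ or $w_1\to\mcr{s_j w_1}$ coming from the appropriate sign hypothesis) to give a path from $w_1$ to $w_2$ of length $<n$, contradicting minimality of $\bp$. For parts (2) and (4), $\bp'$ has length $n$, and if there were a shorter path from $\mcr{s_j w_1}$ to $\mcr{s_j w_2}$ one would apply the construction in the reverse direction (which is part (2) or (4) again, with the roles of the $x_0,x_n$ swapped, legitimate since the sign hypotheses are symmetric) to produce a path from $w_1$ to $w_2$ shorter than $n$, again a contradiction. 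I expect the main obstacle to be the careful handling of the chain of Diamond reductions when the sign condition $\pair{\ti\alpha_j^\vee}{x_k\lambda}$ oscillates along $\bp$: one must either argue that for a shortest path the sign pattern is well-behaved (monotone after the first flip), or set up the induction so that a single Diamond move strictly decreases a suitable complexity measure (e.g. the number of indices $k$ with $\pair{\ti\alpha_j^\vee}{x_k\lambda}\ge 0$, or the height of a root as in the proof of Theorem~\ref{theorem.level 0 weight poset}) and simply iterate; the latter is cleaner and is the approach I would take, mirroring the height-induction already used in the proof of Theorem~\ref{theorem.level 0 weight poset}. The weight congruences, by contrast, are mechanical given that every individual Diamond move in Lemma~\ref{lemma.diamond.PQBG} was already shown to preserve $\wt$ modulo $Q_J^\vee$ except for the explicit $\theta^\vee$-terms.
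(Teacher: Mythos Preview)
Your overall strategy is sound and uses the same essential ingredients as the paper---the Diamond Lemmas \ref{lemma.diamond.PQBG}, the duality of Proposition~\ref{P:circ} to reduce (3),(4) to (1),(2), and the contradiction argument for (5). However, there are two concrete differences worth flagging.

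First, in part (1) you search from the \emph{left} for the first sign-flip index $m$, and then must push $s_j$ through all of $x_m\to\cdots\to x_n$. As you yourself note, nothing guarantees the sign stays negative along that tail, so you are forced into an induction or iteration to handle oscillation. The paper avoids this entirely by working from the \emph{right}: starting with the edge $\mcr{s_jw_2}\to w_2$ and applying the Diamond Lemma to $x_{n-1}\to x_n$, then $x_{n-2}\to x_{n-1}$, and so on, stopping at the \emph{largest} $k$ with $\pair{\ti\alpha_j^\vee}{x_k\lambda}\ge 0$. By construction all of $x_{k+1},\dots,x_n$ then have negative pairing, so each Diamond move applies directly, and at the boundary one invokes Lemma~\ref{lemma.littelmann}(1) via Theorem~\ref{theorem.level 0 weight poset} to see $\mcr{s_jx_{k+1}}=x_k$. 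No induction on $n$ is needed.

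Second, your description of part (2) has a genuine gap: you write that ``both endpoints pair negatively, so Lemma~\ref{lemma.littelmann}(3) applies uniformly along the whole chain,'' but the hypothesis of (2) only constrains the endpoints, not the intermediate $x_k$. The paper handles this by splitting into two subcases: if \emph{all} $x_k$ pair negatively, the Diamond procedure runs straight through; if some intermediate $x_k$ pairs nonnegatively, one invokes the already-proved part (1) to get a path $w_1\to\mcr{s_jw_2}$ of length $n-1$, then prepends the single edge $\mcr{s_jw_1}\to w_1$. Your inductive framework could absorb this, but as written the claim is not justified.

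Your argument for (5) is essentially the paper's, though for (2) the paper concatenates with an edge and applies part (3) rather than reapplying (2)/(4) in reverse; both work.
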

\begin{proof}
We will omit the proofs of parts (3) and (4),
since they are similar to those of parts (1) and (2), 
respectively; alternatively, we can reduce the former to the latter by using Proposition \ref{P:circ}.

(1) Since $\pair{\ti{\alpha}_{j}^{\vee}}{w_{2}\lambda} < 0$, 
we have $w_{2}^{-1}\ti{\alpha}_{j} \in \Phi^{-} \setminus \Phi^{-}_{J}$. 
Thus it follows from Propositions~\ref{P:leftrPQBG}\,(1) and 
\ref{P:thetaPQBG}\,(3) that
\begin{equation*}
\begin{diagram}
\node{}
\node{\mcr{s_{j}w_2}} \arrow{s} \\
\node{x_{n-1}} \arrow{e,t}{\gamma_{n}} 
\node{w_2}
\end{diagram}
\end{equation*}
If $\pair{\ti{\alpha}_{j}^{\vee}}{x_{n-1}\lambda} < 0$, then
we can apply the assertion for the right diagram in 
Lemma~\ref{lemma.diamond.PQBG} (the diamond lemma) to this diagram; 
choose a suitable right diagram in Lemma~\ref{lemma.diamond.PQBG}, 
depending on the types (Bruhat or quantum) of 
the edges $x_{n-1} \stackrel{\gamma_{n}}{\longrightarrow} w_{2}$ and 
$\mcr{s_{j}w_{2}} \rightarrow w_{2}$, and also on the value of 
$\pair{\gamma_{n}^{\vee}}{x_{n-1}^{-1}\theta}$ if $j=0$. 
Thus we obtain
\begin{equation*}
\begin{diagram}
\node{}
\node{\mcr{s_{j}x_{n-1}}} \arrow{s} \arrow{e,t}{z_{n}\gamma_{n}} 
\node{\mcr{s_{j}w_2}} \arrow{s} \\
\node{x_{n-2}} \arrow{e,t}{\gamma_{n-1}} 
\node{x_{n-1}} \arrow{e,t}{\gamma_{n}} 
\node{w_2}
\end{diagram}
\end{equation*}
for some $z_{n} \in W_{J}$; 
by Lemma~\ref{lemma.diamond.PQBG}, 
the weights of the directed paths 
from $\mcr{s_{j}x_{n-1}}$ to $w_2$ 
appearing in the diagram above are 
all congruent modulo $Q_{J}^{\vee}$. 
Next, if $\pair{\ti{\alpha}_{j}^{\vee}}{x_{n-2}\lambda} < 0$, 
then by the same reasoning as above, we obtain
\begin{equation*}
\begin{diagram}
\node{}
\node{\mcr{s_{j}x_{n-2}}} \arrow{s} \arrow{e,t}{z_{n-1}\gamma_{n-1}}
\node{\mcr{s_{j}x_{n-1}}} \arrow{s} \arrow{e,t}{z_{n}\gamma_{n}} 
\node{\mcr{s_{j}w_2}} \arrow{s} \\
\node{x_{n-3}} \arrow{e,t}{\gamma_{n-2}} 
\node{x_{n-2}} \arrow{e,t}{\gamma_{n-1}} 
\node{x_{n-1}} \arrow{e,t}{\gamma_{n}} 
\node{w_2}
\end{diagram}
\end{equation*}
for some $z_{n-1} \in W_{J}$; 
by Lemma~\ref{lemma.diamond.PQBG}, 
the weights of the directed paths 
from $\mcr{s_{j}x_{n-2}}$ to $w_{2}$ 
appearing in the diagram above are 
all congruent modulo $Q_{J}^{\vee}$. 
Continue this procedure until 
$\pair{\ti{\alpha}_{j}^{\vee}}{x_{k}\lambda} \ge 0$ 
for the first time. Then we have
\begin{equation*}
\begin{diagram}
\node{}
\node{\mcr{s_{j}x_{k+1}}} \arrow{s} \arrow{e,t}{z_{k+2}\gamma_{k+2}}
\node{\cdots\cdots} \arrow{e,t}{z_{n}\gamma_{n}} 
\node{\mcr{s_{j}w_2}} \arrow{s} \\
\node{x_{k}} \arrow{e,t}{\gamma_{k+1}} 
\node{x_{k+1}} \arrow{e,t}{\gamma_{k+2}} 
\node{\cdots\cdots} \arrow{e,t}{\gamma_{n}} 
\node{w_2}
\end{diagram}
\end{equation*}
where $z_{k+2},\,z_{k+3},\,\dots,\,z_{n} \in W_{J}$; 
by Lemma~\ref{lemma.diamond.PQBG}, the weights of the directed paths 
from $\mcr{s_{j}x_{k+1}}$ to $w_{2}$ 
appearing in the diagram above are all congruent 
modulo $Q_{J}^{\vee}$. 
Because $\pair{\ti{\alpha}_{j}^{\vee}}{x_{k+1}\lambda} < 0$ and 
$\pair{\ti{\alpha}_{j}^{\vee}}{x_{k}\lambda} \ge 0$, 
we deduce 
from Lemma~\ref{lemma.littelmann}\,(1) and 
Theorem~\ref{theorem.level 0 weight poset}
that $\mcr{s_{j}x_{k+1}}=x_{k}$; 
in this case, the type (Bruhat or quantum) of the edges 
$x_{k} \stackrel{\gamma_{k+1}}{\longrightarrow} x_{k+1}$ and 
$\mcr{s_{j}x_{k+1}} \rightarrow x_{k+1}$ are the same, 
and their weights are congruent modulo $Q_{J}^{\vee}$. 
Concatenating with the remaining edges in $\bp$, we obtain
%
%
\begin{equation} \label{eq:d1}
\begin{diagram}
\node{}
\node{}
\node{}
\node{\mcr{s_{j}x_{k+1}}} \arrow{s} \arrow{e,t}{z_{k+2}\gamma_{k+2}}
\node{\cdots\cdots} \arrow{e,t}{z_{n}\gamma_{n}} 
\node{\mcr{s_{j}w_2}} \arrow{s} \\
\node{w_1}
\arrow{e,t}{\gamma_{1}}
\node{\cdots}
\arrow{e,t}{\gamma_{k}}
\node{x_{k}} \arrow{ne,t,-}{=} \arrow{e,t}{\gamma_{k+1}} 
\node{x_{k+1}} \arrow{e,t}{\gamma_{k+2}} 
\node{\cdots\cdots} \arrow{e,t}{\gamma_{n}} 
\node{w_2}
\end{diagram}
\end{equation}
Set
%
%
\begin{equation} \label{eq:bp1}
\bp' : w_1=x_{0} 
\stackrel{\gamma_{1}}{\longrightarrow} \cdots 
\stackrel{\gamma_{k}}{\longrightarrow} x_{k}=\mcr{s_{j}x_{k+1}} 
\stackrel{z_{k+2}\gamma_{k+2}}{\longrightarrow} \cdots 
\stackrel{z_{n}\gamma_{n}}{\longrightarrow}  \mcr{s_{j}x_{n}}=\mcr{s_{j}w_2};
\end{equation}
the length of $\bp'$ is equal to $n-1$. 
Also, in the diagram \eqref{eq:d1}, we set
\begin{align*}
 & \bq_{0} : w_1=x_{0} 
\stackrel{\gamma_{1}}{\longrightarrow} \cdots 
\stackrel{\gamma_{k}}{\longrightarrow} x_{k} \\
 & \bq_1 : 
\begin{diagram}
\node{\mcr{s_{j}x_{k+1}}} \arrow{e,t}{z_{k+2}\gamma_{k+2}}
\node{\cdots\cdots} \arrow{e,t}{z_{n}\gamma_{n}} 
\node{\mcr{s_{j}w_2}}
\end{diagram} \\
 & \bq_2 : 
\begin{diagram}
\node{x_{k+1}} \arrow{e,t}{\gamma_{k+2}} 
\node{\cdots\cdots} \arrow{e,t}{\gamma_{n}} \node{w_2}
\end{diagram}
\end{align*}
Then we have
\begin{equation*}
\wt(\bq_{1})+ \wt(\mcr{s_{j}w_{2}} \rightarrow w_{2}) \equiv 
\wt(\mcr{s_{j}x_{k+1}} \rightarrow x_{k+1})+\wt(\bq_{2}). 
\end{equation*}
Here, recall that 
$\wt(\mcr{s_{j}x_{k+1}} \rightarrow x_{k+1}) \equiv 
 \wt(x_{k} \stackrel{\gamma_{k+1}}{\longrightarrow} x_{k+1})$. 
Therefore we obtain
\begin{align*}
\wt(\bp') & = \wt (\bq_{0})+\wt(\bq_{1}) \\ 
& 
\equiv \wt(\bq_{0})+
\wt(\mcr{s_{j}x_{k+1}} \rightarrow x_{k+1})+\wt(\bq_{2})-
\wt(\mcr{s_{j}w_{2}} \rightarrow w_{2}) \\
& 
\equiv 
\underbrace{
\wt(\bq_{0})+\wt(x_{k} \stackrel{\gamma_{k+1}}{\longrightarrow} x_{k+1})+
\wt(\bq_{2})}_{=\wt (\bp)} -
\wt(\mcr{s_{j}w_{2}} \rightarrow w_{2}) \\
& = 
\wt(\bp)-
\wt(\mcr{s_{j}w_{2}} \rightarrow w_{2}) 
\equiv
\wt(\bp)+\delta_{j,0}w_{2}^{-1}\ti{\alpha}_{j}^{\vee},
\end{align*}
as desired. 

\medskip

(2) Assume first that $\pair{\ti{\alpha}_{j}^{\vee}}{x_{k}\lambda} < 0$ for all 
$0 \le k \le n$. 
Continuing the procedure in the proof of part (1) above, 
we finally obtain 
\begin{equation*}
\begin{diagram}
\node{\mcr{s_{j}w_1}} \arrow{s} \arrow{e,t}{z_1\gamma_{1}} 
\node{\mcr{s_{j}x_1}} \arrow{s} \arrow{e,t}{z_2\gamma_{2}} 
\node{\mcr{s_{j}x_2}} \arrow{s} \arrow{e,t}{z_3\gamma_{3}} 
\node{\cdots\cdots} \arrow{e,t}{z_{n}\gamma_{n}} 
\node{\mcr{s_{j}w_2}} \arrow{s} \\
\node{w_1} \arrow{e,b}{\gamma_{1}} 
\node{x_1} \arrow{e,b}{\gamma_{2}} 
\node{x_2} \arrow{e,b}{\gamma_{3}} 
\node{\cdots \cdots} \arrow{e,b}{\gamma_{n}}
\node{w_2}
\end{diagram}
\end{equation*}
for some $z_{1},\,z_{2},\,\dots,\,z_{n} \in W_{J}$; 
by Lemma~\ref{lemma.diamond.PQBG}, the weights of the directed paths 
from $\mcr{s_{j}w_1}$ to $w_{2}$ 
appearing in the diagram above are all congruent 
modulo $Q_{J}^{\vee}$. Set
%
%
\begin{equation} \label{eq:bp2}
\bp' : \mcr{s_{j}w_1}=\mcr{s_{j}x_{0}}
\stackrel{z_{1}\gamma_{1}}{\longrightarrow} 
\mcr{s_{j}x_{1}}
\stackrel{z_{2}\gamma_{2}}{\longrightarrow} \cdots 
\stackrel{z_{n}\gamma_{n}}{\longrightarrow} \mcr{s_{j}x_n}=\mcr{s_{j}w_2};
\end{equation}
the length of $\bp'$ is equal to $n$. 
Furthermore, we obtain
\begin{equation*}
\wt(\bp')+\wt(\mcr{s_{j}w_{2}} \rightarrow w_{2}) \equiv
\wt(\mcr{s_{j}w_{1}} \rightarrow w_{1})+\wt(\bp), 
\end{equation*}
and hence 
\begin{align*}
\wt(\bp') & \equiv 
\wt(\bp)+
\wt(\mcr{s_{j}w_{1}} \rightarrow w_{1})-
\wt(\mcr{s_{j}w_{2}} \rightarrow w_{2}) \\
& \equiv 
\wt(\bp)-\delta_{j,0}w_{1}^{-1}\ti{\alpha}_{j}^{\vee}+
\delta_{j,0}w_{2}^{-1}\ti{\alpha}_{j}^{\vee}, 
\end{align*}
as desired. 

\medskip

Now assume that there exists $0 < k < n$ such that 
$\pair{\ti{\alpha}_{j}^{\vee}}{x_{k}\lambda} \ge 0$.
By part (1), there exists a directed path $\bp''$ from 
$w_{1}$ to $\mcr{s_{j}w_{2}}$ of length $n-1$ in the PQBG 
such that
\begin{equation*}
\wt(\bp'') \equiv \wt(\bp) + 
\delta_{j,0}w_{2}^{-1}\ti{\alpha}_{j}^{\vee}. 
\end{equation*}
By concatenating this directed path $\bp''$ and 
the edge $\mcr{s_{j}w_{1}} \longrightarrow w_{1}$, 
we obtain a directed path $\bp'$ from 
$\mcr{s_{j}w_{1}}$ to $\mcr{s_{j}w_{2}}$ of length $n-1+1=n$ 
in the PQBG such that
\begin{equation*}
\wt(\bp') \equiv 
\wt(\bp'')+
 \underbrace{\wt (\mcr{s_{j}w_{1}} \rightarrow w_{1})}_{%
 =-\delta_{j,0}w_{1}^{-1}\ti{\alpha}_{j}^{\vee} }
\equiv 
\wt(\bp) 
-\delta_{j,0}w_{1}^{-1}\ti{\alpha}_{j}^{\vee}+
\delta_{j,0}w_{2}^{-1}\ti{\alpha}_{j}^{\vee},
\end{equation*}
as desired. 

\medskip

(5) We give the proofs only for parts (1) and (2); 
the proofs for the other cases are similar. 
Suppose that in part (1), $\bp$ is shortest in the PQBG, but 
$\bp'$ is not shortest in the PQBG. 
Concatenating a shortest directed path from 
$w_{1}$ to $\mcr{s_{j}w_{2}}$ in the PQBG 
(note that its length is less than $n-1$) and 
$\mcr{s_{j}w_{2}} \rightarrow w_{2}$, 
we obtain a directed path from $w_{1}$ to $w_{2}$ 
whose length is less than $n$. This contradicts 
the assumption that $\bp$ is shortest. 

Suppose that in part (2), 
$\bp$ is shortest in the PQBG, but 
$\bp'$ is not shortest in the PQBG. 
Concatenating a shortest directed path from 
$\mcr{s_{j}w_{1}}$ to $\mcr{s_{j}w_{2}}$ in the PQBG 
(note that its length is less than $n$) and 
$\mcr{s_{j}w_{2}} \rightarrow w_{2}$, 
we obtain a directed path from $\mcr{s_{j}w_{1}}$ to $w_{2}$ 
whose length is less than $n+1$. 
By the assumption of part (2), 
$\pair{\ti{\alpha}_{j}^{\vee}}{s_{j}w_{1}\lambda} > 0$ and 
$\pair{\ti{\alpha}_{j}^{\vee}}{w_{2}\lambda} < 0$.
Therefore it follows from part (3) that 
there exists a directed path from $w_{1}$ to $w_{2}$ 
whose length is less than $n$, which contradicts the 
assumption that $\bp$ is shortest in the PQBG. 

This completes the proof of the lemma. 
\end{proof}

\subsection{Proof of the tilted Bruhat Theorem~{\rm \ref{thm:tilted}}}

\begin{proof}[{}Proof of Theorem~{\rm \ref{thm:tilted}}]
The proof proceeds by induction on $\ql(u)$.
If $\ql(u)=0$, then $u=e$. 
We know from~\cite[p. 435]{BFP} that the $e$-tilted Bruhat order 
$\preccurlyeq_{e}$ on $W$ is just the Bruhat order on $W$. 
Hence, for each $z \in W$, the minimal coset representative 
in $zW_{J}$ is the unique $\preccurlyeq_{e}$-minimal element. 
Therefore the assertion holds.

Assume that $\ql(u) > 0$. 
Let $u=x_{0},\,x_{1},\,\dots,\,x_{n}=e$ be a sequence of 
elements in $W$ satisfying the condition in 
Lemma~\ref{lem:ql2}, with $n=\ql(u)$. Put $v:=x_{1}$; 
note that $\ql(v)=\ql(u)-1$. 
Thus the inductive assumption is: 
\begin{center}
Theorem~\ref{thm:tilted} is true for this $v$ 
(and arbitrary $z \in W$). 
\end{center}

Assume that $v=s_{i}u$ for some $i \in I_{\af}$.
Since $u^{-1}\ti{\alpha}_{i}$ is positive, it follows from 
Remark~\ref{rem:arrow} that 
\begin{equation} \label{eq:u}
\begin{diagram}
\node{u} \arrow{e,t}{u^{-1}\ti{\alpha}_{i}} \node{v=s_{i}u,}
\end{diagram}
\end{equation}
where this arrow is an Bruhat arrow (resp., a quantum arrow) if $i \ne 0$ 
(resp., $i=0$). 

\paragraph{\bf Case 1.} Assume that 
$z^{-1}\ti{\alpha}_{i} \in \Delta^{-} \setminus \Delta_{J}^{-}$; 
note that $(zy)^{-1}\ti{\alpha}_{i}$ is negative for all $y \in W_{J}$. 

By the inductive assumption, there exists a unique minimal element 
in the coset $zW_{J}$ with respect to $\preccurlyeq_{v}$, 
which we denote by $\min (zW_{J},\,\preccurlyeq_{v})$. 
Let $x \in W_{J}$ be such that 
\begin{equation*}
\min (zW_{J},\,\preccurlyeq_{v})=zx.
\end{equation*}
Let us show that $zx \in zW_{J}$ is a unique minimal element 
in the coset $zW_{J}$ with respect to $\preccurlyeq_{u}$, that is, 
\begin{equation*}
\min (zW_{J},\,\preccurlyeq_{u})=zx.
\end{equation*}
Let $y \in W_{J}$ be an arbitrary element in $W_{J}$. 
There exists a shortest directed path from $v$ to $zy$ 
that passes through $zx$: 
\begin{equation*}
v \rightarrow \cdots \rightarrow zx 
 \rightarrow \cdots \rightarrow zy.
\end{equation*}
Concatenating $u \rightarrow v$ of \eqref{eq:u}
and this directed path, we obtain a directed path 
\begin{equation} \label{eq:case1}
u \rightarrow v \rightarrow \cdots \rightarrow zx 
 \rightarrow \cdots \rightarrow zy
\end{equation}
of length $\sdp{v}{zy}+1$. 
Let us show that 
this directed path is shortest. Suppose that 
$\sdp{u}{zy} < \sdp{v}{zy}+1$. 
Recall that $u^{-1}\ti{\alpha}_{i}$ is positive, and 
$(zy)^{-1}\ti{\alpha}_{i}$ is negative. 
By Lemma~\ref{lem:diamond1}\,(3), we obtain a directed path 
from $s_{i}u=v$ to $zy$ whose length is equal to 
$\sdp{u}{zy}-1$. Hence,
\begin{equation*}
\sdp{v}{zy} \le 
\sdp{u}{zy} -1 < \sdp{v}{zy}+1-1=\sdp{v}{zy},
\end{equation*}
which is a contradiction. 
Therefore, the directed path \eqref{eq:case1} is shortest. 

\paragraph{\bf Case 2.} Assume that 
$z^{-1}\ti{\alpha}_{i} \in \Delta^{+} \setminus \Delta_{J}^{+}$;
note that $(zy)^{-1}\ti{\alpha}_{i}$ is positive for all $y \in W_{J}$, 
which implies that $zy \rightarrow s_{i}zy$ by Remark~\ref{rem:arrow}. 

By the inductive assumption, there exists a unique minimal element 
in the coset $s_{i}zW_{J}$ with respect to $\preccurlyeq_{v}$, 
which we denote by $\min (s_{i}zW_{J},\,\preccurlyeq_{v})$. 
Let $x \in W_{J}$ be such that 
\begin{equation*}
\min (s_{i}zW_{J},\,\preccurlyeq_{v})=s_{i}zx.
\end{equation*}
Let us show that $zx \in zW_{J}$ is a unique minimal element 
in the coset $zW_{J}$ with respect to $\preccurlyeq_{u}$;
\begin{equation*}
\min (zW_{J},\,\preccurlyeq_{u})=zx.
\end{equation*}
Let $y \in W_{J}$ be an arbitrary element in $W_{J}$. 
We construct a directed path 
from $u$ to $zy$ that passes through $zx$ as follows: 
First, we construct a directed path from $u$ to $zx$. 
Concatenating $u \rightarrow v$ of \eqref{eq:u}
and a shortest directed path from $v$ to $s_{i}zx$, 
we obtain a directed path from $u$ to $s_{i}zx$ 
of length $\sdp{v}{s_{i}zx}+1$:
\begin{equation*}
u \rightarrow v \rightarrow \cdots \rightarrow s_{i}zx
\end{equation*}
Because $u^{-1}\ti{\alpha}_{i}$ is positive and 
$(s_{i}zx)^{-1}\ti{\alpha}_{i}$ is negative, it follows from 
Lemma~\ref{lem:diamond1}\,(1) that there exists a directed path from 
$u$ to $zx$ of length $\sdp{v}{s_{i}zx}+1-1=\sdp{v}{s_{i}zx}$:
\begin{equation*}
\begin{diagram}
\node[3]{zx}\arrow{s} \\
\node{u}\arrow{ene,l,..}
{\begin{array}{ll}
 {\mbox{\scriptsize ${}^{\exists}$directed path}} \\
 {\mbox{\scriptsize of length $\sdp{v}{s_{i}zx}$}}
 \end{array}
} \arrow{e} \node{\cdots} \arrow{e} \node{s_{i}zx}
\end{diagram}
\end{equation*}
Next, we construct a directed path from $zx$ to $zy$. 
Concatenating $zx \rightarrow s_{i}zx$ and 
a shortest directed path from $s_{i}zx$ to $s_{i}zy$, 
we obtain a directed path from $zx$ to $s_{i}zy$ of 
length $\sdp{s_{i}zx}{s_{i}zy}+1$:
\begin{equation*}
\begin{diagram}
\node[3]{zx}\arrow{s}\arrow{ese,t,..}{\mbox{\scriptsize Concatenation of directed paths}} \\
\node{u}\arrow{ene,l,..}
{\begin{array}{ll}
 \mbox{\scriptsize directed path} \\
 \mbox{\scriptsize of length $\sdp{v}{s_{i}zx}$}
 \end{array}
} \arrow{e} \node{\cdots} \arrow{e} \node{s_{i}zx} 
\arrow{e} \node{\cdots} \arrow{e} \node{s_{i}zy}
\end{diagram}
\end{equation*}
Because $(zx)^{-1}\ti{\alpha}_{i}$ is positive and 
$(s_{i}zy)^{-1}\ti{\alpha}_{i}$ is negative, it follows from 
Lemma~\ref{lem:diamond1}\,(1) that there exists a directed path from 
$zx$ to $zy$ of length $\sdp{s_{i}zx}{s_{i}zy}+1-1=\sdp{s_{i}zx}{s_{i}zy}$. 
\vspace*{5mm}
\begin{equation*}
\begin{diagram}
\node[3]{zx} \arrow{s} \arrow{ese,..}
\arrow[2]{e,t,..}
{\begin{array}{ll}
 \mbox{\scriptsize ${}^{\exists}$directed path} \\
 \mbox{\scriptsize of length $\sdp{s_{i}zx}{s_{i}zy}$}
 \end{array}
} \node[2]{zy} \arrow{s}\\
\node{u}\arrow{ene,l,..}
{\begin{array}{ll}
 \mbox{\scriptsize directed path} \\
 \mbox{\scriptsize of length $\sdp{v}{s_{i}zx}$}
 \end{array}
} \arrow{e} \node{\cdots} \arrow{e} \node{s_{i}zx} 
\arrow{e} \node{\cdots} \arrow{e} \node{s_{i}zy}
\end{diagram}
\end{equation*}
Concatenating the directed paths above, 
we obtain a directed path from $u$ to $zy$ of length 
$\sdp{v}{s_{i}zx}+\sdp{s_{i}zx}{s_{i}zy}=\sdp{v}{s_{i}zy}$ 
(recall that $s_{i}zx \preccurlyeq_{v} s_{i}zy$ by the definition of $x \in W_{J}$)
that passes through $zx$. 

Let us show that 
this directed path is shortest. Suppose that 
$\sdp{u}{zy} < \sdp{v}{s_{i}zy}$. 
Concatenating a shortest directed path from $u$ to $zy$ and 
the directed path $zy \rightarrow s_{i}zy$, 
we obtain a directed path from $u$ to $s_{i}zy$ of the form:
\begin{equation*}
\underbrace{u \rightarrow \cdots \rightarrow zy}_{\text{shortest}} \rightarrow s_{i}zy;
\end{equation*}
note that its length is $\sdp{u}{zy}+1$. 
Because $u^{-1}\ti{\alpha}_{i}$ is positive, and 
$(s_{i}zy)^{-1}\ti{\alpha}_{i}$ is negative, 
it follows from Lemma~\ref{lem:diamond1}\,(3) that
there exists a directed path from $s_{i}u=v$ to $s_{i}zy$ 
of length $\sdp{u}{zy}+1-1=\sdp{u}{zy}$. 
Since $\sdp{u}{zy} < \sdp{v}{s_{i}zy}$, 
this is a contradiction. 

\paragraph{\bf Case 3.} 
Assume that $z^{-1}\ti{\alpha}_{i} \in \Delta_{J}$; 
note that $s_{i}zW_{J}=zW_{J}$. 

By the inductive assumption, there exists a unique minimal element 
in the coset $zW_{J}$ with respect to $\preccurlyeq_{v}$, 
which we denote by $\min (zW_{J},\,\preccurlyeq_{v})$. 
Let $x \in W_{J}$ be such that 
\begin{equation*}
\min (zW_{J},\,\preccurlyeq_{v})=zx.
\end{equation*}

\paragraph{\bf Subcase 3.1} Assume that 
$(zx)^{-1}\ti{\alpha}_{i} \in \Delta_{J}^{+}$. 
Let us show that 
\begin{equation*}
\min (zW_{J},\,\preccurlyeq_{u})=zx.
\end{equation*}
Take an arbitrary $y \in W_{J}$. 

\paragraph{\bf 3.1.1.}
%
Assume first that $(zy)^{-1}\ti{\alpha}_{i} \in \Delta_{J}^{-}$. 
Then we can check in exactly the same way as in Case 1 that 
concatenating $u \rightarrow v$ of \eqref{eq:u} and 
a shortest directed path from $v$ to $zy$ that passes through $zx$ 
gives a shortest directed path from $u$ to $zy$:
\begin{equation*}
\underbrace{u \rightarrow}_{\text{\eqref{eq:u}}}
v 
\underbrace{\rightarrow \cdots \rightarrow zx
 \rightarrow \cdots \rightarrow zy}_{\text{shortest}}.
\end{equation*}

\paragraph{\bf 3.1.2.}
%
Assume next that $(zy)^{-1}\ti{\alpha}_{i} \in \Delta_{J}^{+}$. 
Concatenating $u \rightarrow v$ of \eqref{eq:u} and 
a shortest directed path from $v$ to $zx$, we obtain a directed path 
from $u$ to $zx$ of length $\sdp{v}{zx}+1$:
\begin{equation*}
\underbrace{u \rightarrow}_{\text{\eqref{eq:u}}}
v 
\underbrace{\rightarrow \cdots \rightarrow zx}_{\text{shortest}}.
\end{equation*}
Because $(zx)^{-1}\ti{\alpha}_{i}$ is positive, 
and $(s_{i}zy)^{-1}\ti{\alpha}_{i}$ is negative, 
we see by applying Lemma~\ref{lem:diamond1}\,(1)
to a shortest directed path from $zx$ to $s_{i}zy$ 
that there exists a directed path from $zx$ to $zy$ of length 
$\sdp{zx}{s_{i}zy}-1$:
\begin{equation*}
\begin{diagram}
\node[6]{zy} \arrow{s} \\
\node{u} \arrow{e} \node{v} \arrow{e} \node{\cdots} \arrow{e} \node{zx} 
\arrow{ene,t,..}{
 \begin{array}{ll}
 \mbox{\scriptsize ${}^{\exists}$directed path} \\
 \mbox{\scriptsize of length $\sdp{zx}{s_{i}zy}-1$}
 \end{array}
}
\arrow{e} \node{\cdots} \arrow{e} \node{s_{i}zy}
\end{diagram}
\end{equation*}
Concatenating these directed paths, 
we obtain a directed path from $u$ to $zy$ that passes through $zx$; 
its length is equal to 
\begin{align*}
(\sdp{v}{zx}+1)+(\sdp{zx}{s_{i}zy}-1) & =
\sdp{v}{zx}+\sdp{zx}{s_{i}zy} \\ 
& =\sdp{v}{s_{i}zy};
\end{align*}
recall that $zx \preccurlyeq_{v} s_{i}zy$. 
We can show in exactly the same way as in Case 2 that 
this directed path is shortest. 

\paragraph{\bf Subcase 3.2.} Assume that 
$(zx)^{-1}\ti{\alpha}_{i} \in \Delta_{J}^{-}$. 
Let us show that 
\begin{equation*}
\min (zW_{J},\,\preccurlyeq_{u})=s_{i}zx.
\end{equation*}
Take an arbitrary $y \in W_{J}$. 

\paragraph{\bf 3.2.1.}
%
Assume that $(zy)^{-1}\ti{\alpha}_{i} \in \Delta_{J}^{-}$. 
Concatenating $u \rightarrow v$ of \eqref{eq:u} and 
a shortest directed path from $v$ to $zx$, we obtain a directed path 
from $u$ to $zx$ of length $\sdp{v}{zx}+1$:
\begin{equation*}
\underbrace{u \rightarrow}_{\text{\eqref{eq:u}}}
v 
\underbrace{\rightarrow \cdots \rightarrow zx}_{\text{shortest}}.
\end{equation*}
Because $u^{-1}\ti{\alpha}_{i}$ is positive, 
and $(zx)^{-1}\ti{\alpha}_{i}$ is negative, 
it follows from Lemma~\ref{lem:diamond1}\,(1) 
that there exists a directed path from $u$ to $s_{i}zx$ of length 
$\sdp{v}{zx}+1-1=\sdp{v}{zx}$:
\begin{equation*}
\begin{diagram}
\node[3]{s_{i}zx}\arrow{s} \\
\node{u}\arrow{ene,l,..}
{\begin{array}{ll}
 {\mbox{\scriptsize ${}^{\exists}$directed path}} \\
 {\mbox{\scriptsize of length $\sdp{v}{zx}$}}
 \end{array}
} \arrow{e} \node{\cdots} \arrow{e} \node{zx}
\end{diagram}
\end{equation*}
Concatenating this directed path, 
$s_{i}zx \rightarrow zx$, and a shortest directed path from 
$zx$ to $zy$, we obtain a directed path from $u$ to $zy$ 
that passes through $s_{i}zx$:
\begin{equation*}
\begin{diagram}
\node[3]{s_{i}zx}\arrow{s}\arrow{ese,t,..}{\mbox{\scriptsize Concatenation of directed paths}} \\
\node{u}\arrow{ene,l,..}
{\begin{array}{ll}
 \mbox{\scriptsize directed path} \\
 \mbox{\scriptsize of length $\sdp{v}{zx}$}
 \end{array}
} \arrow{e} \node{\cdots} \arrow{e} \node{zx} 
\arrow{e} \node{\cdots} \arrow{e} \node{zy}
\end{diagram}
\end{equation*}
The length of this directed path is equal to 
\begin{equation*}
\sdp{v}{zx}+1+\sdp{zx}{zy}=\sdp{v}{zy}+1;
\end{equation*}
recall that $zx \preccurlyeq_{v} zy$.
We can show in exactly the same way as the argument in Case 1 that 
this directed path is shortest. 

\paragraph{\bf 3.2.2.}
%
Assume that $(zy)^{-1}\ti{\alpha}_{i} \in \Delta_{J}^{+}$. 
By the same argument as in 3.2.1, we have 
\begin{equation*}
\begin{diagram}
\node[3]{s_{i}zx}\arrow{s} \\
\node{u}\arrow{ene,l,..}
{\begin{array}{ll}
 {\mbox{\scriptsize ${}^{\exists}$directed path}} \\
 {\mbox{\scriptsize of length $\sdp{v}{zx}$}}
 \end{array}
} \arrow{e} \node{\cdots} \arrow{e} \node{zx}
\end{diagram}
\end{equation*}
Concatenating $s_{i}zx \rightarrow zx$ and a shortest directed path 
from $zx$ to $s_{i}zy$, we obtain a directed path from 
$s_{i}zx$ to $s_{i}zy$ of length $\sdp{zx}{s_{i}zy}+1$:
\begin{equation*}
\begin{diagram}
\node[3]{s_{i}zx}\arrow{s}
\arrow{ese,t,..}{\mbox{\scriptsize Concatenation of directed paths}} \\
\node{u}\arrow{ene,l,..}
{\begin{array}{ll}
 \mbox{\scriptsize directed path} \\
 \mbox{\scriptsize of length $\sdp{v}{zx}$}
 \end{array}
} \arrow{e} \node{\cdots} \arrow{e} \node{zx} 
\arrow{e} \node{\cdots} \arrow{e} \node{s_{i}zy}
\end{diagram}
\end{equation*}
Since$(s_{i}zx)^{-1}\ti{\alpha}_{i}$ is positive, and 
$(s_{i}zy)^{-1}\ti{\alpha}_{i}$ is negative, 
it follows from Lemma~\ref{lem:diamond1}\,(1) that 
there exists a directed path from 
$s_{i}zx$ to $zy$ of length $\sdp{zx}{s_{i}zy}+1-1=\sdp{zx}{s_{i}zy}$:
\vspace*{5mm}
\begin{equation*}
\begin{diagram}
\node[3]{s_{i}zx} \arrow{s} \arrow{ese,..}
\arrow[2]{e,t,..}
{\begin{array}{ll}
 \mbox{\scriptsize ${}^{\exists}$directed path} \\
 \mbox{\scriptsize of length $\sdp{zx}{s_{i}zy}$}
 \end{array}
} \node[2]{zy} \arrow{s}\\
\node{u}\arrow{ene,l,..}
{\begin{array}{ll}
 \mbox{\scriptsize directed path} \\
 \mbox{\scriptsize of length $\sdp{v}{zx}$}
 \end{array}
} \arrow{e} \node{\cdots} \arrow{e} \node{zx} 
\arrow{e} \node{\cdots} \arrow{e} \node{s_{i}zy}
\end{diagram}
\end{equation*}
Concatenating these directed paths, 
we obtain a directed path from $u$ to $zy$ 
that passes through $s_{i}zx$; its length is equal to 
\begin{equation*}
\sdp{v}{zx}+\sdp{zx}{s_{i}zy}=
 \sdp{v}{s_{i}zy} \qquad (\because \ zx \preccurlyeq_{v} s_{i}zy).
\end{equation*}
We can show in exactly the same way as the argument in Case 2 that 
this directed path is shortest. 
Thus we have proved Theorem~\ref{thm:tilted}.
\end{proof}

\section{Postnikov's lemma}\label{section.poslem}
\numberwithin{lem}{section}

We now prove a generalization to the PQBG of a lemma due to Postnikov~\cite{Po}.

\begin{prop}[{cf.~\cite[Lemma 1\, (2), (3)]{Po}}] 
\label{prop:weight}
Let $x,\,y \in W^{J}$. Let $\bp$ and $\bq$ be a shortest and an arbitrary directed path 
from $x$ to $y$ in $\QB(W^J)$, respectively. Then there exists 
$h \in Q^{\vee}_{+}$ such that 
\begin{equation*}
\wt (\bq) - \wt (\bp) \equiv h \mod Q_{J}^{\vee}.
\end{equation*}
In particular, if $\bq$ is also shortest, 
then $\wt (\bq) \equiv \wt (\bp) \mod Q_{J}^{\vee}$. 
\end{prop}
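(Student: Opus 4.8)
The strategy is to induct on the quantum length $\ql(x)$ of the source vertex, mirroring the structure of the proof of the tilted Bruhat Theorem~\ref{thm:tilted} and making essential use of Lemma~\ref{lem:diamond1}, whose weight bookkeeping is precisely what one needs here. For the base case $\ql(x)=0$ we have $x=e$, so that $x$ is the $\clproj$-image of the identity; any directed path in $\QB(W^J)$ out of a minimal element is a Bruhat path whose weight is $0$, hence $\wt(\bq)=\wt(\bp)=0$ and the claim is trivial (with $h=0$). For the induction step, choose by Lemma~\ref{lem:ql2} an affine simple reflection $s_i$ with $x^{-1}\ti\alpha_i\in\Phi^+\setminus\Phi_J^+$ and $\ql(\lfloor s_i x\rfloor)=\ql(x)-1$; write $v:=\lfloor s_i x\rfloor$, so by Remark~\ref{rem:arrow} there is an edge $x\to v$ (Bruhat if $i\ne 0$, quantum if $i=0$). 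The inductive hypothesis applies to $v$ in place of $x$.

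The heart of the argument is to reduce paths out of $x$ to paths out of $v$ using Lemma~\ref{lem:diamond1}. Given the shortest path $\bp$ and the arbitrary path $\bq$, both from $x$ to $y$, I want to produce corresponding paths $\bp^\sharp$ and $\bq^\sharp$ emanating from $v$ (or from $\lfloor s_i y\rfloor$, depending on the sign of $\pair{\ti\alpha_i^\vee}{y\lambda}$), controlling the weight change in each case. Concretely: since $\pair{\ti\alpha_i^\vee}{x\lambda}>0$, part (3) of Lemma~\ref{lem:diamond1} (when $\pair{\ti\alpha_i^\vee}{y\lambda}\le 0$) or part (4) (when $\pair{\ti\alpha_i^\vee}{y\lambda}>0$) lets me pass from a path $x\to y$ to a path out of $v=\lfloor s_i x\rfloor$, of length one less or equal respectively, with a weight shift by $-\delta_{i,0}x^{-1}\ti\alpha_i^\vee$ (plus possibly $+\delta_{i,0}(\cdot)^{-1}\ti\alpha_i^\vee$ at the target). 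Applying this simultaneously to $\bp$ and $\bq$, the \emph{same} correction term appears in both, so it cancels in the difference $\wt(\bq^\sharp)-\wt(\bp^\sharp)$; by part~(5), $\bp^\sharp$ is again shortest. Invoking the inductive hypothesis on these new paths yields $\wt(\bq^\sharp)-\wt(\bp^\sharp)\equiv h'$ for some $h'\in Q^\vee_+$, and then transporting back along the reductions gives $\wt(\bq)-\wt(\bp)\equiv h'$ as well (the corrections having cancelled); if in the passage a quantum edge $x\to v$ or $\lfloor s_i y\rfloor\to y$ had to be prepended or appended to one of the two paths but not the other, it contributes a single extra coroot, which lies in $Q^\vee_+$, so $h=h'+(\text{that coroot, if present})\in Q^\vee_+$.

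The bookkeeping splits into cases according to the two signs $\pair{\ti\alpha_i^\vee}{x\lambda}$ (always $>0$ here) and $\pair{\ti\alpha_i^\vee}{y\lambda}$, exactly as Cases~1--3 in the proof of Theorem~\ref{thm:tilted}; in the sign-$\le 0$ case one needs, for the longer of the two paths (namely $\bq$), to also know there is an intermediate vertex $x_k$ with $\pair{\ti\alpha_i^\vee}{x_k\lambda}\le 0$, which holds automatically because the endpoint already has nonpositive pairing. The final ``in particular'' clause is immediate: if $\bq$ is shortest, then so is $\bq^\sharp$ by part~(5), the inductive hypothesis gives $h'\equiv 0$, and no spurious quantum edge can survive (a quantum edge prepended to only one side would make that side non-shortest or would have to be matched on the other side), so $h\equiv 0$.

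\textbf{Main obstacle.} The delicate point is the matching of the diamond reductions for $\bp$ and $\bq$ so that their weight corrections are \emph{literally identical} and thus cancel: Lemma~\ref{lem:diamond1} produces a path out of a shifted vertex with a shift depending only on the endpoints $x,y$ and on $i$, not on the path, so this should go through — but one must be careful that the two reductions land paths with the \emph{same} source and \emph{same} target (possibly $\lfloor s_i y\rfloor$ rather than $y$), and that the case analysis on $\pair{\ti\alpha_i^\vee}{y\lambda}$ is handled uniformly. A secondary subtlety is verifying positivity $h\in Q^\vee_+$: this requires tracking that every extra coroot introduced (from a quantum edge of the form $x\to v$ with $i=0$, or from reinstating an endpoint) enters $\wt(\bq)-\wt(\bp)$ with a positive sign, which follows from the structure of Lemma~\ref{lem:diamond1} and the fact that $\bp$ is the shortest path, so it cannot acquire ``extra'' quantum edges that $\bq$ lacks.
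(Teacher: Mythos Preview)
Your overall strategy---induction on the quantum length of $x$, reducing via Lemma~\ref{lem:diamond1} parts (3)/(4) and (5)---is exactly the paper's approach. However, your base case is wrong. You claim that when $x=e$, ``any directed path in $\QB(W^J)$ out of a minimal element is a Bruhat path whose weight is $0$'', and conclude $\wt(\bq)=0$. This is false: only the \emph{first} edge out of $e$ must be a Bruhat edge (since $e$ has no quantum edges out), but an arbitrary path $\bq$ from $e$ to $y$ can acquire quantum edges after the first step. The correct base case is that the \emph{shortest} path $\bp$ is purely Bruhat (since a Bruhat chain of length $\ell(y)$ reaches $y$, and any path has length $\ge \ell(y)$), hence $\wt(\bp)=0$; then $\wt(\bq)-\wt(\bp)=\wt(\bq)$ is a nonnegative sum of positive coroots, so already lies in $Q^\vee_+$ and we may take $h=\wt(\bq)$.

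In the inductive step you overcomplicate. There is no ``Case 3'' and no ``extra coroot prepended to one path but not the other'': Lemma~\ref{lem:diamond1}(3) (when $\pair{\ti\alpha_j^\vee}{y\lambda}\le 0$, so the endpoint itself witnesses the required intermediate vertex) or Lemma~\ref{lem:diamond1}(4) (when $\pair{\ti\alpha_j^\vee}{y\lambda}>0$) applies uniformly to \emph{both} $\bp$ and $\bq$, producing paths $\bp'$, $\bq'$ with the same source $\mcr{s_jx}$ and same target (either $y$ or $\mcr{s_jy}$), with identical weight shift; part~(5) guarantees $\bp'$ is shortest. The shifts cancel in $\wt(\bq')-\wt(\bp')$, and the inductive hypothesis finishes. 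For the ``in particular'' clause, the paper's argument is much cleaner than your inductive one: apply the first assertion with the roles of $\bp$ and $\bq$ swapped to get $h''\in Q^\vee_+$ with $-h\equiv h''\bmod Q_J^\vee$, whence $h+h''\in Q_J^\vee\cap Q^\vee_+$, forcing both $h$ and $h''$ to lie in $Q_J^\vee$.
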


\begin{proof}
We prove the first assertion of the proposition 
(for general $x \in W^{J}$) by induction on $\qlj(x)$, 
where $\qlj(x)$ is the quantum length of 
Definition~\ref{definition.quantum length}. 
If $\qlj(x)=0$, then $x=e$. It is easy to see that the $e$-tilted parabolic Bruhat order (cf. Section \ref{section.tbo}) is just the usual parabolic Bruhat order on $W^J$, cf. \cite{BFP}[Section 6]. (Indeed, a path from $e$ to $y$ contains at least $\ell(y)$ edges, because an edge either increases length by $1$ or decreases length; so the path consisting of covers in the parabolic Bruhat order is shortest.) Thus $\bp$ contains no quantum edges in this case, so $\wt(\bp)=0$, which makes the statement to prove trivial.

Now assume that $\qlj(x) > 0$, and 
take $j \in I_{\af}$ such that 
$\qlj(\mcr{s_{j}x})=\qlj(x)-1$; 
note that $x^{-1}\ti{\alpha}_{j} \in 
\Phi^{+} \setminus \Phi_{J}^{+}$, or equivalently, 
$\pair{\ti{\alpha}_{j}^{\vee}}{x\lambda} > 0$, where $\lambda$ is a dominant
weight with stabilizer $W_J$.

\paragraph{\bf{Case 1.}}
%
Assume that $\pair{\ti{\alpha}_{j}^{\vee}}{y\lambda} > 0$.
Applying Lemma~\ref{lem:diamond1}\,(4) and (5) to 
$\bp$ (resp., $\bq$), we obtain a shortest directed path $\bp'$ 
(resp., a directed path $\bq'$) from $\mcr{s_{j}x}$ to $\mcr{s_{j}y}$ 
such that
\begin{align*}
\wt(\bp') & \equiv 
 \wt(\bp)-
\delta_{j,0}x^{-1}\ti{\alpha}_{j}^{\vee}+
\delta_{j,0}y^{-1}\ti{\alpha}_{j}^{\vee}, \\
\wt(\bq') & \equiv 
 \wt(\bq)-
\delta_{j,0}x^{-1}\ti{\alpha}_{j}^{\vee}+
\delta_{j,0}y^{-1}\ti{\alpha}_{j}^{\vee}.
\end{align*}
By the induction hypothesis, $\wt(\bp')-\wt(\bq') \equiv h'$ 
for some $h' \in Q^{\vee}_{+}$. Then we have 
$\wt(\bp) - \wt(\bq) \equiv h' \in Q^{\vee}_{+}$, as desired. 

\paragraph{\bf{Case 2.}}
%
Assume that $\pair{\ti{\alpha}_{j}^{\vee}}{y\lambda} \le 0$.
Applying Lemma~\ref{lem:diamond1}\,(3) and (5) to 
$\bp$ (resp., $\bq$), we obtain a shortest directed path $\bp'$ 
(resp., a directed path $\bq'$) from $\mcr{s_{j}x}$ to $y$ such that
\begin{equation*}
\wt(\bp') \equiv 
 \wt(\bp) - \delta_{j,0}x^{-1}\ti{\alpha}_{j}^{\vee}, \qquad
\wt(\bq') \equiv 
 \wt(\bq) - \delta_{j,0}x^{-1}\ti{\alpha}_{j}^{\vee}.
\end{equation*}
By the induction hypothesis, $\wt(\bp')-\wt(\bq') \equiv h'$ 
for some $h' \in Q^{\vee}_{+}$. Then we have 
$\wt(\bp) - \wt(\bq) \equiv h' \in Q^{\vee}_{+}$, as desired. 
Thus we have proved the first assertion.

Next, assume that $\bq$ is also shortest. 
By interchanging $\bp$ and $\bq$ in the first assertion, 
we see that there exists $h'' \in Q^{\vee}_{+}$ such that 
$\wt (\bp) - \wt (\bq) \equiv h'' \mod Q_{J}^{\vee}$, which implies that 
$\wt (\bq) \equiv \wt (\bp) \mod Q_{J}^{\vee}$. 
Thus we have proved the proposition.
\end{proof}


\end{document}